\documentclass{amsart}
\usepackage[T1]{fontenc}

\usepackage{amssymb}
\usepackage{amsmath}
\usepackage{amsthm}
\usepackage{pgfplots} 
\pgfplotsset{compat=1.9}
\usepackage{graphicx}
\usepackage[utf8]{inputenc}
\usepackage{tikz}
\usepackage{tikz-cd}
\usepackage{bbm}
\usepackage{subcaption}
\usepackage[boxruled]{algorithm2e}
\usepackage{mathtools}
\usepackage{lipsum}
\usepackage[title,titletoc]{appendix}
\usepackage{booktabs}
\usepackage{here}
\usepackage{natbib,enumerate} 
\usepackage{hyperref}
\usepackage{faktor}
\usepackage{comment}
\usepackage{bm}
\usepackage{adjustbox}
\usepackage{circuitikz}
\usepackage{moresize}
\usepackage{pdflscape}
\usepackage{mathpazo}
\usepackage{euler}
\usepackage{faktor}
\usepackage[a4paper,left=3cm,right=3cm,top=3cm,bottom=3cm]{geometry}
\usepackage{microtype}

\renewcommand{\phi}{\varphi}

\theoremstyle{definition}
\newtheorem{theorem}{Theorem}
\numberwithin{theorem}{section}
\newtheorem{proposition}[theorem]{Proposition}
\newtheorem{lemma}[theorem]{Lemma}

\newtheorem{remark}[theorem]{Remark}
\newtheorem{definition}[theorem]{Definition}

\newtheorem{claim}[theorem]{Claim}

\newcommand{\Hom}{\text{Hom}}

\DeclareMathOperator{\Hilb}{\operatorname{Hilb}}

\newcommand\define{\stackrel{\mathclap{\mbox{\tiny def}}}{=}}

\RequirePackage{tikz-cd}
\RequirePackage{amssymb}
\usetikzlibrary{calc}
\usetikzlibrary{decorations.pathmorphing}

\tikzset{curve/.style={settings={#1},to path={(\tikztostart)
    .. controls ($(\tikztostart)!\pv{pos}!(\tikztotarget)!\pv{height}!270:(\tikztotarget)$)
    and ($(\tikztostart)!1-\pv{pos}!(\tikztotarget)!\pv{height}!270:(\tikztotarget)$)
    .. (\tikztotarget)\tikztonodes}},
    settings/.code={\tikzset{quiver/.cd,#1}
        \def\pv##1{\pgfkeysvalueof{/tikz/quiver/##1}}},
    quiver/.cd,pos/.initial=0.35,height/.initial=0}

\tikzset{tail reversed/.code={\pgfsetarrowsstart{tikzcd to}}}
\tikzset{2tail/.code={\pgfsetarrowsstart{Implies[reversed]}}}
\tikzset{2tail reversed/.code={\pgfsetarrowsstart{Implies}}}
\tikzset{no body/.style={/tikz/dash pattern=on 0 off 1mm}}

\usepackage{xcolor}
\hypersetup{
    colorlinks,
    linkcolor={red!50!black},
    citecolor={blue!50!black},
    urlcolor={blue!80!black}
}

\begin{document}

\newsavebox{\yangianbox}
\sbox{\yangianbox}{$Y_{t_1,t_2}(\widehat{\mathfrak{gl}}_1)$}

\title[An action on the cohomology of the moduli space of stable sheaves on surfaces]{An action of the affine Yangian \texorpdfstring{\usebox{\yangianbox}}{Y_{t_1,t_2}(gl_1)} on the cohomology of the moduli space of stable sheaves on surfaces}

\author[Claudio Pfammatter]{Claudio Pfammatter}

%\date{\today}

\address{Master's program in Mathematics, Chair of Representation Theory, EPFL, Switzerland}

\email{claudio.pfammatter@epfl.ch}

\begin{abstract}
We investigate the action of the affine Yangian $Y_{t_1,t_2}(\widehat{\mathfrak{gl}}_1)$ on the singular cohomology of the moduli space of stable sheaves $\mathcal{M}$ on a smooth projective surface $S$. Through an intersection-theoretic analysis of nested moduli spaces, we obtain a proof of the representation $$Y_{t_1,t_2}(\widehat{\mathfrak{gl}}_1) \curvearrowright H_{\mathcal{M}}.$$ This intersection-theoretic approach extends classical constructions to the full Yangian structure and circumvents the $K$-theoretic machinery traditionally required to establish this action.
\end{abstract}

\maketitle

%\vspace{-2em}
%\tableofcontents

\section{Introduction}
\label{section_introduction}

\subsection{Motivation}
Let $S$ be a smooth projective surface over $\mathbb{C}$. In \cite{goettsche_betti_numbers}, Göttsche computed the Poincaré polynomials of $S^{[n]}$, the \textit{Hilbert scheme of $n$ points on $S$}. Vafa and Witten \cite{vafa_coupling_test} subsequently observed that as $n$ ranges from $0$ to $\infty$, the generating function of these Poincaré polynomials coincides with the character of the Fock space representation of the infinite-dimensional Heisenberg algebra $\widehat{\mathfrak{gl}}_1$. Motivated by this insight, Nakajima proposed studying all such Hilbert schemes simultaneously, packaging them into the \textit{Hilbert scheme of points on $S$} defined by
\begin{equation}
    \label{equation_definition_Hilbert_scheme}
   \operatorname{Hilb}(S) \define \coprod_{n=0}^\infty S^{[n]}. 
\end{equation}

This observation catalyzed a central theme in geometric representation theory: the realization of infinite-dimensional algebras on moduli spaces. Independently, Grojnowski \cite{grojnowksi_heisenberg_action} and Nakajima \cite{nakajima_heisenberg_action} constructed an action of $\widehat{\mathfrak{gl}}_1$ on the singular cohomology $H_{\operatorname{Hilb}(S)}$ \footnote{We employ this non-standard notation for singular cohomology with integer coefficients to emphasize that our results hold for other (co)homology theories as well, such as Borel–Moore
homology or Chow rings (see Remark \ref{remark_Chow}).} using correspondences. We follow the formulation of \textit{loc.\ cit.} The foundational objects in this construction are the \textit{nested Hilbert schemes}
\[
\mathfrak{C}_i^\bullet \define \Big\{ \mathcal{J}' \subset_{i u} \mathcal{J} \Big\} \subset \operatorname{Hilb}(S) \times S \times \operatorname{Hilb}(S),
\]
where the notation $\mathcal{J}' \subset_{i u} \mathcal{J}$ indicates that $\mathcal{J}'$ is a subsheaf of the ideal sheaf $\mathcal{J} \subset \mathcal{O}_S$, and the quotient $\mathcal{J}/\mathcal{J}'$ is a length $i$ skyscraper sheaf supported at an arbitrary closed point $u \in S$. 

To define the action, one considers the natural projection maps from this correspondence space given by the diagram
\begin{equation*}
    \begin{tikzcd}[row sep=small]
	& {\mathfrak{C}_i^\bullet} &&&& {\mathcal{J}' \subset_{i u} \mathcal{J}} \\
	{\operatorname{Hilb}(S)} & S & {\operatorname{Hilb}(S)} && {\mathcal{J}'} & u & {\mathcal{J}.}
	\arrow["{p_+}"', from=1-2, to=2-1]
	\arrow["{p_S}", from=1-2, to=2-2]
	\arrow["{p_-}", from=1-2, to=2-3]
	\arrow[maps to, from=1-6, to=2-5]
	\arrow[maps to, from=1-6, to=2-6]
	\arrow[maps to, from=1-6, to=2-7]
\end{tikzcd}
\end{equation*}
We construct annihilation and creation operators for $i \in \mathbb{Z}_{>0}$ via push-pull formulas:
\begin{equation*}
    \mathcal{P}[\pm i] \define (p_\mp \times p_S)_* p_\pm^*: H_{\operatorname{Hilb}(S)} \rightarrow H_{\operatorname{Hilb}(S) \times S}.
\end{equation*}
By proving that these operators satisfy the defining commutation relations of the Heisenberg algebra, Nakajima turned $H_{\operatorname{Hilb}(S)}$ into the Fock space of $\widehat{\mathfrak{gl}}_1$, which explains Göttsche's computation in a representation-theoretic way. This action was subsequently used to understand, among other things, the Gromov--Witten theory of $\operatorname{Hilb}(S)$ in \cite{oberdieck_gromov-witten} and its Chow ring in \cite{maulik_chow}.

As the field progressed, the focus naturally expanded along three distinct axes: the algebraic structures acting on these spaces, the moduli spaces themselves, and the (co)homology theories employed. On the algebraic side, attention shifted from the Heisenberg algebra to larger quantum groups, for instance, the quantum toroidal $\mathfrak{gl}_1$. Geometrically, it is natural to replace the rank $1$ ideal sheaves of the Hilbert scheme with higher-rank stable sheaves. Fix an ample divisor $H$ on $S$, a rank $r \in \mathbb{Z}_{>0}$, and a first Chern class $c_1 \in H^2(S, \mathbb{Z})$. Under the standard assumptions 
\begin{align}
    \label{equation_assumption_A}
    &\textbf{Assumption A:} \qquad \operatorname{gcd}(r,c_1 \cdot H) =1 \\
    \label{equation_assumption_S}
    &\textbf{Assumption S:} \qquad \,\text{ either } \omega_S \cong \mathcal{O}_S \text{ or } c_1(\omega_S) \cdot H < 0.
\end{align}
the \textit{moduli space of stable sheaves} 
\[
\mathcal{M} \define \coprod_{c_2 = -\infty}^\infty \mathcal{M}_{(r,c_1,c_2)}
\]
is a smooth projective scheme that provides a natural generalization of $\operatorname{Hilb}(S)$. In this higher-rank setting, Baranovsky successfully lifted the Grojnowski--Nakajima construction, establishing a Heisenberg action on $H_{\mathcal{M}}$ in \cite{baranovsky_moduli_stable_sheaves}.

Moving along the third axis, it becomes necessary to replace singular cohomology with $K$-theory because the quantum toroidal $\mathfrak{gl}_1$ naturally acts on $K_\mathcal{M}$. This algebra, also called the Ding--Iohara--Miki algebra, is denoted by $U_{q_1,q_2}(\ddot{\mathfrak{gl}}_1)$ and was studied in \cite{feigin_commutative_algebra} and \cite{feigin_quantum_toroidal}. However, establishing an action in $K$-theory proved significantly more challenging. The primary obstacle to this generalization is geometric: the classical higher-length analogues of the nested correspondences $\mathfrak{C}_i^\bullet$ for stable sheaves are poorly behaved for $i > 1$ (it is not even known whether they are equidimensional).

To bypass this issue, we define operators using certain $K$-theory classes on the correspondence 
\[
\mathfrak{Z}_i^\bullet \define \Big\{ \mathcal{F}_0 \subset_u \mathcal{F}_1 \subset_u \cdots \subset_u \mathcal{F}_i \Big\},
\]
where $\mathcal{F}_0, \ldots, \mathcal{F}_i$ are stable sheaves on the surface $S$, and $\mathcal{F}_{k-1} \subset_u \mathcal{F}_k$ means that $\mathcal{F}_{k-1} \subset \mathcal{F}_k$ and $\mathcal{F}_k/\mathcal{F}_{k-1} \cong \mathbb{C}_u$ for $u \in S$. The nested moduli space $\mathfrak{Z}_i^\bullet$ can be thought of as a resolution of $\mathfrak{C}_i^\bullet$. Utilizing the shuffle algebra, Neguţ constructed an action of the quantum toroidal $\mathfrak{gl}_1$ on $K_\mathcal{M}$ in \cite{negut_shuffle_surfaces}. This program culminated in derived-categorical operators whose $K$-theoretic shadows realize the elliptic Hall algebra in \cite{negut_hecke_correspondences}. 

By combining \textit{loc.\ cit.} with \cite{zhao_commutator_Hilbert_schemes}, one obtains a geometric $U_{q_1,q_2}(\ddot{\mathfrak{gl}}_1)$ action on the derived category of $\operatorname{Hilb}(S)$. To establish this, Zhao studied certain nested and quadruple Hilbert schemes through the lens of the minimal model program, showing that these spaces possess either canonical or semidivisorial log terminal singularities. He finally realized a weak categorification of Neguţ's action on $K_\mathcal{M}$ via two intersection-theoretic descriptions of the quadruple moduli space of stable sheaves in \cite{zhao_commutator_moduli_stable}.

Despite the success of these frameworks, they rely heavily on sophisticated $K$-theoretic and derived-categorical machinery. An intersection-theoretic construction, however, remains a central objective in the field (see Theorem 1.2 of \cite{negut_rationaltrigonometricellipticalgebras}) and would allow for an action on singular cohomology without relying on Chern character descents from $K$-theory or weak categorifications.

Replacing $K$-theory with singular cohomology on the geometric side dictates a corresponding shift on the algebraic side. As developed by Tsymbaliuk \cite{tsymbaliuk_yangian}, the affine Yangian $Y_{t_1,t_2}(\widehat{\mathfrak{gl}}_1)$ (see Definition \ref{definition_Yangian} for a description of this algebra) arises as the natural ``additivization'' of $U_{q_1,q_2}(\ddot{\mathfrak{gl}}_1)$, mirroring the classical setting of simple Lie algebras $\mathfrak{g}$, where the Yangian $Y_h(\mathfrak{g})$ acts as the ``additivization'' of the quantum loop group $U_q(L\mathfrak{g})$. %Furthermore, Arbesfeld and Schiffmann established in \cite{schiffmann_Yangian} that $Y_{t_1,t_2}(\widehat{\mathfrak{gl}}_1)$ is isomorphic to the algebra $\mathbf{SH}^{\mathbf{c}}$ introduced in \cite{schiffmann_Cherednik}. 
Whereas the quantum toroidal $\mathfrak{gl}_1$ acts on the $K$-theory of the moduli space $\mathcal{M}$, its ``additivization'' provides the algebraic framework required to act on singular cohomology.

The goal of the present paper is to provide such a construction. Specifically, we establish the action of the affine Yangian $Y_{t_1,t_2}(\widehat{\mathfrak{gl}}_1)$ on $H_{\mathcal{M}}$ via intersection theory.

\subsection{Main results}
\label{section_main_results}
We utilize the nested moduli space of stable sheaves $\mathfrak{Z}_{(u)}$ (see Definition \ref{definition_nested_stable_space} for details). The natural projection maps from this correspondence space are given by the diagram
\[
\begin{tikzcd}[row sep=small]
	& {\mathfrak{Z}_{(u)}} &&&& {\mathcal{F}' \subset_{u} \mathcal{F}} \\
	{\mathcal{M}} & S & {\mathcal{M}} && {\mathcal{F}'} & u & {\mathcal{F},}
	\arrow["{p_+}"', from=1-2, to=2-1]
	\arrow["{p_S}", from=1-2, to=2-2]
	\arrow["{p_-}", from=1-2, to=2-3]
	\arrow[maps to, from=1-6, to=2-5]
	\arrow[maps to, from=1-6, to=2-6]
	\arrow[maps to, from=1-6, to=2-7]
\end{tikzcd}
\]
where $\mathcal{F}'$ and $\mathcal{F}$ are stable sheaves on $S$. Let $\mathcal{L}$ denote the tautological line bundle on $\mathfrak{Z}_{(u)}$, defined such that its fiber over a point $(\mathcal{F}' \subset_{u} \mathcal{F})$ is $\Gamma(S, \mathcal{F}/\mathcal{F}')$, and set $\ell = c_1(\mathcal{L})$. We construct a family of operators via push-pulls:
\begin{align}
\label{equation_operator_E}
    E_d &\define (p_+ \times p_S)_*\bigl( \ell^d \cdot p_-^* \bigr): H_{\mathcal{M}} \rightarrow H_{\mathcal{M} \times S}, \\
\label{equation_operator_F}
    F_d &\define (p_- \times p_S)_*\bigl( \ell^d \cdot p_+^* \bigr): H_{\mathcal{M}} \rightarrow H_{\mathcal{M} \times S},
\end{align}
for any integer $d \ge 0$. While $E_d$ and $F_d$ rely on correspondences, the Cartan part of the Yangian acts directly via cup product. We capture this by introducing the operator $H_d$, defined through the formal power series
\begin{equation}
    \label{equation_power_series_H}
    H(z) \define 1 + \sum_{d=0}^\infty \frac{H_d}{z^{d+1}} \;:\; H_{\mathcal{M}} \xrightarrow{p_{\mathcal{M}}^*} H_{\mathcal{M} \times S} \xrightarrow{\cdot \frac{c(\mathcal{U},z+t)}{c(\mathcal{U},z)}} H_{\mathcal{M} \times S},
\end{equation}
where $t = c_1(\omega_S)$ denotes the first Chern class of the canonical bundle of $S$, and $\mathcal{U}$ is the universal sheaf on $\mathcal{M} \times S$. The defining relations of the Yangian action on $H_{\mathcal{M}}$ are expressed by assembling the generators into the formal series
\begin{align}
\label{equation_power_series_E}
    E(z) &\define \sum_{d=0}^\infty \frac{E_d}{z^{d+1}} = (p_+ \times p_S)_*\left( \frac{1}{z-\ell} \cdot p_-^* \right), \\ 
\label{equation_power_series_F}
    F(z) &\define \sum_{d=0}^\infty \frac{F_d}{z^{d+1}} = (p_- \times p_S)_*\left( \frac{1}{z-\ell} \cdot p_+^* \right).
\end{align}

The central theorem of this paper establishes that the geometrically defined power series $E(z)$, $F(w)$, and $H(z)$ satisfy the defining commutation relations of the algebra $Y_{t_1,t_2}(\widehat{\mathfrak{gl}}_1)$. We state this result as follows.

\begin{theorem}%[\cite{negut_rationaltrigonometricellipticalgebras}, Theorem 3.13]
\label{theorem_relations_geometric_operators_stable}
    Let $S$ be a smooth projective surface over $\mathbb{C}$ and fix an ample divisor $H$ as well as a pair $(r,c_1) \in \mathbb{N} \times H^2(S,\mathbb{Z})$. Assume $S$ satisfies Assumptions A and S stated in \eqref{equation_assumption_A} and \eqref{equation_assumption_S}, respectively. We then have the following equalities of operators $H_{\mathcal{M}} \rightarrow H_{\mathcal{M} \times \textcolor{red}{S} \times \textcolor{blue}{S}}$
    \begin{align}
        \label{equation_relation_cohomology_E_E_intro}
        \Bigg[ \textcolor{red}{E(z)}\textcolor{blue}{E(w)}\widetilde{\zeta}^\text{coh}_-(w-z) \Bigg]_{z^{<0},w^{<0}} &= \Bigg[ \textcolor{blue}{E(w)}\textcolor{red}{E(z)}\widetilde{\zeta}^\text{coh}_+(z-w) \Bigg]_{z^{<0},w^{<0}} \\
        \label{equation_relation_cohomology_F_F_intro}
        \Bigg[ \textcolor{blue}{F(w)}\textcolor{red}{F(z)}\widetilde{\zeta}^\text{coh}_-(w-z) \Bigg]_{z^{<0},w^{<0}} &= \Bigg[ \textcolor{red}{F(z)}\textcolor{blue}{F(w)}\widetilde{\zeta}^\text{coh}_+(z-w) \Bigg]_{z^{<0},w^{<0}} \\
        \label{equation_relation_cohomology_H_E_intro}
        \textcolor{red}{H(z)}\textcolor{blue}{E(w)} &= \Bigg[ \textcolor{blue}{E(w)}\textcolor{red}{H(z)} \frac{\zeta^\text{coh}(z-w)}{\zeta^\text{coh}(w-z)} \Bigg]_{z \gg w,z^{\leq 0},w^{<0}} \\
        \label{equation_relation_cohomology_H_F_intro}
        \textcolor{blue}{F(w)}\textcolor{red}{H(z)} &= \Bigg[ \textcolor{red}{H(z)}\textcolor{blue}{F(w)} \frac{\zeta^\text{coh}(z-w)}{\zeta^\text{coh}(w-z)} \Bigg]_{z \gg w,z^{\leq 0},w^{<0}} \\
        \label{equation_relation_cohomology_H_H_intro}
        \bigl[ \textcolor{red}{H(z)},\textcolor{blue}{H(w)} \bigr] &= 0 \\
        \label{equation_relation_cohomology_E_F_intro}
        \bigl[ \textcolor{red}{E(z)},\textcolor{blue}{F(w)} \bigr] &= (-1)^{r-1} \frac{1}{t} (\operatorname{id}_{\mathcal{M}} \times \Delta)_* \left( \frac{H(z)-H(w)}{z-w} \right).
    \end{align}
\end{theorem}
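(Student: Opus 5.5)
The plan is to follow Neguţ's strategy for the $K$-theoretic relations, with intersection-theoretic computations on nested moduli spaces in place of the $K$-theoretic ones.

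\textbf{Relations not involving $E$ and $F$ together.} Relation \eqref{equation_relation_cohomology_H_H_intro} is immediate: both $H(z)$ and $H(w)$ act by cup product with classes pulled back to $\mathcal{M}\times S\times S$, and these commute. For \eqref{equation_relation_cohomology_H_E_intro} and \eqref{equation_relation_cohomology_H_F_intro} I will work on $\mathfrak{Z}_{(u)}$. There the universal sheaves satisfy the short exact sequence $0\to \mathcal{U}'\to\mathcal{U}\to \mathcal{O}_{\Gamma}\otimes\mathcal{L}\to 0$ on $\mathfrak{Z}_{(u)}\times S$, where $\Gamma$ is the graph of $p_S$. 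The Chern polynomial of $\mathcal{O}_\Gamma\otimes\mathcal{L}$ can be computed by Grothendieck--Riemann--Roch, or from the Koszul resolution of the diagonal on $S\times S$. Restricted to the second copy of $S$, this gives
\[
\frac{c(\mathcal{U},z+t)}{c(\mathcal{U},z)} = \frac{c(\mathcal{U}',z+t)}{c(\mathcal{U}',z)}\cdot \frac{\zeta^{\text{coh}}(z-\ell)}{\zeta^{\text{coh}}(\ell-z)},
\]
up to a diagonal correction supported on $\Delta$. The projection formula then moves $H(z)$ across $p_\pm^*$, and $\ell$ is replaced by $w$ after expanding $\tfrac1{w-\ell}$. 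The diagonal correction is exactly what forces the truncation $[\,\cdot\,]_{z\gg w,z^{\le 0},w^{<0}}$.

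\textbf{Quadratic $EE$ and $FF$ relations.} I will express the compositions $E(z)E(w)$ and $E(w)E(z)$ as push-forwards from the fibre products $\mathfrak{Z}_{(u)}\times_{\mathcal{M}}\mathfrak{Z}_{(v)}$, namely the spaces of flags $\mathcal{F}''\subset_u\mathcal{F}'\subset_v\mathcal{F}$. Following Neguţ, I want to show that this space is the projectivization of a two-step complex over $\mathfrak{Z}_{(v)}\times S$, so that it is a local complete intersection of expected dimension. This is needed so that composition of correspondences equals the push-forward of the fundamental class. On this space there are two line bundles $\mathcal{L}_1,\mathcal{L}_2$. Swapping the order of the flag gives a second space with the same image in $\mathcal{M}\times S\times S\times\mathcal{M}$ away from $u=v$. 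The difference of the two push-forwards is therefore supported over the diagonal, and there it is computed through the further flag space where $u=v$. The factors $\widetilde{\zeta}^{\text{coh}}_\pm$ arise as Euler classes of the virtual normal bundles comparing the two sides. After multiplying by them, the discrepancy is a polynomial in $z$ or $w$, which the truncation $[\,\cdot\,]_{z^{<0},w^{<0}}$ removes. The $FF$ relation follows in the same way by symmetry, exchanging $p_+$ and $p_-$.

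\textbf{The commutator $[E(z),F(w)]$ (main obstacle).} I will compare the two fibre products $\{\mathcal{F}'\subset_u\mathcal{F}\supset_v\mathcal{F}''\}$ and $\{\mathcal{F}'\supset_u\mathcal{G}\subset_v\mathcal{F}''\}$. Off the diagonal $u\ne v$ the two spaces are isomorphic, by sending a pair to $\mathcal{G}=\mathcal{F}'\cap\mathcal{F}''$ and conversely to $\mathcal{F}'+\mathcal{F}''$. Hence $[E(z),F(w)]$ is supported on $\Delta$, with $\mathcal{F}'=\mathcal{F}''$. To compute the diagonal contribution I first try a residue computation. On the locus $u=v$ and $\mathcal{F}'=\mathcal{F}''$, each fibre product restricts to a projective bundle $\mathbb{P}$ of rank governed by $\operatorname{Ext}$ data, using Assumptions A and S to get the vanishing of $\operatorname{Ext}^2$. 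Its push-forward of $\tfrac{1}{(z-\ell)(w-\ell)}$ is a Segre-class expression, namely $[c(\mathcal{U},\ell+t)/c(\mathcal{U},\ell)]$ evaluated as a residue, and this yields $\tfrac{H(z)-H(w)}{z-w}$. The excess intersection along $\Delta$ contributes the factor $\tfrac1t$ together with the sign $(-1)^{r-1}$, coming from the parity of the rank of the bundle. The delicate points are justifying the excess intersection formula on possibly singular fibre products, and matching signs and normalisations. If this turns out to be too hard, I will fall back on blowing up along $\Delta$ and comparing the two spaces there.
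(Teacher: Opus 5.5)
\textbf{The relations not involving both $E$ and $F$.} Your $H$-relations follow the paper's route: the sequence $0\to\mathcal{U}'\to\mathcal{U}\to\mathcal{L}\otimes\mathcal{O}_\Gamma\to 0$ together with the Koszul computation $c(-\mathcal{O}_\Delta,z)=1+\frac{[\Delta]}{z(z-t)}$. The identity is in fact exact, with no further diagonal correction. Also note the direction: it is the series of the smaller sheaf $\mathcal{U}'$ that equals the series of $\mathcal{U}$ times $\zeta^{\text{coh}}(z-\ell)/\zeta^{\text{coh}}(\ell-z)$, not the other way round.

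\textbf{The gap: $[E(z),F(w)]$.} The off-diagonal isomorphism only shows that the commutator is supported on the diagonal. It does not compute the class there, and restricting each fibre product to $\{\mathcal{F}'=\mathcal{F}''\}$ does not compute it either. The two sides are geometrically very different. The space $\{\mathcal{F}'\subset_u\mathcal{F}\supset_v\mathcal{F}''\}$ is irreducible of expected dimension, and the diagonal locus has codimension $r+1$ in it. In $\{\mathcal{F}'\supset_u\mathcal{G}\subset_v\mathcal{F}''\}$, by contrast, the locus $\{\mathcal{F}'=\mathcal{F}''\}$ is a whole irreducible component. Its dimension is $\operatorname{const}+2rc_2(\mathcal{F}')+r+1$, exceeding the expected $\operatorname{const}+2rc_2(\mathcal{F}')+2$ by $r-1$.

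So $F_bE_a$ is not the push-forward from the fundamental class of the fibre product. You need a refined base change (Lemma~\ref{lemma_base_change_stable}). There the excess component carries $c_{r-1}$ of an excess bundle of $K$-class $([\Gamma^{u*}\mathcal{U}_0\otimes\mathcal{L}^{-1}]-1)[\omega_S^{-1}]$. This class is obtained from the tangent complexes using simplicity of stable sheaves, relative Serre duality and the Koszul complex of the graph. It yields $\Gamma_1=(-1)^r\frac{P(\ell)-P(\ell+t)}{t}$, where $P$ is only the polynomial part of $c(\Gamma^{u*}\mathcal{U}_0,z)$.

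Moreover, because the operators carry $\ell$-insertions, the main components also contribute a diagonal term. Work on the common birational model $\mathfrak{Y}^{\mathrm{rot}}$, which is what your blow-up fallback would have to produce. There one has $\ell_{01}^a\ell_{12}^b-\ell_{01}'^b\ell_{12}'^a=j_*\bigl(\sum_{k=0}^{a+b-1}\ell_{01}^k\ell_{01}'^{a+b-1-k}\bigr)$. Here $j$ is the inclusion of the zero locus of $\mathcal{L}_{0,1}'\to\mathcal{L}_{1,2}$, i.e.\ of the two-step flag space $\{\mathcal{F}_1'\subset_u\mathcal{F}_0\subset_u\mathcal{F}_1\}$.

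\textbf{Why the principal term is essential.} This term is not optional, and attributing the factor $\frac{1}{t}$ and the sign to the excess intersection is misplaced. Written as a residue, the excess component contributes
$(-1)^{r-1}\operatorname{Res}_{z=\infty}\bigl[z^{a+b}c(-\mathcal{U},z)\frac{P'(z)-P'(z+t)}{t}\bigr]$.
This sees only the polynomial part $P'$ of $c(\mathcal{U},z)$. It therefore differs in general from $\pm H_{a+b}/t$, because $c(\mathcal{U},z)$ has a nonzero negative-degree part when $\mathcal{U}$ is not locally free. In the paper this term is cancelled exactly by the $w=\infty$ part of the principal double residue, whose kernel is $\frac{1}{(z-w)(z-w-t)}$. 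The answer $\frac{1}{t}\frac{c(\mathcal{U},z+t)}{c(\mathcal{U},z)}$ then comes from the finite poles $w=z$ and $w=z-t$ of that kernel.

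Note also that the diagonal loci are not projective bundles. The fibre $\mathbb{P}(\mathcal{F}|_u)$ jumps in dimension wherever $\mathcal{F}$ is not locally free, and $\operatorname{Ext}^2$-vanishing does not help with this. These loci must be handled as zero loci of regular cosections in $\mathbb{P}(\mathcal{V})$ and $\mathbb{P}(\mathcal{W}^\vee\otimes\omega_S)$, with a modified cosection on the two-step space.

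\textbf{The $EE$ relation.} The same imprecision affects your $EE$ argument in milder form. The $[\Delta]$-terms of $\widetilde{\zeta}^{\text{coh}}_\pm$ do not come from Euler classes of virtual normal bundles. They come from combining two ingredients:
\begin{enumerate}[i)]
\item the formula for $[E_{(d_1,\ldots,d_l)},E_k]$, obtained by the same telescoping on $\mathfrak{Y}_\pm$ and $\mathfrak{Y}_{-+}$;
\item the excess identity $E_dE_{d'}|_\Delta=E_{(d+1,d')}-E_{(d,d'+1)}-tE_{(d,d')}$.
\end{enumerate}
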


The cohomological rational functions $\zeta^\text{coh}(x)$ and $\widetilde{\zeta}^\text{coh}_\pm$ are defined in \eqref{equation_zeta_cohomology} and \eqref{equation_zeta_tilde_cohomology}. To ensure clarity, the color of each operator indicates the corresponding factor of $\textcolor{red}{S} \times \textcolor{blue}{S}$ in which it operates, as in \eqref{equation_composition_geometric_operators_1} and \eqref{equation_composition_geometric_operators_2}. The bracket notation is explained below Definition \ref{definition_Yangian}.

The proofs of Theorems \ref{theorem_relations_geometric_operators_stable} and \ref{theorem_action_Yangian_stable} were sketched in \cite{negut_rationaltrigonometricellipticalgebras} using $K$-theoretic and excision arguments. In this paper, we fill in the details and provide a direct intersection-theoretic proof of the key commutator relation \eqref{equation_relation_cohomology_E_F_intro}. This stands in contrast to the excision trick employed in \textit{loc. cit.} and recovers the Yangian action natively within singular cohomology without passing through the Chern character isomorphism.

\begin{theorem}%[\cite{negut_rationaltrigonometricellipticalgebras}, Theorem 1.2]
\label{theorem_action_Yangian_stable}
    Let $S$ be a smooth projective surface over $\mathbb{C}$ and fix an ample divisor $H$ as well as a pair $(r,c_1) \in \mathbb{N} \times H^2(S,\mathbb{Z})$. Assume $S$ satisfies Assumptions A and S. Then, there is an action $$Y_{t_1,t_2}( \widehat{\mathfrak{gl}}_1 ) \curvearrowright H_{\mathcal{M}}$$ in the sense of Definition \ref{definition_action_Yangian}.
\end{theorem}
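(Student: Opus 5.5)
Theorem \ref{theorem_action_Yangian_stable} will be deduced from Theorem \ref{theorem_relations_geometric_operators_stable}. The plan is to match the geometric series $E(z)$, $F(z)$, $H(z)$ with the generating series of $Y_{t_1,t_2}(\widehat{\mathfrak{gl}}_1)$ from Definition \ref{definition_Yangian}, and then to check that the relations required by Definition \ref{definition_action_Yangian} are exactly \eqref{equation_relation_cohomology_E_E_intro}--\eqref{equation_relation_cohomology_E_F_intro}.

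First, I would fix the specialization of the equivariant parameters. Let $t_1, t_2$ be the Chern roots of $\Omega_S$ pulled back along the diagonal, so that $t_1+t_2 = t = c_1(\omega_S)$ and $t_1t_2 = c_2(\Omega_S)$. Since the operators land in $H_{\mathcal{M}\times S}$, an ``action'' in the sense of Definition \ref{definition_action_Yangian} should mean operators $H_{\mathcal{M}} \to H_{\mathcal{M}\times S^n}$ whose compositions and diagonal pushforwards satisfy the relations. The rational functions $\zeta^{\text{coh}}$ and $\widetilde{\zeta}^{\text{coh}}_\pm$ should then specialize to the structure functions $\frac{(x+t_1)(x+t_2)}{x(x+t)}$-type expressions in the Yangian. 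I would send each generator $e_d, f_d, h_d$ to the operators $E_d, F_d, H_d$ defined in \eqref{equation_operator_E}, \eqref{equation_operator_F} and \eqref{equation_power_series_H}. The sign $(-1)^{r-1}$ and the factor $\frac{1}{t}$ in \eqref{equation_relation_cohomology_E_F_intro} are absorbed by rescaling $F$ (or $H$), so that the $[e,f]$ relation takes the normalized form $\frac{h(z)-h(w)}{z-w}$ times the class of the diagonal.

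Second, I would extract coefficients. Taking the $z^{-a-1}w^{-b-1}$ coefficients of \eqref{equation_relation_cohomology_E_E_intro} and \eqref{equation_relation_cohomology_F_F_intro} should give the cubic-type relations $[e_{a+3},e_b]-3[e_{a+2},e_{b+1}]+\cdots = \ldots$ between the modes. Here the truncation $[\cdot]_{z^{<0},w^{<0}}$ matches the bracket convention explained after Definition \ref{definition_Yangian}. Relations \eqref{equation_relation_cohomology_H_E_intro}--\eqref{equation_relation_cohomology_H_H_intro} give the $h$--$e$, $h$--$f$ and $h$--$h$ relations. Relation \eqref{equation_relation_cohomology_E_F_intro} gives $[e_a,f_b]=h_{a+b}$ after this normalization.

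Third, if Definition \ref{definition_Yangian} includes Serre-type relations, for instance $\operatorname{Sym}[e_0,[e_1,e_2]]=0$, I would derive them from the quadratic relations. One route uses the shuffle-algebra realization, in which the quadratic relations together with the wheel conditions suffice. The alternative is to verify them geometrically via triple nested moduli spaces. I expect this step to be the main obstacle, since the quadratic relations alone need not imply Serre relations on an arbitrary module. I would first try to show that the composition $E(z_1)E(z_2)E(z_3)$ is pushed forward from $\mathfrak{Z}$-type spaces whose classes are symmetric rational functions with the required wheel vanishing. Failing that, I would check whether Definition \ref{definition_action_Yangian} only requires the relations listed in Theorem \ref{theorem_relations_geometric_operators_stable}, in which case the theorem follows immediately from the coefficient comparison.
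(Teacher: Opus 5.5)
Your route is the paper's: its proof of Theorem \ref{theorem_action_Yangian_stable} is exactly a direct comparison of Theorem \ref{theorem_relations_geometric_operators_stable} with \eqref{equation_relation_Yangian_e_e}--\eqref{equation_relation_Yangian_e_f}. However, the one place where that comparison has real content, the normalization of the $e$--$f$ relation, is wrong as you describe it.

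\textbf{The $e$--$f$ normalization.} The target is not $[e_a,f_b]=h_{a+b}$. It is \eqref{equation_relation_Yangian_e_f_operators}, $[e_a,f_b]=-\frac{t_1t_2}{t}h_{a+b}$. Condition iv) of Definition \ref{definition_action_Yangian} compares the \emph{reduced} commutator with $\Phi([x,y]/(t_1t_2))$. So the factor $t_1t_2$ is accounted for by stripping off $(\operatorname{id}_{\mathcal{M}}\times\Delta)_*$. This is the same dictionary $[\Delta]\leftrightarrow t_1t_2$ you already use to compare $\zeta^{\text{coh}}$ with $\zeta^{\mathbb{C}}$. What must hold is therefore $[\Phi(e_a),\Phi(f_b)]_{\mathrm{red}}=-\Phi(h_{a+b})/t$. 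Proposition \ref{proposition_commutator_cohomology_E_F_operators_stable} gives $[E_a,F_b]_{\mathrm{red}}=(-1)^rH_{a+b}/t$ to compare it with.

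The $1/t$ sits on both sides and must be kept; it cannot be absorbed by rescaling. Since $t=c_1(\omega_S)$ is nilpotent, you cannot divide $F$ by it. Replacing $F$ by $tF$ instead makes iv) fail by a factor of $t$.

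Only the sign needs fixing, for instance by $\Phi(f_d)=(-1)^{r-1}F_d$. This is harmless for the $f$--$f$ and $h$--$f$ relations, which are quadratic and linear in $f$ respectively. It is also a point the paper's one-line comparison leaves implicit. Rescaling $H$ instead is not possible, because $H(z)=1+\cdots$ enters the unit axiom and the $h$--$e$, $h$--$f$ relations multiplicatively.

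\textbf{The $e$--$e$ comparison.} The same mechanism makes this comparison work. After passing to reduced commutators, the $(\textcolor{red}{t}-\textcolor{blue}{t})$ terms vanish by the projection formula. Writing $t_1^2+t_1t_2+t_2^2=t^2-t_1t_2$, the $-t^2$ part matches the $-\textcolor{red}{t}\textcolor{blue}{t}$ term. The $+t_1t_2$ part matches the terms $E_{a+1}E_b|_\Delta-E_bE_{a+1}|_\Delta-\cdots$ in Proposition \ref{proposition_commutator_cohomology_E_quadratic}.

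\textbf{Your third step.} It targets a non-issue. Definition \ref{definition_Yangian} contains no Serre relations; they appear only in Remark \ref{remark_Drinfeld-Serre} as an optional convention. Even there, the paper derives them geometrically from Proposition \ref{proposition_commutator_cohomology_E}, not via shuffle algebras or wheel conditions.

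What the comparison does tacitly use, beyond the six relations, is that $\Phi$ extends to the divided commutators adjoined in Remark \ref{remark_adjustments_Yangian_geometry} and satisfies iv) for them. This is where you need the diagonal support of commutators of the multi-step operators $E_{(d_1,\ldots,d_l)}$, from Proposition \ref{proposition_commutator_cohomology_E} and its analogue for $F$. That is the point that deserved your attention, not the Serre relations.
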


\begin{remark}
\label{remark_Chow}
    We adopt the non-standard notation $H_X$ to denote the singular cohomology of a scheme $X$ to emphasize that our results hold for other (co)homology theories, such as Borel--Moore homology or Chow rings. Indeed, the only properties we use are pull-back maps for local complete intersection morphisms, pushforward maps for proper morphisms, and Chern classes, all of which exist in these theories. The intersection-theoretic techniques employed here, namely base change, the projection formula, Mayer--Vietoris, excision, and the excess intersection formula, are natively established in the Chow ring setting (see \cite{fulton_intersection_theory}).
\end{remark}

\subsection{Strategy}
We now outline the proof of Theorem \ref{theorem_relations_geometric_operators_stable}. To verify the relations \eqref{equation_relation_cohomology_E_E_intro} and \eqref{equation_relation_cohomology_F_F_intro}, we adapt the derived-categorical approach developed in \cite{negut_hecke_correspondences}. The identities \eqref{equation_relation_cohomology_H_E_intro} and \eqref{equation_relation_cohomology_H_F_intro} are handled by modifying the $K$-theoretic arguments from \cite{negut_shuffle_surfaces}. Relation \eqref{equation_relation_cohomology_H_H_intro} follows immediately from the definition of the power series $H(z)$, as cup product operators naturally commute. While translating these arguments to the cohomological setting requires careful adaptation, the core technical challenge lies in proving the commutator relation \eqref{equation_relation_cohomology_E_F_intro}.

Rather than working directly with the generating series $E(z)$ and $F(w)$, we compute the commutator at the level of the individual operators $E_a$ and $F_b$, building upon results and strategies introduced by Zhao in \cite{zhao_commutator_moduli_stable} and \cite{zhao_commutator_Hilbert_schemes}. We first show that the composition $E_aF_b$ is realized as a correspondence on the nested moduli space $\mathfrak{Z}^\uparrow$ which is irreducible. Conversely, the reverse composition $F_bE_a$ is governed by a correspondence on $\mathfrak{Z}^\downarrow$. Crucially, the space $\mathfrak{Z}^\downarrow$ fails to be irreducible; in fact, it is not even equidimensional for stable sheaves of rank $r > 1$ (see Claim \ref{claim_irreducibility_nested_stable}). 

This failure presents the primary geometric hurdle of our construction. We overcome this via an application of the excess intersection formula, supported by Mayer--Vietoris, excision, and Grothendieck--Verdier duality (see Lemma \ref{lemma_base_change_stable}). By introducing the quadruple moduli space $\mathfrak{Y}^{\mathrm{rot}}$, we are able to split the commutator $\bigl[E_a,F_b\bigr]$ into two distinct contributions (see diagrams \eqref{equation_proof_E_F_4_stable} and \eqref{equation_proof_E_F_5_stable}). 

The subsequent simplification of these contributions relies on the formalism of Chern and Segre classes, exploiting the realization of various nested moduli spaces as projectivizations of two-term complexes. Finally, we deploy formal residue calculus to simplify the resulting expression.

\subsection{Organization}
The paper is organized as follows. Section \ref{chapter_moduli_spaces_operators} collects the geometric preliminaries: after establishing our conventions for correspondences, Chern polynomials, and projective bundles, we introduce the moduli space of stable sheaves $\mathcal{M}$ and the nested and quadruple moduli spaces that underpin our operators. We also record the dimension estimates and define the multi-step generalizations $E_{(d_1,\ldots,d_l)}$ and $F_{(d_1,\ldots,d_l)}$ needed in the sequel.

With the geometric setup in place, Section \ref{chapter_action_Yangian} recalls the definition of the affine Yangian $Y_{t_1,t_2}(\widehat{\mathfrak{gl}}_1)$ and establishes the commutation relations satisfied by the geometric operators $E_d$, $F_d$, and $H_d$. These relations are shown to be the cohomological analogues of the defining relations of the Yangian, culminating in the proofs of Theorems \ref{theorem_relations_geometric_operators_stable} and \ref{theorem_action_Yangian_stable}.

The technical core of the paper is contained in Section \ref{section_commutator_E_F}, which is devoted to the intersection-theoretic computation of the commutator $\bigl[E(z),F(w)\bigr]$.

\subsection{Outlook} 
We conclude this introduction by outlining two avenues for future research that extend the framework developed in this paper. One direction involves translating our intersection-theoretic framework from the moduli space of stable sheaves to the setting of Nakajima quiver varieties. These varieties are classically constructed as moduli spaces of stable representations of preprojective algebras and come equipped with Hecke correspondences that relate representations of different dimension vectors. In \cite{zhao_quiver_varieties}, Zhao analyzes the birational geometry of nested quiver varieties, proving that the blow-up of the diagonal of a fiber product of Hecke correspondences is isomorphic to a smooth quadruple moduli space. This result reveals a geometric structure fundamentally analogous to the space $\mathfrak{Y}^{\mathrm{rot}}$ utilized in our construction. Consequently, one could attempt to reconstruct the affine Yangian action on the singular cohomology (or Chow ring) of quiver varieties.

A second direction lies in connecting our framework with the $\mathcal{W}_{1+\infty}$-algebra structures explored by Li, Qin, and Wang in \cite{li_qin_wang_W_algebra}. In their work, the authors express certain Chern character operators (which are proved to be the zero-modes of vertex operators) in terms of Nakajima's Heisenberg operators, subsequently establishing their commutation relations. Given the structural parallels between $\mathcal{W}_{1+\infty}$ and the affine Yangian $Y_{t_1,t_2}(\widehat{\mathfrak{gl}}_1)$, it would be instructive to establish an explicit dictionary between their $\mathcal{W}$-algebra generators and the operators $E_d$, $F_d$, and $H_d$ defined in this paper. More fundamentally, realizing this dictionary raises the question of whether the intersection-theoretic machinery developed here can be deployed to recover the $\mathcal{W}$-algebra commutation relations geometrically, thereby bypassing the need for combinatorial vertex algebra techniques.

\subsection*{Acknowledgements}
This work was carried out as part of my Master's thesis at EPFL, under the supervision of Professor Andrei Neguţ. I express my deepest gratitude to him for his guidance and support, and to Archi Kaushik for many insightful mathematical discussions. Financial support from the EPFL Student Support Program is gratefully acknowledged.

\section{Geometric preliminaries}
\label{chapter_moduli_spaces_operators}

\subsection{Conventions}
We adopt the language of correspondences, following \cite{negut_rationaltrigonometricellipticalgebras}. A \textit{correspondence} between smooth projective varieties $X$ and $Y$ is a cohomology class $\Gamma \in H_{X \times Y}$ that induces an operator
\[
\Phi_{\Gamma}: H_Y \xrightarrow{p_Y^*} H_{X \times Y} \xrightarrow{\cdot \Gamma} H_{X \times Y} \xrightarrow{p_{X*}} H_X,
\]
where $p_X: X \times Y \rightarrow X$ and $p_Y: X \times Y \rightarrow Y$ are the projections and $\cdot$ denotes cup product. Such a correspondence is often represented by the diagram
\[
\begin{tikzcd}[row sep=small]
	& \Gamma \\
	& {X \times Y} \\
	X && Y,
	\arrow[dashed, no head, from=1-2, to=2-2]
	\arrow["{p_X}"', from=2-2, to=3-1]
	\arrow["{p_Y}", from=2-2, to=3-3]
\end{tikzcd}
\]
which encodes the operation of pulling back from $Y$, cupping with $\Gamma$, and pushing forward to $X$.

Correspondences admit a natural composition: for varieties $X,Y,Z$ and classes $\Gamma \in H_{X \times Y}$, $\Gamma' \in H_{Y \times Z}$, the composition $\Gamma \circ \Gamma'$ is defined by
\[
p_{XZ*}\left( p_{XY}^*(\Gamma) \cdot p_{YZ}^*(\Gamma') \right),
\]
where $p_{XY}$, $p_{XZ}$, and $p_{YZ}$ are the standard projections.

\begin{proposition}
    Given smooth projective varieties $X,Y,Z$ and cohomology classes $\Gamma \in H_{X \times Y}$, $\Gamma' \in H_{Y \times Z}$, we have
    \[
    \Phi_\Gamma \circ \Phi_{\Gamma'} = \Phi_{\Gamma \circ \Gamma'}.
    \]
    Moreover, $\Phi_{[\Delta]} = \operatorname{id}_{H_X}$, where $\Delta \subset X \times X$ is the diagonal.
\end{proposition}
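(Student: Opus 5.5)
The plan is to reduce both statements to base change and the projection formula on the triple product $X \times Y \times Z$. Write $\pi_{XY}, \pi_{YZ}, \pi_{XZ}$ for the projections from $X \times Y \times Z$, and $\pi_X$, etc.\ for the further projections to single factors. Fix $\alpha \in H_Z$.

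By definition,
\[
\Phi_{\Gamma'}(\alpha) = p_{Y*}\bigl(\Gamma' \cdot p_Z^*\alpha\bigr).
\]
Consider the cartesian square formed by $\pi_{YZ}: X \times Y \times Z \to Y \times Z$ and $p_{XY}: X \times Y \to Y$ over the projection $Y \times Z \to Y$. All maps here are projections of products of smooth projective varieties, hence proper and smooth, so base change holds:
\[
p_Y^{*}\, p_{Y*}(-) = \pi_{XY*}\, \pi_{YZ}^*(-)
\]
as maps $H_{Y \times Z} \to H_{X \times Y}$.

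With this, I would compute
\[
\Phi_\Gamma\Phi_{\Gamma'}(\alpha) = p_{X*}\bigl(\Gamma \cdot \pi_{XY*}\pi_{YZ}^*(\Gamma' \cdot p_Z^*\alpha)\bigr).
\]
Applying the projection formula for $\pi_{XY}$ turns this into
\[
p_{X*}\pi_{XY*}\bigl(\pi_{XY}^*\Gamma \cdot \pi_{YZ}^*\Gamma' \cdot \pi_Z^*\alpha\bigr).
\]
By functoriality of pushforward, $p_{X*}\pi_{XY*} = \pi_{X*} = p_{X*}\pi_{XZ*}$, where the last $p_X$ is the projection $X \times Z \to X$. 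Since $\pi_Z^*\alpha = \pi_{XZ}^* p_Z^*\alpha$, a second use of the projection formula, now for $\pi_{XZ}$, gives
\[
p_{X*}\bigl(\pi_{XZ*}(\pi_{XY}^*\Gamma \cdot \pi_{YZ}^*\Gamma') \cdot p_Z^*\alpha\bigr) = \Phi_{\Gamma\circ\Gamma'}(\alpha).
\]

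For the identity statement, let $\delta: X \to X \times X$ be the diagonal embedding, so that $[\Delta] = \delta_*1$. By the projection formula,
\[
p_1{}_*\bigl(\delta_*1 \cdot p_2^*\alpha\bigr) = p_1{}_*\delta_*\bigl(\delta^* p_2^*\alpha\bigr) = (p_1\delta)_*(p_2\delta)^*\alpha = \alpha,
\]
since $p_1\delta = p_2\delta = \operatorname{id}_X$.

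The main point requiring care is the base change identity for the non-finite projection. In singular cohomology of compact manifolds this follows from the Künneth formula, or from interpreting $p_*$ via Poincaré duality and fiber integration. For Chow groups it is Fulton's flat pullback commuting with proper pushforward, which applies because the projections are flat. Some care with signs is needed when commuting odd-degree classes in the projection formula. I would fix the convention that pullback classes are multiplied on the right, as in the definition of $\Phi_\Gamma$, so that no signs arise.
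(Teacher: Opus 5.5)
The paper gives no proof of this proposition; it records it as a standard convention following Neguţ. Your argument is the standard one and is correct: base change for the cartesian square over $Y$, then the projection formula for $\pi_{XY}$ and $\pi_{XZ}$, and $\delta_*1\cdot p_2^*\alpha=\delta_*\delta^*p_2^*\alpha$ for the diagonal. With the paper's integer coefficients, justify base change via fiber integration or Gysin maps rather than Künneth, since the Künneth sequence over $\mathbb{Z}$ has Tor terms.

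One correction concerns where the signs come from. The projection formula never produces a sign here. Pushforward along a morphism of complex varieties shifts degree by an even amount, so $f_*(f^*a\cdot b)=a\cdot f_*b$ and $f_*(b\cdot f^*a)=f_*b\cdot a$ both hold. What matters is the side on which $\Gamma$ is multiplied in $\Phi_\Gamma$.

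Your convention $\Phi_\Gamma(\beta)=p_{X*}(\Gamma\cdot p_Y^*\beta)$ makes the statement exact for the paper's composition $p_{XZ*}(p_{XY}^*\Gamma\cdot p_{YZ}^*\Gamma')$. It is not, however, ``as in the definition''. Read literally, the paper's $\xrightarrow{\cdot\Gamma}$ gives $p_{X*}(p_Y^*\beta\cdot\Gamma)$. Under that reading your computation yields $\pi_{X*}(\pi_Z^*\alpha\cdot\pi_{YZ}^*\Gamma'\cdot\pi_{XY}^*\Gamma)$ on one side and $\pi_{X*}(\pi_Z^*\alpha\cdot\pi_{XY}^*\Gamma\cdot\pi_{YZ}^*\Gamma')$ on the other. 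Hence $\Phi_\Gamma\circ\Phi_{\Gamma'}=(-1)^{\deg\Gamma\,\deg\Gamma'}\,\Phi_{\Gamma\circ\Gamma'}$.

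This sign is harmless for the even-degree algebraic correspondences the paper actually uses. Still, your choice of convention is what makes the proposition literally true for all classes, so it is not merely cosmetic.
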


Although our results are stated as equalities of operators, they actually hold at the level of correspondences. This distinction is non-trivial: the map from correspondences to operators is injective only for cohomology theories satisfying the Künneth decomposition $H_{X \times Y} \cong H_X \otimes H_Y$ and having a non-degenerate intersection pairing. The latter fails for Chow rings and holds for singular cohomology only after tensoring with $\mathbb{Q}$. Thus, establishing an identity of correspondences is generally stronger than proving the corresponding operator equality.

We next fix our conventions for Chern polynomials, again following \cite{negut_rationaltrigonometricellipticalgebras}. For a locally free sheaf $\mathcal{V}$ of rank $v$ on a scheme $X$, let $c_i(\mathcal{V}) \in H_X$ denote its \textit{Chern classes}. We assemble them into the \textit{Chern polynomial}
\[
c(\mathcal{V},z) \define \sum_{i=0}^v z^{v-i}(-1)^i c_i(\mathcal{V}) \in H_X[z].
\]
Because Chern classes depend only on the $K$-theory class of $\mathcal{V}$, the same is true of $c(\mathcal{V},z)$. Consequently, we may define Chern polynomials for sheaves of homological dimension $1$: given a short exact sequence of coherent sheaves
\[
0 \rightarrow \mathcal{W} \rightarrow \mathcal{V} \rightarrow \mathcal{E} \rightarrow 0
\]
with $\mathcal{V}$ and $\mathcal{W}$ locally free of ranks $v$ and $w$, we set
\[
c(\mathcal{E},z) \define \frac{c(\mathcal{V},z)}{c(\mathcal{W},z)}.
\]
This expression is independent of the chosen resolution, so we obtain well-defined Chern classes for $\mathcal{E}$ by expanding
\[
c(\mathcal{E},z) = \sum_{i=0}^\infty z^{v-w-i}(-1)^i c_i(\mathcal{E}) \in H_X((z^{-1})),
\]
where $H_X((z^{-1}))$ denotes formal Laurent series in $z^{-1}$ with coefficients in $H_X$.

We will frequently encounter projective bundles. For a locally free sheaf $\mathcal{V}$ on a scheme $X$, its \textit{projectivization} is
\[
\mathbb{P}_X(\mathcal{V}) \define \mathbf{Proj}_X\bigl(\operatorname{Sym}^\bullet(\mathcal{V})\bigr),
\]
with structure morphism $\rho: \mathbb{P}_X(\mathcal{V}) \to X$ (see Section II.7 of \cite{hartshorne_algebraic_geometry} for details). For any $f: T \to X$, there is a bijection
\[
\operatorname{Hom}_{\mathbf{Sch}_X}\bigl(T, \mathbb{P}_X(\mathcal{V})\bigr) \;\longleftrightarrow\; 
\Big\{ (\mathcal{L}, \tau) \;\Big|\; \mathcal{L} \text{ a line bundle on } T,\; 
\tau: f^*\mathcal{V} \twoheadrightarrow \mathcal{L} \text{ a surjection} \Big\}.
\]
In particular, there is a tautological surjection $\rho^*\mathcal{V} \twoheadrightarrow \mathcal{O}(1)$, inducing $\mathcal{O}(-1) \hookrightarrow \rho^*\mathcal{V}^\vee$, and closed points of $\mathbb{P}_X(\mathcal{V})$ parametrize one-dimensional quotients of the fibers of $\mathcal{V}$.

Given a map $\mathcal{W} \to \mathcal{V}$ of locally free sheaves on $X$, we consider closed embeddings
\begin{equation}
    \label{equation_definition_closed_embedding}
    Z(\sigma) \xhookrightarrow{i} \mathbb{P}_X(\mathcal{V}),
\end{equation}
where $i$ is cut out by the section $\sigma: \rho^*\mathcal{W} \rightarrow \rho^*\mathcal{V} \overset{\text{taut}}{\twoheadrightarrow} \mathcal{O}(1).$

Pushforwards of powers of $c_1(\mathcal{O}(1))$ along projective bundle morphisms are naturally computed using Segre classes. We follow \cite{fulton_intersection_theory}, noting that Fulton's dual definition of the projective bundle may introduce extra signs.

\begin{definition}[\cite{fulton_intersection_theory}, Chapter 4]
    Let $X$ be a smooth projective variety and $\mathcal{V}$ a locally free sheaf of rank $v$ on $X$. The $i$-\textit{th Segre class} of $\mathcal{V}$ is
    \[
    s_i(\mathcal{V}) \define (-1)^i \rho_* \left( c_1(\mathcal{O}(1))^{v-1+i} \right) \in A^i(X),
    \]
    where $A(X)$ is the Chow ring of $X$ and $\rho: \mathbb{P}_X(\mathcal{V}) \to X$ is the structure morphism. The \textit{total Segre class} is
    \[
    s(\mathcal{V}) \define \sum_{i=0}^\infty s_i(\mathcal{V}) \in A(X).
    \]
\end{definition}

Denote the \textit{total Chern class} of a locally free sheaf $\mathcal{V}$ on a smooth projective variety $X$ by $c(\mathcal{V})$. The total Segre and Chern classes are mutual inverses in the Chow ring.

\begin{proposition}[\cite{eisenbud_harris_2016}, Proposition 10.3]
    For a locally free sheaf $\mathcal{V}$ on a smooth projective variety $X$, we have
    \[
    s(\mathcal{V}) \, c(\mathcal{V}) = 1 \in A(X).
    \]
\end{proposition}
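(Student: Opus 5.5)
The approach is to reduce to the splitting principle and then compute the pushforward of powers of the hyperplane class with the projective bundle relation. Since both $s(\mathcal{V})$ and $c(\mathcal{V})$ are natural under pullback, and flat pullback along a splitting map $f\colon X'\to X$ is injective on $A(X)$, it suffices to prove the identity after pulling back along such a map. This uses the compatibility $f^*\rho_*=\rho'_*f'^*$ for the fibre square of projective bundles, which holds because $\rho$ is flat. A more direct route avoids splitting altogether: on $P=\mathbb{P}_X(\mathcal{V})$ with $\zeta=c_1(\mathcal{O}(1))$, the tautological surjection $\rho^*\mathcal{V}\twoheadrightarrow\mathcal{O}(1)$ has a rank $v-1$ kernel $\mathcal{K}$. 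Hence
\[
c_v\bigl(\rho^*\mathcal{V}\otimes\mathcal{O}(-1)\bigr)=0 ,
\]
because this bundle surjects onto $\mathcal{O}$, so its dual has a nowhere vanishing section.

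Expanding this relation in the quotient convention gives
\[
\sum_{j=0}^{v} (-1)^{j}\, c_j(\mathcal{V})\,\zeta^{v-j}=0 ,
\]
with signs depending on the convention. Multiplying by $\zeta^{i-1}$ and applying $\rho_*$, the projection formula yields a recursion linking $\rho_*(\zeta^{v-1+i})$ to the lower pushforwards. This needs the normalisations $\rho_*(\zeta^{k})=0$ for $k<v-1$ and $\rho_*(\zeta^{v-1})=1$. The first holds for dimension reasons, and the second because $\zeta$ restricts to the hyperplane class on each fibre $\mathbb{P}^{v-1}$.

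Substituting $s_i=(-1)^i\rho_*(\zeta^{v-1+i})$ turns the recursion into
\[
\sum_{j=0}^{i} c_j(\mathcal{V})\,s_{i-j}(\mathcal{V})=0 \quad (i>0),
\qquad s_0=1 .
\]
This is exactly the statement $s(\mathcal{V})\,c(\mathcal{V})=1$.

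The main obstacle is the sign bookkeeping. The paper's $\mathbb{P}$ parametrises quotients, whereas Fulton's parametrises lines. The factor $(-1)^i$ in the definition must therefore cancel the alternating signs coming from the relation, so I would fix signs by checking the case $\mathcal{V}=\mathcal{O}\oplus\mathcal{L}$ by hand before carrying out the general recursion.
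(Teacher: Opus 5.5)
Your proposal is correct and is essentially the standard argument behind the cited Eisenbud--Harris result, which the paper invokes without reproducing: the relation $c_v(\rho^*\mathcal{V}\otimes\mathcal{O}(-1))=0$, multiplied by $\zeta^{i-1}$ and pushed forward using $\rho_*(\zeta^{k})=0$ for $k<v-1$ and $\rho_*(\zeta^{v-1})=1$, gives exactly the recursion $\sum_{j=0}^{i}c_j(\mathcal{V})s_{i-j}(\mathcal{V})=0$. The sign check you plan is not needed: expanding $c_v(\rho^*\mathcal{V}\otimes\mathcal{O}(-1))=\sum_j c_j(\mathcal{V})(-\zeta)^{v-j}$ already fixes the relation $\sum_j(-1)^jc_j(\mathcal{V})\zeta^{v-j}=0$ in the paper's quotient convention, and the factor $(-1)^{i-j}$ in $s_{i-j}$ combines with $(-1)^j$ into an overall $(-1)^i$.
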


This identity has two immediate consequences. First, using $c_i(\mathcal{V}^\vee) = (-1)^i c_i(\mathcal{V})$, we obtain $s_i(\mathcal{V}^\vee) = (-1)^i s_i(\mathcal{V}).$ Second, for any short exact sequence of locally free sheaves
\[
0 \rightarrow \mathcal{W} \rightarrow \mathcal{V} \rightarrow \mathcal{E} \rightarrow 0,
\]
Whitney's formula gives $c(\mathcal{V}) = c(\mathcal{W})c(\mathcal{E})$; taking inverses yields $s(\mathcal{V}) = s(\mathcal{W})s(\mathcal{E}).$

\subsection{The moduli space of stable sheaves}
\label{subsection_moduli_stable}

We now introduce the moduli space of stable sheaves. After recalling the notion of stability for coherent sheaves, we state the representability theorem that constructs $\mathcal{M}$ as a projective scheme. Our exposition follows \cite{negut_shuffle_surfaces, negut_hecke_correspondences}; for a comprehensive treatment, we refer the reader to \cite{huybrechts_lehn_moduli}.

Let $S$ be a smooth projective surface over $\mathbb{C}$ and fix an ample divisor $H$. The \textit{Hilbert polynomial} of a coherent sheaf $\mathcal{F}$ on $S$ is given by 
$$P_\mathcal{F}(m) \define \chi(\mathcal{F} \otimes \mathcal{O}_S(mH)).$$
An application of the Hirzebruch--Riemann--Roch theorem yields 
$$P_\mathcal{F}(m) = \frac{rH \cdot H}{2}m^2 + \left( c_1 \cdot H - \frac{rH \cdot \omega_S}{2} \right)m + \left( \frac{c_1 \cdot c_1}{2}-c_2-\frac{c_1 \cdot \omega_S}{2}+r\chi(\mathcal{O}_S) \right),$$ 
where $r$, $c_1$, and $c_2$ denote the rank, first, and second Chern classes of $\mathcal{F}$, respectively. Additionally, $\omega_S$ represents the canonical bundle of $S$ (or the corresponding divisor). We define the \textit{reduced Hilbert polynomial} as 
$$p_\mathcal{F}(m) \define \frac{P_\mathcal{F}(m)}{r}$$ 
and note that it takes the form 
\begin{equation}
    \label{equation_reduced_Hilbert_polynomial}
    \frac{c_1 \cdot H}{r}m + \frac{1}{r} \left( \frac{c_1 \cdot c_1}{2}-c_2-\frac{c_1 \cdot \omega_S}{2} \right) + \operatorname{polynomial}(m),
\end{equation} 
where $\operatorname{polynomial}(m)$ contains terms independent of $r$, $c_1$, and $c_2$. 

For two polynomials $P(m)$ and $Q(m)$, we write $P(m) \geq Q(m)$ whenever this inequality holds for all sufficiently large $m$. Equivalently, this means that the coefficients of $P$ dominate those of $Q$ in lexicographic order.

\begin{definition}
    \label{definition_semistable_sheaf}
    A torsion-free coherent sheaf $\mathcal{F}$ on $S$ is called \textit{semistable} if 
    \begin{equation}
        \label{equation_definition_semistable_sheaf}
        p_\mathcal{F}(m) \geq p_\mathcal{G}(m)
    \end{equation} 
    holds for all proper subsheaves $\mathcal{G} \subset \mathcal{F}$.

    If the inequality in \eqref{equation_definition_semistable_sheaf} is strict for all proper $\mathcal{G} \subset \mathcal{F}$, we call $\mathcal{F}$ \textit{stable}.
\end{definition}

Before constructing the moduli space, we record some elementary consequences of the stability conditions. Henceforth, we impose Assumptions A and S on $S$ with respect to a pair $(r,c_1) \in \mathbb{N} \times H^2(S,\mathbb{Z})$.

Since the difference of reduced Hilbert polynomials $p_\mathcal{F}(m)-p_\mathcal{G}(m)$ is linear in $m$, a strict inequality in \eqref{equation_definition_semistable_sheaf} is equivalent to either
\begin{equation*}
    \frac{c_1 \cdot H}{r} > \frac{c_1' \cdot H}{r'}
\end{equation*}
or
$$\frac{c_1 \cdot H}{r} = \frac{c_1' \cdot H}{r'} \qquad \text{and} \qquad \frac{1}{r} \left( \frac{c_1 \cdot c_1}{2}-c_2-\frac{c_1 \cdot \omega_S}{2} \right) > \frac{1}{r'} \left( \frac{c_1' \cdot c_1'}{2}-c_2'-\frac{c_1' \cdot \omega_S}{2} \right),$$ 
where $r$, $c_1$, and $c_2$ denote the invariants of $\mathcal{F}$, and $r'$, $c_1'$, $c_2'$ those of the subsheaf $\mathcal{G}$. These observations yield the following result.

\begin{proposition}[\cite{negut_shuffle_surfaces}, Section 5]
    \label{proposition_semistable_stable}
    Under Assumption A of \eqref{equation_assumption_A}, a sheaf $\mathcal{F}$ is stable if and only if it is semistable.
\end{proposition}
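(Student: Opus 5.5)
Stability trivially implies semistability, so the content is the converse: a semistable $\mathcal{F}$ with invariants $(r,c_1,c_2)$ admits no proper subsheaf $\mathcal{G}$ with $p_\mathcal{G}(m) = p_\mathcal{F}(m)$. I would argue by contradiction, using the explicit form \eqref{equation_reduced_Hilbert_polynomial} of the reduced Hilbert polynomial.

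Suppose $\mathcal{G} \subset \mathcal{F}$ is a proper nonzero subsheaf with $p_\mathcal{G} = p_\mathcal{F}$, and let $\mathcal{G}$ have invariants $(r',c_1',c_2')$. First, $r' \ge 1$: $\mathcal{F}$ is torsion-free, so $\mathcal{G}$ is torsion-free, hence has positive rank. Next, if $r' = r$, then $\mathcal{F}/\mathcal{G}$ is a torsion sheaf. Its Hilbert polynomial $P_\mathcal{F} - P_\mathcal{G}$ then has degree at most $1$, with nonnegative leading coefficient, and it is nonzero because $\mathcal{G}$ is proper. This forces $p_\mathcal{G} < p_\mathcal{F}$, which is a contradiction. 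Hence $1 \le r' < r$.

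The key step is to compare the linear coefficients in \eqref{equation_reduced_Hilbert_polynomial}. Equality of the reduced Hilbert polynomials gives
\[
\frac{c_1 \cdot H}{r} = \frac{c_1' \cdot H}{r'}, \qquad\text{i.e.}\qquad r'\,(c_1 \cdot H) = r\,(c_1' \cdot H).
\]
Since $c_1$ and $c_1'$ are integral classes and $H$ is an integral divisor, both $c_1 \cdot H$ and $c_1' \cdot H$ are integers. Because $r$ divides $r'(c_1 \cdot H)$ and Assumption A gives $\gcd(r, c_1 \cdot H) = 1$, it follows that $r$ divides $r'$. This is impossible for $0 < r' < r$, so no such $\mathcal{G}$ exists and $\mathcal{F}$ is stable.

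The only genuinely delicate point is the case $r' = r$, that is, quotients supported in dimension $\le 1$. Here I would rely on the fact that a nonzero sheaf of dimension $\le 1$ has a nonzero Hilbert polynomial with positive leading coefficient. This follows from the ampleness of $H$: for a one-dimensional support the leading coefficient is the degree of the support against $H$, and for zero-dimensional support the polynomial is the positive constant given by the length. Everything else reduces to the integrality argument above.
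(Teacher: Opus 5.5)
Your proof is correct and follows the paper's route. The paper deduces the proposition from the fact that equal reduced Hilbert polynomials force equal slopes $\frac{c_1\cdot H}{r}=\frac{c_1'\cdot H}{r'}$, which $\gcd(r,c_1\cdot H)=1$ rules out for $0<r'<r$; your treatment of the equal-rank case (a nonzero torsion quotient has nonzero Hilbert polynomial with positive leading coefficient) fills in a step the paper leaves implicit.
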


In light of this equivalence, we henceforth focus on stable sheaves. We will frequently consider Hecke modifications: for two coherent sheaves $\mathcal{F}'$ and $\mathcal{F}$, the notation $\mathcal{F}' \subset_u \mathcal{F}$ indicates that $\mathcal{F}'$ is a subsheaf of $\mathcal{F}$ whose quotient is a skyscraper sheaf supported at the closed point $u \in S$. The following result is crucial for our discussion.

\begin{proposition}[\cite{negut_shuffle_surfaces}, Proposition 5.5]
    \label{proposition_Hecke_stable}
    Under Assumption A of \eqref{equation_assumption_A}, for any Hecke modification $\mathcal{F}' \subset_u \mathcal{F}$ the sheaf $\mathcal{F}$ is stable if and only if $\mathcal{F}'$ is stable.
\end{proposition}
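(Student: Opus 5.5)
The plan is to compare reduced Hilbert polynomials of subsheaves of $\mathcal{F}'$ and $\mathcal{F}$. The key input is that a Hecke modification changes only $c_2$, by one, while $r$ and $c_1$ are fixed. Since $\mathcal{F}/\mathcal{F}' \cong \mathbb{C}_u$ has length one, we get $r(\mathcal{F}') = r(\mathcal{F})$, $c_1(\mathcal{F}') = c_1(\mathcal{F})$ and $c_2(\mathcal{F}') = c_2(\mathcal{F}) + 1$. By \eqref{equation_reduced_Hilbert_polynomial}, $p_{\mathcal{F}'}$ and $p_{\mathcal{F}}$ therefore share the linear coefficient $\frac{c_1 \cdot H}{r}$ and differ only in the constant term. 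Torsion-freeness also transfers in both directions. A subsheaf of the torsion-free $\mathcal{F}$ is torsion-free. Conversely, if $\mathcal{F}'$ is torsion-free, then the torsion of $\mathcal{F}$ maps injectively to $\mathbb{C}_u$, and a nonzero such map would produce a zero-dimensional subsheaf. This last case needs a short check, using for instance that $\mathcal{F}$ is the extension defining the Hecke modification with nonsplit behaviour. I expect this to be harmless, since in the intended setting both sheaves are torsion-free by hypothesis.

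The heart of the argument is the following observation. By Assumption A, $\gcd(r, c_1 \cdot H) = 1$. Hence for any subsheaf $\mathcal{G}$ with $0 < r' < r$ we cannot have $\frac{c_1' \cdot H}{r'} = \frac{c_1 \cdot H}{r}$, since equality would force $r \mid r'(c_1 \cdot H)$ and so $r \mid r'$. So for every proper saturated subsheaf of smaller rank, stability (equivalently semistability, by Proposition~\ref{proposition_semistable_stable}) is decided purely by the slope $\mu = \frac{c_1 \cdot H}{r}$, with strict inequality either way. Subsheaves $\mathcal{G}$ of full rank $r$ have torsion quotient, so $c_1(\mathcal{G}) \cdot H \le c_1 \cdot H$. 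If equality holds, the quotient is zero-dimensional, and $p_{\mathcal{G}} < p_{\mathcal{F}}$ automatically because $c_2$ increases. Rank-$r$ subsheaves therefore never destabilize. Stability of either sheaf is thus equivalent to $\mu(\mathcal{G}) < \mu$ for all subsheaves $\mathcal{G}$ of rank $0 < r' < r$, i.e.\ to $\mu$-stability.

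It remains to match subsheaves of smaller rank between $\mathcal{F}'$ and $\mathcal{F}$. Given $\mathcal{G} \subset \mathcal{F}'$, it is also a subsheaf of $\mathcal{F}$ with the same slope. Given $\mathcal{G} \subset \mathcal{F}$, the intersection $\mathcal{G} \cap \mathcal{F}' = \ker(\mathcal{G} \to \mathbb{C}_u)$ differs from $\mathcal{G}$ by a sheaf of length at most one. It therefore has the same rank and $c_1$, hence the same slope. So the sets of slopes of subsheaves of rank strictly between $0$ and $r$ coincide for $\mathcal{F}$ and $\mathcal{F}'$, and with them the stability conditions. This is the equivalence asserted in Proposition~\ref{proposition_Hecke_stable}.

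The only step needing care is the reduction to slope stability. One must verify, first, that full-rank subsheaves never violate the inequality and, second, that the lexicographic comparison never reaches the constant term for ranks $0 < r' < r$. The latter is exactly where Assumption A enters; without it, the $c_2$ shift could flip an inequality between constant terms.
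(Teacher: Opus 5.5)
Your proposal is correct and is essentially the intended argument: the paper defers to the cited reference, and the slope-versus-constant-term dichotomy it records just before the statement is what you use. Coprimality makes stability equivalent to strict slope inequalities for subsheaves of rank $0<r'<r$, full-rank subsheaves never destabilize, and $\mathcal{G}\mapsto\mathcal{G}\cap\mathcal{F}'$ shows that $\mathcal{F}$ and $\mathcal{F}'$ have the same slopes of such subsheaves. The only caveat is the one you flagged: torsion-freeness of $\mathcal{F}$ does not follow from that of $\mathcal{F}'$ ($\mathcal{F}'\oplus\mathbb{C}_u$ is a counterexample; it holds exactly when the extension is nonsplit), so it has to be read as part of the hypothesis on the Hecke modification.
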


A key feature of stable sheaves, which facilitates the construction of their moduli space, is their rigidity.

\begin{proposition}[\cite{huybrechts_lehn_moduli}, Corollary I.1.2.8]
    \label{proposition_stable_simple}
    Let $\mathcal{F}$ be a stable sheaf. Then, $\mathcal{F}$ is simple, i.e., 
    $$\Hom(\mathcal{F},\mathcal{F}) \cong \mathbb{C}.$$
\end{proposition}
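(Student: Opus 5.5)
The plan is to show that every endomorphism of $\mathcal{F}$ is either zero or an isomorphism, and then use that $\mathbb{C}$ is algebraically closed. Let $\varphi:\mathcal{F}\to\mathcal{F}$ be nonzero, with kernel $\mathcal{K}$ and image $\mathcal{I}\neq 0$.

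First I will show that $\varphi$ is injective. Suppose $\mathcal{K}\neq 0$. Then $\mathcal{K}$ and $\mathcal{I}$ are proper nonzero subsheaves of $\mathcal{F}$. Both are torsion-free, being subsheaves of the torsion-free sheaf $\mathcal{F}$, so both have positive rank and their reduced Hilbert polynomials make sense. Stability applied to the subsheaf $\mathcal{I}\subset\mathcal{F}$ gives $p_\mathcal{I}<p_\mathcal{F}$. On the other hand, Hilbert polynomials are additive on $0\to\mathcal{K}\to\mathcal{F}\to\mathcal{I}\to 0$, and so are ranks. Hence $p_\mathcal{F}$ is a weighted average of $p_\mathcal{K}$ and $p_\mathcal{I}$, with positive weights $\operatorname{rk}\mathcal{K}/r$ and $\operatorname{rk}\mathcal{I}/r$. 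Stability also gives $p_\mathcal{K}<p_\mathcal{F}$, so averaging yields $p_\mathcal{F}<p_\mathcal{F}$, a contradiction. Therefore $\mathcal{K}=0$.

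Next I will show that $\varphi$ is surjective. Since $\varphi$ is injective, $\mathcal{F}\cong\mathcal{I}\subset\mathcal{F}$, and the two sheaves have the same Hilbert polynomial. The cokernel $\mathcal{F}/\mathcal{I}$ therefore has zero Hilbert polynomial, so it is zero and $\varphi$ is an isomorphism. Consequently $\operatorname{End}(\mathcal{F})$ is a division algebra over $\mathbb{C}$. It is finite-dimensional because $\mathcal{F}$ is coherent on the projective surface $S$.

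Finally, pick a closed point $x$ in the open locus where $\mathcal{F}$ is locally free, and an eigenvalue $\lambda$ of the fiber map $\varphi(x)$, which exists since $\mathbb{C}$ is algebraically closed. Then $\varphi-\lambda\cdot\mathrm{id}$ is not an isomorphism at $x$, so by the dichotomy above it is zero. Hence $\varphi=\lambda\cdot\mathrm{id}$ and $\Hom(\mathcal{F},\mathcal{F})\cong\mathbb{C}$. Equivalently, a finite-dimensional division algebra over $\mathbb{C}$ is $\mathbb{C}$ itself.

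The only delicate point is the first step, since the averaging argument requires $\operatorname{rk}\mathcal{K}>0$. This holds because $\mathcal{K}$ is a nonzero subsheaf of a torsion-free sheaf, so it cannot be torsion.
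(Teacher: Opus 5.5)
Your argument is correct and is essentially the standard proof behind the citation (Huybrechts--Lehn, Proposition 1.2.7 and Corollary 1.2.8), which the paper invokes without reproducing: comparing reduced Hilbert polynomials of kernel and image shows that a nonzero endomorphism of a stable sheaf is an isomorphism, so $\operatorname{End}(\mathcal{F})$ is a finite-dimensional division algebra over $\mathbb{C}$ and hence equals $\mathbb{C}$. You assert without justification that $\mathcal{I}$ is a proper subsheaf when $\mathcal{K}\neq 0$; this follows at once from $\operatorname{rk}\mathcal{I} = r - \operatorname{rk}\mathcal{K} < r$.
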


We now fix an ample divisor $H$ on $S$, a pair $(r,c_1) \in \mathbb{N} \times H^2(S,\mathbb{Z})$, and an integer $c_2 \in \mathbb{Z}$. Consider the functor 
$$\mathcal{S}table_{(r,c_1,c_2)}: \mathbf{Sch}_\mathbb{C} \rightarrow \mathbf{Set}$$ 
mapping a $\mathbb{C}$-scheme $T$ to the set 
\begin{align}
    \label{equation_definition_functor_moduli_space}
    \Big\{ \mathcal{F} \;\Big|\; &\mathcal{F} \text{ a coherent sheaf on } T \times S, \text{ flat over } T \text{ such that } \mathcal{F}_t \text{ is stable} \\ \nonumber
    &\text{with invariants } (r,c_1,c_2) \text{ for every closed point } t \in T \Big\} \Big/ \sim,
\end{align}
where $\sim$ is the equivalence relation induced by tensoring with a line bundle pulled back from $T$. The following theorem, due to Huybrechts and Lehn, establishes the existence of the moduli space of stable sheaves.

\begin{theorem}[\cite{huybrechts_lehn_moduli}]
    \label{theorem_existence_moduli_space_of_stable_sheaves}
    Fix a pair $(r,c_1) \in \mathbb{N} \times H^2(S,\mathbb{Z})$ and $c_2 \in \mathbb{Z}$. Under Assumption A of \eqref{equation_assumption_A}, the functor $\mathcal{S}table_{(r,c_1,c_2)}$ is representable by a projective scheme $\mathcal{M}_{(r,c_1,c_2)}$. Under Assumption S of \eqref{equation_assumption_S}, the moduli space $\mathcal{M}_{(r,c_1,c_2)}$ is smooth of dimension
    \begin{equation}
        \label{equation_dimension_moduli_space}
        \operatorname{const} + 2rc_2,
    \end{equation}
    where $\operatorname{const}$ depends only on $S$, $H$, $r$, and $c_1$.
\end{theorem}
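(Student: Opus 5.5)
The statement just recorded is Theorem~\ref{theorem_existence_moduli_space_of_stable_sheaves}, due to Huybrechts and Lehn. It splits into a representability part, which uses only Assumption A, and a smoothness and dimension part, which uses Assumption S. I would follow the standard GIT route for the first and deformation theory for the second.

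For representability, I would begin from boundedness of the family of semistable sheaves with fixed invariants $(r,c_1,c_2)$, via Langer's or Maruyama's theorem. This gives an $m$ such that every such $\mathcal{F}$ has $\mathcal{F}(m)$ globally generated with no higher cohomology. Each $\mathcal{F}$ is then a quotient $V\otimes\mathcal{O}_S(-m)\twoheadrightarrow\mathcal{F}$ with $V=\mathbb{C}^{P(m)}$. This exhibits the stable locus as a locally closed subscheme $R$ of a Quot scheme, and $R$ carries a $GL(V)$-action. The next step is to identify GIT (semi)stability for a suitable linearization, coming from a Grassmannian embedding, with Gieseker (semi)stability of the sheaves. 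This gives a projective good quotient $R^{ss}/\!\!/SL(V)$.

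By Proposition~\ref{proposition_semistable_stable}, Assumption A forces semistable to coincide with stable. The quotient is therefore geometric, and by Proposition~\ref{proposition_stable_simple} the stabilizers are exactly the scalars. So $R\to\mathcal{M}$ is a principal $PGL(V)$-bundle, and $\mathcal{M}$ is projective and a coarse moduli space.

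To upgrade to a fine moduli space I need a universal sheaf. The obstruction is that scalars act with nonzero weight on the universal quotient. Assumption A provides a class $\alpha$ with $\chi(\mathcal{F}\otimes\alpha)$ coprime to the relevant weight, following Mukai's trick using $\gcd(r,c_1\cdot H,\chi)$. Twisting by a suitable determinant line bundle then makes the universal family descend. This descent step is the main obstacle, and for it I would simply cite \cite{huybrechts_lehn_moduli}, Corollary 4.6.6.

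For smoothness, the tangent space at $[\mathcal{F}]$ is $\operatorname{Ext}^1(\mathcal{F},\mathcal{F})$, and obstructions lie in the traceless part $\operatorname{Ext}^2(\mathcal{F},\mathcal{F})_0$. Serre duality identifies $\operatorname{Ext}^2(\mathcal{F},\mathcal{F})\cong\operatorname{Hom}(\mathcal{F},\mathcal{F}\otimes\omega_S)^\vee$, and I would split into the two cases of Assumption S.
\begin{itemize}
\item If $\omega_S\cong\mathcal{O}_S$, this $\operatorname{Hom}$ space is $\mathbb{C}$ by simplicity, and the trace map absorbs it, so the traceless part vanishes.
\item If $c_1(\omega_S)\cdot H<0$, then $\mathcal{F}\otimes\omega_S$ is stable with strictly smaller reduced Hilbert polynomial. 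Any nonzero map between stable sheaves $\mathcal{F}\to\mathcal{G}$ requires $p_\mathcal{F}\le p_\mathcal{G}$, so the $\operatorname{Hom}$ vanishes and $\operatorname{Ext}^2=0$.
\end{itemize}
In either case $\mathcal{M}$ is smooth, and its dimension is $\operatorname{ext}^1=\hom-\chi(\mathcal{F},\mathcal{F})+\operatorname{ext}^2$. By Riemann--Roch, $-\chi(\mathcal{F},\mathcal{F})=2rc_2-(r-1)c_1^2-r^2\chi(\mathcal{O}_S)$. Adding the constant contributions of $\hom$ and $\operatorname{ext}^2$ gives $\operatorname{const}+2rc_2$, with the constant depending only on $S$, $H$, $r$ and $c_1$.
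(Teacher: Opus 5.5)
Your proposal is correct, and it is essentially the standard Huybrechts--Lehn argument: a GIT quotient of a Quot scheme, Mukai's determinant-twisting trick to descend a universal family, and deformation theory with traceless obstructions plus Riemann--Roch for smoothness and dimension. The paper gives no proof of its own and simply cites Huybrechts--Lehn, so this is exactly the route it relies on.

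One small point to tighten: $\mathcal{F}\otimes\omega_S$ is not automatically Gieseker stable for an arbitrary twist. It is here because Assumption A makes Gieseker stability coincide with $\mu$-stability, which is invariant under twisting. Alternatively, the slope inequality $\mu(\mathcal{F}\otimes\omega_S)<\mu(\mathcal{F})$ between $\mu$-semistable sheaves already forces $\operatorname{Hom}(\mathcal{F},\mathcal{F}\otimes\omega_S)=0$.
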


The precise formula for $\operatorname{const}$ can be found in \cite{huybrechts_lehn_moduli}. Due to the equivalence relation $\sim$ in \eqref{equation_definition_functor_moduli_space}, the universal sheaf $\mathcal{U}_{(r,c_1,c_2)}$ on $\mathcal{M}_{(r,c_1,c_2)} \times S$ is not unique, but is defined only up to tensoring with a line bundle pulled back from $\mathcal{M}_{(r,c_1,c_2)}$.

As in the definition of the Hilbert scheme of points, we consider the disjoint union
\begin{equation}
\label{equation_moduli_space_of_stable_sheaves}
    \mathcal{M} \define \coprod_{c_2 = -\infty}^\infty \mathcal{M}_{(r,c_1,c_2)}.
\end{equation}
The scheme $\mathcal{M}$ is called the \textit{moduli space of stable sheaves} on $S$ with invariants $r$ and $c_1$. The second Chern class $c_2$ is bounded below, as shown by the following standard result.

\begin{proposition}[\cite{huybrechts_lehn_moduli}, Theorem I.3.4.1]
    \label{proposition_Bogomolov_inequality}
    Fix a pair $(r,c_1) \in \mathbb{N} \times H^2(S,\mathbb{Z})$. If 
    $c_2 < \frac{r-1}{2r}c_1^2$, 
    then $\mathcal{M}_{(r,c_1,c_2)} = \varnothing$.
\end{proposition}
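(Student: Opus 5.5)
This is the Bogomolov inequality, so the plan is to deduce it from the Bogomolov--Gieseker inequality for $\mu$-semistable torsion-free sheaves. Suppose $\mathcal{M}_{(r,c_1,c_2)} \neq \varnothing$, and pick a closed point, i.e. a stable sheaf $\mathcal{F}$ with invariants $(r,c_1,c_2)$.

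First, $\mathcal{F}$ is $\mu$-semistable with respect to $H$. Indeed, by the discussion preceding Proposition \ref{proposition_semistable_stable}, Gieseker stability forces $\frac{c_1\cdot H}{r} \geq \frac{c_1'\cdot H}{r'}$ for every subsheaf $\mathcal{G}\subset\mathcal{F}$ of positive rank $r'$. This holds because the leading difference of the reduced Hilbert polynomials is the slope difference. Second, the discriminant is
\[
\Delta(\mathcal{F}) = 2rc_2-(r-1)c_1^2 .
\]
The claim is then equivalent to $\Delta(\mathcal{F})\geq 0$ for $\mu$-semistable torsion-free $\mathcal{F}$. This is exactly Bogomolov's theorem, and I would cite it in the form of \cite{huybrechts_lehn_moduli}, Theorem I.3.4.1. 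Rearranging gives $c_2 \geq \frac{r-1}{2r}c_1^2$, which contradicts the hypothesis. Hence $\mathcal{M}_{(r,c_1,c_2)}$ is empty.

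If one wanted a self-contained argument rather than a citation, I would follow the standard route. First reduce to the locally free case by passing to the double dual $\mathcal{F}^{\vee\vee}$. This preserves $c_1$ and $\mu$-semistability and lowers $c_2$ by $\operatorname{length}(\mathcal{F}^{\vee\vee}/\mathcal{F}) \geq 0$, so it only decreases $\Delta$. Next, for a locally free $\mu$-semistable $\mathcal{F}$, consider $\mathcal{E}nd_0(\mathcal{F})$ or the symmetric powers $\operatorname{Sym}^n\mathcal{F}\otimes\det\mathcal{F}^{-n/r}$ (after a finite cover if needed); in characteristic zero these remain $\mu$-semistable. Riemann--Roch shows that their $\chi$ grows like $-\tfrac{n^{r+1}}{(r+1)!}\Delta\cdot(\text{positive constant})$ if $\Delta<0$. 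Finally, derive a contradiction from vanishing of $h^2$ by semistability and Serre duality, together with $h^0$ bounds.

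The main obstacle in the self-contained route is preserving semistability under tensor operations. In characteristic zero this is a nontrivial theorem (via Narasimhan--Seshadri/Donaldson, or via restriction to curves of Mehta--Ramanathan). For this reason I would simply invoke the cited theorem, which is what the statement's attribution indicates.
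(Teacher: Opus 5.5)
Your proposal is correct and follows essentially the paper's route: the paper gives no argument of its own and simply cites Bogomolov's inequality from Huybrechts--Lehn. Your two steps are exactly what that citation needs: Gieseker stability forces $\mu$-semistability (compare the $m$-coefficients of the reduced Hilbert polynomials), and then rearranging $2rc_2-(r-1)c_1^2\geq 0$ gives the bound. One small imprecision in your optional self-contained sketch: $h^2$ of the normalized symmetric powers need not vanish (e.g.\ when $\omega_S\cong\mathcal{O}_S$), but semistability and Serre duality bound it, like $h^0$, by a constant times the rank $O(n^{r-1})$, which still contradicts the $n^{r+1}$ growth of $\chi$ when $\Delta<0$.
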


We refer to Section 5 of \cite{negut_shuffle_surfaces} or Chapter I.4 of \cite{huybrechts_lehn_moduli} for the details of the GIT construction of $\mathcal{M}$, which we have omitted here.

\subsection{Nested and quadruple moduli spaces}
\label{subsection_nested_stable}
We now introduce the nested and quadruple moduli spaces that underpin our geometric operators. These spaces are standard in the framework of \cite{negut_hecke_correspondences}. We recall their definitions and main properties here, but refer to \textit{loc. cit.} for the proofs of the results stated below.

Let $\mathcal{U}$ denote the universal sheaf on $\mathcal{M} \times S$. It is obtained as the disjoint union of the universal sheaves on the components $\mathcal{M}_{(r,c_1,c_2)} \times S$ over all $c_2$. These sheaves are compatible as $c_2$ varies, so the nested moduli spaces below carry universal sheaves that are naturally contained in one another. The universal sheaf $\mathcal{U}$ is flat over $\mathcal{M}$ and has homological dimension $1$. This means it admits a length‑$1$ resolution by locally free sheaves.

\begin{proposition}[\cite{negut_shuffle_surfaces}, Proposition 2.2]
    \label{proposition_short_exact_sequence_W_V_U}
    There exists a short exact sequence
    \begin{equation}
        \label{equation_short_exact_sequence_W_V_U}
        0 \rightarrow \mathcal{W} \rightarrow \mathcal{V} \rightarrow \mathcal{U} \rightarrow 0
    \end{equation}
    with $\mathcal{W}$ and $\mathcal{V}$ locally free sheaves on $\mathcal{M} \times S$.
\end{proposition}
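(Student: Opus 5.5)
The plan is to build the resolution from a standard Serre-vanishing argument, using the finite-type nature of each component and flatness of $\mathcal{U}$ over $\mathcal{M}$. Work one component $\mathcal{M}_{(r,c_1,c_2)}$ at a time. This suffices because $\mathcal{M}$ is a disjoint union and a short exact sequence on each piece assembles to one on $\mathcal{M} \times S$; the ranks are constant on each component, which is all that local freeness requires.

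On a fixed component $\mathcal{M}_{c_2}=\mathcal{M}_{(r,c_1,c_2)}$, which is projective by Theorem \ref{theorem_existence_moduli_space_of_stable_sheaves}, I would choose an ample line bundle $\mathcal{O}(1)$ on $\mathcal{M}_{c_2} \times S$, for example $\mathcal{O}_{\mathcal{M}_{c_2}}(1) \boxtimes \mathcal{O}_S(H)$.

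\textbf{Step 1.} For $n \gg 0$, the sheaf $\mathcal{U}(n)$ is globally generated by Serre's theorem. Set $\mathcal{V} = H^0(\mathcal{U}(n)) \otimes \mathcal{O}(-n)$. This is locally free, and the evaluation map $\mathcal{V} \twoheadrightarrow \mathcal{U}$ is surjective.

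\textbf{Step 2.} Let $\mathcal{W}$ be the kernel of this surjection. It remains to show that $\mathcal{W}$ is locally free, and this is the main point. Since $\mathcal{U}$ and $\mathcal{V}$ are flat over $\mathcal{M}_{c_2}$, the kernel $\mathcal{W}$ is flat over $\mathcal{M}_{c_2}$ as well. For each closed point $m$, restricting to the fiber gives the exact sequence
\[
0 \to \mathcal{W}_m \to \mathcal{V}_m \to \mathcal{U}_m \to 0 ,
\]
which stays exact because $\operatorname{Tor}_1(\mathcal{U},k(m))=0$. Here $\mathcal{U}_m$ is a torsion-free sheaf on the smooth surface $S$, so it has homological dimension at most $1$: its depth at every point is at least $1$, and Auslander--Buchsbaum on a regular local ring of dimension $2$ gives projective dimension at most $1$. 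Since $\mathcal{V}_m$ is locally free, the kernel $\mathcal{W}_m$ is locally free on $S$. A coherent sheaf that is flat over the base and locally free on every fiber is locally free, by the local criterion for flatness applied to $\mathcal{M}_{c_2} \times S \to \mathcal{M}_{c_2}$. Hence $\mathcal{W}$ is locally free.

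The main obstacle is this fiberwise argument: one has to check that torsion-freeness of stable sheaves, built into the definition of stability, gives projective dimension at most $1$ on the regular two-dimensional local rings of $S$, and then lift local freeness from the fibers to the total space via flatness. The remaining subtlety is that $\mathcal{U}$ is defined only up to twisting by a line bundle from $\mathcal{M}$. This is harmless, because the construction works for any choice of $\mathcal{U}$. Taking the disjoint union over $c_2$ produces the sequence \eqref{equation_short_exact_sequence_W_V_U}.
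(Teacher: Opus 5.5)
Your proof is correct. The paper gives no argument of its own; it simply cites Proposition 2.2 of \cite{negut_shuffle_surfaces}, and your construction is the standard argument for this statement: a surjection from $H^0(\mathcal{U}(n)) \otimes \mathcal{O}(-n)$ on each projective component $\mathcal{M}_{(r,c_1,c_2)} \times S$, with the kernel $\mathcal{W}$ shown to be locally free via flatness over $\mathcal{M}$, homological dimension $\leq 1$ of torsion-free sheaves on the smooth surface $S$, and the fibrewise flatness criterion.

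One point is worth making explicit: applying the fibre criterion only at closed points suffices. The locus where the coherent sheaf $\mathcal{W}$ is locally free is open, and every nonempty closed subset of the finite-type $\mathbb{C}$-scheme $\mathcal{M}_{(r,c_1,c_2)} \times S$ contains a closed point, which lies over a closed point of $\mathcal{M}_{(r,c_1,c_2)}$.
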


A \textit{set partition} is an equivalence relation on a finite ordered set, represented symbolically. For instance, $(u,v,w)$ denotes the partition of a $3$-element set into three distinct $1$-element subsets, while $(u,v,u)$ identifies the first and last elements. The size $|\lambda|$ is the number of elements in the underlying set.

\begin{definition}
    \label{definition_nested_stable_space}
    Let $\lambda$ be a set partition of size $n$. We define the scheme
    \begin{align}
        \label{equation_definition_nested_Hilbert_scheme}
        \mathfrak{Z}_\lambda \define \Big\{ \big( \mathcal{F}_0 \subset_{u_1} \mathcal{F}_1 \subset_{u_2} \cdots \subset_{u_n} \mathcal{F}_n \big) \;\Big|\; &\text{stable sheaves, } u_1,\dots,u_n \in S, \\ \nonumber
        &\text{and } u_i = u_j \text{ whenever } i \sim j \text{ in } \lambda \Big\}.
    \end{align}
    If $n = 0$, we set $\mathfrak{Z}_\varnothing \define \mathcal{M}.$
\end{definition}

We will use the abbreviation $\mathfrak{Z}_n^\bullet \define \mathfrak{Z}_{(u,\dots,u)}$, where the bullet indicates that all support points coincide. Definition \ref{definition_nested_stable_space} describes only the closed points of $\mathfrak{Z}_\lambda$. The precise moduli interpretation is that $\mathfrak{Z}_\lambda$ represents the functor $\mathbf{Sch}_\mathbb{C} \to \mathbf{Set}$ sending a $\mathbb{C}$-scheme $T$ to the set
\begin{align}
    \label{equation_functor_Z_lambda}
    \Big\{ &\text{flags of coherent sheaves } \mathcal{F}_0 \subset \mathcal{F}_1 \subset \cdots \subset \mathcal{F}_n \text{ on } T \times S \;\Big|\; \text{flat over } T, \\ \nonumber
    &\text{such that for every closed point } t \in T \text{ the fibers } \mathcal{F}_{0,t},\dots,\mathcal{F}_{n,t} \text{ are} \\ \nonumber
    &\text{stable sheaves, together with the data i) and ii) below} \Big\} \Big/ \sim,
\end{align}
where $\sim$ is induced by tensoring all sheaves in a flag with the same line bundle pulled back from $T$. The additional data are:
\begin{enumerate}[i)]
    \item morphisms $u_1,\dots,u_n: T \to S$ such that $u_i = u_j$ whenever $i \sim j$ in $\lambda$;
    \item line bundles $\mathcal{L}_{0,1},\dots,\mathcal{L}_{n-1,n}$ on $T$ satisfying $\mathcal{J}_i/\mathcal{J}_{i-1} \cong \Gamma^i_* \mathcal{L}_{i-1,i}$, where $\Gamma^i: T \hookrightarrow T \times S$ denotes the graph of $u_i$.
\end{enumerate}

\begin{remark}
    \label{remark_moduli_spaces_closed_points}
    Once Proposition \ref{proposition_projective_bundle_description} is established, an induction on $|\lambda|$ shows that the functor \eqref{equation_functor_Z_lambda} is representable. Hence the scheme $\mathfrak{Z}_\lambda$ of Definition \ref{definition_nested_stable_space} exists. Throughout this paper, we describe moduli spaces by their closed points for simplicity. The corresponding relative statements for families are left as exercises to the interested reader.
\end{remark}

Let $\lambda$ be a set partition of size $n$. Denote by $|\lambda$ (respectively $\lambda|$) the partition obtained by dropping the first (respectively last) element of $\lambda$. With this notation, we have natural morphisms
\begin{equation}
    \label{equation_pi_minus_plus}
    \pi_-: \mathfrak{Z}_\lambda \rightarrow \mathfrak{Z}_{|\lambda} \times S^\# \qquad\text{and}\qquad 
    \pi_+: \mathfrak{Z}_\lambda \rightarrow \mathfrak{Z}_{\lambda|} \times S^\#,
\end{equation}
which forget the first and last sheaf in the flag, respectively. The symbol $\#$ indicates whether the forgotten support point is recorded in the target. It equals $1$ if the first (respectively last) element of $\lambda$ is isolated (that is, not equivalent to any other element), and $0$ otherwise. Thus $\pi_-$ (respectively $\pi_+$) records $u_1 \in S$ (respectively $u_n \in S$) precisely when $\# = 1$.

For any $0 \leq i \leq n$, there is a morphism $p_i: \mathfrak{Z}_\lambda \rightarrow \mathcal{M}$ remembering only $\mathcal{F}_i$ in the flag. Set $$\mathcal{U}_i \define (p_i \times \operatorname{id}_S)^*\mathcal{U},$$ where $\mathcal{U}$ is the universal sheaf on $\mathcal{M} \times S$. Pulling back the short exact sequence from Proposition \ref{proposition_short_exact_sequence_W_V_U} along $p_i \times \operatorname{id}_S$ gives
\begin{equation}
    \label{equation_short_exact_sequence_W_i_V_i_U_i}
    0 \rightarrow \mathcal{W}_i \rightarrow \mathcal{V}_i \rightarrow \mathcal{U}_i \rightarrow 0,
\end{equation}
with $\mathcal{W}_i$, $\mathcal{V}_i$ defined analogously. Although $p_i$ is not flat, the sequence remains exact because all sheaves in \eqref{equation_short_exact_sequence_W_V_U} are flat over $\mathcal{M}$.

The inclusion $\mathcal{F}_{i-1} \subset \mathcal{F}_i$ induces an inclusion of universal sheaves
\begin{equation}
    \label{equation_inclusion_universal_sheaves}
    \mathcal{U}_{i-1} \hookrightarrow \mathcal{U}_i
\end{equation}
on $\mathfrak{Z}_\lambda \times S$, for any $1 \leq i \leq n$ .

On $\mathfrak{Z}_\lambda$, we have \textit{tautological line bundles} $\mathcal{L}_{i-1,i}$ for $1 \leq i \leq n$. Their fiber over a closed point $\big( \mathcal{F}_0 \subset_{u_1} \mathcal{F}_1 \subset_{u_2} \cdots \subset_{u_n} \mathcal{F}_n \big)$ is $\Gamma(S, \mathcal{F}_i / \mathcal{F}_{i-1})$. Concretely, using the inclusion \eqref{equation_inclusion_universal_sheaves}, let $p_{\mathfrak{Z}_\lambda}: \mathfrak{Z}_\lambda \times S \rightarrow \mathfrak{Z}_\lambda$ be the projection. Then
\begin{equation*}
    \mathcal{L}_{i-1,i} \define p_{\mathfrak{Z}_\lambda*} \big( \mathcal{U}_i / \mathcal{U}_{i-1} \big).
\end{equation*}
The quotient $\mathcal{U}_i / \mathcal{U}_{i-1}$ is supported on the graph $\Gamma: \mathfrak{Z}_\lambda \rightarrow \mathfrak{Z}_\lambda \times S$ of $p_i$. This yields a short exact sequence
\begin{equation}
    \label{equation_short_exact_sequence_universal_sheaves_1}
    0 \rightarrow \mathcal{U}_{i-1} \rightarrow \mathcal{U}_i \rightarrow \Gamma_* \mathcal{L}_{i-1,i} \rightarrow 0.
\end{equation}

%The following projective bundle description is the key technical input for the representability of the nested moduli space $\mathfrak{Z}_\lambda$.

\begin{proposition}[\cite{negut_hecke_correspondences}, Proposition 2.19]
    \label{proposition_projective_bundle_description}
    The morphism $\pi_-$ of \eqref{equation_pi_minus_plus} can be realized as the diagonal arrow in
\[\begin{tikzcd}[row sep=small]
	{\mathfrak{Z}_\lambda} & {\mathbb{P}_{\mathfrak{Z}_{|\lambda} \times S} \left( \mathcal{V}_1 \right)} && {\mathfrak{Z}_\lambda} & {\mathbb{P}_{\mathfrak{Z}_{|\lambda}} \left( \Gamma^{u*} \mathcal{V}_1 \right)} \\
	& {\mathfrak{Z}_{|\lambda} \times S} &&& {\mathfrak{Z}_{|\lambda},}
	\arrow["{i_-}", hook, from=1-1, to=1-2]
	\arrow["{\pi_-}"', from=1-1, to=2-2]
	\arrow["{\rho_-}", from=1-2, to=2-2]
	\arrow["{i_-}", hook, from=1-4, to=1-5]
	\arrow["{\pi_-}"', from=1-4, to=2-5]
	\arrow["{\rho_-}", from=1-5, to=2-5]
\end{tikzcd}\]
    where $\Gamma^u: \mathfrak{Z}_{|\lambda} \rightarrow \mathfrak{Z}_{|\lambda} \times S$ is the graph of the map recording the forgotten support point. The embedding $i_-$ is cut out by the following composition on the projectivization:
    \begin{align*}
        \rho_-^*\mathcal{W}_1 \rightarrow \rho_-^*\mathcal{V}_1 \overset{\text{taut}}{\twoheadrightarrow} &\mathcal{O}(1) \qquad \qquad \text{if } \#=1, \\
        \rho_-^*\Gamma^{u*}\mathcal{W}_1 \rightarrow \rho_-^*\Gamma^{u*}\mathcal{V}_1 \overset{\text{taut}}{\twoheadrightarrow} &\mathcal{O}(1) \qquad \qquad \text{if } \#=0.
    \end{align*}
    The line bundle $\mathcal{L}_{0,1}$ on $\mathfrak{Z}_\lambda$ is the restriction of $\mathcal{O}(1)$.

    Similarly, $\pi_+$ factors as
\[\begin{tikzcd}[row sep=small]
	{\mathfrak{Z}_\lambda} & {\mathbb{P}_{\mathfrak{Z}_{\lambda|} \times S} \left( \mathcal{W}_{n-1}^\vee \otimes \omega_S \right)} && {\mathfrak{Z}_\lambda} & {\mathbb{P}_{\mathfrak{Z}_{\lambda|}} \left( \Gamma^{u*} \left(\mathcal{W}_{n-1}^\vee \otimes \omega_S\right) \right)} \\
	& {\mathfrak{Z}_{\lambda|} \times S} &&& {\mathfrak{Z}_{\lambda|},}
	\arrow["{i_+}", hook, from=1-1, to=1-2]
	\arrow["{\pi_+}"', from=1-1, to=2-2]
	\arrow["{\rho_+}", from=1-2, to=2-2]
	\arrow["{i_+}", hook, from=1-4, to=1-5]
	\arrow["{\pi_+}"', from=1-4, to=2-5]
	\arrow["{\rho_+}", from=1-5, to=2-5]
\end{tikzcd}\]
    where $\omega_S$ denotes the canonical line bundle and its pull-backs. The embedding $i_+$ is cut out by
    \begin{align*}
        \rho_+^* \left( \mathcal{V}_{n-1}^\vee \otimes \omega_S \right) \rightarrow \rho_+^* \left( \mathcal{W}_{n-1}^\vee \otimes \omega_S \right) \overset{\text{taut}}{\twoheadrightarrow} &\mathcal{O}(1) \qquad \qquad \text{if } \#=1, \\
        \rho_+^* \Gamma^{u*}\left( \mathcal{V}_{n-1}^\vee \otimes \omega_S \right) \rightarrow \rho_+^* \Gamma^{u*}\left( \mathcal{W}_{n-1}^\vee \otimes \omega_S \right) \overset{\text{taut}}{\twoheadrightarrow} &\mathcal{O}(1) \qquad \qquad \text{if } \#=0.
    \end{align*}
    The line bundle $\mathcal{L}_{n-1,n}$ on $\mathfrak{Z}_\lambda$ is the restriction of $\mathcal{O}(-1)$.
\end{proposition}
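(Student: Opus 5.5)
We prove the two factorizations separately. In each case we identify the fiber of $\pi_\mp$ with a linear system on a two-term complex, and then upgrade this to a statement about $T$-points of the functor \eqref{equation_functor_Z_lambda}. Stability is never an issue. By Proposition \ref{proposition_Hecke_stable}, once $\mathcal{F}_1$ (respectively $\mathcal{F}_{n-1}$) is stable, any Hecke modification of it is automatically stable. So it suffices to parametrize Hecke modifications of a fixed flat family.

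\textbf{The morphism $\pi_-$.} Fix a $T$-point of $\mathfrak{Z}_{|\lambda} \times S^\#$, that is, a flag $\mathcal{F}_1 \subset \cdots \subset \mathcal{F}_n$ on $T \times S$ together with $u: T \to S$. If $\# = 0$, the point $u$ is already determined by the flag. Giving $\mathcal{F}_0 \subset \mathcal{F}_1$ with $\mathcal{F}_1/\mathcal{F}_0 \cong \Gamma_*\mathcal{L}$ is the same as giving a surjection $\mathcal{F}_1 \twoheadrightarrow \Gamma_*\mathcal{L}$ up to the identification of $\mathcal{L}$. Two facts ensure the kernel $\mathcal{F}_0$ is again flat over $T$: the quotient $\Gamma_*\mathcal{L}$ is flat over $T$, and $\mathcal{F}_1$ is flat over $T$. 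By adjunction, such a surjection is the same as a surjection $\Gamma^*\mathcal{F}_1 \twoheadrightarrow \mathcal{L}$. Since $\Gamma^*$ is right exact, \eqref{equation_short_exact_sequence_W_i_V_i_U_i} gives
\[
\Gamma^*\mathcal{F}_1 = \operatorname{coker}\bigl(\Gamma^*\mathcal{W}_1 \to \Gamma^*\mathcal{V}_1\bigr).
\]
A surjection from this cokernel is therefore a surjection $\Gamma^*\mathcal{V}_1 \twoheadrightarrow \mathcal{L}$ whose composition with $\Gamma^*\mathcal{W}_1$ vanishes. By the universal property of $\mathbb{P}(\mathcal{V}_1)$ recalled above, this is exactly a $T$-point of the zero locus of $\rho_-^*\mathcal{W}_1 \to \rho_-^*\mathcal{V}_1 \twoheadrightarrow \mathcal{O}(1)$, and it pulls $\mathcal{O}(1)$ back to $\mathcal{L}_{0,1}$. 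Both constructions are compatible with the relation $\sim$, and they are mutually inverse. The case $\# = 0$ is identical after replacing $\mathcal{V}_1, \mathcal{W}_1$ by $\Gamma^{u*}\mathcal{V}_1, \Gamma^{u*}\mathcal{W}_1$ over $\mathfrak{Z}_{|\lambda}$.

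\textbf{The morphism $\pi_+$: pointwise description.} Now we must enlarge $\mathcal{F} := \mathcal{F}_{n-1}$ to some $\mathcal{F}_n$ with $\mathcal{F}_n/\mathcal{F} \cong \mathbb{C}_u$. Pointwise, such an $\mathcal{F}_n$ is a nonzero class in $\operatorname{Ext}^1(\mathbb{C}_u, \mathcal{F})$, taken up to scalar. Nonzero classes are exactly the right ones for the following reason. Torsion in $\mathcal{F}_n$ would inject into $\mathbb{C}_u$, because $\mathcal{F}$ is torsion-free, and so would split the extension; hence every nonsplit extension is torsion-free. To compute the Ext group, apply $\operatorname{Hom}(\mathbb{C}_u, -)$ to $0 \to \mathcal{W} \to \mathcal{V} \to \mathcal{F} \to 0$. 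On a surface, $\operatorname{Ext}^{0,1}(\mathbb{C}_u, \text{locally free}) = 0$, so
\[
\operatorname{Ext}^1(\mathbb{C}_u, \mathcal{F}) = \ker\bigl(\operatorname{Ext}^2(\mathbb{C}_u, \mathcal{W}) \to \operatorname{Ext}^2(\mathbb{C}_u, \mathcal{V})\bigr).
\]
Serre duality identifies $\operatorname{Ext}^2(\mathbb{C}_u, \mathcal{W})$ with $\bigl((\mathcal{W}^\vee \otimes \omega_S)|_u\bigr)^\vee$, and similarly for $\mathcal{V}$. Hence
\[
\operatorname{Ext}^1(\mathbb{C}_u, \mathcal{F}) = \operatorname{coker}\bigl((\mathcal{V}^\vee \otimes \omega_S)|_u \to (\mathcal{W}^\vee \otimes \omega_S)|_u\bigr)^\vee.
\]
A line in this space is a one-dimensional quotient of $(\mathcal{W}^\vee \otimes \omega_S)|_u$ that kills the image of $\mathcal{V}^\vee \otimes \omega_S$. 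This is the claimed zero locus inside $\mathbb{P}(\mathcal{W}_{n-1}^\vee \otimes \omega_S)$. The extension class spans the tautological line $\mathcal{O}(-1)$, so $\mathcal{L}_{n-1,n} = \mathcal{O}(-1)|$.

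\textbf{The family version of $\pi_+$: main obstacle.} The difficulty is to carry out this duality over a base $T$. The dimension of $\operatorname{Ext}^1(\mathbb{C}_u, \mathcal{F}_t)$ jumps with $t$, so the relative $\mathcal{E}xt^1$ sheaf does not commute with base change. I would avoid that sheaf and work with the two-term complex of locally free sheaves on $T$,
\[
\Gamma^*(\mathcal{V}^\vee \otimes \omega_S) \to \Gamma^*(\mathcal{W}^\vee \otimes \omega_S).
\]
By relative Grothendieck duality for $\Gamma: T \hookrightarrow T \times S$, this complex computes $R\mathcal{H}om_T(\Gamma_*\mathcal{L}, \mathcal{F}[1])$ compatibly with arbitrary base change. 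Hence an extension $0 \to \mathcal{F} \to \mathcal{F}_n \to \Gamma_*\mathcal{L} \to 0$ that is nonzero on every fiber corresponds exactly to a surjection $\Gamma^*(\mathcal{W}^\vee \otimes \omega_S) \twoheadrightarrow \mathcal{L}^{-1}$ vanishing on $\Gamma^*(\mathcal{V}^\vee \otimes \omega_S)$. Flatness of $\mathcal{F}_n$ over $T$ follows from flatness of $\mathcal{F}$ and of $\Gamma_*\mathcal{L}$. The pointwise argument then supplies torsion-freeness and stability. Finally, the case $\# = 0$ is obtained exactly as before, by pulling the whole picture back along $\Gamma^u$.
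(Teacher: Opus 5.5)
Your argument is correct, and it is the standard one for this statement, which the paper quotes from Neguţ's Proposition 2.19 without giving a proof. You use adjunction and the universal property of $\mathbb{P}$ for $\pi_-$. For $\pi_+$ you identify extension classes with maps out of the cokernel of a two-term complex of locally free sheaves, pointwise by Serre duality and in families by Grothendieck duality for the regular embedding $\Gamma$.

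One statement needs correcting. Write $C=\bigl[\Gamma^*(\mathcal{V}^\vee\otimes\omega_S)\to\Gamma^*(\mathcal{W}^\vee\otimes\omega_S)\bigr]$ in degrees $-1,0$. Duality gives $\mathbf{R}p_*\mathbf{R}\mathcal{H}om(\Gamma_*\mathcal{L},\mathcal{F}[1])\cong\mathbf{R}\mathcal{H}om_T(C,\mathcal{L}^{-1})$, not $C$ itself. Since $C$ has no cohomology in positive degrees, this yields $\operatorname{Ext}^1(\Gamma_*\mathcal{L},\mathcal{F})\cong\operatorname{Hom}_T(\mathcal{H}^0(C),\mathcal{L}^{-1})$ for arbitrary $T$. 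That is exactly the set of maps $\Gamma^*(\mathcal{W}^\vee\otimes\omega_S)\to\mathcal{L}^{-1}$ killing $\Gamma^*(\mathcal{V}^\vee\otimes\omega_S)$ that your argument goes on to use.
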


We now summarize the geometric properties of the nested moduli spaces $\mathfrak{Z}_\lambda$ for $n = |\lambda| \leq 4$. By irreducibility of $\mathfrak{Z}_\lambda$, we mean that for every connected component $C$ of $\mathcal{M} \times S^n$, the preimage $\pi^{-1}(C)$ is irreducible, where
\[
\pi: \mathfrak{Z}_\lambda \rightarrow \mathcal{M} \times S^n,\qquad
\big( \mathcal{F}_0 \subset_{u_1} \mathcal{F}_1 \subset_{u_2} \cdots \subset_{u_n} \mathcal{F}_n \big) \mapsto \big(\mathcal{F}_n, u_1, \dots, u_n\big).
\]

\begin{proposition}[\cite{negut_hecke_correspondences}, Propositions 2.26--2.33]
    \label{proposition_nested_Hilbert_schemes_geometric_properties}
    The following hold for nested moduli spaces.
    \begin{enumerate}[i)]
        \item $\mathfrak{Z}_{(u)}$ and $\mathfrak{Z}_{(u,u)}$ are smooth and irreducible.
        \item $\mathfrak{Z}_{(u,v)}$, $\mathfrak{Z}_{(u,u,u)}$, $\mathfrak{Z}_{(u,u,v)}$, and $\mathfrak{Z}_{(u,v,v)}$ are locally complete intersection and irreducible.
        \item $\mathfrak{Z}_{(u,v,u)}$ is Cohen--Macaulay and irreducible.
        \item $\mathfrak{Z}_{(u,u,u,u)}$ is locally complete intersection and has two irreducible components.
        \item $\mathfrak{Z}_{(u,u,v,u)}$ and $\mathfrak{Z}_{(u,v,u,u)}$ are Cohen--Macaulay and irreducible.
    \end{enumerate}
\end{proposition}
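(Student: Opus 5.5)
The plan is to prove the six statements by induction on $n = |\lambda|$, using the projective bundle description of Proposition \ref{proposition_projective_bundle_description} as the inductive engine. At each stage, $\mathfrak{Z}_\lambda$ sits inside a projectivization over $\mathfrak{Z}_{|\lambda}$ (or $\mathfrak{Z}_{|\lambda}\times S$) as the zero locus of a section $\rho^*\mathcal{W}_1 \to \mathcal{O}(1)$ of a bundle of rank $w = \operatorname{rk}\mathcal{W}$.

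Setting $v = \operatorname{rk}\mathcal{V}$, the ambient space has dimension $\dim \mathfrak{Z}_{|\lambda} + \#\cdot 2 + v - 1$, and the expected dimension of $\mathfrak{Z}_\lambda$ is this number minus $w$. Since $v - w = r$, the expected dimension is $\dim \mathfrak{Z}_{|\lambda} + 2\# + r - 1$.

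By standard commutative algebra, the following criteria apply:
\begin{itemize}
\item if the base is lci and the actual dimension equals the expected one, then $\mathfrak{Z}_\lambda$ is lci;
\item if the base is Cohen--Macaulay, the same dimension count gives that $\mathfrak{Z}_\lambda$ is Cohen--Macaulay;
\item irreducibility follows once every fiberwise stratum of the projection has dimension strictly smaller than the expected dimension.
\end{itemize}
Smoothness of $\mathfrak{Z}_{(u)}$ and $\mathfrak{Z}_{(u,u)}$ comes from a tangent-space computation: the obstruction $\operatorname{Ext}^2$ groups vanish by Assumption S and Serre duality, exactly as for $\mathcal{M}$ itself in Theorem \ref{theorem_existence_moduli_space_of_stable_sheaves}.

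The key steps are as follows.

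\begin{enumerate}
\item \textbf{Stratify by fiber dimension.} For a point $(\mathcal{F}', u)$ in the base, the fiber of $\pi_-$ is $\mathbb{P}(\operatorname{Hom}(\mathcal{F}', \mathbb{C}_u)^\vee)$ or its dual version. Its dimension is $r - 1 + k$, where $k = \dim \operatorname{Ext}^1(\mathbb{C}_u, \mathcal{F}')$ (equivalently, the failure of $\mathcal{F}'$ to be locally free at $u$). Stratify the base by $k$.

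\item \textbf{Bound the strata.} Show that the locus where $\mathcal{F}$ has defect at least $k$ at some point has codimension at least $2k$ in $\mathcal{M}\times S$. This is the classical estimate for the Quot scheme of length-$k$ quotients, used by Baranovsky and Neguţ, and it makes the strata of excess dimension small.

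\item \textbf{Treat each $\lambda$ by the appropriate end.} For each $\lambda$ in the list, apply $\pi_+$ or $\pi_-$, whichever forgets an isolated end point when possible. This settles $\mathfrak{Z}_{(u,v)}$, $\mathfrak{Z}_{(u,u,v)}$ and $\mathfrak{Z}_{(u,v,v)}$ easily. The coincident cases $\mathfrak{Z}_{(u,u)}$ and $\mathfrak{Z}_{(u,u,u)}$ need the sharper count $k \le n$ on the $\#=0$ fibers over $\mathfrak{Z}_{(u,\dots)}$.

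\item \textbf{Handle $\mathfrak{Z}_{(u,u,u,u)}$.} Here the same count shows that one stratum is exactly of expected dimension rather than smaller. Concretely, this is the locus where $\mathcal{F}_0 = \mathcal{F}_4 \otimes \mathfrak{m}_u$ up to the middle steps, which for $r > 1$ is the $\mathrm{Gr}$-type component. Its closure forms a second component of the same dimension, while the lci property still holds. Exhibiting these two components and checking that nothing else reaches that dimension gives (iv).
\end{enumerate}

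The main obstacle is (iii) and (v). For $\mathfrak{Z}_{(u,v,u)}$, $\mathfrak{Z}_{(u,u,v,u)}$ and $\mathfrak{Z}_{(u,v,u,u)}$, neither end point is isolated, so the base of the projection is itself only Cohen--Macaulay, and the diagonal $u = v$ creates excess fibers. For these cases the plan is as follows:
\begin{itemize}
\item present $\mathfrak{Z}_{(u,v,u)}$ as a degeneracy locus of a map of bundles over $\mathfrak{Z}_{(u,v)}\times_S$-type bases;
\item apply the Eagon--Northcott criterion, which guarantees the Cohen--Macaulay property once the codimension is the expected one;
\item verify the codimension by splitting into the open part $u \ne v$, which is a product situation, and the diagonal part $u = v$, which is controlled by the $\mathfrak{Z}^\bullet_3$ estimates above.
\end{itemize}
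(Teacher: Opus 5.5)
\textbf{There is a genuine gap: your dimension count is wrong whenever $\#=0$.} The paper does not prove this statement itself; it imports it from \cite{negut_hecke_correspondences}. That argument rests on the same two inputs you use: the presentations of Proposition~\ref{proposition_projective_bundle_description} and Quot-scheme dimension estimates.

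If $\#=0$, then $\mathcal{F}_1$ is never locally free at $u_1$, because near $u_1$ it agrees with a sheaf that is a Hecke modification at $u_1$. So every fiber $\mathbb{P}(\mathcal{F}_1|_{u_1}^\vee)$ of $\pi_-$ has dimension at least $r$. The true expected dimension (the paper's $\operatorname{const}+r(c_2(\mathcal{F}_0)+c_2(\mathcal{F}_n))$ plus the number of distinct points) is $\dim\mathfrak{Z}_{|\lambda}+r+\#$. Your formula $\dim\mathfrak{Z}_{|\lambda}+2\#+r-1$ agrees with it only for $\#=1$.

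Equivalently, the section $\rho^*\Gamma^{u*}\mathcal{W}_1\to\mathcal{O}(1)$ is \emph{never} regular. In the adjacent case $\mathcal{F}_1\subset_u\mathcal{F}_2$, we have $\operatorname{Tor}_2(\mathcal{F}_2,\mathbb{C}_u)=0$. Hence the line $\operatorname{Tor}_2(\mathcal{F}_2/\mathcal{F}_1,\mathbb{C}_u)\cong(\mathcal{L}_{1,2}\otimes\omega_S)|_u$ injects into $\operatorname{Tor}_1(\mathcal{F}_1,\mathbb{C}_u)=\ker(\mathcal{W}_1|_u\to\mathcal{V}_1|_u)$, and the section vanishes identically on it.

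Two consequences follow.
\begin{itemize}
\item Your criterion ``actual $=$ expected $\Rightarrow$ lci'' can never be invoked for $\mathfrak{Z}_{(u,u,u)}$ or $\mathfrak{Z}_{(u,u,u,u)}$, since the codimension is $w-1$ against rank $w$. The same defect occurs for $\pi_+$.
\item Your irreducibility test also fails. With $w$ equations, components are only bounded below by $\dim\mathfrak{Z}_{|\lambda}+r-1$, while even the generic stratum has dimension $\dim\mathfrak{Z}_{|\lambda}+r$.
\end{itemize}
The missing idea is the one this paper uses in Section~\ref{section_commutator_E_F}, where it remarks that the rank-$w_0$ cosection cutting out $\mathfrak{Z}_2^\bullet$ is not regular. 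The quotient $\Gamma^{u*}\mathcal{W}_1/(\mathcal{L}_{1,2}\otimes\omega_S)$ is locally free of rank $w-1$ (Proposition~2.27 of \cite{negut_hecke_correspondences}), and the section factors through it. Only with these $w-1$ equations do your lci/CM and irreducibility criteria work, and only for adjacent coincident points.

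\textbf{Several further steps would also fail as written.}
\begin{itemize}
\item \emph{The $\pi_+$ direction.} The fiber of $\pi_+$ is $\mathbb{P}\operatorname{Ext}^1(\mathbb{C}_u,\mathcal{F}_{n-1})$, of dimension $k-1$ rather than $r-1+k$. Moreover $\pi_+$ is not dominant: for $\#=1$, $\dim\mathfrak{Z}_\lambda=\dim\mathfrak{Z}_{\lambda|}+1-r$. You therefore need the defect-$k$ locus to have codimension at least $r+k$, strictly more for $k\geq 2$. For $k=1$ the true value is exactly $r+1$ (the non-locally-free locus in $\mathcal{M}\times S$), whereas your bound ``$\geq 2k$'' gives only $2$. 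Along $v=u$ you also need estimates on the nested spaces themselves, not just on $\mathcal{M}\times S$. So $\mathfrak{Z}_{(u,u,v)}$ is not easy by either route with your tools.
\item \emph{The second component in (iv).} It is neither your locus nor special to $r>1$; it already occurs for $r=1$, where $I_1=\mathfrak{m}_u^2$ locally. In general it lies over the codimension-one locus of $\mathfrak{Z}_{(u,u,u)}$ where $\mathcal{F}_3/\mathcal{F}_1\cong\mathbb{C}_u^{\oplus 2}$. There $\mathcal{F}_1$ has defect $2$, so the $\pi_-$ fiber grows by one and the preimage reaches full dimension.
\item \emph{Parts (iii) and (v).} With the $w-1$ refinement (and its $\pi_+$ analogue), (v) should reduce to (iii) plus dimension estimates, since in $(u,u,v,u)$ and $(u,v,u,u)$ the coincident end is adjacent. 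But (iii) itself is not addressed: you specify no map of bundles for Eagon--Northcott and verify no codimension. The rank reduction has no evident global form there, because the $\operatorname{Tor}$-line comes from $\mathcal{F}_2\subset_u\mathcal{F}_3$ off the diagonal but from $\mathcal{F}_1\subset_u\mathcal{F}_2$ on it.
\item \emph{Smoothness in (i).} When $\omega_S\cong\mathcal{O}_S$ the relevant $\operatorname{Ext}^2$ groups do not vanish. Smoothness therefore needs a trace-free or tangent-dimension argument rather than plain unobstructedness.
\end{itemize}
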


\begin{proposition}[\cite{negut_hecke_correspondences}, Proposition 2.34]
    \label{proposition_nested_Hilbert_schemes_normality}
    The nested moduli space $\mathfrak{Z}_\lambda$ is normal for
    \[
    \lambda \in \Big\{ (u,v), (u,u,v), (u,v,u), (u,v,v), (u,u,v,u), (u,v,u,u) \Big\}.
    \]
\end{proposition}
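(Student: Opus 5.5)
The plan is to verify Serre's criterion: a Noetherian scheme is normal if and only if it satisfies $S_2$ and $R_1$. By Proposition \ref{proposition_nested_Hilbert_schemes_geometric_properties}, each listed $\mathfrak{Z}_\lambda$ is either locally complete intersection or Cohen--Macaulay, hence in particular $S_2$. So the whole task reduces to $R_1$: showing that the singular locus of $\mathfrak{Z}_\lambda$ has codimension at least $2$. Irreducibility, also from Proposition \ref{proposition_nested_Hilbert_schemes_geometric_properties}, lets us measure codimension against the single expected dimension.

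To locate the singular locus I would use the iterated projective bundle description of Proposition \ref{proposition_projective_bundle_description}. Write $\mathfrak{Z}_\lambda \hookrightarrow \mathbb{P}(\mathcal{V}_1)$ over $\mathfrak{Z}_{|\lambda} \times S^\#$ (or use $\pi_+$ if more convenient), with $\mathfrak{Z}_\lambda$ cut out as the zero locus of $\rho^*\mathcal{W}_1 \to \mathcal{O}(1)$. At a point lying over $(\mathcal{F}, u)$ the fibre is $\mathbb{P}\operatorname{Hom}(\mathcal{F}, \mathbb{C}_u)$, of dimension $r-1+k$, where $k = \dim \operatorname{Ext}^1(\mathcal{F},\mathbb{C}_u)$ measures how far $\mathcal{F}$ is from locally free at $u$. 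On the open set where the base is smooth and $k$ takes its minimal value, $\mathfrak{Z}_\lambda$ is a projective bundle over a smooth scheme and hence smooth. For each $\lambda$ in the list, the base $\mathfrak{Z}_{|\lambda}$ (or $\mathfrak{Z}_{\lambda|}$) is one of $\mathfrak{Z}_{(u)}$, $\mathfrak{Z}_{(u,u)}$, $\mathfrak{Z}_{(u,v)}$, $\mathfrak{Z}_{(v,u)}$, $\mathfrak{Z}_{(u,v,u)}$, $\mathfrak{Z}_{(u,u,v)}$, $\mathfrak{Z}_{(v,u,u)}$. So I argue by induction along this list, starting from the smooth ones $\mathfrak{Z}_{(u)}$ and $\mathfrak{Z}_{(u,u)}$, and at each step choosing the projection whose base is already known to be normal.

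The singular locus is then contained in the union of two pieces: the preimage of the singular locus of the base, and the jump loci $\{k \geq k_{\min}+1\}$. In every case both pieces lie over the collision locus where the distinct support points coincide ($u=v$). Away from it the flag splits locally into independent Hecke modifications at different points, so locally $\mathfrak{Z}_\lambda$ is a fibre product of smooth pieces over $\mathcal{M}$, smooth over its factors, and is therefore smooth.

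The main computation, and the main obstacle, is a dimension count for these loci. I would stratify the collision locus by the jump $j = k - k_{\min}$. A stratum with jump $j$ has fibres larger by $j$, but the base locus of sheaves with an $(r+k)$-dimensional fibre at the moving point has codimension growing roughly like $2j$ or more; this is the standard estimate used for $\mathfrak{Z}_{(u,u)}$ and quoted from \cite{negut_hecke_correspondences}. Hence each stratum has codimension at least $1 + j \geq 2$. The delicate cases are $j=0$ on the collision locus and the four-step spaces $(u,u,v,u)$ and $(u,v,u,u)$. For $j=0$ I would show directly, via a tangent space computation $\operatorname{Ext}^1$ between consecutive sheaves, that the generic point with $u=v$ is smooth, so that only a proper closed subset of the divisor $\{u=v\}$ is singular. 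The codimension bound is exactly where $\mathfrak{Z}_{(u,u,u,u)}$ fails, since it has two components. So for the four-step spaces I would check that the extra component of $\mathfrak{Z}_{(u,u,u,u)}$, which sits inside $\{u=v\}$, has codimension at least $2$ in $\mathfrak{Z}_{(u,u,v,u)}$ and $\mathfrak{Z}_{(u,v,u,u)}$.
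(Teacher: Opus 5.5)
The paper gives no argument for this statement; it is quoted from Neguţ (Proposition 2.34), so your plan has to stand on its own. The frame is sound. The listed spaces are lci or Cohen--Macaulay, hence $S_2$, and Serre's criterion reduces normality to $R_1$.

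The gap is the step that confines the singular locus to the collision locus $\{u=v\}$. For $\lambda=(u,u,v,u)$ this is false as argued.

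\begin{itemize}
\item Away from $u=v$, the flag near $u$ reads $\mathcal F_0\subset_u\mathcal F_1\subset_u\mathcal F_2=\mathcal F_3\subset_u\mathcal F_4$. This is a three-step flag at one point, locally of type $\mathfrak Z_{(u,u,u)}$, which the paper records only as lci and not among the smooth spaces. The same happens for $(u,v,u,u)$.
\item So there is no ``fibre product of smooth pieces'' off the diagonal in these cases.
\item The jumps there also cannot be dismissed by counting. In rank one, take the locus where, near $u$, $\mathcal F_4=\mathfrak m_u$, $\mathcal F_3=\mathcal F_2=(l)+\mathfrak m_u^2$ and $\mathcal F_1=\mathfrak m_u^2$. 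There $\operatorname{Hom}(\mathcal F_1,\mathbb C_u)$ is $3$-dimensional instead of $2$, and a parameter count shows this locus has codimension exactly one in $\mathfrak Z_{(u,u,v,u)}$, with $u\neq v$ generically.
\end{itemize}

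Your stratification by $j$ is only carried out inside $\{u=v\}$, so this divisor is never treated. Handling it amounts to proving $R_1$ for the $(u,u,u)$-type local piece, which is not in the list and not in your plan.

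Even for the three-step spaces, your justification of smoothness off $\{u=v\}$ does not work as written. The maps $p_\pm\colon\mathfrak Z_{(u)}\to\mathcal M$ are not smooth, since their fibres jump. $\mathcal M$ does not split locally, and a fibre product over $\mathcal M$ of non-smooth maps need not be smooth. You need one of the following:
\begin{itemize}
\item a tangent-space argument showing that the obstructions to deforming the modifications at $u$ and at $v$ impose independent conditions on $T_{\mathcal F_2}\mathcal M$; or
\item for $(u,v)$ only, the fact that $\pi^\uparrow\colon\mathfrak Y\to\mathfrak Z_{(u,v)}$ is an isomorphism over $u\neq v$ with $\mathfrak Y$ smooth.
\end{itemize}
This step cannot be skipped. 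In rank one, the locus $\{u\neq v,\ \mathcal F_1\text{ not locally free at }u\}$ is already a divisor of $\mathfrak Z_{(u,v)}$ along which the fibres of $\pi_-$ jump.

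In short, the missing ingredient is a genuine local analysis proving generic smoothness along \emph{every} codimension-one stratum where the fibres of $\pi_-$ jump, both on and off the diagonal. One way is to exhibit $\mathfrak Z_\lambda$ locally as a hypersurface $\sum_i f_i z_i=0$ in (base)$\times\mathbb P^{r-1+k}$, with the $f_i$ generating the reduced ideal of the jump locus. The $j=0$ tangent computation you promise on the collision divisor is one instance of this, and it is also only asserted.

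One smaller point: the projective-bundle step works only for $\pi_-$. The fibres of $\pi_+$ are $\mathbb P\operatorname{Ext}^1(\mathbb C_v,\mathcal F_{n-1})$, which are empty wherever $\mathcal F_{n-1}$ is locally free at $v$. So $\pi_+$ has image a proper closed subset and is a projective bundle over no open subset of its base.
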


We next introduce the quadruple moduli spaces $\mathfrak{Y}$, $\mathfrak{Y}_-$, $\mathfrak{Y}_+$, and $\mathfrak{Y}_{-+}$. These parametrize diagrams of stable sheaves of the forms
\begin{equation}
    \label{equation_diagram_Y}
    \begin{tikzcd}[sep=small]
	& {\mathcal{J}_1} & \\
	{\mathcal{J}_0} && {\mathcal{J}_2} \\
	& {\mathcal{J}_1'}
	\arrow["v", hook, from=1-2, to=2-3]
	\arrow["u", hook, from=2-1, to=1-2]
	\arrow["v"', hook, from=2-1, to=3-2]
	\arrow["u"', hook, from=3-2, to=2-3]
\end{tikzcd}
\end{equation}
\begin{equation}
    \label{equation_diagram_Y_minus}
    \begin{tikzcd}[sep=small]
	& {\mathcal{J}_1} && \\
	{\mathcal{J}_0} && {\mathcal{J}_2} & {\mathcal{J}_3} \\
	& {\mathcal{J}_1'}
	\arrow["v", hook, from=1-2, to=2-3]
	\arrow["u", hook, from=2-1, to=1-2]
	\arrow["v"', hook, from=2-1, to=3-2]
	\arrow["u", hook, from=2-3, to=2-4]
	\arrow["u"', hook, from=3-2, to=2-3]
\end{tikzcd}
\end{equation}
\begin{equation}
    \label{equation_diagram_Y_plus}
    \begin{tikzcd}[sep=small]
	&& {\mathcal{J}_2} & \\
	{\mathcal{J}_0} & {\mathcal{J}_1} && {\mathcal{J}_3} \\
	&& {\mathcal{J}_2'}
	\arrow["v", hook, from=1-3, to=2-4]
	\arrow["u", hook, from=2-1, to=2-2]
	\arrow["u", hook, from=2-2, to=1-3]
	\arrow["v"', hook, from=2-2, to=3-3]
	\arrow["u"', hook, from=3-3, to=2-4]
\end{tikzcd}
\end{equation}
\begin{equation}
    \label{equation_diagram_Y_minus_plus}
    \begin{tikzcd}[sep=small]
	&& {\mathcal{J}_2} && \\
	{\mathcal{J}_0} & {\mathcal{J}_1} && {\mathcal{J}_3} & {\mathcal{J}_4} \\
	&& {\mathcal{J}_2'}
	\arrow["v", hook, from=1-3, to=2-4]
	\arrow["u", hook, from=2-1, to=2-2]
	\arrow["u", hook, from=2-2, to=1-3]
	\arrow["v"', hook, from=2-2, to=3-3]
	\arrow["u", hook, from=2-4, to=2-5]
	\arrow["u"', hook, from=3-3, to=2-4]
\end{tikzcd}
\end{equation}
respectively. Each inclusion is colength $1$ and supported at the indicated point. As in Remark \ref{remark_moduli_spaces_closed_points}, we describe only closed points. The functorial interpretation is standard.

Each of these spaces carries tautological line bundles $\mathcal{L}_{i-1,i}$ with fibers $\Gamma(S, \mathcal{J}_i / \mathcal{J}_{i-1})$. Where applicable, there are also primed versions $\mathcal{L}'_{i-1,i}$. They satisfy $\mathcal{L}_{0,1} \mathcal{L}_{1,2} = \mathcal{L}'_{0,1} \mathcal{L}'_{1,2}$ on $\mathfrak{Y}$ and $\mathfrak{Y}_-$. On $\mathfrak{Y}_+$ and $\mathfrak{Y}_{-+}$, we have $\mathcal{L}_{1,2} \mathcal{L}_{2,3} = \mathcal{L}'_{1,2} \mathcal{L}'_{2,3}.$

There are natural morphisms
\begin{equation}
    \label{equation_definition_morphisms_uparrow}
    \begin{tikzcd}[row sep=small]
	{\mathfrak{Y}} & {\mathfrak{Y}_-} & {\mathfrak{Y}_+} & {\mathfrak{Y}_{-+}} \\
	{\mathfrak{Z}_{(u,v)}} & {\mathfrak{Z}_{(u,v,u)}} & {\mathfrak{Z}_{(u,u,v)}} & {\mathfrak{Z}_{(u,u,v,u)}}
	\arrow["{\pi^\uparrow}"', from=1-1, to=2-1]
	\arrow["{\pi^\uparrow}"', from=1-2, to=2-2]
	\arrow["{\pi^\uparrow}"', from=1-3, to=2-3]
	\arrow["{\pi^\uparrow}"', from=1-4, to=2-4]
\end{tikzcd}
\end{equation}
remembering the middle and top parts, and
\begin{equation}
    \label{equation_definition_morphisms_downarrow}
    \begin{tikzcd}[row sep=small]
	{\mathfrak{Y}} & {\mathfrak{Y}_-} & {\mathfrak{Y}_+} & {\mathfrak{Y}_{-+}} \\
	{\mathfrak{Z}_{(v,u)}} & {\mathfrak{Z}_{(v,u,u)}} & {\mathfrak{Z}_{(u,v,u)}} & {\mathfrak{Z}_{(u,v,u,u)}}
	\arrow["{\pi^\downarrow}"', from=1-1, to=2-1]
	\arrow["{\pi^\downarrow}"', from=1-2, to=2-2]
	\arrow["{\pi^\downarrow}"', from=1-3, to=2-3]
	\arrow["{\pi^\downarrow}"', from=1-4, to=2-4]
\end{tikzcd}
\end{equation}
remembering the middle and bottom parts. 

\begin{comment}
The following proposition describes these morphisms projectively.

\begin{proposition}[\cite{negut_hecke_correspondences}, Proposition 2.37]
    \label{proposition_projective_bundle_description_Y}
    There exists a locally free sheaf $\mathcal{E}$ on $\mathfrak{Z}_{(u,v)}$ (respectively on $\mathfrak{Z}_{(v,u)}$) such that $\pi^\uparrow$ (respectively $\pi^\downarrow$) factors as
\[\begin{tikzcd}
	{\mathfrak{Y}} & {\mathbb{P}_{\mathfrak{Z}_{(u,v)}} \left( \mathcal{E} \right)} && {\mathfrak{Y}} & {\mathbb{P}_{\mathfrak{Z}_{(v,u)}} \left( \mathcal{E} \right)} \\
	& {\mathfrak{Z}_{(u,v)}} &&& {\mathfrak{Z}_{(v,u)}}
	\arrow["{i^\uparrow}", hook, from=1-1, to=1-2]
	\arrow["{\pi^\uparrow}"', from=1-1, to=2-2]
	\arrow["{\rho^\uparrow}", from=1-2, to=2-2]
	\arrow["{i^\downarrow}", from=1-4, to=1-5]
	\arrow["{\pi^\downarrow}"', from=1-4, to=2-5]
	\arrow["{\rho^\downarrow}", from=1-5, to=2-5]
\end{tikzcd}\]
    with the embeddings cut out by compositions of maps as in \eqref{equation_definition_closed_embedding}. The same holds for $\mathfrak{Y}_-$, $\mathfrak{Y}_+$, and $\mathfrak{Y}_{-+}$ with the obvious substitutions.
\end{proposition}

Together with Proposition \ref{proposition_projective_bundle_description}, this establishes the representability of the quadruple spaces. 
\end{comment}

The following vanishing locus result will be essential in establishing the relations of Section \ref{subsection_relations}.

\begin{proposition}[\cite{negut_hecke_correspondences}, Proposition 2.39]
    \label{proposition_vanishing_locus_line_bundle_map}
    On $\mathfrak{Y}$, the composition
    $
    \mathcal{F}_1/\mathcal{F}_0 \hookrightarrow \mathcal{F}_2/\mathcal{F}_0 \twoheadrightarrow \mathcal{F}_2/\mathcal{F}_1'
    $
    induces a map of line bundles $\mathcal{L}_{0,1} \rightarrow \mathcal{L}_{1,2}'$. Its zero locus is
    $$
    \Big\{ \mathcal{F}_1 = \mathcal{F}_1', u = v \Big\} \hookrightarrow \mathfrak{Y},
    $$
    which is isomorphic to $\mathfrak{Z}_{(u,u)}$. Analogous statements hold for $\mathfrak{Y}_-$, $\mathfrak{Y}_+$, and $\mathfrak{Y}_{-+}$.
\end{proposition}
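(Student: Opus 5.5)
The statement to prove is Proposition~\ref{proposition_vanishing_locus_line_bundle_map}. The plan is to work fibrewise over a closed point of $\mathfrak{Y}$, identify the zero locus set-theoretically, and then check the scheme structure.

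\textbf{Defining the map.} Take a point $(\mathcal{F}_0 \subset_u \mathcal{F}_1 \subset_v \mathcal{F}_2,\ \mathcal{F}_0 \subset_v \mathcal{F}_1' \subset_u \mathcal{F}_2)$ of $\mathfrak{Y}$. Then $\mathcal{F}_1/\mathcal{F}_0 \cong \mathbb{C}_u$ is a subsheaf of the length-$2$ sheaf $\mathcal{F}_2/\mathcal{F}_0$, and $\mathcal{F}_2/\mathcal{F}_1' \cong \mathbb{C}_u$ is a quotient of it. Globally, the composition $\mathcal{U}_1/\mathcal{U}_0 \to \mathcal{U}_2/\mathcal{U}_0 \to \mathcal{U}_2/\mathcal{U}_1'$ is a map between sheaves supported on the graph of $u$. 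By \eqref{equation_short_exact_sequence_universal_sheaves_1}, both sheaves are $\Gamma_*$ of the respective tautological line bundles, so pushing forward to $\mathfrak{Y}$ yields $\mathcal{L}_{0,1} \to \mathcal{L}'_{1,2}$.

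\textbf{Vanishing locus on closed points.} The map vanishes at a point exactly when $\mathcal{F}_1/\mathcal{F}_0 \subset \mathcal{F}_1'/\mathcal{F}_0$, that is, when $\mathcal{F}_1 \subset \mathcal{F}_1'$. Both sheaves have colength $1$ in $\mathcal{F}_2$, so this forces $\mathcal{F}_1 = \mathcal{F}_1'$. Comparing the supports of $\mathcal{F}_1/\mathcal{F}_0$ (which is $u$) and $\mathcal{F}_1'/\mathcal{F}_0$ (which is $v$) then gives $u = v$. Conversely, if $\mathcal{F}_1 = \mathcal{F}_1'$ and $u=v$, the composition is clearly zero. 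The forgetful map $(\mathcal{F}_0 \subset \mathcal{F}_1 \subset \mathcal{F}_2) \mapsto$ diagram with $\mathcal{F}_1'=\mathcal{F}_1$ embeds $\mathfrak{Z}_{(u,u)}$, and on closed points it is a bijection onto the zero locus.

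\textbf{Scheme-theoretic identification (main obstacle).} Set-theoretic agreement is not enough; I also need the zero scheme $Z(s)$ to equal the image of $\mathfrak{Z}_{(u,u)}$ scheme-theoretically. I would argue functorially: over a test scheme $T$ mapping into $Z(s)$, the vanishing of $s$ means $\mathcal{U}_1 \subset \mathcal{U}_1'$ as flat families. Both are kernels of surjections from $\mathcal{U}_2$ onto $\Gamma_*$ of line bundles, so the induced quotient map $\Gamma^u_*\mathcal{L}'_{1,2}\to\Gamma^v_*\mathcal{L}_{1,2}$ is a surjection of line bundles on graphs. This forces $\Gamma^u=\Gamma^v$ scheme-theoretically and an isomorphism, hence $\mathcal{U}_1=\mathcal{U}_1'$ and $u=v$ on $T$. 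The natural map $\mathfrak{Z}_{(u,u)} \to Z(s)$ therefore represents the same functor and is an isomorphism.

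An alternative route is dimension counting: $\mathfrak{Y}$ is smooth and $\mathfrak{Z}_{(u,u)}$ is smooth of codimension one, by Proposition~\ref{proposition_nested_Hilbert_schemes_geometric_properties}. It would then suffice to check reducedness of $Z(s)$ at one point, using a local computation in the projective bundle charts of Proposition~\ref{proposition_projective_bundle_description}.

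\textbf{Other quadruple spaces.} For $\mathfrak{Y}_-$, $\mathfrak{Y}_+$ and $\mathfrak{Y}_{-+}$, the extra sheaves $\mathcal{F}_3, \mathcal{F}_4$ or the shifted indices do not interact with the square. The same argument applies verbatim, giving the zero loci $\mathfrak{Z}_{(u,u,u)}$, $\mathfrak{Z}_{(u,u,u)}$ and $\mathfrak{Z}_{(u,u,u,u)}$ respectively.
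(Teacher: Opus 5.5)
Your proof is correct. The paper does not prove this proposition itself (it quotes \cite{negut_hecke_correspondences}, Proposition 2.39), so your argument stands as a self-contained proof: a closed-point check, followed by a functor-of-points identification of the zero scheme $Z(s)$ with $\mathfrak{Z}_{(u,u)}$. The scheme-theoretic part is the one that matters downstream. Since $\mathfrak{Y}$ is smooth and irreducible and $s$ is nonzero on $\{u \neq v\}$, $Z(s)$ is an effective Cartier divisor. Your identification then gives $c_1(\mathcal{L}'_{1,2}) - c_1(\mathcal{L}_{0,1}) = j_*1$, which is the input to the telescoping sums.

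One step needs correcting. From $\mathcal{U}_1 \subseteq \mathcal{U}_1'$ the induced surjection is $\Gamma^v_*\mathcal{L}_{1,2} = \mathcal{U}_2/\mathcal{U}_1 \twoheadrightarrow \mathcal{U}_2/\mathcal{U}_1' = \Gamma^u_*\mathcal{L}'_{1,2}$, not the reverse. With the direction fixed, the argument goes as follows:
\begin{enumerate}[i)]
\item Comparing annihilators gives $I_{\Gamma^v} \subseteq I_{\Gamma^u}$, i.e.\ $\Gamma^u \subseteq \Gamma^v$.
\item Both graphs project isomorphically onto $T$, so they coincide and $u = v$ on $T$.
\item By full faithfulness of $\Gamma^u_*$, the surjection is $\Gamma^u_*$ of a surjection of invertible sheaves on $T$, hence an isomorphism.
\item Its kernel $\mathcal{U}_1'/\mathcal{U}_1$ therefore vanishes, so $\mathcal{U}_1 = \mathcal{U}_1'$.
\end{enumerate}

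On your alternative dimension-counting route: smoothness of $\mathfrak{Y}$ comes from Proposition~\ref{proposition_moduli_spaces_Y_geometric_properties}, not Proposition~\ref{proposition_nested_Hilbert_schemes_geometric_properties}. You only sketched it, and as stated it relies on smoothness, so it covers $\mathfrak{Y}$. The other three spaces would need extra input. For the lci spaces $\mathfrak{Y}_\pm$, you would use Cohen--Macaulayness to rule out embedded components. For $\mathfrak{Y}_{-+}$, you would also need a check on each of the two components of $\mathfrak{Z}_{(u,u,u,u)}$. The functorial argument needs none of this and transfers verbatim to all four spaces, so it is the one to keep.
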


The properties of the quadruple spaces are as follows.

\begin{proposition}[\cite{negut_hecke_correspondences}, Propositions 2.41--2.43]
    \label{proposition_moduli_spaces_Y_geometric_properties}
    The following hold.
    \begin{enumerate}[i)]
        \item $\mathfrak{Y}$ is smooth and irreducible.
        \item $\mathfrak{Y}_-$ and $\mathfrak{Y}_+$ are locally complete intersection and irreducible.
        \item $\mathfrak{Y}_{-+}$ is locally complete intersection and has two irreducible components.
    \end{enumerate}
\end{proposition}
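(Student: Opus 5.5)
We work throughout with the projective-bundle presentations of Proposition \ref{proposition_projective_bundle_description} and with the dimension theory already recorded for the nested spaces in Proposition \ref{proposition_nested_Hilbert_schemes_geometric_properties}. The basic principle is the following. Suppose a scheme is the zero locus of a section of a vector bundle on a projective bundle over an lci (respectively smooth) base. If its dimension equals the expected dimension, then it is lci. If, in addition, every stratum we are forced to add has dimension strictly less than the expected one, then the whole space is the closure of its generic open piece. In that case it is irreducible, provided that piece is.

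\textbf{Step 1: $\mathfrak{Y}$.} Consider $\pi^\uparrow:\mathfrak{Y}\to\mathfrak{Z}_{(u,v)}$. The datum $\mathcal{J}_1'$ is a sheaf with $\mathcal{J}_0\subset_v\mathcal{J}_1'\subset_u\mathcal{J}_2$, that is, a line in $\mathcal{J}_2/\mathcal{J}_0$ of the appropriate type. This realizes $\mathfrak{Y}$ as a closed subscheme of a projectivization $\mathbb{P}_{\mathfrak{Z}_{(u,v)}}(\mathcal{E})$. Here $\mathcal{E}$ is built from $\Gamma^{v*}\mathcal{V}$ of the relevant universal sheaf, and $\mathfrak{Y}$ is cut out by a section of the type \eqref{equation_definition_closed_embedding}. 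Over the open set $u\neq v$ we have $\mathcal{J}_2/\mathcal{J}_0\cong\mathbb{C}_u\oplus\mathbb{C}_v$, so $\mathcal{J}_1'$ is unique and $\pi^\uparrow$ is an isomorphism there. Since $\mathfrak{Z}_{(u,v)}$ is irreducible, this piece is irreducible of dimension $\dim\mathcal{M}+4$.

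Over $u=v$, the fibre is a projective space. Its dimension is governed by $\dim\operatorname{Hom}(\mathbb{C}_u,\mathcal{J}_2/\mathcal{J}_0)$ together with the choices in $\mathfrak{Z}_{(u,u)}$. Using the stratification of $\mathfrak{Z}_{(u,u)}$ by whether $\mathcal{J}_2/\mathcal{J}_0$ is $\mathbb{C}_u^{\oplus 2}$ or a length-$2$ curvilinear quotient, together with the dimension estimates behind Proposition \ref{proposition_nested_Hilbert_schemes_geometric_properties}, we show this locus has dimension at most $\dim\mathcal{M}+3$. It is a divisor, and it is exactly the locus of Proposition \ref{proposition_vanishing_locus_line_bundle_map}. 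Hence $\mathfrak{Y}$ is lci of the expected dimension and irreducible. For smoothness, we identify $\mathfrak{Y}$ with the blow-up of $\mathfrak{Z}_{(u,v)}$ along the (lci) locus $\{u=v\}$. Alternatively, we check directly that the tangent space, computed via $\operatorname{Ext}$ groups of the flag, has constant dimension $\dim\mathcal{M}+4$; the input here is that $\operatorname{Ext}^2(\mathcal{J}_i,\mathcal{J}_j)$ vanishes by Assumption S and stability.

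\textbf{Step 2: $\mathfrak{Y}_\pm$ and $\mathfrak{Y}_{-+}$.} The extra sheaf $\mathcal{J}_3$ in $\mathfrak{Y}_-$ is obtained by one more Hecke step supported at the already recorded point $u$. Forgetting it gives $\mathfrak{Y}_-\to\mathfrak{Y}$, which by Proposition \ref{proposition_projective_bundle_description} (the case $\#=0$) embeds $\mathfrak{Y}_-$ into $\mathbb{P}_{\mathfrak{Y}}(\Gamma^{u*}(\mathcal{W}_2^\vee\otimes\omega_S))$ as the zero locus of a section of $\rho^*\Gamma^{u*}(\mathcal{V}_2^\vee\otimes\omega_S)^\vee\otimes\mathcal{O}(1)$. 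The expected dimension is $\dim\mathfrak{Y}+1$. The case of $\mathfrak{Y}_+$ is symmetric, using $\pi_-$ and adding $\mathcal{J}_0$, and $\mathfrak{Y}_{-+}$ applies both constructions.

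Over $u\neq v$, these spaces are locally $\mathfrak{Z}_{(u,u)}\times\mathfrak{Z}_{(v)}$-type products, which are irreducible of the expected dimension. Over $u=v$, we bound the dimension using the fibre of $\pi^\downarrow$ or $\pi^\uparrow$ into $\mathfrak{Z}_{(u,v,u)}$, $\mathfrak{Z}_{(u,u,v)}$ and similar spaces, together with the known dimensions of $\mathfrak{Z}_{(u,u,u)}$ and $\mathfrak{Z}_{(u,u,u,u)}$. For $\mathfrak{Y}_\pm$, the $u=v$ locus should have dimension strictly smaller than expected, giving lci and irreducibility.

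For $\mathfrak{Y}_{-+}$, the $u=v$ locus contains a piece of full expected dimension. This piece maps onto the extra component of $\mathfrak{Z}_{(u,u,u,u)}$ from Proposition \ref{proposition_nested_Hilbert_schemes_geometric_properties} iv), namely flags where the quotients are all $\mathbb{C}_u$ split. We check that this piece is irreducible and is not contained in the closure of the $u\neq v$ part, for instance because a generic point has too large a fibre of $\pi^\uparrow$. This yields exactly two components, and lci still holds by the dimension count.

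\textbf{Main obstacle.} The main obstacle is the dimension estimates over the diagonal $u=v$. These require bounding the loci in $\mathcal{M}$ where $\dim\operatorname{Hom}(\mathcal{F},\mathbb{C}_u)$ or $\dim\operatorname{Ext}^1(\mathbb{C}_u,\mathcal{F})$ jumps, combined with the fibre dimensions of the projective bundles. I would first reduce all such estimates to those already underlying Proposition \ref{proposition_nested_Hilbert_schemes_geometric_properties}. The reduction uses the fact that the loci in question map with controlled fibres onto $\mathfrak{Z}_{(u,u,u)}$ and $\mathfrak{Z}_{(u,u,u,u)}$.
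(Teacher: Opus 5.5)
The paper gives no proof of this proposition; it quotes it from Neguţ. Your outline has the right general shape: zero loci in projectivizations, expected dimension giving lci, and small diagonal strata giving irreducibility. However, the steps that carry the content are missing or incorrect.

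\textbf{Smoothness of $\mathfrak{Y}$ is not established.} The blow-up route does not work.
\begin{itemize}
\item Set-theoretically, $\{u=v\}\subset\mathfrak{Z}_{(u,v)}$ is $\mathfrak{Z}_{(u,u)}$, which has codimension one.
\item If it were regularly embedded it would be Cartier, and the blow-up would be an isomorphism.
\item But $\pi^\uparrow$ has $\mathbb{P}^1$-fibres over the points with $u=v$ and $\mathcal{J}_2/\mathcal{J}_0\cong\mathbb{C}_u^{\oplus 2}$.
\end{itemize}
So at best $\mathfrak{Y}$ is the blow-up of the merely lci scheme $\mathfrak{Z}_{(u,v)}$ along a non-Cartier Weil divisor. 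Nothing formal makes such a blow-up smooth; proving smoothness is exactly the content you still owe.

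The tangent-space alternative is only asserted, and its stated input is false. When $\omega_S\cong\mathcal{O}_S$, Serre duality gives $\operatorname{Ext}^2(\mathcal{J}_i,\mathcal{J}_j)\cong\operatorname{Hom}(\mathcal{J}_j,\mathcal{J}_i)^\vee\neq 0$ whenever $\mathcal{J}_j\subseteq\mathcal{J}_i$; for example $\operatorname{Ext}^2(\mathcal{J}_2,\mathcal{J}_0)$ contains the dual of the inclusion. Assumption S and stability only kill $\operatorname{Ext}^2(\mathcal{J}_i,\mathcal{J}_j)$ when $\mathcal{J}_i\subsetneq\mathcal{J}_j$. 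The deformation computation at points with $u=v$ and $\mathcal{J}_1\neq\mathcal{J}_1'$ is therefore missing.

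Relatedly, $\{u=v\}\subset\mathfrak{Y}$ is not ``exactly'' the divisor of Proposition~\ref{proposition_vanishing_locus_line_bundle_map}. It also contains the locus where $\mathcal{J}_1\neq\mathcal{J}_1'$.

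\textbf{The lci argument for $\mathfrak{Y}_\pm$ fails as written.} By the dimension formula,
$\dim\mathfrak{Y}_-=\dim\mathfrak{Z}_{(u,v,u)}=\dim\mathfrak{Y}-r$, not $\dim\mathfrak{Y}+1$. Inside the $\mathbb{P}^{w-1}$-bundle over $\mathfrak{Y}$, $\mathfrak{Y}_-$ therefore has codimension $v-1$. Your section lives in a rank-$v$ bundle, so it is never regular.

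The reason is that $\mathcal{J}_2$ is the larger sheaf of $\mathcal{J}_1'\subset_u\mathcal{J}_2$. Hence the functional $\Gamma^{u*}\mathcal{V}_2\to\Gamma^{u*}\mathcal{U}_2\to\mathcal{L}'_{1,2}$ annihilates the image of $\Gamma^{u*}\mathcal{W}_2$, and one component of your cosection vanishes identically. Symmetrically, for $\mathfrak{Y}_+$ the sheaf $\mathcal{J}_1$ is the smaller sheaf of $\mathcal{J}_1\subset_u\mathcal{J}_2$. There you need exactly the quotient by $\mathcal{L}_{1,2}\otimes\omega_S$ used in Section~\ref{section_commutator_E_F}.

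Only after passing to these rank-$(v-1)$ (respectively $(w-1)$) bundles does ``expected dimension $\Rightarrow$ lci'' apply. Even then you still need the dimension bounds over $u=v$, which you only state as what ``should'' hold.

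\textbf{The extra component of $\mathfrak{Y}_{-+}$ is misidentified.} The locus where all quotients split is much too small. The full-dimensional piece is $\{u=v,\ \mathcal{J}_3/\mathcal{J}_1\cong\mathbb{C}_u^{\oplus 2}\}$, with $\mathcal{J}_2'$ moving in a $\mathbb{P}^1$. It lies over the component of $\mathfrak{Z}_{(u,u,u,u)}$ on which the middle quotient $\mathcal{F}_3/\mathcal{F}_1$ splits.

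\textbf{Minor dimension error.} $\dim\mathfrak{Y}=\dim\mathcal{M}_{(r,c_1,c_2(\mathcal{J}_2))}+2r+2$. This equals your ``$\dim\mathcal{M}+4$'' only when $r=1$.
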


\begin{proposition}[\cite{negut_hecke_correspondences}, Proposition 2.44]
    \label{proposition_moduli_spaces_Y_reduced}
    The schemes $\mathfrak{Y}$, $\mathfrak{Y}_-$, $\mathfrak{Y}_+$, and $\mathfrak{Y}_{-+}$ are reduced.
\end{proposition}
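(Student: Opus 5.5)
The statement just before this point is Proposition \ref{proposition_moduli_spaces_Y_reduced}: the quadruple spaces $\mathfrak{Y}$, $\mathfrak{Y}_-$, $\mathfrak{Y}_+$ and $\mathfrak{Y}_{-+}$ are reduced. The plan is Serre's criterion. A scheme is reduced if and only if it satisfies $(R_0)$ and $(S_1)$. By Proposition \ref{proposition_moduli_spaces_Y_geometric_properties} each of the four spaces is a local complete intersection, hence Cohen--Macaulay, so $(S_1)$ holds automatically. For $\mathfrak{Y}$ there is nothing further to do, since it is smooth.

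It remains to show $(R_0)$ for the other three spaces: each should be generically smooth, i.e.\ reduced at the generic point of every irreducible component.

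For $\mathfrak{Y}_-$, $\mathfrak{Y}_+$ and $\mathfrak{Y}_{-+}$ I would work with the projections $\pi^\uparrow$ and $\pi^\downarrow$ of \eqref{equation_definition_morphisms_uparrow}--\eqref{equation_definition_morphisms_downarrow}. Over the open locus $u \neq v$, a Hecke modification at $u$ and one at $v$ are independent. So the diagram \eqref{equation_diagram_Y_minus} is determined by its top row, and $\pi^\uparrow$ restricts to an isomorphism onto the open locus $\{u\neq v\}$ of $\mathfrak{Z}_{(u,v,u)}$ (and likewise for the other cases).

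On the base side, Proposition \ref{proposition_nested_Hilbert_schemes_normality} says $\mathfrak{Z}_{(u,v,u)}$, $\mathfrak{Z}_{(u,u,v)}$ and $\mathfrak{Z}_{(u,u,v,u)}$ are normal, hence reduced. Therefore the locus $\{u\neq v\}$ of each of $\mathfrak{Y}_-$, $\mathfrak{Y}_+$, $\mathfrak{Y}_{-+}$ is reduced.

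The task then reduces to showing that no irreducible component lies entirely over the diagonal $u=v$. For this I would use dimension counts. The lci property gives every component at least the expected dimension, namely the dimension of the $u\neq v$ part. On the other hand, I expect the locus $u=v$ to have strictly smaller dimension. To check this, stratify it by whether $\mathcal{J}_1=\mathcal{J}_1'$ (respectively $\mathcal{J}_2=\mathcal{J}_2'$), using Proposition \ref{proposition_vanishing_locus_line_bundle_map}.
\begin{itemize}
\item On the equality stratum, the locus is a copy of $\mathfrak{Z}_{(u,u,u)}$ or $\mathfrak{Z}_{(u,u,u,u)}$, whose dimension is known from the lci structure.
\item On the complement, the diagram is determined by the outer sheaves, up to a point of a projective line's worth of data.
\end{itemize}

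The main obstacle is this dimension estimate for $\mathfrak{Y}_{-+}$, which has two irreducible components. I need both components to meet $\{u\neq v\}$, so I must rule out a component supported on the diagonal. I would try to identify the two components explicitly: one should be the closure of $\{u\neq v\}$, and the other should come from the second component of $\mathfrak{Z}_{(u,u,u,u)}$. If the latter lies over $u=v$, then genericity fails. In that case I would instead check reducedness there directly: show the tangent space at a generic point of that component has the expected dimension, using the projective bundle description of Proposition \ref{proposition_projective_bundle_description} and the fact that the section cutting it out is regular.
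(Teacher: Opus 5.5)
The paper gives no argument for this statement; it quotes Neguţ's Proposition 2.44. Your framework is sound: reduced $\Leftrightarrow$ $(R_0)+(S_1)$, lci $\Rightarrow$ Cohen--Macaulay, and over $\{u\neq v\}$ the maps $\pi^\uparrow$ identify the spaces with open subsets of normal $\mathfrak{Z}$'s. This settles $\mathfrak{Y}$, $\mathfrak{Y}_-$ and $\mathfrak{Y}_+$. For $\mathfrak{Y}_\pm$ you need no dimension count at all: they are irreducible (Proposition~\ref{proposition_moduli_spaces_Y_geometric_properties}), so the unique generic point already lies in $\{u\neq v\}$. Note also that lci by itself gives no lower bound on component dimensions; that comes from Proposition~\ref{proposition_dimension_quadruple_stable}.

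The genuine gap is $\mathfrak{Y}_{-+}$, and there your main reduction cannot work. The map $\pi^\uparrow$ identifies $\mathfrak{Y}_{-+}\cap\{u\neq v\}$ with an open subset of $\mathfrak{Z}_{(u,u,v,u)}$, which is irreducible by Proposition~\ref{proposition_nested_Hilbert_schemes_geometric_properties}. So on each connected component only one of the two components of $\mathfrak{Y}_{-+}$ can meet $\{u\neq v\}$, and the other lies entirely over $u=v$.

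Your expectation that $\{u=v\}$ is small comes from an undercount. On the stratum $\{u=v,\ \mathcal{J}_2\neq\mathcal{J}_2'\}$ one has $\mathcal{J}_3/\mathcal{J}_1\cong\mathbb{C}_u^{\oplus 2}$. Hence $(\mathcal{J}_2,\mathcal{J}_2')$ ranges over $\mathbb{P}^1\times\mathbb{P}^1$ minus the diagonal, not over a single $\mathbb{P}^1$. Moreover $\dim \mathcal{J}_1|_u$ jumps from $r+1$ to $r+2$, so $\mathcal{J}_0$ ranges over a $\mathbb{P}^{r+1}$. With $\mathcal{J}_4$ locally free at $u$, the count beyond $\dim\mathcal{M}_{(r,c_1,c_2(\mathcal{J}_4))}$ is $2+(r-1)+2(r-1)+2+(r+1)=4r+2$, which is exactly the expected dimension. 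So the second component is the closure of this stratum. It is not the copy of $\mathfrak{Z}_{(u,u,u,u)}$ from Proposition~\ref{proposition_vanishing_locus_line_bundle_map}, which is a divisor of one dimension less.

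Consequently all the content of the statement for $\mathfrak{Y}_{-+}$ sits in your fallback, which is not carried out, and the tool you name cannot do it. Regularity of the section cutting out $\mathfrak{Y}_{-+}$ only gives lci and the expected dimension, which you already have. It says nothing about the rank of the differential of that section at points of the second component, and that rank is exactly what decides reducedness there.

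What is missing is a first-order computation at a general point $p$ of this component showing $\dim T_p\mathfrak{Y}_{-+}=\dim\mathfrak{Y}_{-+}$. One way to organize it:
\begin{enumerate}
\item Forgetting $\mathcal{J}_0$ presents $\mathfrak{Y}_{-+}$ as the projectivization of $\Gamma^{u*}\mathcal{U}_1$ over a copy of $\mathfrak{Y}_-$, as in Proposition~\ref{proposition_projective_bundle_description} with $\#=0$.
\item The second component lies over the codimension-one locus $\Sigma=\{u=v,\ \mathcal{J}_2\neq\mathcal{J}_2'\}$ of that base, where the fiber dimension jumps.
\item If the base is smooth at the generic point of $\Sigma$, then locally $\mathfrak{Y}_{-+}$ is a hypersurface $\{f^k s=0\}$ in a $\mathbb{P}^{r+1}$-bundle. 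Here $f$ is a local equation of $\Sigma$ and $s$ is a fiberwise linear form cutting out the main component.
\item You must show $k=1$, i.e.\ that the jump of $\dim\mathcal{J}_1|_u$ across $\Sigma$ happens only to first order.
\end{enumerate}
Until such a deformation computation is supplied, the argument does not prove that $\mathfrak{Y}_{-+}$ is reduced.
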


Finally, we record the birationality of the maps $\pi^\uparrow$ and $\pi^\downarrow$.

\begin{proposition}
    \label{proposition_birationality_uparrow_downarrow}
    The morphisms \eqref{equation_definition_morphisms_uparrow} and \eqref{equation_definition_morphisms_downarrow} are birational.
\end{proposition}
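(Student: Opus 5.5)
The plan is to exhibit, for each of the four morphisms, a dense open subset of the source over which the morphism is an isomorphism onto a dense open subset of the target. Since all sources and targets are irreducible (or have the stated components), this proves birationality. I take $\pi^\uparrow:\mathfrak{Y}\to\mathfrak{Z}_{(u,v)}$ as the model case; the others follow the same pattern with the extra flag steps carried along.

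Let $U\subset\mathfrak{Z}_{(u,v)}$ be the open locus where $u\neq v$. Given $\mathcal{F}_0\subset_u\mathcal{F}_1\subset_v\mathcal{F}_2$ with $u\neq v$, the quotient $\mathcal{F}_2/\mathcal{F}_0$ is a length-$2$ sheaf supported at two distinct points. It therefore splits canonically as $\mathbb{C}_u\oplus\mathbb{C}_v$. Consequently there is a unique $\mathcal{F}_1'$ with $\mathcal{F}_0\subset_v\mathcal{F}_1'\subset_u\mathcal{F}_2$, namely the preimage of the $v$-summand. By Proposition \ref{proposition_Hecke_stable}, $\mathcal{F}_1'$ is stable. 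Doing this in families (the splitting of $\mathcal{U}_2/\mathcal{U}_0$ over the locus where the two graphs are disjoint) gives a section $U\to\mathfrak{Y}$. This section is inverse to $\pi^\uparrow$ over $U$, so $(\pi^\uparrow)^{-1}(U)\to U$ is an isomorphism.

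Density comes next. In the target, $U$ is nonempty and open, and $\mathfrak{Z}_{(u,v)}$ is irreducible by Proposition \ref{proposition_nested_Hilbert_schemes_geometric_properties}, so $U$ is dense. In the source, $(\pi^\uparrow)^{-1}(U)$ is nonempty and open, and $\mathfrak{Y}$ is irreducible by Proposition \ref{proposition_moduli_spaces_Y_geometric_properties}. Using reducedness (Proposition \ref{proposition_moduli_spaces_Y_reduced}), this gives a birational morphism. The same reasoning applies to $\pi^\downarrow:\mathfrak{Y}\to\mathfrak{Z}_{(v,u)}$ by symmetry.

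For $\mathfrak{Y}_-$, $\mathfrak{Y}_+$ and $\mathfrak{Y}_{-+}$, I restrict again to $u\neq v$. The additional flag steps $\mathcal{F}_2\subset_u\mathcal{F}_3$ or $\mathcal{F}_0\subset_u\mathcal{F}_1$ are remembered by both $\pi^\uparrow$ and $\pi^\downarrow$. The missing sheaf (either $\mathcal{J}_1'$, $\mathcal{J}_1$, $\mathcal{J}_2'$ or $\mathcal{J}_2$) is then reconstructed uniquely by the same splitting argument, applied to the length-$2$ quotient supported at $\{u,v\}$.

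The main obstacle is $\mathfrak{Y}_{-+}$ and its target $\mathfrak{Z}_{(u,u,v,u)}$ (resp. $\mathfrak{Z}_{(u,v,u,u)}$), because $\mathfrak{Y}_{-+}$ has two irreducible components. Here birationality must be understood componentwise, and I have to check that each component meets the locus $u\neq v$. I would argue by dimension, using that $\mathfrak{Y}_{-+}$ is lci, hence equidimensional, of expected dimension $\dim\mathfrak{Z}_{(u,u,v,u)}$. The closed locus $\{u=v\}$ has strictly smaller dimension, by the dimension estimates for nested spaces with all points equal. Hence it cannot contain a component, so the open set $\{u\neq v\}$ is dense in $\mathfrak{Y}_{-+}$. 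Its image is dense in the irreducible target, which is irreducible by Proposition \ref{proposition_nested_Hilbert_schemes_geometric_properties}. If the dimension count is delicate, the fallback is to identify the two components explicitly (one generically with $u\neq v$, the other lying over $u=v$) and interpret the claim as birationality on the main component.
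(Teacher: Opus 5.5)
For $\mathfrak{Y}$, $\mathfrak{Y}_-$ and $\mathfrak{Y}_+$ your argument is the paper's. Over $\{u\neq v\}$ the missing intermediate sheaf is uniquely determined: your splitting of the length-$2$ quotient as $\mathbb{C}_u\oplus\mathbb{C}_v$ is the same as the paper's gluing of $\mathcal{F}_0|_{S\setminus\{v\}}$ with $\mathcal{F}_2|_{S\setminus\{u\}}$. You also add density and family-level checks that the paper leaves implicit.

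The gap is in your treatment of $\mathfrak{Y}_{-+}$. Your claim that $\{u=v\}$ has strictly smaller dimension, and so contains no component, is false; the paper's own inputs force the opposite. Since $\pi^\uparrow$ is an isomorphism over $\{u\neq v\}$ onto an open subset of the irreducible $\mathfrak{Z}_{(u,u,v,u)}$, the preimage of $\{u\neq v\}$ is irreducible. It therefore meets only one of the two components of $\mathfrak{Y}_{-+}$ (Proposition \ref{proposition_moduli_spaces_Y_geometric_properties}). The other component lies entirely in $\{u=v\}$, and by Proposition \ref{proposition_dimension_quadruple_stable} it has the full dimension $\dim\mathfrak{Z}_{(u,u,v,u)}$.

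The estimate for $\mathfrak{Z}_{(u,u,u,u)}$ does not transfer to this locus. Forgetting $\mathcal{J}_2'$ has $\mathbb{P}^1$-fibers over the locus where $\mathcal{J}_3/\mathcal{J}_1\cong\mathbb{C}_u^{\oplus 2}$, and that locus already has the full dimension of $\mathfrak{Z}_{(u,u,u,u)}$. Indeed, with $\mathcal{J}_4$ locally free at $u$, the count is $\dim\mathcal{M}+(r+1)+2(r-1)+1+(r+1)=\dim\mathcal{M}+4r+1$. The four terms are:
the choice of $\mathcal{J}_3\subset_u\mathcal{J}_4$;
a $2$-dimensional quotient of the $(r+1)$-dimensional fiber of $\mathcal{J}_3$ at $u$;
the choice of $\mathcal{J}_2$;
a one-dimensional quotient of the generically $(r+2)$-dimensional fiber of $\mathcal{J}_1$.
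Adding the $\mathbb{P}^1$ gives exactly $\dim\mathfrak{Z}_{(u,u,v,u)}$.

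Consequently $\pi^\uparrow$ and $\pi^\downarrow$ on $\mathfrak{Y}_{-+}$ are not birational in the strict sense. Any dense open subset of $\mathfrak{Y}_{-+}$ meets the extra component away from the main one. Its image there would be a nonempty open subset of the target contained in the nowhere dense set $\{u=v\}$.

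Your fallback is the correct statement and should replace the dimension argument. The maps are birational on the main component. The extra component maps into $\{u=v\}$, which has dimension one less. Using reducedness (Proposition \ref{proposition_moduli_spaces_Y_reduced}), one therefore still has $\pi^\uparrow_*[\mathfrak{Y}_{-+}]=[\mathfrak{Z}_{(u,u,v,u)}]$ and $\pi^\downarrow_*[\mathfrak{Y}_{-+}]=[\mathfrak{Z}_{(u,v,u,u)}]$. That is what the proof of Proposition \ref{proposition_commutator_cohomology_E} actually uses.

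The paper's proof dismisses these cases with ``The other cases are identical'' and passes over the same point. You were right to flag it, but your primary argument resolves it in the wrong direction.
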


\begin{proof}
    We prove the statement for $\pi^\uparrow: \mathfrak{Y} \rightarrow \mathfrak{Z}_{(u,v)}$. The nested moduli spaces
    $\{ \mathcal{F}_0 \subset_u \mathcal{F}_1 \subset_v \mathcal{F}_2 \}$ and $\{ \mathcal{F}_0 \subset_v \mathcal{F}_1' \subset_u \mathcal{F}_2 \}
    $
    are isomorphic over $\{u \neq v\}$. Given $\big(\mathcal{F}_0 \subset_u \mathcal{F}_1 \subset_v \mathcal{F}_2\big)$ with $u \neq v$, construct $\mathcal{F}_1'$ by gluing
    $$
    \begin{cases}
        \mathcal{F}_1'|_{S \setminus \{v\}} &\hspace{-1em}= \mathcal{F}_0|_{S \setminus \{v\}} \\
        \mathcal{F}_1'|_{S \setminus \{u\}} \, &\hspace{-1em}= \mathcal{F}_2|_{S \setminus \{u\}}.
    \end{cases}
    $$
    This is well-defined because $\mathcal{F}_0|_{S \setminus \{u,v\}} = \mathcal{F}_2|_{S \setminus \{u,v\}}$. The inverse assignment is analogous. Hence $\pi^\uparrow$ is birational. The other cases are identical.
\end{proof}

In fact, these morphisms satisfy stronger vanishing of higher direct images (see Proposition 2.45 of \cite{negut_hecke_correspondences}). We will not need this, however. Since we work with singular cohomology, birationality of proper morphisms is sufficient.

\subsection{Dimension estimates}
\label{subsection_dimension_estimates_stable}
We now establish the dimension estimates for the nested and quadruple moduli spaces of stable sheaves. These estimates rely on a careful analysis of the associated Quot schemes. Again, we state the results without proof and refer to Section 5 of \cite{negut_hecke_correspondences} for full details.

Let $\lambda$ be a set partition of size $n$ and denote by $k$ the number of distinct elements of $\lambda$. 

\begin{definition}
    The \textit{expected dimension} of a connected component of $\mathfrak{Z}_\lambda$ is defined as 
    $$\operatorname{const} + r\big(c_2(\mathcal{F}_0)+c_2(\mathcal{F}_n)\big)+k$$ 
    for any closed point $\big( \mathcal{F}_0 \subset_{u_1} \mathcal{F}_1 \subset_{u_2} \cdots \subset_{u_n} \mathcal{F}_n \big) \in \mathfrak{Z}_\lambda$. Here, $\operatorname{const}$ refers to the constant appearing in \eqref{equation_dimension_moduli_space} that depends only on $S$, $H$, $r$, and $c_1$. 
\end{definition}

The following proposition, drawn from \cite{negut_hecke_correspondences}, collects the dimension computations relevant to our geometric operators.

\begin{proposition}[\cite{negut_hecke_correspondences}, Propositions 2.26--2.33]
    \label{proposition_dimension_nested_stable}
    All irreducible components of the nested moduli space of stable sheaves $\mathfrak{Z}_\lambda$ are of expected dimension for any set partition
    \[
    \lambda \in \Big\{ (u), (u,u), (u,v), (u,u,u), (u,u,v), (u,v,u), (u,v,v), (u,u,u,u), (u,u,v,u), (u,v,u,u) \Big\}.
    \]
\end{proposition}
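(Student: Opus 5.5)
Since Proposition \ref{proposition_dimension_nested_stable} is quoted from \cite{negut_hecke_correspondences}, we only outline how one verifies it. The plan is an induction on $n=|\lambda|$, using the projective bundle description of Proposition \ref{proposition_projective_bundle_description}: $\pi_-\colon\mathfrak{Z}_\lambda\to\mathfrak{Z}_{|\lambda}\times S^\#$ embeds $\mathfrak{Z}_\lambda$ into $\mathbb{P}(\mathcal{V}_1)$ as the zero locus of $\rho_-^*\mathcal{W}_1\to\mathcal{O}(1)$. The fibre of $\pi_-$ over $(\mathcal{F}_1,\dots,\mathcal{F}_n,u_1)$ is $\mathbb{P}\operatorname{Hom}(\mathcal{F}_1,\mathbb{C}_{u_1})$, and its dimension is governed by the local behaviour of $\mathcal{F}_1$ at $u_1$.

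\textbf{Step 1: stratify by local type.} Stratify the base by $\dim\operatorname{Hom}(\mathcal{F}_1,\mathbb{C}_{u_1})=r+j$, where $j=\dim\operatorname{Ext}^1(\mathcal{F}_1,\mathbb{C}_{u_1})$ measures the failure of local freeness at $u_1$. Using the Quot scheme estimates of \cite{negut_hecke_correspondences}, Section 5, show that the locus in $\mathcal{M}\times S$ where the fibre has dimension $r-1+j$ has codimension at least $2j$ (for $j\ge1$). For $\lambda=(u)$ this already yields the expected dimension $\operatorname{const}+r(c_2+c_2+1)+1$, matching $\dim\mathcal{M}_{c_2}+2+(r-1)$.

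\textbf{Step 2: induct through the partitions.} Run the same fibrewise count through each listed $\lambda$. When the new point is isolated ($\#=1$), add $2$ for the point and $r-1+j$ for the fibre. When it coincides with an earlier point ($\#=0$), add no freedom for the point, but track how the previous step altered the local type at $u$: a colength-one modification at $u$ raises $j$ by at most one. The expected-dimension formula $\operatorname{const}+r(c_2(\mathcal{F}_0)+c_2(\mathcal{F}_n))+k$ is additive in exactly this way. Combine this with the lower bound coming from the lci/zero-locus presentation, namely that every component has at least the virtual dimension $\dim\mathbb{P}-\operatorname{rk}\mathcal{W}$. Upper and lower bounds then agree.

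\textbf{Main obstacle.} The hard part is the length $3$ and $4$ cases with repeated points, namely $(u,u,u)$, $(u,u,u,u)$, $(u,v,u)$, $(u,u,v,u)$ and $(u,v,u,u)$. Here the jumping loci accumulate, so one must bound jointly the dimension of flags of local modifications at a single point. The approach would be to reduce to punctual Quot schemes of $\mathcal{O}^{r}$ at a point, where the relevant nested punctual schemes have dimensions computable by hand at length $\le4$ (e.g.\ they are unions of cells whose dimensions can be enumerated). One then checks that each stratum is of codimension large enough to compensate for the fibre jumps, with equality occurring only in the stratum responsible for the second component of $\mathfrak{Z}_{(u,u,u,u)}$.
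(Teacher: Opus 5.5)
The gap is in the lower bound, and it occurs exactly in the cases you single out as hard.

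\textbf{Why the lower bound fails when $\#=0$.} Suppose the first element of $\lambda$ is not isolated. Then $\mathcal{F}_1\subsetneq\mathcal{F}_n$ with finite cokernel supported at $u_1$, so $\mathcal{F}_1$ cannot be reflexive, hence not locally free, at $u_1$.
\begin{itemize}
\item Consequently $\operatorname{Tor}_1(\mathcal{F}_1,\mathbb{C}_{u_1})=\ker(\mathcal{W}_1|_{u_1}\to\mathcal{V}_1|_{u_1})$ is nonzero at every point of $\mathfrak{Z}_{|\lambda}$.
\item The section $\rho_-^*\Gamma^{u*}\mathcal{W}_1\to\mathcal{O}(1)$ kills this subspace at every point of every fibre, so one of the $w$ equations is identically redundant.
\item The expected dimension grows by $r$ in such a step (generic fibre $\mathbb{P}^{r}$). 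Your Krull-type bound only gives $\dim\mathbb{P}-\operatorname{rk}\mathcal{W}_1=\dim\mathfrak{Z}_{|\lambda}+r-1$, which is one less.
\end{itemize}
So ``upper and lower bounds then agree'' is false. The paper records exactly this defect for $\mathfrak{Z}^\bullet_2$ in Section \ref{section_commutator_E_F}, where the codimension is $w_0-1$ while the rank is $w_0$.

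Switching to $\pi_+$ does not help. For $(u,u)$, $(u,u,u)$, $(u,v,u)$, $(u,u,u,u)$, $(u,u,v,u)$ and $(u,v,u,u)$, neither end of $\lambda$ is isolated.

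This is not a formality. Inside $\mathfrak{Z}_{(u,v,u)}$, the diagonal locus $\{u=v\}\cong\mathfrak{Z}_{(u,u,u)}$ has dimension exactly one less than the expected dimension of $\mathfrak{Z}_{(u,v,u)}$. The same holds for $\mathfrak{Z}_{(u,u,u,u)}$ inside $\mathfrak{Z}_{(u,u,v,u)}$ and inside $\mathfrak{Z}_{(u,v,u,u)}$. Your argument cannot rule out that these loci are extra components of too small dimension.

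\textbf{What is needed to close the gap.} You need a refined presentation.
\begin{itemize}
\item For consecutive equal points, show that the line $\mathcal{L}_{1,2}\otimes\omega_S$ is a subbundle of $\Gamma^{u*}\mathcal{W}_1$. This line is the image of $\operatorname{Tor}_2(\mathcal{F}_2/\mathcal{F}_1,\mathbb{C}_u)$ produced by the previous modification. Then show the section factors through the rank $w-1$ quotient. This is Proposition 2.27 of \cite{negut_hecke_correspondences}, which the paper invokes in Section \ref{section_commutator_E_F}.
\item For the non-consecutive repetitions $(u,v,u)$, $(u,u,v,u)$, $(u,v,u,u)$, you must also control the locus $u=v$, where $\operatorname{Tor}_1(\mathcal{F}_1,\mathbb{C}_u)$ can jump. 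For instance, show that $\{u=v\}$ lies in the closure of $\{u\neq v\}$.
\end{itemize}

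\textbf{The upper bound also needs more than your sketch.}
\begin{itemize}
\item The rule $j'\le j+1$ together with codimension $\ge 2j$ does not bound $\mathfrak{Z}_{(u,u,u)}$. Already over $\mathcal{F}_3$ locally free at $u$, the crude rule allows $j=2$ for $\mathcal{F}_1$ with no codimension, overshooting by one. You must actually compute that the locus $\mathcal{F}_3/\mathcal{F}_1\cong\mathbb{C}_u^{\oplus 2}$ has codimension $2$.
\item $\mathcal{F}_n$ may itself fail to be locally free at $u$, with arbitrary colength in $\mathcal{F}_n^{\vee\vee}$. So the punctual Quot schemes involved are not of length $\le 4$. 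You need estimates uniform in the local type of $\mathcal{F}_n$, which Section 5 of \cite{negut_hecke_correspondences} provides.
\end{itemize}
The paper gives no argument of its own for this statement and simply defers to those results.
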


We now turn to the dimensions of the quadruple moduli spaces. Their expected dimensions are naturally governed by the maps $\pi^\uparrow$ and $\pi^\downarrow$ introduced previously.

\begin{definition}
    The \textit{expected dimension} of a connected component of the moduli spaces $\mathfrak{Y}$, $\mathfrak{Y}_-$, $\mathfrak{Y}_+$, and $\mathfrak{Y}_{-+}$ is defined as the dimension of the respective connected component of the codomains of the morphisms $\pi^\uparrow$ or $\pi^\downarrow$. By Proposition \ref{proposition_dimension_nested_stable}, this is equal to the actual dimension of these nested moduli schemes.
\end{definition}

\begin{proposition}[\cite{negut_hecke_correspondences}, Propositions 2.41--2.43]
    \label{proposition_dimension_quadruple_stable}
    All irreducible components of the moduli spaces $\mathfrak{Y}$, $\mathfrak{Y}_-$, $\mathfrak{Y}_+$, and $\mathfrak{Y}_{-+}$ are of expected dimension.    
\end{proposition}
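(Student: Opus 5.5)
The statement to prove is Proposition~\ref{proposition_dimension_quadruple_stable}: every irreducible component of $\mathfrak{Y}$, $\mathfrak{Y}_-$, $\mathfrak{Y}_+$ and $\mathfrak{Y}_{-+}$ has the dimension of the corresponding component of the target of $\pi^\uparrow$ (equivalently of $\pi^\downarrow$). The strategy is to stratify each quadruple space according to whether $u \neq v$ or $u = v$, and to bound the dimension of each stratum separately.

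\textbf{Step 1: the open stratum $\{u \neq v\}$.} By the argument in the proof of Proposition~\ref{proposition_birationality_uparrow_downarrow}, $\pi^\uparrow$ restricts to an isomorphism over $\{u \neq v\}$: the sheaf $\mathcal{F}_1'$ is uniquely glued from $\mathcal{F}_0$ and $\mathcal{F}_2$. The extra Hecke steps at $u$ in $\mathfrak{Y}_\pm$ and $\mathfrak{Y}_{-+}$ are simply carried along. Hence this stratum is isomorphic to an open subset of $\mathfrak{Z}_{(u,v)}$, $\mathfrak{Z}_{(u,v,u)}$, $\mathfrak{Z}_{(u,u,v)}$ or $\mathfrak{Z}_{(u,u,v,u)}$ respectively. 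By Proposition~\ref{proposition_dimension_nested_stable}, all components of these spaces have expected dimension, so the open stratum has pure expected dimension.

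\textbf{Step 2: the closed stratum $\{u = v\}$.} Here I would use the local complete intersection structure. By Proposition~\ref{proposition_moduli_spaces_Y_geometric_properties}, $\mathfrak{Y}$ is smooth and the other three spaces are lci. Each is cut out of an iterated projective bundle (Proposition~\ref{proposition_projective_bundle_description} applied twice) by sections whose number of equations gives the expected dimension. Consequently every irreducible component has dimension \emph{at least} the expected one. It then suffices to show that the locus $\{u=v\}$ has dimension strictly less than the expected one, so that it cannot contain a whole component; every component must then meet $\{u\neq v\}$ and have expected dimension by Step 1.

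To bound $\{u = v\}$, split it further by Proposition~\ref{proposition_vanishing_locus_line_bundle_map}.
\begin{enumerate}[i)]
\item On the sublocus $\{\mathcal{F}_1 = \mathcal{F}_1', u=v\}$, the space is isomorphic to $\mathfrak{Z}_{(u,u)}$ (respectively $\mathfrak{Z}_{(u,u,u)}$, and $\mathfrak{Z}_{(u,u,u,u)}$ for $\mathfrak{Y}_{-+}$). These have expected dimension with one fewer distinct point, i.e.\ one less than the target.
\item On the complementary sublocus $\mathcal{F}_1 \neq \mathcal{F}_1'$, both $\mathcal{F}_1$ and $\mathcal{F}_1'$ lie between $\mathcal{F}_0$ and $\mathcal{F}_2$ with $\mathcal{F}_2/\mathcal{F}_0$ of length $2$ at $u$. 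Distinctness forces $\mathcal{F}_1 = \mathcal{F}_0 + \mathcal{F}_1'$, so the quadruple is determined by its image in $\mathfrak{Z}_{(u,u)}$ together with a point of a projective line (the choice of $\mathcal{F}_1'$) of dimension at most $1$. This locus is nonempty only where $\mathcal{F}_2/\mathcal{F}_0 \cong \mathbb{C}_u^{\oplus 2}$, a condition of positive codimension in $\mathfrak{Z}_{(u,u)}$. The resulting bound is again strictly below the expected dimension.
\end{enumerate}

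\textbf{Main obstacle.} The hardest point is the codimension estimate for $\{\mathcal{F}_2/\mathcal{F}_0 \cong \mathbb{C}_u^{\oplus 2}\}$ for higher rank $r$, together with its analogues with extra flags in $\mathfrak{Y}_\pm$ and $\mathfrak{Y}_{-+}$. I would attack it by a Quot-scheme fibre count over $\mathcal{M}\times S$: the fibre of split length-$2$ quotients of $\mathcal{F}_2$ at $u$ is $\mathrm{Gr}(2, \mathcal{F}_2|_u^\vee)$, and one stratifies by the dimension of the fibre $\mathcal{F}_2|_u$ as in Section~5 of \cite{negut_hecke_correspondences}. The same bookkeeping must also cover the extra Hecke steps, and for $\mathfrak{Y}_{-+}$ it must be compatible with the two components of $\mathfrak{Z}_{(u,u,u,u)}$.
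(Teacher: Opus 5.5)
\textbf{Gap 1: the strict inequality on $\{u=v\}$ is false for $\mathfrak{Y}_{-+}$.} Your Step~2 needs $\dim\{u=v\}$ to be strictly below the expected dimension, and this fails for $\mathfrak{Y}_{-+}$. By your own Step~1, $\{u\neq v\}\subset\mathfrak{Y}_{-+}$ is isomorphic to an open subset of the irreducible space $\mathfrak{Z}_{(u,u,v,u)}$ (Proposition~\ref{proposition_nested_Hilbert_schemes_geometric_properties}~v)). So your argument would prove that $\mathfrak{Y}_{-+}$ is irreducible, contradicting Proposition~\ref{proposition_moduli_spaces_Y_geometric_properties}~iii). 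The second component lies entirely inside $\{u=v\}$.

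Concretely, it comes from the locus $\{u=v,\ \mathcal{J}_2\neq\mathcal{J}_2'\}$, where $\mathcal{J}_3/\mathcal{J}_1\cong\mathbb{C}_u^{\oplus 2}$. Start from $(\mathcal{J}_4,u)$ with $\mathcal{J}_4$ locally free at $u$, and choose:
\begin{enumerate}[i)]
\item $\mathcal{J}_3\subset_u\mathcal{J}_4$, giving $r-1$ parameters (then $\dim\mathcal{J}_3|_u=r+1$);
\item a $2$-dimensional quotient of $\mathcal{J}_3|_u$ defining $\mathcal{J}_1$, giving $2(r-1)$ parameters;
\item two distinct lines $\mathcal{J}_2/\mathcal{J}_1$ and $\mathcal{J}_2'/\mathcal{J}_1$ in $\mathcal{J}_3/\mathcal{J}_1$, giving $2$ parameters;
\item $\mathcal{J}_0\subset_u\mathcal{J}_1$, giving at least $r+1$ parameters, since $\dim\mathcal{J}_1|_u\geq r+2$.
\end{enumerate}
The total is at least $\operatorname{const}+2rc_2(\mathcal{J}_4)+4r+2=\operatorname{const}+r\bigl(c_2(\mathcal{J}_0)+c_2(\mathcal{J}_4)\bigr)+2$, which is exactly the expected dimension.

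In the terms of your ii): the split condition $\mathcal{J}_3/\mathcal{J}_1\cong\mathbb{C}_u^{\oplus 2}$ does not have positive codimension in $\mathfrak{Z}_{(u,u,u,u)}$. The same count without $\mathcal{J}_2'$ shows it contains one of the two irreducible components from Proposition~\ref{proposition_nested_Hilbert_schemes_geometric_properties}~iv). The extra $\mathbb{P}^1$ of choices of $\mathcal{J}_2'$ therefore lands exactly on the expected dimension. You mention those two components only as a bookkeeping issue, but they are precisely what breaks the strict inequality. (Also, in ii) you mean $\mathcal{F}_2=\mathcal{F}_1+\mathcal{F}_1'$ and $\mathcal{F}_0=\mathcal{F}_1\cap\mathcal{F}_1'$.)

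\textbf{Gap 2: the lower bound is not justified.} The statement only needs the weak bound on $\{u=v\}$, and that is immediate. Forgetting the primed sheaf maps $\{u=v\}$ to $\mathfrak{Z}_{(u,u)}$, $\mathfrak{Z}_{(u,u,u)}$, $\mathfrak{Z}_{(u,u,u)}$ or $\mathfrak{Z}_{(u,u,u,u)}$. The fibres are a point or a $\mathbb{P}^1$ (lines in the socle of a length-two quotient). By Proposition~\ref{proposition_dimension_nested_stable} these targets have pure dimension one less than expected. Together with Step~1, every component therefore has at most the expected dimension, with no Quot-scheme estimate. The codimension estimate you call the main obstacle only matters for irreducibility of $\mathfrak{Y}$ and $\mathfrak{Y}_\pm$, which is not claimed here.

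The substantive input is therefore the lower bound, and your justification of it does not hold. Proposition~\ref{proposition_projective_bundle_description} presents only the nested spaces $\mathfrak{Z}_\lambda$. Applying it twice says nothing about the extra sheaf $\mathcal{F}_1'$ (resp.\ $\mathcal{J}_1'$, $\mathcal{J}_2'$), which must satisfy two incidence conditions at once. What you need is a separate realization of $\pi^\uparrow$ as the zero locus of a section inside a projective bundle over $\mathfrak{Z}_{(u,v)}$, $\mathfrak{Z}_{(u,v,u)}$, $\mathfrak{Z}_{(u,u,v)}$, $\mathfrak{Z}_{(u,u,v,u)}$, with exactly as many equations as the fibre dimension. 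A statement of this kind is Proposition~2.37 of \cite{negut_hecke_correspondences}, which is not among the results stated in this paper. Krull's height theorem over these irreducible bases then gives the bound.

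Falling back on the smoothness/lci of Proposition~\ref{proposition_moduli_spaces_Y_geometric_properties} is circular, since it is quoted from the same Propositions~2.41--2.43 you are proving. In any case, your argument uses an explicit count of equations, not lci as an abstract property. For comparison, the paper gives no argument of its own here and simply defers to \cite{negut_hecke_correspondences}.
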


\subsection{Multi-step operators}
\label{subsection_multi_step_operators}
The arguments of Section \ref{subsection_relations} require multi-step generalizations of the operators \(E_d\) and \(F_d\) defined in Section \ref{section_main_results}. For any integer \(l > 0\), define
\begin{equation}
    \label{equation_operator_E_multiple}
    E_{(d_1,\ldots,d_l)} \define (p_+ \times p_S)_*\Bigl( \ell^{d_1} \cdot \pi_{+*}\pi_-^*\bigl( \ell^{d_2} \cdot \pi_{+*} \cdots \pi_-^*(\ell^{d_l} \cdot p_-^*) \cdots \bigr) \Bigr): H_{\mathcal{M}} \rightarrow H_{\mathcal{M} \times S},
\end{equation}
as illustrated by the diagram
\[
\begin{tikzcd}[column sep=tiny,row sep=small]
    {\Big\{\mathcal{F}_0 \subset_u \mathcal{F}_1 \subset_u \mathcal{F}_2\Big\}} & {\ell^{d_2}} & \cdots & {\ell^{d_{l-1}}} & {\Big\{\mathcal{F}_{l-2} \subset_u \mathcal{F}_{l-1} \subset_u \mathcal{F}_l\Big\}} \\
    {\Big\{\mathcal{F}_0 \subset_u \mathcal{F}_1\Big\}} & {\Big\{\mathcal{F}_1 \subset_u \mathcal{F}_2\Big\}} && {\Big\{\mathcal{F}_{l-2} \subset_u \mathcal{F}_{l-1}\Big\}} & {\Big\{\mathcal{F}_{l-1} \subset_u \mathcal{F}_l\Big\}} \\
    {\mathcal{M} \times S} & {\ell^{d_1}} && {\ell^{d_l}} & {\mathcal{M}.}
    \arrow["{\pi_+}"', from=1-1, to=2-1]
    \arrow["{\pi_-}", from=1-1, to=2-2]
    \arrow[dashed, no head, from=1-2, to=2-2]
    \arrow["{\pi_+}"', from=1-3, to=2-2]
    \arrow["{\pi_-}", from=1-3, to=2-4]
    \arrow[dashed, no head, from=1-4, to=2-4]
    \arrow["{\pi_+}"', from=1-5, to=2-4]
    \arrow["{\pi_-}", from=1-5, to=2-5]
    \arrow["{p_+ \times p_S}"', from=2-1, to=3-1]
    \arrow["{p_-}", from=2-5, to=3-5]
    \arrow[dashed, no head, from=3-2, to=2-1]
    \arrow[dashed, no head, from=3-4, to=2-5]
\end{tikzcd}
\]
Similarly, we define
\begin{equation}
    \label{equation_operator_F_multiple}
    F_{(d_1,\ldots,d_l)} \define (p_- \times p_S)_*\Bigl( \ell^{d_1} \cdot \pi_{-*}\pi_+^*\bigl( \ell^{d_2} \cdot \pi_{-*} \cdots \pi_+^*(\ell^{d_l} \cdot p_+^*) \cdots \bigr) \Bigr): H_{\mathcal{M}} \rightarrow H_{\mathcal{M} \times S},
\end{equation}
with the evident diagram.

\begin{comment}
\[
\begin{tikzcd}[column sep=tiny,row sep=small]
    {\Big\{\mathcal{F}_0 \supset_u \mathcal{F}_1 \supset_u \mathcal{F}_2\Big\}} & {\ell^{d_2}} & \cdots & {\ell^{d_{l-1}}} & {\Big\{\mathcal{F}_{l-2} \supset_u \mathcal{F}_{l-1} \supset_u \mathcal{F}_l\Big\}} \\
    {\Big\{\mathcal{F}_0 \supset_u \mathcal{F}_1\Big\}} & {\Big\{\mathcal{F}_1 \supset_u \mathcal{F}_2\Big\}} && {\Big\{\mathcal{F}_{l-2} \supset_u \mathcal{F}_{l-1}\Big\}} & {\Big\{\mathcal{F}_{l-1} \supset_u \mathcal{F}_l\Big\}} \\
    {\mathcal{M} \times S} & {\ell^{d_1}} && {\ell^{d_l}} & {\mathcal{M}}
    \arrow["{\pi_-}"', from=1-1, to=2-1]
    \arrow["{\pi_+}", from=1-1, to=2-2]
    \arrow[dashed, no head, from=1-2, to=2-2]
    \arrow["{\pi_-}"', from=1-3, to=2-2]
    \arrow["{\pi_+}", from=1-3, to=2-4]
    \arrow[dashed, no head, from=1-4, to=2-4]
    \arrow["{\pi_-}"', from=1-5, to=2-4]
    \arrow["{\pi_+}", from=1-5, to=2-5]
    \arrow["{p_- \times p_S}"', from=2-1, to=3-1]
    \arrow["{p_+}", from=2-5, to=3-5]
    \arrow[dashed, no head, from=3-2, to=2-1]
    \arrow[dashed, no head, from=3-4, to=2-5]
\end{tikzcd}
\]
\end{comment}

For \(l=1\), these definitions recover the operators \(E_{d_1}\) and \(F_{d_1}\) from \eqref{equation_operator_E} and \eqref{equation_operator_F}, respectively.

\section{The action on the cohomology of the moduli space of stable sheaves}
\label{chapter_action_Yangian}

\subsection{The Yangian of \texorpdfstring{$\widehat{\mathfrak{gl}}_1$}{gl1}}
\label{subsection_Yangian}

We begin by recalling the definition of the affine Yangian \(Y_{t_1,t_2}(\widehat{\mathfrak{gl}}_1)\) and the relations among its generators. Our exposition follows \cite{negut_rationaltrigonometricellipticalgebras}. We work over the polynomial ring \(\mathbb{Z}[t_1,t_2]\) and set \(t = t_1 + t_2\). Define the rational functions
\begin{equation} \label{equation_zeta_Yangian}
    \zeta^\mathbb{C}(x) \define \frac{(x+t_1)(x+t_2)}{x(x+t)} \qquad \text{and} \qquad \widetilde{\zeta}^\mathbb{C}(x) \define \zeta^\mathbb{C}(x)(x+t)(x-t) = \frac{(x+t_1)(x+t_2)(x-t)}{x}.
\end{equation}

\begin{definition}[\cite{tsymbaliuk_yangian}]
    \label{definition_Yangian}
    The \textit{affine Yangian of $\widehat{\mathfrak{gl}}_1$} is the algebra
    \[
    Y_{t_1,t_2}( \widehat{\mathfrak{gl}}_1 ) \define 
    \faktor{\mathbb{Z}[t_1,t_2]\langle e_d, f_d, h_d \rangle_{d \geq 0}}
    {\text{relations (\ref{equation_relation_Yangian_e_e})--(\ref{equation_relation_Yangian_e_f})}}.
    \]
    In terms of the generating series
    \[
    e(z) = \sum_{d=0}^\infty \frac{e_d}{z^{d+1}},\qquad 
    f(z) = \sum_{d=0}^\infty \frac{f_d}{z^{d+1}},\qquad 
    h(z) = 1 + \sum_{d=0}^\infty \frac{h_d}{z^{d+1}},
    \]
    the defining relations take the form
    \begin{align}
        \label{equation_relation_Yangian_e_e}
        \Bigg[ e(z)e(w) \widetilde{\zeta}^\mathbb{C}(w-z) \Bigg]_{z^{<0},w^{<0}} 
        &= \Bigg[ e(w)e(z) \widetilde{\zeta}^\mathbb{C}(z-w) \Bigg]_{z^{<0},w^{<0}} \\
        \label{equation_relation_Yangian_f_f}
        \Bigg[ f(w)f(z) \widetilde{\zeta}^\mathbb{C}(w-z) \Bigg]_{z^{<0},w^{<0}} 
        &= \Bigg[ f(z)f(w) \widetilde{\zeta}^\mathbb{C}(z-w) \Bigg]_{z^{<0},w^{<0}} \\
        \label{equation_relation_Yangian_h_e}
        h(z)e(w) &= \Bigg[ e(w)h(z) \frac{\zeta^\mathbb{C}(z-w)}{\zeta^\mathbb{C}(w-z)} \Bigg]_{z \gg w,\, z^{\leq 0},\, w^{<0}} \\
        \label{equation_relation_Yangian_h_f}
        f(w)h(z) &= \Bigg[ h(z)f(w) \frac{\zeta^\mathbb{C}(z-w)}{\zeta^\mathbb{C}(w-z)} \Bigg]_{z \gg w,\, z^{\leq 0},\, w^{<0}} \\
        \label{equation_relation_Yangian_h_h}
        \bigl[ h(z),h(w) \bigr] &= 0 \\
        \label{equation_relation_Yangian_e_f}
        \bigl[ e(z),f(w) \bigr] &= \frac{t_1 t_2}{t} \cdot \frac{h(z)-h(w)}{z-w}.
    \end{align}
\end{definition}

We now clarify the bracket notation in \eqref{equation_relation_Yangian_e_e}--\eqref{equation_relation_Yangian_e_f}. In (\ref{equation_relation_Yangian_e_e}) and (\ref{equation_relation_Yangian_f_f}), one cancels the factor \(z-w\) from the denominator and equates the coefficients of all monomials \(\{ z^a w^b \}_{a<0,\; b<0}\) on both sides. Since
\[
\widetilde{\zeta}^\mathbb{C}(z-w) \cdot (z-w) = (z-w)^3 - (t_1^2 + t_1 t_2 + t_2^2)(z-w) - t_1 t_2 t,
\]
these two relations are equivalent, for all \(a,b \geq 0\), to
\begin{align}
    \label{equation_relation_Yangian_e_e_operators}
        \bigl[ e_{a+3},e_b \bigr] - 3\bigl[ e_{a+2},e_{b+1} \bigr] + 3\bigl[ e_{a+1},e_{b+2} \bigr] - \bigl[ e_a,e_{b+3} \bigr] \\
        - (t_1^2 + t_1 t_2 + t_2^2) \bigl( \bigl[ e_{a+1},e_b \bigr] - \bigl[ e_a,e_{b+1} \bigr] \bigr) + t_1 t_2 t \bigl( e_a e_b + e_b e_a \bigr) &= 0, \nonumber \\
        \nonumber \\
    \label{equation_relation_Yangian_f_f_operators}
        \bigl[ f_b,f_{a+3} \bigr] - 3\bigl[ f_{b+1},f_{a+2} \bigr] + 3\bigl[ f_{b+2},f_{a+1} \bigr] - \bigl[ f_{b+3},f_a \bigr] \\
        - (t_1^2 + t_1 t_2 + t_2^2) \bigl( \bigl[ f_b,f_{a+1} \bigr] - \bigl[ f_{b+1},f_a \bigr] \bigr) + t_1 t_2 t \bigl( f_b f_a + f_a f_b \bigr) &= 0. \nonumber
\end{align}

\begin{comment}
In (\ref{equation_relation_Yangian_h_e}) and (\ref{equation_relation_Yangian_h_f}), the rational function
\[
\frac{\zeta^\mathbb{C}(z-w)}{\zeta^\mathbb{C}(w-z)}
\]
is expanded in non-negative powers of \(w/z\), and the coefficients of all \(\{ z^a w^b \}_{a \leq 0,\; b < 0}\) are compared. 
\end{comment}

The expansion
\begin{equation}
    \label{equation_expansion_rational_function}
    \frac{\zeta^\mathbb{C}(z-w)}{\zeta^\mathbb{C}(w-z)} 
    = \frac{(z-w+t_1)(z-w+t_2)(z-w-t)}{(z-w-t_1)(z-w-t_2)(z-w+t)} 
    = 1 + \sum_{i=3}^\infty \sum_{j=0}^{i-2} t_1 t_2 \, \gamma_{ij} \frac{w^j}{z^i},
\end{equation}
where the \(\gamma_{ij}\) are polynomials in \(t = t_1 + t_2\) and \(t_1 t_2\), has coefficients divisible by \(t_1t_2\) beyond the leading term. Using \eqref{equation_expansion_rational_function}, relations (\ref{equation_relation_Yangian_h_e}) and (\ref{equation_relation_Yangian_h_f}) become
\begin{align}
    \label{equation_relation_Yangian_h_e_operators}
    h_ae_b &= e_bh_a + t_1t_2 \sum_{i=3}^\infty\sum_{j=0}^{i-2} \gamma_{ij} e_{b+j}h_{a-i}, \\
    \nonumber \\
    \label{equation_relation_Yangian_h_f_operators}
    f_bh_a &= h_af_b + t_1t_2 \sum_{i=3}^\infty\sum_{j=0}^{i-2} \gamma_{ij} h_{a-i}f_{b+j}
\end{align}
for all \(a,b \geq 0\), with the convention that \(h_{-1} = 1\) and \(h_{-2} = h_{-3} = \cdots = 0\).

Relation (\ref{equation_relation_Yangian_h_h}) simply states that $\bigl[ h_a, h_b \bigr] = 0$ for all \(a,b \geq 0\). Finally, equating coefficients in (\ref{equation_relation_Yangian_e_f}) yields
\begin{equation}
    \label{equation_relation_Yangian_e_f_operators}
    \bigl[ e_a, f_b \bigr] = - \frac{t_1 t_2}{t} \, h_{a+b}
\end{equation}
for all \(a,b \geq 0\).

Occasionally, one imposes additional cubic relations on the Yangian generators, analogous to the Drinfeld--Serre relations in finite types (see, e.g., \cite{tsymbaliuk_yangian}).

\begin{remark}
    \label{remark_Drinfeld-Serre}
    In some conventions, the generators are required to satisfy
    \[
    \sum_{\sigma \in S_3} \bigl[ e_{d_{\sigma(1)}}, \bigl[ e_{d_{\sigma(2)}}, e_{d_{\sigma(3)}+1} \bigr] \bigr]
    = \sum_{\sigma \in S_3} \bigl[ f_{d_{\sigma(1)}}, \bigl[ f_{d_{\sigma(2)}}, f_{d_{\sigma(3)}+1} \bigr] \bigr]
    = 0
    \]
    for all \(d_1,d_2,d_3 \geq 0\).
\end{remark}

Two adjustments to Definition \ref{definition_Yangian} are needed for the geometric interpretation.

\begin{remark}
    \label{remark_adjustments_Yangian_geometry}
    First, since the relations are symmetric in \(t_1\) and \(t_2\), we regard the Yangian as a \(\mathbb{Z}[t_1,t_2]^{\mathrm{sym}}\)-algebra.

    Second, we require that every commutator \(\bigl[x,y\bigr]\) be divisible by \(t_1 t_2\). While this holds for the commutators in (\ref{equation_relation_Yangian_h_e_operators}), (\ref{equation_relation_Yangian_h_f_operators}), and (\ref{equation_relation_Yangian_e_f_operators}), it is not guaranteed for \(\bigl[ e_a, e_b \bigr]\) and \(\bigl[ f_b, f_a \bigr]\) by (\ref{equation_relation_Yangian_e_e_operators}) and (\ref{equation_relation_Yangian_f_f_operators}) alone. To remedy this, we formally adjoin symbols
    \[
    \frac{\bigl[ e_{d_1}, \ldots, \bigl[ e_{d_{k-1}}, \bigl[ e_{d_k}, e_{d_{k+1}} \bigr] \bigr] \ldots \bigr]}{(t_1 t_2)^k}
    \quad\text{and}\quad
    \frac{\bigl[ f_{d_1}, \ldots, \bigl[ f_{d_{k-1}}, \bigl[ f_{d_k}, f_{d_{k+1}} \bigr] \bigr] \ldots \bigr]}{(t_1 t_2)^k}
    \]
    for all \(d_1,\ldots,d_{k+1} \geq 0\), subject to the Leibniz rule and the Jacobi identity.
\end{remark}

\subsection{Relations between operators}
\label{subsection_relations}

This section establishes the relations among the operators \(E_d\), \(F_d\), and \(H_d\) defined in (\ref{equation_operator_E}), (\ref{equation_operator_F}), and (\ref{equation_power_series_H}), respectively. For any two operators $x, y: H_{\mathcal{M}} \rightarrow H_{\mathcal{M} \times S}$, we define the colored compositions
\begin{align}
    \label{equation_composition_geometric_operators_1}
    \textcolor{red}{x}\textcolor{blue}{y} &: H_{\mathcal{M}} \xrightarrow{\textcolor{blue}{y}} H_{\mathcal{M} \times \textcolor{blue}{S}} \xrightarrow{\textcolor{red}{x} \boxtimes \operatorname{id}_{\textcolor{blue}{S}}} H_{\mathcal{M} \times \textcolor{red}{S} \times \textcolor{blue}{S}}, \\[4pt]
    \label{equation_composition_geometric_operators_2}
    \textcolor{blue}{y}\textcolor{red}{x} &: H_{\mathcal{M}} \xrightarrow{\textcolor{red}{x}} H_{\mathcal{M} \times \textcolor{red}{S}} \xrightarrow{\operatorname{id}_{\textcolor{red}{S}} \! \boxtimes \textcolor{blue}{y}} H_{\mathcal{M} \times \textcolor{red}{S} \times \textcolor{blue}{S}}.
\end{align}
In these compositions, the colors of the operators match the color of the corresponding factor of \(\textcolor{red}{S} \times \textcolor{blue}{S}\). We write $\bigl[\textcolor{red}{x},\textcolor{blue}{y}\bigr] = \textcolor{red}{x}\textcolor{blue}{y} - \textcolor{blue}{y}\textcolor{red}{x}$ and denote the diagonal embedding by \(\Delta: S \hookrightarrow \textcolor{red}{S} \times \textcolor{blue}{S}\).

The next proposition gives explicit formulas for certain commutators involving operators of type \(E\). It is the cohomological analogue of Theorem 1.4 of \cite{negut_hecke_correspondences}.

\begin{proposition}
    \label{proposition_commutator_cohomology_E}
    The following identities of operators \(H_{\mathcal{M}} \rightarrow H_{\mathcal{M} \times \textcolor{red}{S} \times \textcolor{blue}{S}}\) hold:
    \begin{align}
        \label{equation_relation_cohomology_E_E_operators}
        &\bigl[ \textcolor{red}{E_{(d_1, \ldots, d_l)}},\textcolor{blue}{E_k} \bigr] \\ \nonumber =\, &(\operatorname{id}_{\mathcal{M}} \times \Delta)_* \sum_{i=1}^l \left( \begin{cases}
            -\sum_{k \leq a \leq d_i-1} E_{(d_1, \ldots, d_{i-1}, a, d_i+k-1-a, d_{i+1}, \ldots, d_l)} \quad &\text{if } d_i > k \\
            0 \quad &\text{if } d_i = k \\
            \sum_{d_i \leq a \leq k-1} E_{(d_1, \ldots, d_{i-1}, a, d_i+k-1-a, d_{i+1}, \ldots, d_l)} \quad &\text{if } d_i < k
        \end{cases} \right)
    \end{align}
    for all \(d_1, \ldots, d_l, k \geq 0\).
\end{proposition}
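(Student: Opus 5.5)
We follow the strategy of Theorem 1.4 of \cite{negut_hecke_correspondences}, transposed to cohomology. We first reduce to $l=1$. In $\textcolor{red}{E_{(d_1,\ldots,d_l)}}\textcolor{blue}{E_k}$ the blue step adds one colength-one modification at $v$ on top of the chain $\mathcal{F}_0 \subset_u \cdots \subset_u \mathcal{F}_l$. The plan is to move this step past the red chain one link at a time, in the spirit of the Leibniz rule. Each transposition of a single link $\mathcal{F}_{i-1}\subset_u\mathcal{F}_i$ with the $v$-step is a local computation. It only involves the three-step spaces $\mathfrak{Z}_{(u,u,v)}$, $\mathfrak{Z}_{(u,v,u)}$, $\mathfrak{Z}_{(v,u,u)}$ and the two-step space $\mathfrak{Z}_{(u,v)}$ (or $\mathfrak{Z}_{(v,u)}$) glued along the remaining links. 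The leftover links contribute the outer factors $\ell^{d_1},\ldots,\ell^{d_{i-1}}$ and $\ell^{d_{i+1}},\ldots,\ell^{d_l}$ by base change. Summing the local defects over $i$ then gives the stated formula. So the key case is $[\textcolor{red}{E_d},\textcolor{blue}{E_k}]$, which we must express as $(\operatorname{id}_{\mathcal{M}}\times\Delta)_*$ of combinations of $E_{(a,d+k-1-a)}$.

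For $l=1$ we realize both compositions as push-forwards from the quadruple space $\mathfrak{Y}$ of \eqref{equation_diagram_Y}. By base change, and using the lci structure from Proposition \ref{proposition_nested_Hilbert_schemes_geometric_properties} (the fibre products $\mathfrak{Z}_{(u)}\times_{\mathcal{M}}\mathfrak{Z}_{(v)}$ are $\mathfrak{Z}_{(u,v)}$ and $\mathfrak{Z}_{(v,u)}$, both of expected dimension by Proposition \ref{proposition_dimension_nested_stable}), the composition $\textcolor{red}{E_d}\textcolor{blue}{E_k}$ is the push-forward of $\ell_{1,2}^{d}\,\ell_{0,1}^{k}$ from $\mathfrak{Z}_{(v,u)}$. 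Similarly, $\textcolor{blue}{E_k}\textcolor{red}{E_d}$ is the push-forward of $\ell_{0,1}^{d}\,\ell_{1,2}^{k}$ from $\mathfrak{Z}_{(u,v)}$. Proposition \ref{proposition_birationality_uparrow_downarrow} says $\pi^\uparrow,\pi^\downarrow$ are proper and birational onto irreducible spaces of the same dimension (Proposition \ref{proposition_dimension_quadruple_stable}), so $\pi_*[\mathfrak{Y}]$ is the fundamental class in each case. Both compositions are therefore push-forwards from $\mathfrak{Y}$ to $\mathcal{M}\times S\times S$. The classes being pushed forward are, respectively,
\[
x'^{\,d}y'^{\,k}\quad\text{and}\quad x^{d}y^{k},
\]
where $x=c_1(\mathcal{L}_{0,1})$, $y=c_1(\mathcal{L}_{1,2})$, $x'=c_1(\mathcal{L}'_{1,2})$, $y'=c_1(\mathcal{L}'_{0,1})$; here red is $u$ and blue is $v$. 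The relation $\mathcal{L}_{0,1}\mathcal{L}_{1,2}=\mathcal{L}'_{0,1}\mathcal{L}'_{1,2}$ gives $x+y=x'+y'$.

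Proposition \ref{proposition_vanishing_locus_line_bundle_map} provides a section of $\mathcal{L}_{0,1}^{\vee}\otimes\mathcal{L}'_{1,2}$ vanishing exactly on $D\cong\mathfrak{Z}_{(u,u)}$, which lies over the diagonal $u=v$. Because $\mathfrak{Y}$ is smooth, this zero locus has codimension one with the expected dimension, and $\mathfrak{Y}$ is reduced, the zero scheme is the Cartier divisor $D$. Hence $[D]=x'-x$. On $D$ we have $x=x'$ and $y=y'$. Write $s=x+y=x'+y'$ and eliminate $y=s-x$, $y'=s-x'$. The difference $x'^{d}(s-x')^{k}-x^{d}(s-x)^{k}$ is a polynomial in $x,x'$ that vanishes on $x=x'$, so it is divisible by $x'-x$. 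Its quotient, restricted to $D$, is the divided difference of the one-variable polynomial $\phi(X)=X^d(s-X)^k$, taken at two points with equal values, that is, $\phi'(X)$.

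The main obstacle is to match this restriction with the prescribed combination $\mp\sum_a E_{(a,d+k-1-a)}$. These operators push $\ell_{0,1}^{a}\ell_{1,2}^{d+k-1-a}$ forward from $\mathfrak{Z}_{(u,u)}$. On $D$ the classes $\ell_{0,1},\ell_{1,2}$ should be identified with $x$ and $y$ (up to the correct ordering). I would therefore compute the divided difference with the variables $x,y$ kept separate, rather than substituting $y=s-x$. The identity
\[
\frac{x'^{d}y'^{k}-x^{d}y^{k}}{x'-x}\Big|_{x'=x,\;y'=y} \;\rightsquigarrow\; -\sum_{k\le a\le d-1}x^{a}y^{d+k-1-a}\quad (d>k)
\]
has to be obtained with the constraint $y'-y=-(x'-x)$. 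This is the standard telescoping of $\frac{x'^dy'^k-x^dy^k}{x'-x}$, in which the terms with matching exponents cancel symmetrically. That cancellation leaves only the range between $k$ and $d$ and accounts for the three cases. I also expect to check the sign conventions and the normal-bundle identification on $D$ against the ordering of $\mathcal{L}_{0,1},\mathcal{L}_{1,2}$ in $\mathfrak{Z}_{(u,u)}$. Finally, for the gluing step for general $l$ we need base change along the lci maps $\pi_\pm$, which is justified by Proposition \ref{proposition_projective_bundle_description}.
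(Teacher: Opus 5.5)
Your overall plan is the paper's: realize both orders on a quadruple space via the birational maps $\pi^\uparrow,\pi^\downarrow$, use the divisor $D$ of Proposition~\ref{proposition_vanishing_locus_line_bundle_map} with $[D]=x'-x$, telescope, and move the blue step past the red links one at a time (the paper does this on $\mathfrak{Y}_+$, $\mathfrak{Y}_-$ and $\mathfrak{Y}_{-+}$). However, the local computation on $\mathfrak{Y}$ has a genuine error.

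\textbf{The restriction to $D$.} On $D=\{\mathcal{J}_1=\mathcal{J}_1',\,u=v\}$ the two chains coincide, so the tautological bundles agree \emph{by position}, not by colour: $\mathcal{L}_{0,1}|_D\cong\mathcal{L}'_{0,1}|_D$ and $\mathcal{L}_{1,2}|_D\cong\mathcal{L}'_{1,2}|_D$. In terms of the classes on $\mathfrak{Z}_{(u,u)}$ this means $x|_D=y'|_D=\ell_{01}$ and $y|_D=x'|_D=\ell_{12}$. In particular $(x'-x)|_D=c_1(\mathcal{O}(D)|_D)=\ell_{12}-\ell_{01}$ is the normal class of $D$, which is not zero in general.

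So your assertion that $x=x'$ and $y=y'$ on $D$ is false, and the derivative $\phi'$ it produces is the wrong class. For $d=2$, $k=0$ it gives $\pm 2\ell_{01}$, while the statement needs $-(\ell_{01}+\ell_{12})$, i.e.\ $-(E_{(0,1)}+E_{(1,0)})$. Keeping $x,y$ separate does not repair this as long as you set $x'=x$, $y'=y$. The quotient by $x'-x$ is a well-defined polynomial, and it specializes to $\pm(dx^{d-1}y^k-kx^dy^{k-1})$ with no cancellation. The cancellation of matching exponents, which leaves only the range between $k$ and $d$, comes precisely from the position-wise identification.

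Your divided-difference idea works verbatim once this is corrected. Since $s|_D=\ell_{01}+\ell_{12}$, the quotient restricts to
\[
\frac{\phi(\ell_{12})-\phi(\ell_{01})}{\ell_{12}-\ell_{01}}=\frac{\ell_{12}^{d}\ell_{01}^{k}-\ell_{01}^{d}\ell_{12}^{k}}{\ell_{12}-\ell_{01}}.
\]
This equals $\sum_{k\le a\le d-1}\ell_{01}^a\ell_{12}^{d+k-1-a}$ if $d>k$, zero if $d=k$, and $-\sum_{d\le a\le k-1}\ell_{01}^a\ell_{12}^{d+k-1-a}$ if $d<k$.

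\textbf{The composition order.} Your composition order is reversed, which flips the overall sign. In the paper's convention $\textcolor{red}{x}\textcolor{blue}{y}$ applies $\textcolor{blue}{y}$ first, and $E$ passes from $\mathcal{F}$ to $\mathcal{F}'\subset_u\mathcal{F}$. Hence $\textcolor{red}{E_d}\textcolor{blue}{E_k}$ is $\ell_{01}^d\ell_{12}^k=x^dy^k$ on $\mathfrak{Z}_{(u,v)}$ (blue step on top), and $\textcolor{blue}{E_k}\textcolor{red}{E_d}$ is $x'^dy'^k$ on $\mathfrak{Z}_{(v,u)}$. The commutator is therefore $x^dy^k-x'^dy'^k$. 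The target identity you display is false in your normalization: for $d=1$, $k=0$ its left side is $(x'-x)/(x'-x)=1$, while its right side is $-1$.

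With both corrections, $x^dy^k-x'^dy'^k=-(x'-x)\cdot\frac{\phi(x')-\phi(x)}{x'-x}$. So the commutator is $(\operatorname{id}_{\mathcal{M}}\times\Delta)_*$ of minus the class above, pushed forward from $\mathfrak{Z}_{(u,u)}$, which is exactly the case $l=1$ of the statement. The same position-wise identification on the vanishing loci in $\mathfrak{Y}_+$, $\mathfrak{Y}_-$ and $\mathfrak{Y}_{-+}$ is what makes each transposition in your inductive step produce the terms $E_{(d_1,\ldots,d_{i-1},a,d_i+k-1-a,d_{i+1},\ldots,d_l)}$ with the correct sign. Note that interior links genuinely need $\mathfrak{Y}_{-+}$ over the four-step spaces, not just three-step spaces glued together.
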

\begin{proof}
    We adapt the strategy of Theorem 1.4 in \cite{negut_hecke_correspondences} to the cohomological setting. Beginning with the commutator \(\bigl[ \textcolor{red}{E_{(d_1,d_2)}},\textcolor{blue}{E_k} \bigr]\) for \(d_1,d_2,k \geq 0\), cohomological base change (justified by the dimension estimates of Section \ref{subsection_dimension_estimates_stable}) realizes the composition \(\textcolor{red}{E_{(d_1,d_2)}}\textcolor{blue}{E_k}\) via the diagram
\begin{equation*} 
\begin{tikzcd}[row sep=small]
	& {\ell_{01}^{d_1} \ell_{12}^{d_2} \ell_{23}^k} & \\
	& {\Big\{\mathcal{F}_0 \subset_{\textcolor{red}{u}} \mathcal{F}_1 \subset_{\textcolor{red}{u}} \mathcal{F}_2 \subset_{\textcolor{blue}{v}} \mathcal{F}_3 \Big\}} \\
	{\mathcal{M} \times \textcolor{red}{S} \times \textcolor{blue}{S}} && {\mathcal{M},}
	\arrow[dashed, no head, from=1-2, to=2-2]
	\arrow["{(p_+ \times p_{\textcolor{red}{S}} \times \operatorname{id}_{\textcolor{blue}{S}}) \circ (\pi_+ \times \operatorname{id}_{\textcolor{blue}{S}}) \circ (\pi_+ \times p_{\textcolor{blue}{S}})  }"', from=2-2, to=3-1]
	\arrow["{p_- \circ \pi_- \circ \pi_-}", from=2-2, to=3-3]
\end{tikzcd}\end{equation*}
    where \(\ell_{01} = c_1(\mathcal{L}_{0,1})\), \(\ell_{12} = c_1(\mathcal{L}_{1,2})\), and \(\ell_{23} = c_1(\mathcal{L}_{2,3})\). Similarly, \(\textcolor{blue}{E_k}\textcolor{red}{E_{(d_1,d_2)}}\) is induced by
\[\begin{tikzcd}[row sep=small]
	& {\ell_{01}'^k \ell_{12}'^{d_1} \ell_{23}^{d_2}} & \\
	& {\Big\{\mathcal{F}_0 \subset_{\textcolor{blue}{v}} \mathcal{F}_1' \subset_{\textcolor{red}{u}} \mathcal{F}_2 \subset_{\textcolor{red}{u}} \mathcal{F}_3 \Big\}} \\
	{\mathcal{M} \times \textcolor{red}{S} \times \textcolor{blue}{S}} && {\mathcal{M},}
	\arrow[dashed, no head, from=1-2, to=2-2]
	\arrow["{(p_+ \times \operatorname{id}_{\textcolor{red}{S}} \times  p_{\textcolor{blue}{S}}) \circ (\pi_+ \times p_{\textcolor{red}{S}}) \circ \pi_+}"', from=2-2, to=3-1]
	\arrow["{{p_- \circ \pi_- \circ \pi_-}}", from=2-2, to=3-3]
\end{tikzcd}\] 
    with \(\ell'_{01} = c_1(\mathcal{L}_{0,1}')\) and \(\ell'_{12} = c_1(\mathcal{L}_{1,2}')\). By Proposition \ref{proposition_birationality_uparrow_downarrow}, the compositions \(\textcolor{red}{E_{(d_1,d_2)}}\textcolor{blue}{E_k}\) and \(\textcolor{blue}{E_k}\textcolor{red}{E_{(d_1,d_2)}}\) are realized by the following diagrams
\[\begin{tikzcd}[row sep=small]
	& {\ell_{01}^{d_1}\ell_{12}^{d_2}\ell_{23}^k} &&&& {\ell_{01}'^k\ell_{12}'^{d_1}\ell_{23}^{d_2}} & \\
	& {\mathfrak{Y}_+} &&&& {\mathfrak{Y}_-} \\
	{\mathcal{M} \times \textcolor{red}{S} \times \textcolor{blue}{S}} && {\mathcal{M}} && {\mathcal{M} \times \textcolor{red}{S} \times \textcolor{blue}{S}} && {\mathcal{M}.}
	\arrow[dashed, no head, from=1-2, to=2-2]
	\arrow[dashed, no head, from=1-6, to=2-6]
	\arrow[from=2-2, to=3-1]
	\arrow[from=2-2, to=3-3]
	\arrow[from=2-6, to=3-5]
	\arrow[from=2-6, to=3-7]
\end{tikzcd}\]

    On \(\mathfrak{Y}_+\), we compare the classes \(\ell_{01}^{d_1} \ell_{12}^{d_2} \ell_{23}^{k}\) and \(\ell_{01}^{d_1} \ell_{12}'^{k} \ell_{23}'^{d_2}\). Let \(j\) be the inclusion of the vanishing locus
    \[
    Z\left( \mathcal{L}_{1,2} \rightarrow \mathcal{L}_{2,3}' \right) = Z\left( \mathcal{L}_{1,2}' \rightarrow \mathcal{L}_{2,3} \right) = \Big\{\mathcal{J}_2 = \mathcal{J}_2', \textcolor{red}{u} = \textcolor{blue}{v} \Big\},
    \]
    as in Proposition \ref{proposition_vanishing_locus_line_bundle_map}. A telescoping sum gives
    \begin{align*}
        \ell_{01}^{d_1} \ell_{12}^{d_2} \ell_{23}^{k} - \ell_{01}^{d_1} \ell_{12}'^{k} \ell_{23}'^{d_2}
        = \;&\ell_{01}^{d_1} \Big( \ell_{12}^{d_2}\ell_{23}^{k}-\ell_{12}^{d_2-1}\ell_{23}^{k}\ell_{23}'+\ldots+\ell_{12}\ell_{23}^k\ell_{23}'^{d_2-1}-\ell_{23}^k\ell_{23}'^{d_2} \\
        &+\ell_{23}^k\ell_{23}'^{d_2}-\ell_{23}^{k-1}\ell_{23}'^{d_2}\ell_{12}'+\ldots+\ell_{23}\ell_{23}'^{d_2}\ell_{12}'^{k-1}-\ell_{23}'^{d_2}\ell_{12}'^k \Big) \\
        = \;&j_*\Bigg( \ell_{01}^{d_1}\left( \begin{cases}
            -\sum_{k\leq a \leq d_2-1} \ell_{12}^a\ell_{23}^{d_2+k-1-a} \quad &\text{if } d_2 > k \\
            0 \quad &\text{if } d_2 = k \\
            \sum_{d_2\leq a \leq k-1} \ell_{12}^a\ell_{23}^{d_2+k-1-a} \quad &\text{if } d_2 < k
        \end{cases} \right) \Bigg),
    \end{align*}
    where the second equality follows from the projection formula. We have also used that the restrictions of \(\mathcal{L}_{1,2}\) and \(\mathcal{L}_{2,3}\) to the vanishing locus coincide with those of \(\mathcal{L}_{1,2}'\) and \(\mathcal{L}_{2,3}'\), respectively.

    On \(\mathfrak{Y}_-\), we compare \(\ell_{01}^{d_1}\ell_{12}^k\ell_{23}^{d_2}\) and \(\ell_{01}'^k\ell_{12}'^{d_1}\ell_{23}^{d_2}\). Let \(j'\) be the inclusion of the vanishing locus
    \[
    Z \left( \mathcal{L}_{0,1} \rightarrow \mathcal{L}_{1,2}' \right) = Z \left( \mathcal{L}_{0,1}' \rightarrow \mathcal{L}_{1,2} \right) = \Big\{ \mathcal{F}_1=\mathcal{F}_1', \textcolor{red}{u} = \textcolor{blue}{v} \Big\}.
    \]
    The same telescoping argument, replacing \(\ell_{01}\) by \(\ell_{12}'\) and \(\ell_{12}\) by \(\ell_{01}'\), yields
    \begin{equation*}
        \ell_{01}^{d_1}\ell_{12}^k\ell_{23}^{d_2}-\ell_{01}'^k\ell_{12}'^{d_1}\ell_{23}^{d_2}
        = j'_*\Bigg( \left( \begin{cases}
            -\sum_{k\leq a \leq d_1-1} \ell_{01}^a\ell_{12}^{d_1+k-1-a} \quad &\text{if } d_1 > k \\
            0 \quad &\text{if } d_1 = k \\
            \sum_{d_1\leq a \leq k-1} \ell_{01}^a\ell_{12}^{d_1+k-1-a} \quad &\text{if } d_1 < k
        \end{cases} \right) \ell_{23}^{d_2} \Bigg).
    \end{equation*}

    Hence,
    \begin{align*}
        \bigl[ \textcolor{red}{E_{(d_1,d_2)}},\textcolor{blue}{E_k} \bigr] = (\operatorname{id}_{\mathcal{M}} \times \Delta)_* \Bigg( &\begin{cases}
            -\sum_{k \leq a \leq d_2-1} E_{(d_1, a, d_2+k-1-a)} \quad &\text{if } d_2 > k \\
            0 \quad &\text{if } d_2 = k \\
            \sum_{d_2 \leq a \leq k-1} E_{(d_1, a, d_2+k-1-a)} \quad &\text{if } d_2 < k
        \end{cases} \\
        + &\begin{cases}
            -\sum_{k \leq a \leq d_1-1} E_{(a, d_1+k-1-a, d_2)} \quad &\text{if } d_1 > k \\
            0 \quad &\text{if } d_1 = k \\
            \sum_{d_1 \leq a \leq k-1} E_{(a, d_1+k-1-a, d_2)} \quad &\text{if } d_1 < k
        \end{cases} \Bigg).
    \end{align*}
    The commutator \(\bigl[ \textcolor{red}{E_d},\textcolor{blue}{E_k} \bigr]\) follows similarly; we omit the details.

    For \(l > 2\), the exchanges of \(d_1\) and \(d_l\) with \(k\) are handled by \(\mathfrak{Y}_+\) and \(\mathfrak{Y}_-\), respectively. The intermediate indices \(2 \leq i \leq l-1\) require a separate argument, which we illustrate for \(l=3\).

    By means of the same techniques as above, we realize \(\textcolor{red}{E_{(d_1,d_2,d_3)}}\textcolor{blue}{E_k}\) as
    \[
    (\operatorname{id}_{\mathcal{M}} \times \Delta)_* \left(\begin{cases}
        -\sum_{k \leq a \leq d_3-1} E_{(d_1,d_2,a,d_3+k-1-a)} \quad &\text{if } d_3 > k \\
        0 \quad &\text{if } d_3 = k \\
        \sum_{d_3 \leq a \leq k-1} E_{(d_1,d_2,a,d_3+k-1-a)} \quad &\text{if } d_3 < k
    \end{cases}  \right)
    \]
    plus the contribution
    \begin{equation}
        \label{equation_proof_E_E_4}
        \begin{tikzcd}[row sep=small]
	& {\ell_{01}^{d_1} \ell_{12}^{d_2} \ell_{23}^k \ell_{34}^{d_3}} & \\
	& {\mathfrak{Y}_{-+}} \\
	{\mathcal{M} \times \textcolor{red}{S} \times \textcolor{blue}{S}} && {\mathcal{M}.}
	\arrow[dashed, no head, from=1-2, to=2-2]
	\arrow["{(p_+ \times p_{\textcolor{red}{S}} \times \operatorname{id}_{\textcolor{blue}{S}}) \circ (\pi_+ \times \operatorname{id}_{\textcolor{blue}{S}}) \circ (\pi_+ \times p_{\textcolor{blue}{S}}) \circ \pi_+ \circ \pi^{\uparrow}}"', from=2-2, to=3-1]
	\arrow["{p_- \times \pi_- \times \pi_- \times \pi_- \times \pi^{\uparrow}}", from=2-2, to=3-3]
\end{tikzcd}
    \end{equation}
    The operator induced by (\ref{equation_proof_E_E_4}) equals
    \[
    (\operatorname{id}_{\mathcal{M}} \times \Delta)_* \left(\begin{cases}
        -\sum_{k \leq a \leq d_2-1} E_{(d_1,a,d_2+k-1-a,d_3)} \quad &\text{if } d_2 > k \\
        0 \quad &\text{if } d_2 = k \\
        \sum_{d_2 \leq a \leq k-1} E_{(d_1,a,d_2+k-1-a,d_3)} \quad &\text{if } d_2 < k
    \end{cases}  \right)
    \]
    plus
    \begin{equation}
        \label{equation_proof_E_E_5}
        \begin{tikzcd}[sep=small]
	{\ell_{01}^{d_1} \ell_{12}^k \ell_{23}^{d_2}} &&& \\
	{\mathfrak{Y}_-} && {\Big\{ \mathcal{F}_2 \subset_{\textcolor{red}{u}} \mathcal{F}_3 \subset_{\textcolor{red}{u}} \mathcal{F}_4 \Big\}} & {\ell^{d_3}} \\
	& {\Big\{ \mathcal{F}_2 \subset_{\textcolor{red}{u}} \mathcal{F}_3 \Big\}} && {\Big\{ \mathcal{F}_3 \subset_{\textcolor{red}{u}} \mathcal{F}_4 \Big\}} \\
	{\mathcal{M} \times \textcolor{red}{S} \times \textcolor{blue}{S}} &&& {\mathcal{M}.}
	\arrow[dashed, no head, from=1-1, to=2-1]
	\arrow["{{\pi_- \circ \pi_- \circ \pi^{\uparrow}}}", from=2-1, to=3-2]
	\arrow["{{(p_+ \times p_{\textcolor{red}{S}} \times \operatorname{id}_{\textcolor{blue}{S}}) \circ (\pi_+ \times p_{\textcolor{blue}{S}}) \circ \pi_+ \circ \pi^{\uparrow}}}"{description, pos=0.8}, from=2-1, to=4-1]
	\arrow["{{\pi_+}}", from=2-3, to=3-2]
	\arrow["{{\pi_-}}"', from=2-3, to=3-4]
	\arrow[dashed, no head, from=2-4, to=3-4]
	\arrow["{{p_-}}", from=3-4, to=4-4]
\end{tikzcd}
    \end{equation}
    Finally, the operator from (\ref{equation_proof_E_E_5}) coincides with
    \[
    (\operatorname{id}_{\mathcal{M}} \times \Delta)_* \left(\begin{cases}
        -\sum_{k \leq a \leq d_1-1} E_{(a,d_1+k-1-a,d_2,d_3)} \quad &\text{if } d_1 > k \\
        0 \quad &\text{if } d_1 = k \\
        \sum_{d_1 \leq a \leq k-1} E_{(a,d_1+k-1-a,d_2,d_3)} \quad &\text{if } d_1 < k
    \end{cases}  \right) + \textcolor{blue}{E_k}\textcolor{red}{E_{(d_1,d_2,d_3)}}.
    \]
    Combining these expressions gives
    \begin{align*}
        \bigl[ \textcolor{red}{E_{(d_1,d_2,d_3)}},\textcolor{blue}{E_k} \bigr] = 
        \;(\operatorname{id}_{\mathcal{M}} \times \Delta)_* \Bigg(&\begin{cases}
        -\sum_{k \leq a \leq d_3-1} E_{(d_1,d_2,a,d_3+k-1-a)} \quad &\text{if } d_3 > k \\
        0 \quad &\text{if } d_3 = k \\
        \sum_{d_3 \leq a \leq k-1} E_{(d_1,d_2,a,d_3+k-1-a)} \quad &\text{if } d_3 < k
        \end{cases} \\
        + &\begin{cases}
        -\sum_{k \leq a \leq d_2-1} E_{(d_1,a,d_2+k-1-a,d_3)} \quad &\text{if } d_2 > k \\
        0 \quad &\text{if } d_2 = k \\
        \sum_{d_2 \leq a \leq k-1} E_{(d_1,a,d_2+k-1-a,d_3)} \quad &\text{if } d_2 < k
        \end{cases} \\
        + &\begin{cases}
        -\sum_{k \leq a \leq d_1-1} E_{(a,d_1+k-1-a,d_2,d_3)} \quad &\text{if } d_1 > k \\
        0 \quad &\text{if } d_1 = k \\
        \sum_{d_1 \leq a \leq k-1} E_{(a,d_1+k-1-a,d_2,d_3)} \quad &\text{if } d_1 < k
        \end{cases}  \Bigg),
    \end{align*}
    which completes the proof.    
\end{proof}

Consequently, the commutator \(\left[ \textcolor{red}{E_d},\textcolor{blue}{E_{d'}} \right]\) is expressible in terms of the operators \(E_{(d_1,d_2)}\). Our goal, however, is to express it uniquely in terms of \(E_d\), \(F_d\), and \(H_d\). The following technical lemma is a key step.

\begin{lemma}
    \label{lemma_excess_intersection_formula_E}
    The following identities of operators \(H_{\mathcal{M}} \rightarrow H_{\mathcal{M} \times S}\) hold:
    \begin{equation}
        \label{equation_excess_intersection_formula_E}
        E_dE_{d'}|_\Delta = E_{(d+1,d')} - E_{(d,d'+1)} - c_1(\omega_S)E_{(d,d')}
    \end{equation}
    for all \(d, d' \geq 0\), where \(\omega_S\) denotes the canonical bundle of \(S\) and its various pull-backs.
\end{lemma}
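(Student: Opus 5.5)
The plan is to realize both sides as correspondences on nested moduli spaces with all support points equal, and to compare them through an excess intersection computation. On the left, $E_dE_{d'}$ is an operator $H_{\mathcal{M}} \to H_{\mathcal{M}\times S\times S}$. Restricting to the diagonal means pulling back along $\operatorname{id}_{\mathcal{M}}\times\Delta$.

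\textbf{Step 1: base change for the composition.} As in the proof of Proposition \ref{proposition_commutator_cohomology_E}, cohomological base change (justified by Proposition \ref{proposition_dimension_nested_stable}) realizes $E_dE_{d'}$ via $\mathfrak{Z}_{(u,v)} = \{\mathcal{F}_0\subset_u\mathcal{F}_1\subset_v\mathcal{F}_2\}$ with class $\ell_{01}^{d}\ell_{12}^{d'}$. The map to $\mathcal{M}\times S\times S$ remembers $(\mathcal{F}_0,u,v)$, and the map to $\mathcal{M}$ remembers $\mathcal{F}_2$.

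\textbf{Step 2: restriction to the diagonal.} Pulling back along $\Delta$ amounts to intersecting $\mathfrak{Z}_{(u,v)}$ with $\{u=v\}$. The set-theoretic preimage is $\mathfrak{Z}_{(u,u)}$, which is smooth of dimension one less than expected: $\dim \mathfrak{Z}_{(u,v)}-2$ would be expected, but $\mathfrak{Z}_{(u,u)}$ has $k=1$, hence dimension $\dim\mathfrak{Z}_{(u,v)}-1$. So there is excess dimension $1$. By the excess intersection formula,
\[
\Delta^!\bigl(\ell_{01}^d\ell_{12}^{d'}\bigr)=\bigl(\ell_{01}^d\ell_{12}^{d'}\cdot c_1(\mathcal{E})\bigr)\cap[\mathfrak{Z}_{(u,u)}],
\]
where $\mathcal{E}$ is the excess line bundle. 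This is the quotient of $N_\Delta = T_S$ (pulled back) by the normal bundle of $\mathfrak{Z}_{(u,u)}$ in $\mathfrak{Z}_{(u,v)}$, the latter being understood via the lci structure. To justify the formula, I would use that $\mathfrak{Z}_{(u,v)}$ is lci (Proposition \ref{proposition_nested_Hilbert_schemes_geometric_properties}) and that $\Delta$ is a regular embedding.

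\textbf{Step 3: identify the excess class.} I expect
\[
c_1(\mathcal{E}) = \ell_{01}-\ell_{12}-c_1(\omega_S).
\]
To compute it, I would use the projective bundle description of Proposition \ref{proposition_projective_bundle_description} for $\pi_+$ (or $\pi_-$) on both $\mathfrak{Z}_{(u,v)}$ and $\mathfrak{Z}_{(u,u)}$. Over $\mathfrak{Z}_{(u)}\times S$, the space $\mathfrak{Z}_{(u,v)}$ sits in $\mathbb{P}(\mathcal{V}_1)$ cut out by a section of $\mathcal{W}_1^\vee(1)$. The space $\mathfrak{Z}_{(u,u)}$ is the analogous locus over the graph $\Gamma^u$, now with $\Gamma^{u*}\mathcal{V}_1,\Gamma^{u*}\mathcal{W}_1$.

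Since $\mathcal{U}_1$ is locally of the form $\mathcal{U}_0\oplus\mathbb{C}_u$ near $u$, the $K$-class of $\Gamma^{u*}\mathcal{U}_1$ differs from the generic value by a term involving $\mathcal{L}_{0,1}$ and $\omega_S$ (the $\operatorname{Tor}$ terms of a skyscraper along a codimension-$2$ point contribute $T_S$, i.e.\ $\wedge^2 T_S = \omega_S^{-1}$). Comparing the two virtual normal bundles then yields a line bundle of the form
\[
\mathcal{E} \cong \mathcal{L}_{0,1}\otimes\mathcal{L}_{1,2}^{-1}\otimes\omega_S^{-1}.
\]
Its first Chern class is $\ell_{01}-\ell_{12}-t$, up to a sign convention to be fixed.

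\textbf{Step 4: conclude.} Expanding the product gives
\[
\ell_{01}^{d+1}\ell_{12}^{d'}-\ell_{01}^{d}\ell_{12}^{d'+1}-c_1(\omega_S)\,\ell_{01}^d\ell_{12}^{d'}
\]
on $\mathfrak{Z}_{(u,u)}$. Pushing forward gives exactly $E_{(d+1,d')}-E_{(d,d'+1)}-c_1(\omega_S)E_{(d,d')}$ by the definition \eqref{equation_operator_E_multiple}, since $\mathfrak{Z}_{(u,u)}=\mathfrak{Z}_{(u)}\times_{\mathcal{M}}\mathfrak{Z}_{(u)}$ over the common support point.

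\textbf{Main obstacle.} Step 3 is the hard part: pinning down the excess bundle and its signs. I would first try the cleaner route of the short exact sequence \eqref{equation_short_exact_sequence_universal_sheaves_1} together with the Koszul resolution of $\Gamma_*$ to compute $\Gamma^{u*}$ of $[\mathcal{U}_1]$. As a sanity check on signs, I would compare the result with the $K$-theoretic analogue in \cite{negut_hecke_correspondences}.
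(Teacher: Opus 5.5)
Your skeleton matches the paper's. Both sides are correspondences on $\mathfrak{Z}_{(u,u)}$, the composition has one dimension of excess there, and the excess line bundle is $\mathcal{L}_{0,1}\otimes\mathcal{L}_{1,2}^{-1}\otimes\omega_S^{-1}$.

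The routes differ. The paper composes the two copies of $\mathfrak{Z}_{(u)}$ directly over $\mathcal{M}\times S$, so the intersection $\mathfrak{Z}_{(u)}\times_{\mathcal{M}\times S}\mathfrak{Z}_{(u)}=\mathfrak{Z}_{(u,u)}$ involves only smooth spaces. It then takes the excess bundle from Proposition 2.27 of \cite{negut_hecke_correspondences}. Your detour through $\mathfrak{Z}_{(u,v)}$ is legitimate, since refined Gysin maps compose, but Step 2 is not justified as written. $\mathfrak{Z}_{(u,v)}$ is only lci and may be singular along $\{u=v\}$, where $\mathfrak{Z}_{(u,u)}$ is cut out by two equations in codimension one. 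So there is no a priori reason for $\mathfrak{Z}_{(u,u)}\hookrightarrow\mathfrak{Z}_{(u,v)}$ to be a regular embedding, and ``$N_\Delta$ modulo the normal bundle of $\mathfrak{Z}_{(u,u)}$ in $\mathfrak{Z}_{(u,v)}$'' has no meaning yet. (Also, $\mathfrak{Z}_{(u,u)}$ has dimension one \emph{more} than expected, not less.)

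The real gap is Step 3, which carries the whole content of the lemma and is only conjectured. Your heuristic has three problems. First, $\mathcal{U}_1$ is not locally $\mathcal{U}_0\oplus\mathbb{C}_u$: the Hecke extension does not split, since the fibers are torsion free. Second, the Tor terms of the graph are $\Omega_S$ and $\wedge^2\Omega_S=\omega_S$, not $T_S$ and $\omega_S^{-1}$. Third, a comparison of $K$-classes does not by itself produce a rank-one excess bundle.

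The missing input is as follows. On $\{\mathcal{F}_1\subset_u\mathcal{F}_2\}$, the connecting map of $0\to\mathcal{F}_1\to\mathcal{F}_2\to\mathbb{C}_u\to 0$ embeds $\operatorname{Tor}_2(\mathbb{C}_u,\mathbb{C}_u)\otimes\mathcal{L}_{1,2}=\omega_S|_u\otimes\mathcal{L}_{1,2}$ into $\operatorname{Tor}_1(\mathcal{F}_1,\mathbb{C}_u)=\ker(\mathcal{W}_1|_u\to\mathcal{V}_1|_u)$. It is injective because $\operatorname{Tor}_2(\mathcal{F}_2,\mathbb{C}_u)=0$. Globally, $\mathcal{L}_{1,2}\otimes\omega_S$ is a subbundle of $\Gamma^{u*}\mathcal{W}_1$ killed by the map to $\Gamma^{u*}\mathcal{V}_1$; this is the cited Proposition 2.27, which the paper also uses in Section 4.

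Consequently, the section of $\mathcal{W}_1^\vee\otimes\mathcal{O}(1)$ cutting out $\mathfrak{Z}_{(u,v)}$ in $\mathbb{P}(\mathcal{V}_1)$ restricts over the graph to a section of the corank-one subbundle $\bigl(\Gamma^{u*}\mathcal{W}_1/(\mathcal{L}_{1,2}\otimes\omega_S)\bigr)^\vee\otimes\mathcal{O}(1)$. This lives on the smooth space $\mathbb{P}_{\mathfrak{Z}_{(u)}}(\Gamma^{u*}\mathcal{V}_1)$, and by the dimension count it is regular there with zero scheme $\mathfrak{Z}_{(u,u)}$. Now $[\mathfrak{Z}_{(u,v)}]$ is the localized top Chern class of the original section, and localized top Chern classes commute with refined Gysin maps. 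Hence $\Delta^![\mathfrak{Z}_{(u,v)}]=c_1\bigl(\mathcal{O}(1)\otimes\mathcal{L}_{1,2}^{-1}\otimes\omega_S^{-1}\bigr)\cap[\mathfrak{Z}_{(u,u)}]$. Since $\mathcal{O}(1)|_{\mathfrak{Z}_{(u,u)}}=\mathcal{L}_{0,1}$, this gives $\ell_{01}-\ell_{12}-c_1(\omega_S)$.

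This argument never uses the normal bundle of $\mathfrak{Z}_{(u,u)}$ in $\mathfrak{Z}_{(u,v)}$, so it also repairs Step 2. It is exactly the computation the paper's one-step route reduces to: pulling back $\mathfrak{Z}_{(u)}\subset\mathbb{P}_{\mathcal{M}\times S}(\mathcal{V})$ along $\{\mathcal{F}_1\subset_u\mathcal{F}_2\}\to\mathcal{M}\times S$ produces the same restricted section.
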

\begin{proof}
    The operator \(E_dE_{d'}|_\Delta\) is induced by
\[\begin{tikzcd}[row sep=small]
	& {\ell^d} && {\ell^{d'}} & \\
	& {\Big\{ \mathcal{F}_0 \subset_u \mathcal{F}_1 \Big\}} && {\Big\{ \mathcal{F}_1 \subset_u \mathcal{F}_2 \Big\}} \\
	{\mathcal{M} \times S} && {\mathcal{M} \times S} && {\mathcal{M}.}
	\arrow[dashed, no head, from=1-2, to=2-2]
	\arrow[dashed, no head, from=1-4, to=2-4]
	\arrow["{p_+ \times p_S}"', from=2-2, to=3-1]
	\arrow["{p_- \times p_S}", from=2-2, to=3-3]
	\arrow["{p_+ \times p_S}"', from=2-4, to=3-3]
	\arrow["{p_-}", from=2-4, to=3-5]
\end{tikzcd}\]
    By Proposition 2.27 of \cite{negut_hecke_correspondences} and the excess intersection formula (see Theorem \ref{theorem_excess_intersection_formula}), this operator is realized by
    \begin{equation*}
        \begin{tikzcd}[row sep=small]
	& {\ell_{01}^{d} \ell_{12}^{d'} \left( \ell_{01}-\ell_{12}-c_1(\omega_S) \right)} & \\
	& {\Big\{ \mathcal{F}_0 \subset_u \mathcal{F}_1 \subset_u \mathcal{F}_2 \Big\}} \\
	{\mathcal{M} \times S} && {\mathcal{M},}
	\arrow[dashed, no head, from=1-2, to=2-2]
	\arrow["{(p_+ \times p_S) \circ \pi_+}"', from=2-2, to=3-1]
	\arrow["{p_- \circ \pi_-}", from=2-2, to=3-3]
\end{tikzcd}
    \end{equation*}
    which induces \(E_{(d+1,d')} - E_{(d,d'+1)} - c_1(\omega_S)E_{(d,d')}\), as claimed.    
\end{proof}

Combining Proposition \ref{proposition_commutator_cohomology_E} and Lemma \ref{lemma_excess_intersection_formula_E} yields a quadratic expression for the commutator \(\bigl[ \textcolor{red}{E_d},\textcolor{blue}{E_{d'}} \bigr]\) in terms of the operators \(E_d\).

\begin{proposition}
    \label{proposition_commutator_cohomology_E_quadratic}
    The following identities of operators \(H_{\mathcal{M}} \rightarrow H_{\mathcal{M} \times \textcolor{red}{S} \times \textcolor{blue}{S}}\) hold:
    \begin{align}
        \label{equation_relation_cohomology_E_E_operators_quadratic}
        \bigl[ \textcolor{red}{E_{a+3}},\textcolor{blue}{E_b} \bigr] - 3\bigl[ \textcolor{red}{E_{a+2}},\textcolor{blue}{E_{b+1}} \bigr] + 3\bigl[ \textcolor{red}{E_{a+1}},\textcolor{blue}{E_{b+2}} \bigr] - \bigl[ \textcolor{red}{E_a},\textcolor{blue}{E_{b+3}} \bigr] \\
        +(\textcolor{red}{t}-\textcolor{blue}{t})\left( \bigl[ \textcolor{red}{E_{a+2}},\textcolor{blue}{E_b} \bigr] - 2\bigl[ \textcolor{red}{E_{a+1}},\textcolor{blue}{E_{b+1}} \bigr] + \bigl[ \textcolor{red}{E_a},\textcolor{blue}{E_{b+2}} \bigr] \right) - \textcolor{red}{t}\textcolor{blue}{t}\left( \bigl[ \textcolor{red}{E_{a+1}},\textcolor{blue}{E_b} \bigr] - \bigl[ \textcolor{red}{E_a},\textcolor{blue}{E_{b+1}} \bigr] \right) \nonumber \\
        +(\operatorname{id}_{\mathcal{M}} \times \Delta)_* \left( E_{a+1}E_b|_\Delta - E_bE_{a+1}|_\Delta - E_aE_{b+1}|_\Delta + E_{b+1}E_a|_\Delta + tE_aE_b|_\Delta + tE_bE_a|_\Delta \right) = 0 \nonumber
    \end{align}
    for all \(a,b \geq 0\). Here \(\textcolor{red}{t}\) and \(\textcolor{blue}{t}\) denote the pull-backs of \(t = c_1(\omega_S)\) to \(H_{\textcolor{red}{S} \times \textcolor{blue}{S}}\) via the first and second projections, respectively. The symbol \(t\) in the last line may be taken as either \(\textcolor{red}{t}\) or \(\textcolor{blue}{t}\) due to the presence of \((\operatorname{id}_{\mathcal{M}} \times \Delta)_*\).
\end{proposition}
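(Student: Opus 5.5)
The plan is to reduce everything to Proposition \ref{proposition_commutator_cohomology_E} in the case $l=1$ and then rewrite the resulting two-step operators $E_{(x,y)}$ via Lemma \ref{lemma_excess_intersection_formula_E}.

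\textbf{Step 1: the one-step commutator.} For $l=1$, Proposition \ref{proposition_commutator_cohomology_E} gives
\[
\bigl[\textcolor{red}{E_d},\textcolor{blue}{E_k}\bigr]=(\operatorname{id}_{\mathcal{M}}\times\Delta)_*\,\Sigma(d,k).
\]
Here $\Sigma(d,k)=\operatorname{sgn}(k-d)\sum_a E_{(a,d+k-1-a)}$, with $a$ ranging between $\min(d,k)$ and $\max(d,k)-1$. This is most conveniently packaged in generating functions. Set
\[
\mathbf{E}(z,w)=\sum_{x,y\ge 0}E_{(x,y)}z^{-x-1}w^{-y-1}.
\]
Then the generating function of $\Sigma$ is, up to a polynomial correction, $(z-w)$ times the ``$\mathbf{E}$'' kernel. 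Concretely, on $\mathfrak{Z}_{(u,u)}$ the telescoping identity is
\[
\frac{1}{(z-\ell_{01})(w-\ell_{12})}-\frac{1}{(z-\ell_{12})(w-\ell_{01})}
=\frac{(z-w)(\ell_{01}-\ell_{12})}{\prod(\cdots)}.
\]
Equivalently, $\Sigma(d,k)$ is the coefficient extraction of $\frac{x^dy^k-x^ky^d}{x-y}$ with $x\mapsto\ell_{01}$ and $y\mapsto\ell_{12}$, with $E_{(x,y)}$ read off monomialwise.

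\textbf{Step 2: the LHS as one class on $\mathfrak{Z}_{(u,u)}$.} Substitute Step 1 into the first two lines of \eqref{equation_relation_cohomology_E_E_operators_quadratic}. Because of $(\operatorname{id}\times\Delta)_*$, the factors $\textcolor{red}{t}$ and $\textcolor{blue}{t}$ both restrict to $t$ by the projection formula. So the first two lines become $(\operatorname{id}\times\Delta)_*$ of the operator induced on $\mathfrak{Z}_{(u,u)}$ by the class
\[
\frac{P(x,y)-P(y,x)}{x-y},
\]
where
\[
P(x,y)=x^ay^b\bigl((x-y)^3+(t-t)(x-y)^2-t^2(x-y)\bigr),\qquad x=\ell_{01},\ y=\ell_{12},
\]
with the $(t-t)$ term vanishing after restriction. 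Next apply Lemma \ref{lemma_excess_intersection_formula_E} to the third line: each $E_pE_q|_\Delta$ is induced by $\ell_{01}^p\ell_{12}^q(\ell_{01}-\ell_{12}-t)$. Hence the third line is induced by
\[
(x-y-t)\bigl(x^{a+1}y^b-x^by^{a+1}-x^ay^{b+1}+x^{b+1}y^a+t\,x^ay^b+t\,x^by^a\bigr).
\]

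\textbf{Step 3: the polynomial identity.} It remains to check that the sum of these two polynomials in $x,y,t$ is identically zero. This reduces to a finite symmetric computation. Writing $x^ay^b$ and $x^by^a$ separately, the first contribution is
\[
x^ay^b\bigl((x-y)^2-t^2\bigr)-x^by^a\bigl((x-y)^2-t^2\bigr).
\]
The second contribution is
\[
(x-y-t)\bigl[x^ay^b(x-y+t)+x^by^a(x-y+t)\bigr].
\]
Its $x^ay^b$ part is $(x-y)^2-t^2$ times $x^ay^b$, but its $x^by^a$ part carries the opposite sign convention from the first contribution. Sign bookkeeping is exactly what must be verified here.

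\textbf{Main obstacle.} I expect the main difficulty to be this sign and ordering bookkeeping. The commutator formula of Proposition \ref{proposition_commutator_cohomology_E} is antisymmetric in a specific orientation, and the term $E_bE_{a+1}|_\Delta$ corresponds to the class $y^{a+1}x^b$ only after tracking which step carries which index. I would first test the case $a=b=0$ by hand to fix signs, and then confirm the general identity by the generating-function form
\[
\frac{1}{(z-x)(w-y)}
\]
rather than expanding term by term. If the $x^by^a$ terms fail to cancel naively, the fix I would try is to use the symmetry $x\leftrightarrow y$ of $\Sigma$ to rewrite them.
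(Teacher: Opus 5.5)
Your strategy is the paper's own: its proof is just the remark that the identity follows by combining Proposition~\ref{proposition_commutator_cohomology_E} for $l=1$ with Lemma~\ref{lemma_excess_intersection_formula_E}. Your translation of the third line via the lemma is also correct. However, the proposal does not prove the statement. The only nontrivial step, the polynomial identity of Step~3, is left unverified, and with your signs it comes out nonzero. There are two sign errors.

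\textbf{First error.} Your closed form in Step~1 contradicts the case formula you quote. For $d=0$, $k=1$ the proposition gives $\Sigma(0,1)=+E_{(0,0)}$, whereas $\frac{x^dy^k-x^ky^d}{x-y}=-1$. Redo the telescoping on $\mathfrak{Y}$ with $[Z]=\ell_{12}'-\ell_{01}=\ell_{12}-\ell_{01}'$, and let $x,y$ be the restrictions of $\ell_{01},\ell_{12}$ to $Z\cong\mathfrak{Z}_{(u,u)}$. This gives
\[
\ell_{01}^d\ell_{12}^k-\ell_{01}'^k\ell_{12}'^d=j_*\Bigl(\frac{x^ky^d-x^dy^k}{x-y}\Bigr).
\]
So the first two lines of the statement correspond to $\frac{P(y,x)-P(x,y)}{x-y}$, not $\frac{P(x,y)-P(y,x)}{x-y}$. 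Your auxiliary identity for $\frac{1}{(z-\ell_{01})(w-\ell_{12})}-\frac{1}{(z-\ell_{12})(w-\ell_{01})}$ is also off by a sign: its numerator is $(w-z)(\ell_{01}-\ell_{12})$.

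\textbf{Second error.} In Step~3 you dropped the sign produced by the odd power of $x-y$. Since $P(x,y)=x^ay^b(x-y)\bigl((x-y)^2-t^2\bigr)$, one has $P(y,x)=-x^by^a(x-y)\bigl((x-y)^2-t^2\bigr)$. So $x^ay^b$ and $x^by^a$ enter with the same sign.

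\textbf{With both corrections the identity closes.} The first two lines give $-(x^ay^b+x^by^a)\bigl((x-y)^2-t^2\bigr)$. Your third-line class simplifies to $(x-y-t)(x-y+t)(x^ay^b+x^by^a)=(x^ay^b+x^by^a)\bigl((x-y)^2-t^2\bigr)$. The total therefore vanishes identically; for $a=b=0$ it reads $-2\bigl((x-y)^2-t^2\bigr)+2\bigl((x-y)^2-t^2\bigr)=0$.

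The fallback you suggest, invoking an $x\leftrightarrow y$ symmetry, would not have worked. Classes in $\ell_{01},\ell_{12}$ on $\mathfrak{Z}_{(u,u)}$ related by swapping $x$ and $y$ induce different operators, since $E_{(p,q)}\neq E_{(q,p)}$ in general. Moreover, the discrepancy your signs produce is a nonzero polynomial: $2x^ay^b\bigl((x-y)^2-t^2\bigr)$ as written, or $2(x^ay^b+x^by^a)\bigl((x-y)^2-t^2\bigr)$ after fixing only the algebra. The latter is even symmetric, so no symmetrization removes it. What is missing is the correct sign of the commutator kernel; once it is in place, the argument closes as in the paper.
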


Relation (\ref{equation_relation_cohomology_E_E_operators_quadratic}) is the cohomological analogue of the Yangian relation (\ref{equation_relation_Yangian_e_e_operators}). As observed in Definition \ref{definition_Yangian}, the defining relations are best expressed in terms of generating series. The same holds for the geometric operators \(E_d, F_d, H_d\). To translate (\ref{equation_relation_cohomology_E_E_operators_quadratic}) into a power-series identity, we introduce the cohomological analogues of (\ref{equation_zeta_Yangian}):
\begin{equation}
    \label{equation_zeta_cohomology}
    \zeta^\text{coh}(x) \define 1 + \frac{[\Delta]}{x(x+t)} \in H_{\textcolor{red}{S} \times \textcolor{blue}{S}}(x),
\end{equation}
where \([\Delta] \in H_{\textcolor{red}{S} \times \textcolor{blue}{S}}\) is the fundamental class of the diagonal, and
\begin{equation}
    \label{equation_zeta_tilde_cohomology}
    \widetilde{\zeta}^\text{coh}_\pm \define \zeta(x)(x \pm \textcolor{red}{t})(x \mp \textcolor{blue}{t}) \in \frac{H_{\textcolor{red}{S} \times \textcolor{blue}{S}}[x]}{x},
\end{equation}
 with \(\textcolor{red}{t}, \textcolor{blue}{t}\) as above. Again, \(t\) in the denominator of (\ref{equation_zeta_cohomology}) may be treated as either \(\textcolor{red}{t}\) or \(\textcolor{blue}{t}\) due to \([\Delta]\).

As in the derivation of (\ref{equation_relation_Yangian_e_e_operators}) and (\ref{equation_relation_Yangian_f_f_operators}), observe that
\[
\widetilde{\zeta}^\text{coh}_\pm(z-w) \cdot (z-w) = (z-w)^3+(\textcolor{red}{t}-\textcolor{blue}{t})(z-w)^2 - \textcolor{red}{t}\textcolor{blue}{t}(z-w) + [\Delta](z-w) \mp [\Delta]t.
\]
Thus, relation (\ref{equation_relation_cohomology_E_E_operators_quadratic}) for all \(a,b \geq 0\) is equivalent to
\begin{equation}
    \label{equation_relation_cohomology_E_E}
        \Bigg[ \textcolor{red}{E(z)}\textcolor{blue}{E(w)}\widetilde{\zeta}^\text{coh}_-(w-z) \Bigg]_{z^{<0},w^{<0}} = \Bigg[ \textcolor{blue}{E(w)}\textcolor{red}{E(z)}\widetilde{\zeta}^\text{coh}_+(z-w) \Bigg]_{z^{<0},w^{<0}},
\end{equation}
where the square brackets have the same meaning as in Definition \ref{definition_Yangian}.

The same techniques yield a quadratic expression for \(\bigl[ \textcolor{blue}{F_d},\textcolor{red}{F_{d'}} \bigr]\).

\begin{proposition}
    \label{proposition_commutator_cohomology_F_quadratic}
    The following identities of operators \(H_{\mathcal{M}} \rightarrow H_{\mathcal{M} \times \textcolor{red}{S} \times \textcolor{blue}{S}}\) hold:
    \begin{align}
        \label{equation_relation_cohomology_F_F_operators_quadratic}
        \bigl[ \textcolor{blue}{F_b},\textcolor{red}{F_{a+3}} \bigr] - 3\bigl[ \textcolor{blue}{F_{b+1}},\textcolor{red}{F_{a+2}} \bigr] + 3\bigl[ \textcolor{blue}{F_{b+2}},\textcolor{red}{F_{a+1}} \bigr] - \bigl[ \textcolor{blue}{F_{b+3}},\textcolor{red}{F_a} \bigr] \\
        +(\textcolor{red}{t}-\textcolor{blue}{t})\left( \bigl[ \textcolor{blue}{F_b},\textcolor{red}{F_{a+2}} \bigr] - 2\bigl[ \textcolor{blue}{F_{b+1}},\textcolor{red}{F_{a+1}} \bigr] + \bigl[ \textcolor{blue}{F_{b+2}},\textcolor{red}{F_a} \bigr] \right) - \textcolor{red}{t}\textcolor{blue}{t}\left( \bigl[ \textcolor{blue}{F_b},\textcolor{red}{F_{a+1}} \bigr] - \bigl[ \textcolor{blue}{F_{b+1}},\textcolor{red}{F_a} \bigr] \right) \nonumber \\
        +(\operatorname{id}_{\mathcal{M}} \times \Delta)_* \left( F_bF_{a+1}|_\Delta -F_{a+1}F_b|_\Delta -F_{b+1}F_a|_\Delta + F_aF_{b+1}|_\Delta +tF_bF_a|_\Delta +tF_aF_b|_\Delta \right) = 0 \nonumber
    \end{align}
    for all \(a,b \geq 0\).
\end{proposition}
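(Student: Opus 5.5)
The plan is to mirror the derivation of Proposition \ref{proposition_commutator_cohomology_E_quadratic}, working with the $F$-type analogues of its three ingredients.

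\textbf{Step 1: $F$-version of Proposition \ref{proposition_commutator_cohomology_E}.} I first establish the formula for $\bigl[\textcolor{blue}{F_k},\textcolor{red}{F_{(d_1,\ldots,d_l)}}\bigr]$ as $(\operatorname{id}_{\mathcal{M}}\times\Delta)_*$ of sums of $F_{(\ldots,a,d_i+k-1-a,\ldots)}$.

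The geometry is the same as before. Both compositions are realized on the same nested spaces $\mathfrak{Z}_{(u,u,v)}$ and $\mathfrak{Z}_{(v,u,u)}$, but now with the roles of $p_+$ and $p_-$ swapped. Cohomological base change applies by the dimension estimates of Proposition \ref{proposition_dimension_nested_stable}. Birationality (Proposition \ref{proposition_birationality_uparrow_downarrow}) lets me lift to $\mathfrak{Y}_\pm$ and $\mathfrak{Y}_{-+}$.

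On these spaces the differences of monomials in the $\ell$'s telescope. Each difference becomes a pushforward from the vanishing locus of Proposition \ref{proposition_vanishing_locus_line_bundle_map}, exactly as in the $E$ case. The only change is the order in which the colored factors are composed. This is why the commutators appear as $[\textcolor{blue}{F_b},\textcolor{red}{F_a}]$ rather than $[\textcolor{red}{F_a},\textcolor{blue}{F_b}]$.

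\textbf{Step 2: $F$-analogue of Lemma \ref{lemma_excess_intersection_formula_E}.} Here I express $F_dF_{d'}|_\Delta$ through $F_{(d_1,d_2)}$. The fiber product $\{\mathcal{F}_1\supset_u\mathcal{F}_0\}\times_{\mathcal{M}\times S}\{\mathcal{F}_2\supset_u\mathcal{F}_1\}$ is $\mathfrak{Z}_{(u,u)}$, now traversed in the opposite direction. Proposition 2.27 of \cite{negut_hecke_correspondences} together with the excess intersection formula gives an excess class. By symmetry it should be $\ell_{01}-\ell_{12}-c_1(\omega_S)$, up to the ordering and sign conventions for the $F$-indices. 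This yields an identity of the shape $F_dF_{d'}|_\Delta=\pm\bigl(F_{(d',d+1)}-F_{(d'+1,d)}\bigr)-tF_{(\cdot,\cdot)}$.

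\textbf{Step 3: assembling.} I specialize Step 1 to $l=1,2$, take the cubic combination of commutators with coefficients $1,-3,3,-1$, and add the $(\textcolor{red}{t}-\textcolor{blue}{t})$ and $\textcolor{red}{t}\textcolor{blue}{t}$ terms. Three facts make this work:
\begin{itemize}
\item The telescoping sums cancel in the cubic combination.
\item Pulling $\textcolor{red}{t},\textcolor{blue}{t}$ across $(\operatorname{id}_{\mathcal{M}}\times\Delta)_*$ identifies both with $t$.
\item Step 2 converts the remaining $F_{(d_1,d_2)}$ terms into the diagonal restrictions $F_bF_{a+1}|_\Delta$ and similar terms.
\end{itemize}
The result is the displayed identity. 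It is a purely formal manipulation identical in structure to the passage from Proposition \ref{proposition_commutator_cohomology_E} and Lemma \ref{lemma_excess_intersection_formula_E} to Proposition \ref{proposition_commutator_cohomology_E_quadratic}.

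\textbf{Main obstacle.} I expect the hardest part to be the sign and ordering bookkeeping in Step 2. For $F$, the tautological bundle $\mathcal{L}$ enters through $\pi_+$, whose projective bundle uses $\mathcal{W}^\vee\otimes\omega_S$ with $\mathcal{L}=\mathcal{O}(-1)$ (Proposition \ref{proposition_projective_bundle_description}). The $\omega_S$ twist and the sign of $\ell$ therefore flip relative to $E$, and this decides whether the $t$-terms come out as $+tF_bF_a|_\Delta+tF_aF_b|_\Delta$.

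To control this, I would compute the normal bundle of $\mathfrak{Z}_{(u,u)}\subset\mathfrak{Z}_{(u)}\times_S\mathfrak{Z}_{(u)}$ directly in the $F$-orientation. As a cross-check, I would compare with the $E$ case under the duality $\mathcal{F}\mapsto\mathcal{F}^\vee\otimes\omega_S$, which swaps the roles of $p_\pm$.
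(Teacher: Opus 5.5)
Your route is the paper's route. The paper proves this proposition only by saying ``the same techniques yield'' it, meaning $F$-versions of Proposition \ref{proposition_commutator_cohomology_E} and Lemma \ref{lemma_excess_intersection_formula_E}, followed by the same formal assembly as for Proposition \ref{proposition_commutator_cohomology_E_quadratic}.

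\textbf{Step 2 has a definite answer, and the sign flip you expect does not occur.} The excess class depends only on the fiber square, not on which projective-bundle presentation of $\mathfrak{Z}_{(u)}$ you use. The square computing $F_dF_{d'}|_\Delta$ glues $\{\mathcal{F}_0\subset_u\mathcal{F}_1\}$ (carrying $\ell^{d'}$) and $\{\mathcal{F}_1\subset_u\mathcal{F}_2\}$ (carrying $\ell^{d}$) along $(\mathcal{F}_1,u)$. This is literally the square used for $E_{d'}E_d|_\Delta$, so the excess class is again $\ell_{01}-\ell_{12}-t$ on $\mathfrak{Z}_2^\bullet$. In the paper's convention, $F_{(d_1,d_2)}$ carries $\ell_{12}^{d_1}\ell_{01}^{d_2}$, and this gives
\[
F_dF_{d'}|_\Delta = F_{(d,d'+1)} - F_{(d+1,d')} - t\,F_{(d,d')}.
\]
Your guessed shape $\pm\bigl(F_{(d',d+1)}-F_{(d'+1,d)}\bigr)$ matches this only after reversing the index order, and then only with the minus sign.

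\textbf{A wording issue in Step 3.} The cubic combination of commutators does not cancel on its own. It cancels against the $|_\Delta$ terms once Step 2 is used.

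\textbf{A shortcut that removes the bookkeeping.} The correspondence of $F_d$ is the transpose of that of $E_d$: same space $\mathfrak{Z}_{(u)}$, same class $\ell^d$, with $p_+$ and $p_-$ exchanged.
\begin{itemize}
\item Transposition reverses compositions: $(\textcolor{red}{E_d}\textcolor{blue}{E_k})^T=\textcolor{blue}{F_k}\textcolor{red}{F_d}$ and $(E_dE_{d'}|_\Delta)^T=F_{d'}F_d|_\Delta$.
\item It commutes with $(\operatorname{id}_{\mathcal{M}}\times\Delta)_*$ and with multiplication by $\textcolor{red}{t}$ and $\textcolor{blue}{t}$.
\end{itemize}
Proposition \ref{proposition_commutator_cohomology_E_quadratic} holds at the level of correspondences, so transposing it term by term gives exactly the stated identity. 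This is why every ordering in the $F$-relation is reversed. The same argument yields Step 2 directly from Lemma \ref{lemma_excess_intersection_formula_E}.

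\textbf{Your proposed cross-check does not work.} The map $\mathcal{F}\mapsto\mathcal{F}^\vee\otimes\omega_S$ does not act on $\mathcal{M}$. It sends non-locally-free torsion-free sheaves to reflexive ones, does not preserve Hecke modifications, and changes $c_1$. The relevant symmetry is transposition of correspondences, not duality of sheaves.
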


For all \(a,b \geq 0\), relation (\ref{equation_relation_cohomology_F_F_operators_quadratic}) is equivalent to
\begin{equation}
    \label{equation_relation_cohomology_F_F}
    \Bigg[ \textcolor{blue}{F(w)}\textcolor{red}{F(z)}\widetilde{\zeta}^\text{coh}_-(w-z) \Bigg]_{z^{<0},w^{<0}} = \Bigg[ \textcolor{red}{F(z)}\textcolor{blue}{F(w)}\widetilde{\zeta}^\text{coh}_+(z-w) \Bigg]_{z^{<0},w^{<0}}.   
\end{equation}

\begin{comment}
We record the following analogue of Proposition \ref{proposition_commutator_cohomology_E} for completeness.

\begin{proposition}
    \label{proposition_commutator_cohomology_F}
    The following identities of operators \(H_{\mathcal{M}} \rightarrow H_{\mathcal{M} \times \textcolor{red}{S} \times \textcolor{blue}{S}}\) hold:
    \begin{equation}
        \label{equation_relation_cohomology_F_F_operators}
        \bigl[ \textcolor{blue}{F_{d}},\textcolor{red}{F_{d'}} \bigr] = (\operatorname{id}_{\mathcal{M}} \times \Delta)_* \begin{cases}
            \sum_{d' \leq a \leq d-1} F_{(a, d+d'-1-a)} \quad &\text{if } d > d' \\
            0 \quad 
            &\text{if } d=d' \\
            -\sum_{d \leq a \leq d'-1} F_{(a, d+d'-1-a)} \quad &\text{if } d < d'
        \end{cases}
    \end{equation}
    for all \(d, d' \geq 0\).
\end{proposition}
\end{comment}

We now derive the $H$-$E$ relation.

\begin{proposition}
    \label{proposition_commutator_cohomology_E_H_series}
    The following identity of operators \(H_{\mathcal{M}} \rightarrow H_{\mathcal{M} \times \textcolor{red}{S} \times \textcolor{blue}{S}}\) holds:
    \begin{equation}
        \label{equation_relation_cohomology_H_E}
        \textcolor{red}{H(z)}\textcolor{blue}{E(w)} = \Bigg[ \textcolor{blue}{E(w)}\textcolor{red}{H(z)} \frac{\zeta^\text{coh}(z-w)}{\zeta^\text{coh}(w-z)} \Bigg]_{z \gg w,z^{\leq 0},w^{<0}}.
    \end{equation}
\end{proposition}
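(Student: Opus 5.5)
The plan is to reduce the relation to a computation of how the Chern polynomial of the universal sheaf changes across a Hecke modification, adapting the $K$-theoretic argument of \cite{negut_shuffle_surfaces}. Since $H(z)$ is cup product with $\frac{c(\mathcal{U},z+t)}{c(\mathcal{U},z)}$ pulled back to $\mathcal{M} \times \textcolor{red}{S}$, both sides are realized on the same correspondence $\mathfrak{Z}_{(\textcolor{blue}{v})} \times \textcolor{red}{S}$. On the left, $\textcolor{red}{H(z)}\textcolor{blue}{E(w)}$ multiplies by the class built from $\mathcal{U}_1$, the universal sheaf of $\mathcal{F}$. On the right, $\textcolor{blue}{E(w)}\textcolor{red}{H(z)}$ multiplies by the class built from $\mathcal{U}_0$, the universal sheaf of $\mathcal{F}'$. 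The factor $\frac{1}{w-\ell}$ is the same on both sides. By the projection formula for $(p_+ \times p_{\textcolor{blue}{S}}) \times \operatorname{id}_{\textcolor{red}{S}}$, it therefore suffices to prove an identity of classes on $\mathfrak{Z}_{(\textcolor{blue}{v})} \times \textcolor{red}{S}$ comparing $\frac{c(\mathcal{U}_1,z+\textcolor{red}{t})}{c(\mathcal{U}_1,z)}$ with $\frac{c(\mathcal{U}_0,z+\textcolor{red}{t})}{c(\mathcal{U}_0,z)}$.

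For this comparison I will use the short exact sequence \eqref{equation_short_exact_sequence_universal_sheaves_1}, namely $0 \to \mathcal{U}_0 \to \mathcal{U}_1 \to \Gamma_*\mathcal{L} \to 0$ on $\mathfrak{Z}_{(\textcolor{blue}{v})} \times \textcolor{red}{S}$. Here $\Gamma$ is the graph of $p_{\textcolor{blue}{S}}$, so its image is the pullback of the diagonal $\Delta \subset \textcolor{red}{S} \times \textcolor{blue}{S}$. Multiplicativity of Chern polynomials in $K$-theory gives
\[
\frac{c(\mathcal{U}_1,z+\textcolor{red}{t})\,c(\mathcal{U}_0,z)}{c(\mathcal{U}_1,z)\,c(\mathcal{U}_0,z+\textcolor{red}{t})} = \frac{c(\Gamma_*\mathcal{L},z+\textcolor{red}{t})}{c(\Gamma_*\mathcal{L},z)}.
\]
The key step is to compute $c(\Gamma_*\mathcal{L},x)$ by Grothendieck--Riemann--Roch for the codimension-two embedding $\Gamma$, or equivalently via a Koszul resolution locally. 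The pushforward of a line bundle from a codimension-$2$ locus should have Chern polynomial
\[
c(\Gamma_*\mathcal{L},x) = 1 + \frac{[\Delta]}{(x-\ell)(x-\ell+\textcolor{red}{t})} + \cdots.
\]
The correction terms involve the normal bundle of $\Delta$, whose $c_1$ is $-t$. Because $[\Delta]^2 = [\Delta]\cdot c_2(T_S)$ and higher powers are supported on $\Delta$, I expect these terms to collapse so that the ratio becomes exactly
\[
\frac{\zeta^\text{coh}(z-\ell)}{\zeta^\text{coh}(\ell-z)} = \frac{1+\frac{[\Delta]}{(z-\ell)(z-\ell+t)}}{1+\frac{[\Delta]}{(\ell-z)(\ell-z+t)}}.
\]
This collapse is the main obstacle: pinning down the exact rational function, including the sign conventions for $\textcolor{red}{t}$ versus $\textcolor{blue}{t}$ and the orientation of $\Delta$. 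I would first check the formula in the local model $S = \mathbb{C}^2$ with torus weights $t_1, t_2$, where the Koszul complex gives $c(\Gamma_*\mathcal{L},x) = \frac{(x-\ell)(x-\ell-t)}{(x-\ell+t_1)(x-\ell+t_2)}$ up to normalization. This should reproduce $\zeta^\mathbb{C}$ and fix the conventions. The global statement then follows by writing the Koszul factors in terms of $[\Delta]$ and $t$, using $[\Delta]|_\Delta = c_2(T_S)$ together with the defining property $t_1t_2 \leftrightarrow [\Delta]$.

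Finally, I will replace $\ell$ by the formal variable $w$, using $\frac{\ell^d}{w-\ell}$-type manipulations. Multiplying $\frac{1}{w-\ell}$ by a power series in $\ell/z$ and expanding in the region $z \gg w$ gives coefficients of $w^{<0}$ that agree with those obtained by substituting $\ell \mapsto w$. Equivalently, $\ell^m$ inside the pushforward corresponds to shifting $E_d \mapsto E_{d+m}$, which is exactly the meaning of the bracket $[\cdots]_{z \gg w, z^{\leq 0}, w^{<0}}$. Pushing forward along $(p_+ \times p_{\textcolor{blue}{S}}) \times \operatorname{id}_{\textcolor{red}{S}}$ then yields \eqref{equation_relation_cohomology_H_E}. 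No base change is needed here, since only $\mathfrak{Z}_{(u)}$ appears, and it is smooth by Proposition \ref{proposition_nested_Hilbert_schemes_geometric_properties}.
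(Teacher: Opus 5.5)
Your strategy matches the paper's: realize both sides on $\mathfrak{Z}_{(u)}\times S$, compare them through $0\to\mathcal{U}_0\to\mathcal{U}_1\to\Gamma_*\mathcal{L}\to0$, compute the diagonal term by a Koszul complex, and then trade $\ell$ for $w$. However, your first step is reversed, and as written the argument gives the inverse relation.

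In $H(z)E(w)$ the operator $E(w)$ acts first. Since $E_d=(p_+\times p_S)_*(\ell^d\cdot p_-^*)$ lands on the copy of $\mathcal{M}$ recording the smaller sheaf $\mathcal{F}'$, the subsequent $H(z)$ contributes $\frac{c(\mathcal{U}_0,z+t)}{c(\mathcal{U}_0,z)}$. In $E(w)H(z)$, the class of $H(z)$ is pulled back along $p_-$ and becomes $\frac{c(\mathcal{U}_1,z+t)}{c(\mathcal{U}_1,z)}$. The factor you must identify is therefore
\[
\frac{c(\mathcal{U}_0,z+t)\,c(\mathcal{U}_1,z)}{c(\mathcal{U}_0,z)\,c(\mathcal{U}_1,z+t)}=\frac{c(-\Gamma_*\mathcal{L},z+t)}{c(-\Gamma_*\mathcal{L},z)},
\]
which is the reciprocal of the ratio you wrote. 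Your ratio equals $\zeta^{\text{coh}}(\ell-z)/\zeta^{\text{coh}}(z-\ell)$, whose expansion begins $1+2t[\Delta]z^{-3}+\cdots$, while the target begins $1-2t[\Delta]z^{-3}+\cdots$. These differ as soon as $t\neq0$. The discrepancy cannot be absorbed into ``sign conventions'': both the $t$ in $H(z)$ and the $c_1(\mathcal{N}_\Delta)=c_1(\mathcal{T}_S)=-t$ produced by the Koszul complex are fixed. A correct local-model check would return exactly $\zeta^{\mathbb{C}}(\ell-z)/\zeta^{\mathbb{C}}(z-\ell)$ for your ratio, so it would expose the swap rather than confirm your claim.

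The step you call the main obstacle needs no collapse of a series in $[\Delta]$. Assume, as the paper does following Neguţ (the assumption is removable), that $\Delta$ is the zero locus of a regular section of a rank-$2$ bundle with Chern roots $e_1,e_2$. The Koszul complex then gives the exact identity
\[
c(-\mathcal{O}_\Delta,z)=\frac{(z+e_1)(z+e_2)}{z(z+e_1+e_2)}=1+\frac{e_1e_2}{z(z+e_1+e_2)}=1+\frac{[\Delta]}{z(z-t)},
\]
using only $e_1e_2=[\Delta]$ and $[\Delta]\cdot(e_1+e_2)=\Delta_*c_1(\mathcal{T}_S)=-t[\Delta]$. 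Shifting by $\ell$ gives $c(-\Gamma_*\mathcal{L},z)=\zeta^{\text{coh}}(\ell-z)$ and $c(-\Gamma_*\mathcal{L},z+t)=\zeta^{\text{coh}}(z-\ell)$. This is precisely the required factor $\zeta^{\text{coh}}(z-\ell)/\zeta^{\text{coh}}(\ell-z)$.

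Your guessed formula for $c(\Gamma_*\mathcal{L},x)$ also has the wrong leading sign. Since $c_2$ of the structure sheaf of a codimension-two locus is $-[\Delta]$, one has $c(\Gamma_*\mathcal{L},x)=1-[\Delta]x^{-2}+\cdots$. The clean form $1+[\Delta]/(\text{quadratic})$ belongs to $-\Gamma_*\mathcal{L}$, which is exactly the class that appears once $\mathcal{U}_0$ and $\mathcal{U}_1$ are assigned correctly.

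With these two corrections, your final step matches the paper and finishes the proof. That step replaces $\ell$ by $w$ and compares coefficients of $z^{\le0}w^{<0}$ in the region $z\gg w$.
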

\begin{proof}
    A cohomological base change (justified by the dimension estimates of Section \ref{subsection_dimension_estimates_stable}) shows that \(\textcolor{red}{H(z)}\textcolor{blue}{E(w)}\) corresponds to
    \[
    \frac{1}{w-\ell} \cdot \frac{c(\mathcal{U}_0,z+t)}{c(\mathcal{U}_0,z)}
    \]
    on \(\{\mathcal{J}_0 \subset_{\textcolor{blue}{u}} \mathcal{J}_1\} \times \textcolor{red}{S}\), while \(\textcolor{blue}{E(w)}\textcolor{red}{H(z)}\) is given by
    \[
    \frac{1}{w-\ell} \cdot \frac{c(\mathcal{U}_1,z+t)}{c(\mathcal{U}_1,z)}.
    \]
    The short exact sequence
    \begin{equation}
    \label{equation_short_exact_sequence_universal_sheaves_2}
        0 \rightarrow \mathcal{U}_0 \rightarrow \mathcal{U}_1 \rightarrow \mathcal{L} \otimes \mathcal{O}_\Delta \rightarrow 0
    \end{equation}
    on \(\{\mathcal{J}_0 \subset_{\textcolor{blue}{u}} \mathcal{J}_1\} \times \textcolor{red}{S}\) relates the universal sheaves \(\mathcal{U}_0\) and \(\mathcal{U}_1\). In (\ref{equation_short_exact_sequence_universal_sheaves_2}), we abuse notation slightly: the sheaf \(\mathcal{O}_\Gamma\) from (\ref{equation_short_exact_sequence_universal_sheaves_1}) is the pull-back of the diagonal structure sheaf \(\mathcal{O}_\Delta\) in \(\textcolor{red}{S} \times \textcolor{blue}{S}\), so we write \(\mathcal{O}_\Delta\). We also denote the tautological line bundle \(\mathcal{L}\) and its pull-back by the same symbol. For further details, see the proof of Proposition 3.2 in \cite{negut_shuffle_surfaces}.

    The short exact sequence \eqref{equation_short_exact_sequence_universal_sheaves_2} yields
    \begin{equation}
        \label{equation_proof_H(z)_E(w)_2}
        \textcolor{red}{H(z)}\textcolor{blue}{E(w)} = \textcolor{blue}{E(w)}\textcolor{red}{H(z)} \cdot \frac{c(-\mathcal{L} \otimes \mathcal{O}_\Delta,z+t)}{c(-\mathcal{L} \otimes \mathcal{O}_\Delta,z)}.
    \end{equation}
    Since \(c(-\mathcal{L} \otimes \mathcal{O}_\Delta,z) = c(-\mathcal{O}_\Delta,z-\ell)\), we compute \(c(-\mathcal{O}_\Delta,z)\). Following the argument of Proposition 5.24 in \cite{negut_shuffle_surfaces}, assume the diagonal is cut out by a regular section \(\sigma: \mathcal{O}_{\textcolor{red}{S} \times \textcolor{blue}{S}} \rightarrow \mathcal{E}\) of a rank \(2\) locally free sheaf \(\mathcal{E}\) (this assumption can be removed as in \textit{loc.~cit.}). If \(e_1, e_2\) are the Chern roots of \(\mathcal{E}\), then
    \[
    c_1(\mathcal{E}) = c_1(\wedge^2\mathcal{E}) = e_1+e_2,\quad c_2(\mathcal{E}) = c_2(\mathcal{E}^\vee) = e_1e_2,\quad c_1(\mathcal{E}^\vee) = c_1(\wedge^2\mathcal{E}^\vee) = -e_1-e_2.
    \]
    The normal bundle of the diagonal is \(\mathcal{N}_{\Delta / \textcolor{red}{S} \times \textcolor{blue}{S}} \cong \mathcal{E}|_\Delta\), and the Koszul complex gives
    \[
    [\mathcal{O}_\Delta] = [\mathcal{O}_{\textcolor{red}{S} \times \textcolor{blue}{S}}] - [\mathcal{E^\vee}] + [\wedge^2\mathcal{E}^\vee]
    \]
    in \(K\)-theory. Since \(\sigma\) is regular, \([\Delta] = \Delta_*1 = c_{\text{top}}(\mathcal{E}) = e_1e_2\) (see Lemma \ref{lemma_fundamental_class_zero_locus}). Hence,
    \begin{align*}
        c(-\mathcal{O}_\Delta,z) &= \frac{c(\mathcal{E}^\vee,z)}{c(\mathcal{O}_{\textcolor{red}{S}\times \textcolor{blue}{S}})c(\wedge^2\mathcal{E}^\vee,z)} \\
        &= \frac{(z+e_1)(z+e_2)}{z(z+e_1+e_2)} \\
        &= 1 + \frac{[\Delta]}{z(z-t)},
    \end{align*}
    where the last equality uses \(\mathcal{N}_{\Delta / \textcolor{red}{S} \times \textcolor{blue}{S}} \cong \mathcal{T}_S\), so that \(e_1+e_2 = -t\) after identifying Chern classes of \(\mathcal{T}_S\) with those of \(\Omega^1_S\).

    Now, on the left-hand side of \eqref{equation_relation_cohomology_H_E}, \(z\) has non-positive powers and \(w\) has negative powers. In \cite{negut_rationaltrigonometricellipticalgebras}, Neguţ establishes
    \begin{equation}
        \label{equation_proof_H(z)_E(w)_3}
        \frac{\zeta^\text{coh}(z-w)}{\zeta^\text{coh}(w-z)} = 1 + \sum_{i=3}^\infty \sum_{j=0}^{i-2} \Delta_*(\gamma_{ij}) \frac{w^j}{z^i},
    \end{equation}
    where \(\gamma_{ij}\) is the same polynomial in \(t_1+t_2 = c_1(\Omega^1_S)\) and \(t_1t_2 = c_2(\Omega^1_S)\) as in (\ref{equation_expansion_rational_function}).

    It remains to compare
    \begin{equation}
        \label{equation_proof_H(z)_E(w)_4}
        \frac{1}{w-\ell} \cdot \frac{1+\frac{[\Delta]}{(z-\ell)(z-\ell+t)}}{1+\frac{[\Delta]}{(z-\ell)(z-\ell-t)}} = \left( \sum_{k=0}^\infty \frac{\ell^k}{w^{k+1}} \right) \cdot \left( 1 + \sum_{i=3}^\infty \sum_{j=0}^{i-2} \Delta_*(\gamma_{ij}) \frac{\ell^j}{z^i} \right)
    \end{equation}
    and
    \begin{equation}
        \label{equation_proof_H(z)_E(w)_5}
        \frac{1}{w-\ell} \cdot \frac{1 + \frac{[\Delta]}{(z-w)(z-w+t)}}{1 + \frac{[\Delta]}{(z-w)(z-w-t)}} = \left( \sum_{k=0}^\infty \frac{\ell^k}{w^{k+1}} \right) \cdot \left(1 + \sum_{i=3}^\infty \sum_{j=0}^{i-2} \Delta_*(\gamma_{ij}) \frac{w^j}{z^i}\right).
    \end{equation}
    The coefficients of \(z^i w^j\) for \(i \leq 0\) and \(j < 0\) in \eqref{equation_proof_H(z)_E(w)_4} and \eqref{equation_proof_H(z)_E(w)_5} agree, proving the proposition. Note that in \eqref{equation_proof_H(z)_E(w)_4}, \(w\) appears only with non-positive powers, while in \eqref{equation_proof_H(z)_E(w)_5} it appears with positive powers. This explains the bracket notation in \eqref{equation_relation_cohomology_H_E}.
\end{proof}

Using (\ref{equation_proof_H(z)_E(w)_3}), relation (\ref{equation_relation_cohomology_H_E}) is equivalent, for all \(a,b \geq 0\), to
\begin{equation*}
    \textcolor{red}{H_a}\textcolor{blue}{E_b} = \textcolor{blue}{E_b}\textcolor{red}{H_a} + (\operatorname{id}_{\mathcal{M}} \times \Delta)_* \left( \sum_{i=3}^\infty \sum_{j=0}^{i-2} \gamma_{ij} \textcolor{blue}{E_{b+j}}\textcolor{red}{H_{a-i}}|_\Delta \right),
\end{equation*}
where \(\textcolor{blue}{E_{b+j}}\textcolor{red}{H_{a-i}}|_\Delta\) is an operator
\[
H_{\mathcal{M}} \rightarrow H_{\mathcal{M} \times \textcolor{red}{S} \times \textcolor{blue}{S}} \xrightarrow{|_\Delta} H_{\mathcal{M} \times S},
\]
and we adopt the convention \(H_{-1} = 1\), \(H_{-2} = H_{-3} = \cdots = 0\). This is structurally identical to (\ref{equation_relation_Yangian_h_e_operators}).

The same argument gives the \(F\)-\(H\) relation.

\begin{proposition}
    \label{proposition_commutator_cohomology_F_H_series}
    The following identity of operators \(H_{\mathcal{M}} \rightarrow H_{\mathcal{M} \times \textcolor{red}{S} \times \textcolor{blue}{S}}\) holds:
    \begin{equation}
        \label{equation_relation_cohomology_H_F}
        \textcolor{blue}{F(w)}\textcolor{red}{H(z)} = \Bigg[ \textcolor{red}{H(z)}\textcolor{blue}{F(w)} \frac{\zeta^\text{coh}(z-w)}{\zeta^\text{coh}(w-z)} \Bigg]_{z \gg w,z^{\leq 0},w^{<0}}.
    \end{equation}
\end{proposition}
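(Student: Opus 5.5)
The plan is to mirror the proof of Proposition \ref{proposition_commutator_cohomology_E_H_series}, with the roles of $\mathcal{U}_0$ and $\mathcal{U}_1$ exchanged. First, by cohomological base change (again justified by the dimension estimates of Section \ref{subsection_dimension_estimates_stable}), both compositions are realized on $\{\mathcal{J}_0 \subset_{\textcolor{blue}{u}} \mathcal{J}_1\} \times \textcolor{red}{S}$, pushed forward along $(p_- \times p_{\textcolor{blue}{S}}) \times \operatorname{id}_{\textcolor{red}{S}}$ and pulled back along $p_+$.

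In $\textcolor{blue}{F(w)}\textcolor{red}{H(z)}$, the operator $H$ acts first on the source sheaf $\mathcal{F}$, which corresponds to $p_+$, i.e.\ to $\mathcal{J}_1$. Hence the class is
\[
\frac{1}{w-\ell}\cdot\frac{c(\mathcal{U}_1,z+t)}{c(\mathcal{U}_1,z)}.
\]
In $\textcolor{red}{H(z)}\textcolor{blue}{F(w)}$, the operator $H$ acts on the target $\mathcal{J}_0$, so the class is
\[
\frac{1}{w-\ell}\cdot\frac{c(\mathcal{U}_0,z+t)}{c(\mathcal{U}_0,z)}.
\]

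Next I apply the short exact sequence \eqref{equation_short_exact_sequence_universal_sheaves_2}, $0\to\mathcal{U}_0\to\mathcal{U}_1\to\mathcal{L}\otimes\mathcal{O}_\Delta\to 0$. It gives
\[
\textcolor{blue}{F(w)}\textcolor{red}{H(z)} = \textcolor{red}{H(z)}\textcolor{blue}{F(w)}\cdot \frac{c(\mathcal{L}\otimes\mathcal{O}_\Delta,z+t)}{c(\mathcal{L}\otimes\mathcal{O}_\Delta,z)},
\]
at the level of classes on the correspondence. Note the positive class $\mathcal{L}\otimes\mathcal{O}_\Delta$ replaces $-\mathcal{L}\otimes\mathcal{O}_\Delta$ in \eqref{equation_proof_H(z)_E(w)_2}.

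Using $c(\mathcal{L}\otimes\mathcal{O}_\Delta,z)=c(-\mathcal{O}_\Delta,z-\ell)^{-1}$ and the Koszul computation from the previous proof, $c(-\mathcal{O}_\Delta,z)=1+\frac{[\Delta]}{z(z-t)}$, the correction factor becomes
\[
\frac{1+\frac{[\Delta]}{(z-\ell)(z-\ell-t)}}{1+\frac{[\Delta]}{(z-\ell)(z-\ell+t)}}.
\]
This is exactly the ratio in \eqref{equation_proof_H(z)_E(w)_4}, because on the diagonal $[\Delta]^2=[\Delta]\cdot c_2(\mathcal{T}_S)$ and the terms recombine. So I must check carefully that it equals $\zeta^{\text{coh}}(z-\ell)/\zeta^{\text{coh}}(\ell-z)$ rather than its inverse.

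This sign and orientation bookkeeping is the main obstacle. The target-versus-source roles for $F$ are reversed relative to $E$, and the tautological line bundle on the $F$-correspondence is the same $\mathcal{L}$ but appears with the opposite placement. That reversal is what makes the relation hold with the product written as $F(w)H(z)$ on the left. I would verify the orientation by checking the lowest-order term: expanding in $z^{-1}$, the $[\Delta]$-linear coefficient must be $-2t[\Delta]\ell^0/z^3$ up to the sign matching \eqref{equation_proof_H(z)_E(w)_3}.

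Finally, I expand both sides using \eqref{equation_proof_H(z)_E(w)_3}. The factor $\frac{1}{w-\ell}$ allows $\ell^j$ to be replaced by $w^j$ when extracting coefficients of $z^iw^j$ with $i\le 0$, $j<0$, exactly as in the comparison of \eqref{equation_proof_H(z)_E(w)_4} and \eqref{equation_proof_H(z)_E(w)_5}. This yields \eqref{equation_relation_cohomology_H_F}. Equivalently, in components it gives $\textcolor{blue}{F_b}\textcolor{red}{H_a}=\textcolor{red}{H_a}\textcolor{blue}{F_b}+(\operatorname{id}_{\mathcal{M}}\times\Delta)_*\bigl(\sum\gamma_{ij}\textcolor{red}{H_{a-i}}\textcolor{blue}{F_{b+j}}|_\Delta\bigr)$.
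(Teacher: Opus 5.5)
You have the right strategy (it is what the paper means by ``the same argument''), but you have swapped the two universal sheaves, and the identity you use to absorb the resulting discrepancy is false. Recall that $F(w)=(p_-\times p_S)_*\bigl(\frac{1}{w-\ell}\cdot p_+^*\bigr)$, and that $p_+$ sends $(\mathcal{F}'\subset_u\mathcal{F})$ to the \emph{smaller} sheaf $\mathcal{F}'=\mathcal{J}_0$. This is also why $\textcolor{red}{H(z)}\textcolor{blue}{E(w)}$ involves $\mathcal{U}_0$ in the paper's proof for $E$, where $p_+$ is the target. Hence:
\begin{itemize}
\item In $\textcolor{blue}{F(w)}\textcolor{red}{H(z)}$, $H$ acts on the source of $F$, and the class is $\frac{1}{w-\ell}\cdot\frac{c(\mathcal{U}_0,z+t)}{c(\mathcal{U}_0,z)}$.
\item In $\textcolor{red}{H(z)}\textcolor{blue}{F(w)}$, $H$ acts on the target $p_-$, i.e.\ on $\mathcal{J}_1$, and the class is $\frac{1}{w-\ell}\cdot\frac{c(\mathcal{U}_1,z+t)}{c(\mathcal{U}_1,z)}$.
\end{itemize}
Your next step reverses this. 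That contradicts your own first sentence, which correctly pulls back along $p_+$ and pushes forward along $p_-\times p_{\textcolor{blue}{S}}$. As a result, your correction factor, built from $+\mathcal{L}\otimes\mathcal{O}_\Delta$, is $\zeta^\text{coh}(\ell-z)/\zeta^\text{coh}(z-\ell)$. This is the inverse of the ratio in \eqref{equation_proof_H(z)_E(w)_4}.

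These two ratios are not equal, and $[\Delta]^2=[\Delta]\cdot c_2(\mathcal{T}_S)$ cannot make them so. That relation only touches terms of order $\geq 2$ in $[\Delta]$, while the discrepancy is already linear. Since $t[\Delta]^2=\Delta_*\bigl(c_1(\omega_S)c_2(\mathcal{T}_S)\bigr)=0$ and $t^3=0$, one computes exactly
\[
\frac{\zeta^\text{coh}(x)}{\zeta^\text{coh}(-x)}=1-\frac{2t[\Delta]}{x^3}.
\]
So your factor is $1+2t[\Delta]/(z-\ell)^3$ rather than $1-2t[\Delta]/(z-\ell)^3$. The relation you would derive therefore differs from the true one by terms proportional to $t[\Delta]=\Delta_*c_1(\omega_S)$. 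Under Assumption S this class is nonzero whenever $\omega_S\not\cong\mathcal{O}_S$, since then $c_1(\omega_S)\cdot H<0$. The lowest-order check you propose would detect exactly this sign.

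The repair is just to fix the identification: $\textcolor{blue}{F(w)}\textcolor{red}{H(z)}\leftrightarrow\mathcal{U}_0$ and $\textcolor{red}{H(z)}\textcolor{blue}{F(w)}\leftrightarrow\mathcal{U}_1$. Then the sequence \eqref{equation_short_exact_sequence_universal_sheaves_2} produces literally the same factor $c(-\mathcal{L}\otimes\mathcal{O}_\Delta,z+t)/c(-\mathcal{L}\otimes\mathcal{O}_\Delta,z)$ as in \eqref{equation_proof_H(z)_E(w)_2}, and your final coefficient comparison goes through unchanged. The reversed order $F(w)H(z)$ on the left-hand side is what compensates for $F$ exchanging source and target relative to $E$. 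In both relations, the left-hand side is the composition in which $H$ sees $\mathcal{U}_0$.
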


Equivalently, for all \(a,b \geq 0\),
\begin{equation*}
    \textcolor{blue}{F_b}\textcolor{red}{H_a} = \textcolor{red}{H_a}\textcolor{blue}{F_b} + (\operatorname{id}_{\mathcal{M}} \times \Delta)_* \left( \sum_{i=3}^\infty \sum_{j=0}^{i-2} \gamma_{ij} \textcolor{red}{H_{a-i}}\textcolor{blue}{F_{b+j}}|_\Delta \right),
\end{equation*}
with the same conventions as above.

Before discussing \(\bigl[ \textcolor{red}{E(z)},\textcolor{blue}{F(w)} \bigr]\), we record the commutation of the \(H\)-series.

\begin{proposition}
    \label{proposition_commutator_cohomology_H_H_series}
    The following identity of operators \(H_{\mathcal{M}} \rightarrow H_{\mathcal{M} \times \textcolor{red}{S} \times \textcolor{blue}{S}}\) holds:
    \begin{equation}
        \label{equation_relation_cohomology_H_H}
        \bigl[ \textcolor{red}{H(z)},\textcolor{blue}{H(w)} \bigr] = 0.
    \end{equation}
\end{proposition}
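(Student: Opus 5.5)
The plan is to unwind the definitions of the two colored compositions in \eqref{equation_composition_geometric_operators_1} and \eqref{equation_composition_geometric_operators_2} for $x = H(z)$ and $y = H(w)$. The goal is to see that both sides are the same cup-product operator on $H_{\mathcal{M} \times \textcolor{red}{S} \times \textcolor{blue}{S}}$. By \eqref{equation_power_series_H}, $H(w)$ is pull-back along $p_{\mathcal{M}}: \mathcal{M} \times \textcolor{blue}{S} \to \mathcal{M}$, followed by multiplication by
\[
\Phi(w) \define \frac{c(\mathcal{U},w+t)}{c(\mathcal{U},w)} \in H_{\mathcal{M} \times S}((w^{-1})).
\]
This series is well defined because $\mathcal{U}$ has the two-term locally free resolution of Proposition \ref{proposition_short_exact_sequence_W_V_U}. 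Hence $H(w)$ is the correspondence given by the graph of $\mathcal{M} \times \textcolor{blue}{S} \to \mathcal{M}$ decorated with the class $\Phi(w)$.

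First I would compute $\textcolor{red}{H(z)}\textcolor{blue}{H(w)}$. One applies $H(w)$, landing in $H_{\mathcal{M} \times \textcolor{blue}{S}}$, and then $H(z) \boxtimes \operatorname{id}_{\textcolor{blue}{S}}$. The latter is pull-back along the projection $\mathcal{M} \times \textcolor{red}{S} \times \textcolor{blue}{S} \to \mathcal{M} \times \textcolor{blue}{S}$, followed by multiplication by the pull-back of $\Phi(z)$ from $\mathcal{M} \times \textcolor{red}{S}$. Pull-back is a ring homomorphism, and the pull-backs compose to the projection to $\mathcal{M}$. So the composite sends $\alpha \in H_{\mathcal{M}}$ to
\[
p_{\mathcal{M}}^*\alpha \cdot p_{\mathcal{M}\textcolor{blue}{S}}^*\Phi(w) \cdot p_{\mathcal{M}\textcolor{red}{S}}^*\Phi(z),
\]
where $p_{\mathcal{M}\textcolor{red}{S}}$ and $p_{\mathcal{M}\textcolor{blue}{S}}$ are the projections from $\mathcal{M} \times \textcolor{red}{S} \times \textcolor{blue}{S}$. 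The same unwinding of $\textcolor{blue}{H(w)}\textcolor{red}{H(z)}$ gives the same three factors in the opposite order of multiplication.

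Since singular cohomology (and likewise the Chow ring, cf.\ Remark \ref{remark_Chow}) is graded-commutative, the two expressions agree. The Chern classes of $\mathcal{U}$, and hence all coefficients of $\Phi$, are of even degree, so no signs arise, and the identity holds coefficientwise in $z^{-1}, w^{-1}$. At the level of correspondences, both sides are the class $p^*\Phi(z)\,p^*\Phi(w)$ on the graph of the projection $\mathcal{M} \times \textcolor{red}{S} \times \textcolor{blue}{S} \to \mathcal{M}$, which gives the stronger statement.

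There is no genuine obstacle here. The only point needing care is the bookkeeping of which projection each Chern polynomial is pulled back along: the factor $\Phi(z)$ must come from $\mathcal{M} \times \textcolor{red}{S}$ and $\Phi(w)$ from $\mathcal{M} \times \textcolor{blue}{S}$ in both orderings. This requires checking that $\operatorname{id}_{\textcolor{red}{S}} \boxtimes H(w)$ really pulls back the universal sheaf along the $\textcolor{blue}{S}$-factor, which follows from the base-change compatibility of pull-backs through the fibre square $\mathcal{M} \times \textcolor{red}{S} \times \textcolor{blue}{S} \to \mathcal{M} \times \textcolor{red}{S}$ over $\mathcal{M}$.
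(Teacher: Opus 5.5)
Your proposal is correct and takes the same approach as the paper. The paper's one-line proof says that both compositions are pull-back to $\mathcal{M}\times S\times S$ followed by cup product with the classes defining $H(z)$ and $H(w)$, and cup product operators commute. Your unwinding of the colored compositions, together with the remark that the coefficients are of even degree so no signs arise, simply makes that argument explicit.
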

\begin{proof}
    This follows immediately from \eqref{equation_power_series_H}, as cup product operators commute.
\end{proof}

Comparing coefficients yields $\bigl[ \textcolor{red}{H_a},\textcolor{blue}{H_b} \bigr] = 0$ for all \(a,b \geq 0\). Finally, we state the commutator \(\bigl[ \textcolor{red}{E_a},\textcolor{blue}{F_b} \bigr]\). Its proof is deferred to Section \ref{section_commutator_E_F}.

\begin{proposition}
\label{proposition_commutator_cohomology_E_F_operators_stable}
    The following identity of operators \(H_{\mathcal{M}} \rightarrow H_{\mathcal{M} \times \textcolor{red}{S} \times \textcolor{blue}{S}}\) holds:
    \begin{equation}
        \label{equation_relation_cohomology_E_F_operators_stable}
        \bigl[ \textcolor{red}{E_a},\textcolor{blue}{F_b} \bigr] = (-1)^r(\operatorname{id}_{\mathcal{M}} \times \Delta)_* \left( \frac{H_{a+b}}{t} \right)
    \end{equation}
    for all \(a,b \geq 0\).
\end{proposition}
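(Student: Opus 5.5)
We realize both compositions as correspondences on nested moduli spaces. By cohomological base change, justified by the dimension estimates of Section \ref{subsection_dimension_estimates_stable}, the composition $\textcolor{red}{E_a}\textcolor{blue}{F_b}$ is induced by the class $\ell_{01}^{a}\ell_{12}'^{\,b}$ on the fibre product
\[
\mathfrak{Z}^\uparrow = \Big\{ \mathcal{F}_0 \subset_{\textcolor{red}{u}} \mathcal{F}_1 \supset_{\textcolor{blue}{v}} \mathcal{F}_0' \Big\},
\]
with $\mathcal{F}_0$ and $\mathcal{F}_0'$ sent to the two copies of $\mathcal{M}$. The reverse composition $\textcolor{blue}{F_b}\textcolor{red}{E_a}$ lives on
\[
\mathfrak{Z}^\downarrow = \Big\{ \mathcal{F}_0 \supset_{\textcolor{blue}{v}} \mathcal{F}_{-1} \subset_{\textcolor{red}{u}} \mathcal{F}_0' \Big\}.
\]
Away from $\textcolor{red}{u}=\textcolor{blue}{v}$ the two spaces are identified by the gluing construction of Proposition \ref{proposition_birationality_uparrow_downarrow}. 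Concretely, both are birational to the quadruple space $\mathfrak{Y}^{\mathrm{rot}}$, which parametrizes a ``rotated'' square
\[
\mathcal{F}_{-1} \subset \mathcal{F}_0,\ \mathcal{F}_0' \subset \mathcal{F}_1 .
\]
Its projection to $\mathfrak{Z}^\uparrow$ is proper and birational, since $\mathfrak{Z}^\uparrow$ is irreducible of expected dimension. Hence the $\mathfrak{Z}^\uparrow$ side of the commutator can be computed on $\mathfrak{Y}^{\mathrm{rot}}$.

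The $\mathfrak{Z}^\downarrow$ side is the main obstacle. For $r>1$ this space is not equidimensional: besides the main component, which is the closure of $\{\textcolor{red}{u}\neq\textcolor{blue}{v}\}$, there is an excess component over the diagonal where $\mathcal{F}_0=\mathcal{F}_0'$. On that component the data reduce to choosing $\mathcal{F}_{-1}\subset_u \mathcal{F}_0$ twice, that is, to a self-intersection of $\mathfrak{Z}_{(u)}$ over $\mathcal{M}\times S$.

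I would handle this with Lemma \ref{lemma_base_change_stable} in the following steps.
\begin{enumerate}
\item Use Mayer--Vietoris and excision to split the base-change class on $\mathfrak{Z}^\downarrow$ into the pushforward from $\mathfrak{Y}^{\mathrm{rot}}$ plus a contribution supported on the diagonal locus $\{\mathcal{F}_0=\mathcal{F}_0'\}\cong\mathfrak{Z}_{(u)}$.
\item Compute that diagonal contribution by the excess intersection formula, with excess bundle read off from Proposition \ref{proposition_projective_bundle_description}: it is the quotient of $\Gamma^{u*}(\mathcal{W}^\vee\otimes\omega_S)$ against $\Gamma^{u*}\mathcal{V}^\vee$, twisted by $\mathcal{L}$.
\end{enumerate}
The difference of the two $\mathfrak{Y}^{\mathrm{rot}}$ terms comes from comparing $\ell_{01}^a\ell_{12}'^{\,b}$ with the corresponding class pulled back from $\mathfrak{Z}^\downarrow$. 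As in Proposition \ref{proposition_commutator_cohomology_E}, this difference is supported on the vanishing locus of a line-bundle map as in Proposition \ref{proposition_vanishing_locus_line_bundle_map}, so it also lands on $\{\textcolor{red}{u}=\textcolor{blue}{v}\}$. These are the two contributions of diagrams \eqref{equation_proof_E_F_4_stable} and \eqref{equation_proof_E_F_5_stable}.

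After these reductions, $[\textcolor{red}{E_a},\textcolor{blue}{F_b}]$ is $(\operatorname{id}_{\mathcal{M}}\times\Delta)_*$ of an operator on $\mathcal{M}\times S$. It is obtained by pushing forward along the projective bundle $\mathfrak{Z}_{(u)}\to\mathcal{M}\times S$ a class of the form $\ell^{a+b}\cdot c(\text{excess},\ell)$, with a sign correction. The projective bundle is the projectivization of the two-term complex $\mathcal{W}\to\mathcal{V}$ of rank $r$. Pushing forward powers of $\ell=c_1(\mathcal{O}(1))$ along it is given by Segre classes, hence by the coefficients of
\[
\frac{c(\mathcal{W},z)}{c(\mathcal{V},z)}=\frac{1}{c(\mathcal{U},z)}.
\]
The excess class contributes the numerator $c(\mathcal{U},z+t)$ up to a factor $1/t$; this factor arises because the $\omega_S$-twisted bundle differs from the untwisted one by a shift in $t$. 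In generating-series form I therefore expect the pushforward to be
\[
\operatorname{Res}_{x}\, \frac{x^{a+b}}{t}\Big(\frac{c(\mathcal{U},x+t)}{c(\mathcal{U},x)}-1\Big)\,dx ,
\]
up to sign. By \eqref{equation_power_series_H}, this is $(-1)^r H_{a+b}/t$. The sign $(-1)^r$ comes from the rank of the excess bundle together with Fulton's sign convention for Segre classes. I expect the bookkeeping of this excess bundle and its sign to be the most delicate part. The final formal residue manipulation should be routine.
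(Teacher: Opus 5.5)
There is a genuine gap in your final step. You correctly identify the two pieces of the commutator. The first is the difference $\ell_{01}^a\ell_{12}^b-\ell_{01}'^{\,b}\ell_{12}'^{\,a}$ on $\mathfrak{Y}^{\mathrm{rot}}$, which telescopes to $j_*\bigl(\sum_{k=0}^{a+b-1}\ell_{01}^k\ell_{01}'^{\,a+b-1-k}\bigr)$ supported on $\mathfrak{Z}_2^\bullet=\{\mathcal{F}_1'\subset_u\mathcal{F}_0\subset_u\mathcal{F}_1\}$. The second is the excess term on the diagonal component $\mathfrak{Z}_1\cong\mathfrak{Z}_{(u)}$. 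Your last paragraph, however, evaluates only the second: it claims the whole commutator is the pushforward of $\ell^{a+b}\cdot c(\text{excess})$ along $\mathfrak{Z}_{(u)}\to\mathcal{M}\times S$. That cannot produce $H_{a+b}$.

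The component $\mathfrak{Z}_1$ is the diagonal of $\mathfrak{Z}_{(u)}\times_{\mathcal{M}}\mathfrak{Z}_{(u)}$. This is a fibre product over $\mathcal{M}$, not over $\mathcal{M}\times S$, since $u$ and $v$ move independently. So the excess bundle is minus the virtual relative tangent bundle of $p_+$, namely $\bigl([\Gamma^{u*}\mathcal{U}_0\otimes\mathcal{L}^{-1}]-1\bigr)[\omega_S^{-1}]$, of rank $r-1$. Your ``quotient of $\Gamma^{u*}(\mathcal{W}^\vee\otimes\omega_S)$ against $\Gamma^{u*}\mathcal{V}^\vee$'' has rank $w-v=-r$. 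The top Chern class of the correct bundle is $(-1)^r\frac{P(\ell)-P(\ell+t)}{t}$, a polynomial of degree $r-1$ in $\ell$ built only from the truncation $P$ of $c(\Gamma^{u*}\mathcal{U}_0,z)$. Its contribution is therefore
\[
(-1)^{r-1}\operatorname{Res}_{z=\infty}\Bigl[z^{a+b}\,c(-\mathcal{U},z)\,\frac{P'(z)-P'(z+t)}{t}\Bigr],
\]
where $P'(z)=\bigl[c(\mathcal{U},z)\bigr]_{\geq 0}$. Only the polynomial part of $c(\mathcal{U},z+t)$ enters, so your heuristic that the excess class ``contributes the numerator $c(\mathcal{U},z+t)$'' is false. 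Since $\mathcal{U}$ is not locally free, this expression in general differs from $(-1)^rH_{a+b}/t$ once $a+b\geq r+1$; for $a+b=r+1$ the difference is $\pm c_{r+1}(\mathcal{U})$.

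The missing ingredient is the evaluation of the principal term and its cancellation against the diagonal one.

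\emph{First pushforward.} Push $\sum_k\ell_{01}^k\ell_{01}'^{\,a+b-1-k}$ forward along $\pi_-\colon\mathfrak{Z}_2^\bullet\to\{\mathcal{F}_0\subset_u\mathcal{F}_1\}$. Here the cosection cutting $\mathfrak{Z}_2^\bullet$ out of $\mathbb{P}(\Gamma^{u*}\mathcal{V}_0)$ is not regular, and $\Gamma^{u*}\mathcal{W}_0$ must first be replaced by its quotient by $\mathcal{L}_{0,1}\otimes\omega_S$.

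\emph{Second pushforward.} Then push along $p_+\times p_S$, realized inside $\mathbb{P}(\mathcal{W}^\vee\otimes\omega_S)$. It is this $\omega_S$-twisted projective bundle, not the excess class, that produces the shift and the full series $c(\mathcal{U},w+t)$. The result is the double residue $(-1)^{r-1}\operatorname{Res}_{w=\infty}\operatorname{Res}_{z=\infty}\frac{c(\mathcal{U},w+t)\,c(-\mathcal{U},z)\,(z^{a+b}-w^{a+b})}{(z-w)(z-w-t)}$.

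\emph{Residue exchange.} This step is the heart of the proof, not a routine formality. After discarding $w^{a+b}$ and exchanging the order of the residues, the inner residue at $w=\infty$ equals $\frac{P'(z+t)-P'(z)}{t}$ by the residue theorem, and this cancels the diagonal term exactly. The finite poles supply the remaining correction. The pole at $w=z$ contributes $-\frac{1}{t}\operatorname{Res}_{z=\infty}\bigl[c(\mathcal{U},z+t)\,c(-\mathcal{U},z)\,z^{a+b}\bigr]$. The pole at $w=z-t$ gives nothing, because $c(\mathcal{U},z)\,c(-\mathcal{U},z)=1$. After the global sign $(-1)^{r-1}$, what remains is $(-1)^rH_{a+b}/t$.

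Without this principal-term computation and the cancellation, your final formula is unsupported.
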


Passing to generating series yields
\begin{equation}
    \label{equation_relation_cohomology_E_F}
    \bigl[ \textcolor{red}{E(z)},\textcolor{blue}{F(w)} \bigr] = \frac{1}{t} (\operatorname{id}_{\mathcal{M}} \times \Delta)_* \left( \frac{H(z)-H(w)}{z-w} \right),
\end{equation}
which is equivalent to (\ref{equation_relation_cohomology_E_F_operators_stable}).

Theorem \ref{theorem_relations_geometric_operators_stable} is the main result of this section. In \cite{negut_rationaltrigonometricellipticalgebras}, Neguţ stated it as a consequence of the \(K\)-theoretic results in \cite{negut_shuffle_surfaces} via the Chern character isomorphism. Our proof avoids this detour.

\begin{proof}[Proof of Theorem \ref{theorem_relations_geometric_operators_stable}]
    This is immediate from \eqref{equation_relation_cohomology_E_E}, \eqref{equation_relation_cohomology_F_F}, \eqref{equation_relation_cohomology_H_E}, \eqref{equation_relation_cohomology_H_F}, \eqref{equation_relation_cohomology_H_H}, and \eqref{equation_relation_cohomology_E_F}.
\end{proof}

\subsection{The action on cohomology}
\label{subsection_action}

We now define an action of the affine Yangian of $\widehat{\mathfrak{gl}}_1$ on $H_{\mathcal{M}}$, following \cite{negut_rationaltrigonometricellipticalgebras}. For any two operators $x,y \in \{E_d, F_d, H_d\}_{d \geq 0}$, the commutator $\bigl[ \textcolor{red}{x},\textcolor{blue}{y} \bigr]$ vanishes away from the diagonal of $\textcolor{red}{S} \times \textcolor{blue}{S}$. By the excision theorem in cohomology, we obtain
\[
\bigl[ \textcolor{red}{x},\textcolor{blue}{y} \bigr] = (\operatorname{id}_{\mathcal{M}} \times \Delta)_* z
\]
for some operator $z: H_{\mathcal{M}} \rightarrow H_{\mathcal{M} \times S}$, where $\operatorname{id}_{\mathcal{M}} \times \Delta: \mathcal{M} \times S \hookrightarrow \mathcal{M} \times \textcolor{red}{S} \times \textcolor{blue}{S}$ is the diagonal embedding. The map $(\operatorname{id}_{\mathcal{M}} \times \Delta)_*$ is injective because $p_{\mathcal{M}} \times p_{\textcolor{red}{S}}$ provides a left inverse to $\operatorname{id}_{\mathcal{M}} \times \Delta$. Hence $z$ is uniquely determined; we set
\[
\bigl[ \textcolor{red}{x},\textcolor{blue}{y} \bigr]_{\mathrm{red}} \define z,
\]
and say that $x$ and $y$ have \textit{diagonal commutator}.

Consider the ring homomorphism
\[
\phi: \mathbb{Z}[t_1,t_2]^{\mathrm{sym}} \rightarrow H_S
\]
sending $t_1 + t_2$ to $c_1(\Omega^1_S)$ and $t_1 t_2$ to $c_2(\Omega^1_S)$, and let $p_{\mathcal{M}}: \mathcal{M} \times S \rightarrow \mathcal{M}$ and $p_S: \mathcal{M} \times S \rightarrow S$ denote the standard projections.

\begin{definition}[\cite{negut_rationaltrigonometricellipticalgebras}, Definition 3.19]
    \label{definition_action_Yangian}
    An action $Y_{t_1,t_2}( \widehat{\mathfrak{gl}}_1 ) \curvearrowright H_{\mathcal{M}}$ is an abelian group homomorphism $$Y_{t_1,t_2}( \widehat{\mathfrak{gl}}_1 ) \xrightarrow{\Phi} \operatorname{Hom}\left(H_{\mathcal{M}},H_{\mathcal{M} \times S}\right)$$ satisfying the following properties for all $x,y \in Y_{t_1,t_2}( \widehat{\mathfrak{gl}}_1 )$ and all $\gamma \in \mathbb{Z}[t_1,t_2]^\mathrm{sym}$.
    \begin{enumerate}[i)]
        \item \textbf{Unit:} $$\Phi(1) = \Bigg( H_{\mathcal{M}} \xrightarrow{p_{\mathcal{M}}^*} H_{\mathcal{M} \times S} \Bigg)$$
        \item $\mathbb{Z}[t_1,t_2]^\mathrm{sym}$\textbf{-linearity:} $$\Phi(\gamma x) = \Bigg( H_{\mathcal{M}} \xrightarrow{\Phi(x)} H_{\mathcal{M} \times S} \xrightarrow{\cdot p_S^*(\phi(\gamma))} H_{\mathcal{M} \times S} \Bigg)$$
        \item \textbf{Multiplicativity:} $$\Phi(\textcolor{red}{x}\textcolor{blue}{y}) = \Bigg( H_{\mathcal{M}} \xrightarrow{\Phi(\textcolor{blue}{y})} H_{\mathcal{M} \times \textcolor{blue}{S}} \xrightarrow{\Phi(\textcolor{red}{x}) \boxtimes \operatorname{id}_{\textcolor{blue}{S}}} H_{\mathcal{M} \times \textcolor{red}{S} \times \textcolor{blue}{S}} \xrightarrow{|_\Delta} H_{\mathcal{M} \times S} \Bigg)$$
        \item \textbf{Commutator:} $\Phi(x)$ and $\Phi(y)$ have diagonal commutator, and $$\bigl[ \Phi(x),\Phi(y) \bigr]_\mathrm{red} = \Phi\left( \frac{\bigl[x,y\bigr]}{t_1t_2} \right),$$ where the right-hand side is well-defined because of Remark \ref{remark_adjustments_Yangian_geometry}.
    \end{enumerate}
\end{definition}

With the previous definition in mind, we now prove our main result.

\begin{proof}[Proof of Theorem \ref{theorem_action_Yangian_stable}] 
    A direct comparison between the abstract Yangian relations \eqref{equation_relation_Yangian_e_e}--\eqref{equation_relation_Yangian_e_f} and the identities established in Theorem \ref{theorem_relations_geometric_operators_stable} yields the desired result.
\end{proof}

\begin{remark}
    Proposition \ref{proposition_commutator_cohomology_E} implies that the cubic relations analogous to the Drinfeld--Serre relations in finite types (see Remark \ref{remark_Drinfeld-Serre}) hold in the geometric setting. More explicitly, one has
    \[
    \sum_{\sigma \in S_3} \bigl[ E_{d_{\sigma(1)}}, \bigl[ E_{d_{\sigma(2)}},E_{d_{\sigma(3)}+1} \bigr] \bigr] = 0
    \]
    for all $d_1,d_2,d_3 \geq 0$.
\end{remark}

\section{The commutator \texorpdfstring{$\bigl[E(z),F(w)\bigr]$}{[E(z),F(w)]}}
\label{section_commutator_E_F}
\subsection{Geometry of the compositions \texorpdfstring{$E_aF_b$}{EaFb} and \texorpdfstring{$F_bE_a$}{FbEa}}
\label{section_commutator_geometry}
As throughout the paper, $S$ denotes a smooth projective surface over $\mathbb{C}$ satisfying Assumptions A and S with respect to a fixed ample divisor $H$ and a pair $(r,c_1) \in \mathbb{N} \times H^2(S,\mathbb{Z})$. We begin with an observation about the nested moduli space that governs the composition $\textcolor{red}{E_a}\textcolor{blue}{F_b}$.

\begin{claim}[\cite{zhao_commutator_moduli_stable}, Section 1.3]
    \label{claim_irreducibility_nested_stable_uparrow}
    The nested moduli space $$\mathfrak{Z}^\uparrow \define \Big\{ \mathcal{F}_0 \subset_{\textcolor{red}{u}} \mathcal{F}_1 \supset_{\textcolor{blue}{v}} \mathcal{F}_2 \Big\}$$ is irreducible.
\end{claim}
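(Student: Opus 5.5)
We realize $\mathfrak{Z}^\uparrow$ as an iterated projective bundle-type object over an irreducible base, and then show that the locus where the fibers jump is too small to contain a component.

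Consider the map
\[
\pi: \mathfrak{Z}^\uparrow \rightarrow \mathcal{M} \times \textcolor{red}{S} \times \textcolor{blue}{S}, \qquad (\mathcal{F}_0 \subset_{\textcolor{red}{u}} \mathcal{F}_1 \supset_{\textcolor{blue}{v}} \mathcal{F}_2) \mapsto (\mathcal{F}_1,\textcolor{red}{u},\textcolor{blue}{v}).
$$
The space $\mathfrak{Z}^\uparrow$ is the fiber product $\mathfrak{Z}_{(u)} \times_{\mathcal{M}} \mathfrak{Z}_{(v)}$ along $p_-$ on both factors. By Proposition \ref{proposition_projective_bundle_description}, $\pi_+: \mathfrak{Z}_{(u)} \to \mathcal{M} \times S$ is the zero locus inside $\mathbb{P}_{\mathcal{M}\times S}(\mathcal{W}^\vee \otimes \omega_S)$, and its fiber over $(\mathcal{F}_1,u)$ is $\mathbb{P}\operatorname{Hom}(\mathcal{F}_1,\mathbb{C}_u)$. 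Hence $\pi$ is, fiberwise, the product
\[
\mathbb{P}\operatorname{Hom}(\mathcal{F}_1,\mathbb{C}_u) \times \mathbb{P}\operatorname{Hom}(\mathcal{F}_1,\mathbb{C}_v).
\]
Stability of $\mathcal{F}_0,\mathcal{F}_2$ is automatic by Proposition \ref{proposition_Hecke_stable}.

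We stratify $\mathcal{M} \times S \times S$ by the fiber dimensions $\hom(\mathcal{F}_1,\mathbb{C}_u) = r + k_u$, where $k_u = \dim \operatorname{Ext}^1(\mathcal{F}_1,\mathbb{C}_u)$ measures the failure of local freeness at $u$. Over the open stratum where $\mathcal{F}_1$ is locally free at both $u$ and $v$, $\pi$ is a $\mathbb{P}^{r-1}\times\mathbb{P}^{r-1}$-bundle over an open subset of the smooth, irreducible-per-component space $\mathcal{M}\times S\times S$. This gives a single irreducible piece of dimension
\[
\dim \mathcal{M}_{c_2(\mathcal{F}_1)} + 4 + 2(r-1),
\]
which equals the expected dimension $\dim\mathcal{M}+2r+2$, since $c_2$ shifts by $1$ for each Hecke step.

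It remains to show that no other stratum contributes a component. We expect this to be the main obstacle, since it requires dimension estimates on the non-locally-free loci. The plan is to invoke the estimates of Section \ref{subsection_dimension_estimates_stable}, specifically the Quot-scheme bound from Section 5 of \cite{negut_hecke_correspondences}: the locus of $(\mathcal{F}_1,u)$ with $k_u \ge k$ has codimension at least $2k + 1$ or so in $\mathcal{M}\times S$, roughly $k^2+k$. The preimage of this stratum then has dimension at most expected $-(\text{codim}) + 2k$. Combining the $u$- and $v$-strata, including the diagonal $u=v$ where the jumps coincide, shows that every non-generic piece has dimension strictly less than the expected dimension.

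Finally, $\mathfrak{Z}^\uparrow$ is cut out by the vanishing of sections in a product of projective bundles, so every irreducible component has dimension at least the expected dimension. Since the non-generic strata are strictly smaller, each such stratum lies in the closure of the open piece, and $\mathfrak{Z}^\uparrow$ is irreducible over each connected component.

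The delicate case is $u=v$ with $k_u=1$. Here the codimension bound must be sharp enough to beat the fiber growth $2k$. We would first check it directly using the description $\mathfrak{Z}_{(u,u)}$ from Proposition \ref{proposition_nested_Hilbert_schemes_geometric_properties}, following Zhao's argument in \cite{zhao_commutator_moduli_stable}.
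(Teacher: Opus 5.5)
\paragraph{A different route from the paper, with a gap at the key step.} The paper fibers $\mathfrak{Z}^\uparrow$ over $\{\mathcal{F}_1\supset_v\mathcal{F}_2\}$ and asserts that the fibers, namely $\operatorname{Quot}(\mathcal{F}_1,1)$, are irreducible of constant dimension. You instead fiber over $\mathcal{M}\times S\times S$ and stratify by $(k_u,k_v)$.

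Your skeleton is sound. The open stratum is an irreducible $\mathbb{P}^{r-1}\times\mathbb{P}^{r-1}$-bundle of dimension $D=\dim\mathcal{M}_{c_2(\mathcal{F}_1)}+2r+2$. Every component has dimension at least $D$. So it suffices that every other stratum has preimage of dimension $<D$.

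Fix the labels, though. The map $(\mathcal{F}_0\subset_u\mathcal{F}_1)\mapsto(\mathcal{F}_1,u)$ is $\pi_-$, embedded in $\mathbb{P}_{\mathcal{M}\times S}(\mathcal{V})$ by the $w$ equations $\mathcal{W}\to\mathcal{V}\to\mathcal{O}(1)$. The bundle $\mathbb{P}(\mathcal{W}^\vee\otimes\omega_S)$ belongs to $\pi_+$, whose fibers are $\mathbb{P}\operatorname{Ext}^1(\mathbb{C}_u,\mathcal{F}_0)$. Your lower bound should come from $2w$ equations in the smooth $\mathbb{P}^{v-1}\times\mathbb{P}^{v-1}$-bundle over $\mathcal{M}\times S\times S$.

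The genuine gap is the step you yourself flag as the main obstacle. You never establish a codimension bound for
$\Sigma_k=\{(\mathcal{F},u):\dim\operatorname{Hom}(\mathcal{F},\mathbb{C}_u)=r+k\}\subset\mathcal{M}\times S$,
and your guesses are off in two ways:
\begin{itemize}
\item The bound $2k+1$ fails for $r=1$: for ideal sheaves, $\Sigma_1$ is open dense in $\{u\in\operatorname{supp}Z\}$, which has codimension $2$.
\item The case you call delicate, $u=v$ with $k=1$, actually has slack $r+1$.
\end{itemize}
What you need is $\operatorname{codim}\Sigma_k\ge 2k-1$ for every $k\ge1$. On the diagonal the fiber grows by $2k$, while the stratum loses only $2+\operatorname{codim}\Sigma_k$. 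So the binding cases are large $k$. The bound $\operatorname{codim}\Sigma_k\ge k+1$, which follows from irreducibility of $\mathfrak{Z}_{(u)}$ alone, already fails to suffice for $k\ge 3$.

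\paragraph{How to close the gap.} The missing input is Proposition \ref{proposition_dimension_nested_stable} applied to $\mathfrak{Z}_{(u,u)}$. Map $\mathfrak{Z}_{(u,u)}\to\mathcal{M}\times S$ by $(\mathcal{F}_0\subset_u\mathcal{F}_1\subset_u\mathcal{F}_2)\mapsto(\mathcal{F}_1,u)$.
\begin{itemize}
\item Over $\Sigma_k$ with $k\ge1$, the fiber is $\mathbb{P}\operatorname{Hom}(\mathcal{F}_1,\mathbb{C}_u)\times\mathbb{P}\operatorname{Ext}^1(\mathbb{C}_u,\mathcal{F}_1)\cong\mathbb{P}^{r+k-1}\times\mathbb{P}^{k-1}$.
\item Non-split extensions are torsion-free, hence stable by Proposition \ref{proposition_Hecke_stable}.
\item $\dim\operatorname{Ext}^1(\mathbb{C}_u,\mathcal{F}_1)=\dim\operatorname{Ext}^1(\mathcal{F}_1,\mathbb{C}_u)=k$, by Serre duality, $\chi(\mathcal{F}_1,\mathbb{C}_u)=r$, and $\operatorname{Ext}^2(\mathcal{F}_1,\mathbb{C}_u)=0$.
\end{itemize}
Since $\dim\mathfrak{Z}_{(u,u)}=\dim\mathcal{M}_{c_2(\mathcal{F}_1)}+1$, you get $\operatorname{codim}\Sigma_k\ge 2k+r-1$. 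The preimage dimensions are then:
\begin{itemize}
\item for $u\ne v$ and $(k_u,k_v)=(a,0)$: at most $D-(a+r-1)$;
\item for $u\ne v$ and $a\ge b\ge1$ (by symmetry): at most $D-(r+1)-(a-b)$, since a fixed $\mathcal{F}_1$ fails to be locally free at only finitely many points;
\item on the diagonal with $k\ge1$: at most $D-r-1$.
\end{itemize}
All of these are $<D$, which completes your argument.

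The paper's shortcut needs the same input. Over a point with $k_u=2$, $\operatorname{Quot}(\mathcal{F}_1,1)$ acquires an extra $\mathbb{P}^{r+1}$ component, and it jumps in dimension where $k_u\ge3$. So those fibers are not literally irreducible of constant dimension. Your stratification, once completed as above, is what makes that argument rigorous.
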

\begin{proof}
    Consider the morphism $\mathfrak{Z}^\uparrow \rightarrow \{ \mathcal{F}_1 \supset_{\textcolor{blue}{v}} \mathcal{F}_2\}$. The base space is irreducible, and the morphism is a proper fibration with irreducible fibers of constant dimension. It follows that the total space $\mathfrak{Z}^\uparrow$ is irreducible (see, e.g., \cite{vakil_rising_sea}).
\end{proof}

Combining Claim \ref{claim_irreducibility_nested_stable_uparrow} with the dimension estimates of Section \ref{subsection_dimension_estimates_stable}, a cohomological base change shows that the composition $\textcolor{red}{E_a}\textcolor{blue}{F_b}$ is induced by the following diagram
\begin{equation}
    \label{equation_proof_E_F_3}
    \begin{tikzcd}[row sep=small, column sep=tiny]
	& {\ell_{01}^a\ell_{12}^b} &&&&& \\
	& {\Big\{ \mathcal{F}_0 \subset_{\textcolor{red}{u}} \mathcal{F}_1 \supset_{\textcolor{blue}{v}} \mathcal{F}_2 \Big\}} &&&& {\mathcal{F}_0 \subset_{\textcolor{red}{u}} \mathcal{F}_1 \supset_{\textcolor{blue}{v}} \mathcal{F}_2} \\
	{\mathcal{M} \times \textcolor{red}{S} \times \textcolor{blue}{S}} && {\mathcal{M}} && {(\mathcal{F}_0,\textcolor{red}{u},\textcolor{blue}{v})} && {\mathcal{F}_2.}
	\arrow[dashed, no head, from=1-2, to=2-2]
	\arrow[from=2-2, to=3-1]
	\arrow[from=2-2, to=3-3]
	\arrow[maps to, from=2-6, to=3-5]
	\arrow[maps to, from=2-6, to=3-7]
\end{tikzcd}
\end{equation}

To relate this composition to the reversed product $\textcolor{blue}{F_b}\textcolor{red}{E_a}$, we introduce the quadruple moduli space $\mathfrak{Y}^\mathrm{rot}$. This scheme parametrizes commutative diagrams of Hecke modifications of the form
\begin{equation}
    \label{equation_proof_E_F_2_stable}
    \begin{tikzcd}[sep=small]
	& {\mathcal{F}_1} & \\
	{\mathcal{F}_0} && {\mathcal{F}_2.} \\
	& {\mathcal{F}_1'}
	\arrow["{\textcolor{red}{u}}", hook, from=2-1, to=1-2]
	\arrow["{\textcolor{blue}{v}}"', hook', from=2-3, to=1-2]
	\arrow["{\textcolor{blue}{v}}", hook', from=3-2, to=2-1]
	\arrow["{\textcolor{red}{u}}"', hook, from=3-2, to=2-3]
\end{tikzcd}
\end{equation}
By Propositions \ref{proposition_moduli_spaces_Y_geometric_properties} and \ref{proposition_moduli_spaces_Y_reduced}, the scheme $\mathfrak{Y}^\mathrm{rot}$ is smooth, irreducible, and reduced. Its dimension on the connected component containing the closed point in \eqref{equation_proof_E_F_2_stable} is precisely the expected dimension
\begin{equation}
    \label{equation_proof_E_F_3_stable}
    \operatorname{const} + 2rc_2(\mathcal{F}_0) + 2.
\end{equation}

\begin{claim}[\cite{negut_shuffle_surfaces}, Claim 3.8]
    \label{claim_isomorphism_nested_stable}
    The nested moduli spaces $$\mathfrak{Z}^\uparrow \qquad\text{and}\qquad \mathfrak{Z}^\downarrow \define \Big\{ \mathcal{F}_0 \supset_{\textcolor{blue}{v}} \mathcal{F}_1' \subset_{\textcolor{red}{u}} \mathcal{F}_2 \Big\}$$ are isomorphic over the open locus $\{ \mathcal{F}_0 \neq \mathcal{F}_2 \}$. 
    
    The isomorphism maps the tautological line bundles $\mathcal{L}_{0,1}$ and $\mathcal{L}_{1,2}$ to $\mathcal{L}_{1,2}'$ and $\mathcal{L}_{0,1}'$, respectively.
\end{claim}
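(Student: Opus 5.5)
The plan is to build the isomorphism by gluing sheaves, in the same way as the proof of Proposition \ref{proposition_birationality_uparrow_downarrow}. The natural inverse maps are defined using sum and intersection inside $\mathcal{F}_0+\mathcal{F}_2$.

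\textbf{From $\mathfrak{Z}^\uparrow$ to $\mathfrak{Z}^\downarrow$.} Take a point $(\mathcal{F}_0 \subset_{\textcolor{red}{u}} \mathcal{F}_1 \supset_{\textcolor{blue}{v}} \mathcal{F}_2)$ with $\mathcal{F}_0 \neq \mathcal{F}_2$ as subsheaves of $\mathcal{F}_1$, and set $\mathcal{F}_1' \define \mathcal{F}_0 \cap \mathcal{F}_2$. Since $\mathcal{F}_0 \neq \mathcal{F}_2$, the map $\mathcal{F}_0 \to \mathcal{F}_1/\mathcal{F}_2 \cong \mathbb{C}_{\textcolor{blue}{v}}$ is nonzero, hence surjective. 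Its kernel is $\mathcal{F}_1'$, so $\mathcal{F}_0/\mathcal{F}_1' \cong \mathbb{C}_{\textcolor{blue}{v}}$. Symmetrically, $\mathcal{F}_2/\mathcal{F}_1' \cong \mathbb{C}_{\textcolor{red}{u}}$. In other words, $\mathcal{F}_0+\mathcal{F}_2=\mathcal{F}_1$ and we obtain the two-dimensional square
\[
\mathcal{F}_1/\mathcal{F}_1' \cong \mathcal{F}_1/\mathcal{F}_0 \oplus \mathcal{F}_1/\mathcal{F}_2.
\]
Stability of $\mathcal{F}_1'$ follows from Proposition \ref{proposition_Hecke_stable}.

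\textbf{From $\mathfrak{Z}^\downarrow$ to $\mathfrak{Z}^\uparrow$.} Conversely, given $(\mathcal{F}_0 \supset_{\textcolor{blue}{v}} \mathcal{F}_1' \subset_{\textcolor{red}{u}} \mathcal{F}_2)$ with $\mathcal{F}_0\neq\mathcal{F}_2$, set $\mathcal{F}_1 \define \mathcal{F}_0+\mathcal{F}_2$, taken inside a common ambient sheaf. The difficulty is to find such an ambient sheaf. I would use the pushout $\mathcal{F}_0 \sqcup_{\mathcal{F}_1'} \mathcal{F}_2$. The point is that $\mathcal{F}_0\neq\mathcal{F}_2$ means the two one-dimensional quotients of $\mathcal{F}_1'$ are distinct, so the pushout is an extension of $\mathbb{C}_{\textcolor{red}{u}}\oplus\mathbb{C}_{\textcolor{blue}{v}}$ by $\mathcal{F}_1'$. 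I then need to show this pushout is torsion free. When $\textcolor{red}{u}\neq \textcolor{blue}{v}$ this is clear by the gluing argument of Proposition \ref{proposition_birationality_uparrow_downarrow}. When $\textcolor{red}{u}=\textcolor{blue}{v}$, a torsion subsheaf would be a skyscraper mapping isomorphically onto a quotient, which would force $\mathcal{F}_0=\mathcal{F}_2$ inside the pushout. This torsion-freeness at $\textcolor{red}{u}=\textcolor{blue}{v}$ is the step I expect to be the main obstacle. I would phrase it via $\mathrm{Ext}^1(\mathbb{C}_{\textcolor{red}{u}},\mathcal{F}_1')$: the two extension classes are linearly independent, and I would show that independence implies the sum extension is torsion free.

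\textbf{Families and inverseness.} Both constructions work in flat families over the open loci, because the colength stays constant. This gives morphisms of the functors in \eqref{equation_functor_Z_lambda}. They are mutually inverse because $(\mathcal{F}_0\cap\mathcal{F}_2)$ and $\mathcal{F}_0+\mathcal{F}_2$ recover each other in the square.

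\textbf{Line bundles.} In the square, $\mathcal{F}_1/\mathcal{F}_0 \cong \mathcal{F}_2/\mathcal{F}_1'$ canonically, via the inclusion $\mathcal{F}_2\to\mathcal{F}_1$ composed with the quotient. Similarly $\mathcal{F}_1/\mathcal{F}_2\cong\mathcal{F}_0/\mathcal{F}_1'$. Taking global sections identifies $\mathcal{L}_{0,1}$ with $\mathcal{L}'_{1,2}$ and $\mathcal{L}_{1,2}$ with $\mathcal{L}'_{0,1}$, as claimed. The maps respect the indexing conventions for the orientation of each inclusion.
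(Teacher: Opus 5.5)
Your proposal is correct and is essentially the paper's proof: the paper also uses $\mathcal{F}_0\cap\mathcal{F}_2$ inside $\mathcal{F}_1$ in one direction and the pushout $\mathcal{F}_0\oplus_{\mathcal{F}_1'}\mathcal{F}_2$ in the other, and simply asserts that these assignments are mutually inverse. Your extra checks fill in details the paper leaves implicit, and the step you flagged as the main obstacle is routine: since $\mathcal{F}_1'$ is torsion free, a torsion subsheaf of the pushout injects into $\mathbb{C}_u^{\oplus 2}$, hence contains a copy of $\mathbb{C}_u$ lying over a line spanned by some $(\alpha,\beta)$ on which the extension splits, i.e.\ $\alpha e_0+\beta e_2=0$ for the classes $e_0,e_2\in\operatorname{Ext}^1(\mathbb{C}_u,\mathcal{F}_1')$ of $\mathcal{F}_0,\mathcal{F}_2$ --- exactly what linear independence of the two classes rules out.
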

\begin{proof}
    The isomorphism is given by the following inverse assignments:
    \[
    \big( \mathcal{F}_0 \subset_{\textcolor{red}{u}} \mathcal{F}_1 \supset_{\textcolor{blue}{v}} \mathcal{F}_2 \big) \mapsto \big( \mathcal{F}_0 \supset_{\textcolor{blue}{v}} \mathcal{F}_0 \cap \mathcal{F}_2 \subset_{\textcolor{red}{u}} \mathcal{F}_2 \big),
    \]
    where $\mathcal{F}_0 \cap \mathcal{F}_2$ lives inside the ideal sheaf $\mathcal{F}_1$, and
    \[
    \big( \mathcal{F}_0 \supset_{\textcolor{blue}{v}} \mathcal{F}_1 \subset_{\textcolor{red}{u}} \mathcal{F}_2 \big) \mapsto \big( \mathcal{F}_0 \subset_{\textcolor{red}{u}} \mathcal{F}_0 \oplus_{\mathcal{F}_1} \mathcal{F}_2 \supset_{\textcolor{blue}{v}} \mathcal{F}_2 \big).
    \]
    These assignments are best understood in terms of closed subschemes rather than ideal sheaves.
\end{proof}

Claim \ref{claim_isomorphism_nested_stable} implies that $\pi^{\uparrow}$ and $\pi^{\downarrow}$ restrict to isomorphisms over $\{ \mathcal{F}_0 \neq \mathcal{F}_2 \}$. We now analyze the geometry of $\mathfrak{Z}^\downarrow$, which governs the composition $\textcolor{blue}{F_b}\textcolor{red}{E_a}$.

\begin{claim}[\cite{zhao_commutator_moduli_stable}, Section 1.3]
    \label{claim_irreducibility_nested_stable}
    The nested moduli space $\mathfrak{Z}^\downarrow$ decomposes into the irreducible components 
    $$\mathfrak{Z}_1 \define \Big\{ \mathcal{F}_0 = \mathcal{F}_2 \Big\} \qquad \text{and} \qquad \mathfrak{Z}_2 \define \overline{\Big\{ \textcolor{red}{u} \neq \textcolor{blue}{v} \Big\}}.$$ 
    The component $\mathfrak{Z}_2$ has the same dimension as the quadruple moduli space $\mathfrak{Y}^\mathrm{rot}$ (see \eqref{equation_proof_E_F_3_stable}), while the diagonal component $\mathfrak{Z}_1$ has dimension 
    $$\operatorname{const} + 2rc_2(\mathcal{F}_0)+r+1.$$ 
\end{claim}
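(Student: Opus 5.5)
The plan is to stratify $\mathfrak{Z}^\downarrow = \{\mathcal{F}_0 \supset_{v} \mathcal{F}_1' \subset_{u} \mathcal{F}_2\}$ according to whether $\mathcal{F}_0 = \mathcal{F}_2$, and to compute the dimension of each stratum by projective-bundle counting. Proposition \ref{proposition_projective_bundle_description} and Proposition \ref{proposition_dimension_nested_stable} supply the dimensions of the pieces.

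\textbf{Step 1: the locus $\{\mathcal{F}_0 \neq \mathcal{F}_2\}$.} By Claim \ref{claim_isomorphism_nested_stable}, this open locus is isomorphic to the corresponding open subset of $\mathfrak{Z}^\uparrow$. By Claim \ref{claim_irreducibility_nested_stable_uparrow}, $\mathfrak{Z}^\uparrow$ is irreducible of dimension $\operatorname{const}+2rc_2(\mathcal{F}_0)+2$: it fibres over $\{\mathcal{F}_1 \supset_v \mathcal{F}_2\}$, which has dimension $\operatorname{const}+r(c_2(\mathcal{F}_1)+c_2(\mathcal{F}_2))+1$, and the fibres are $\mathbb{P}^{r-1}$-bundles over $S$ generically. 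Hence the closure of $\{\mathcal{F}_0\neq\mathcal{F}_2\}$ in $\mathfrak{Z}^\downarrow$ is irreducible of the dimension \eqref{equation_proof_E_F_3_stable}.

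This closure equals $\mathfrak{Z}_2 = \overline{\{u \neq v\}}$. Indeed, $\{u\neq v\}$ is a dense open subset of $\{\mathcal{F}_0\neq\mathcal{F}_2\}$, because the corresponding locus in the irreducible $\mathfrak{Z}^\uparrow$ is nonempty and open. Also, $u \neq v$ forces $\mathcal{F}_0\neq\mathcal{F}_2$, since the two sheaves differ from $\mathcal{F}_1'$ at different points.

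\textbf{Step 2: the diagonal locus $\mathfrak{Z}_1=\{\mathcal{F}_0=\mathcal{F}_2\}$.} This is a closed condition, and it forces $u=v$. The locus parametrizes $\mathcal{F}_1' \subset_u \mathcal{F}_0$ with $\mathcal{F}_0 = \mathcal{F}_2$, so $\mathfrak{Z}_1 \cong \mathfrak{Z}_{(u)}$, which is smooth and irreducible by Proposition \ref{proposition_nested_Hilbert_schemes_geometric_properties}. Its expected dimension is
\[
\operatorname{const} + r\bigl(c_2(\mathcal{F}_1')+c_2(\mathcal{F}_0)\bigr) + 1 = \operatorname{const} + 2rc_2(\mathcal{F}_0) + r + 1,
\]
using $c_2(\mathcal{F}_1') = c_2(\mathcal{F}_0)+1$. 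One can double-check this directly: the fibre of $\pi_-$ over $(\mathcal{F}_0,u)$ is $\mathbb{P}(\mathcal{F}_{0,u}) \cong \mathbb{P}^{r-1}$ generically, so the count is $\dim\mathcal{M} + 2 + (r-1)$, which gives the same number.

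\textbf{Step 3: $\mathfrak{Z}_1 \cup \mathfrak{Z}_2$ is the irreducible decomposition.} Set-theoretically $\mathfrak{Z}^\downarrow=\mathfrak{Z}_1\cup\mathfrak{Z}_2$ by Steps 1 and 2, and both pieces are closed and irreducible. It remains to show that neither contains the other.

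$\mathfrak{Z}_2 \not\subset \mathfrak{Z}_1$ holds because $\mathfrak{Z}_2$ contains points with $u \neq v$, while every point of $\mathfrak{Z}_1$ has $u = v$. For $\mathfrak{Z}_1 \not\subset \mathfrak{Z}_2$, compare dimensions: when $r > 1$, $\dim\mathfrak{Z}_1 = \operatorname{const}+2rc_2(\mathcal{F}_0)+r+1 > \dim\mathfrak{Z}_2$, so $\mathfrak{Z}_1$ cannot lie in $\mathfrak{Z}_2$.

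\textbf{Main obstacle.} The delicate case is $r=1$, where the two dimensions coincide, so the dimension argument gives no containment information. Here I would argue as follows. Points of $\mathfrak{Z}_2 \cap \{u=v\}$ are limits of configurations with $\mathcal{F}_0 \neq \mathcal{F}_2$. At such a point, $\mathcal{F}_0/\mathcal{F}_1'$ and $\mathcal{F}_2/\mathcal{F}_1'$ give two length-one quotients of $\mathcal{F}_0/\mathfrak{m}_u\mathcal{F}_1'$. Generically on $\mathfrak{Z}_1$, this quotient has the smallest possible dimension, so such limits are excluded and the generic point of $\mathfrak{Z}_1$ does not lie in $\mathfrak{Z}_2$. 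In the write-up I would cite Zhao (\emph{loc.\ cit.}) for this local computation rather than redo it, since this is the step I am least certain I can complete cleanly from first principles.
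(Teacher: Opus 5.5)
Your Steps 1 and 2 are correct, and together they are a fleshed-out version of the paper's proof. The paper only remarks that the dimension formulas follow from Proposition \ref{proposition_dimension_nested_stable}, takes irreducibility from Zhao, and implicitly uses $\mathfrak{Z}_1 \cong \mathfrak{Z}_{(u)}$ and the birationality of $\mathfrak{Z}_2$ with $\mathfrak{Y}^{\mathrm{rot}}$. Routing the off-diagonal part through $\mathfrak{Z}^\uparrow$ and Claim \ref{claim_isomorphism_nested_stable}, as you do, amounts to the same thing.

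The gap is that you leave $\mathfrak{Z}_1 \not\subset \mathfrak{Z}_2$ unproved for $r=1$. This is not a real obstacle and needs no local computation. Your dimension argument only requires $\dim \mathfrak{Z}_1 \geq \dim \mathfrak{Z}_2$, which holds for every $r \geq 1$ (the difference is $r-1$). Suppose $\mathfrak{Z}_1 \subseteq \mathfrak{Z}_2$. Then $\mathfrak{Z}_1$ is a closed irreducible subset of the irreducible space $\mathfrak{Z}_2$ of at least the same dimension, so $\mathfrak{Z}_1 = \mathfrak{Z}_2$. This contradicts $\mathfrak{Z}_2 \not\subset \mathfrak{Z}_1$, which you already proved. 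So replace the strict inequality in Step 3 by $\geq$ and drop the appeal to Zhao.

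Your local sketch is not precise enough to check: $\mathcal{F}_2/\mathcal{F}_1'$ is not naturally a quotient of $\mathcal{F}_0/\mathfrak{m}_u\mathcal{F}_1'$ off $\mathfrak{Z}_1$. If you want a pointwise reason anyway, use the description of $\mathfrak{Z}_1 \cap \mathfrak{Z}_2$ from the proof of Lemma \ref{lemma_base_change_stable}.
\begin{itemize}
\item The map $\pi^\downarrow\colon \mathfrak{Y}^{\mathrm{rot}} \to \mathfrak{Z}^\downarrow$ is proper, and it is an isomorphism over $\{\mathcal{F}_0 \neq \mathcal{F}_2\}$, so its image is exactly $\mathfrak{Z}_2$.
\item Hence every point of $\mathfrak{Z}_1 \cap \mathfrak{Z}_2$ admits a torsion-free $\mathcal{F}_1$ with $\mathcal{F}_0 \subset_u \mathcal{F}_1$.
\item This is impossible when $\mathcal{F}_0$ is locally free at $u$: then $\operatorname{Ext}^1(\mathbb{C}_u,\mathcal{F}_0)=0$, so the extension splits and $\mathcal{F}_1$ has torsion.
\item Finally, $\mathcal{F}_0$ is locally free at $u$ at the generic point of $\mathfrak{Z}_1 \cong \mathfrak{Z}_{(u)}$.
\end{itemize}
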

\begin{proof}
    The dimension formulas follow from Proposition \ref{proposition_dimension_nested_stable}.
\end{proof}

When $r=1$, the dimensions of $\mathfrak{Z}_1$ and $\mathfrak{Z}_2$ coincide, and $\mathfrak{Z}^\downarrow$ is equidimensional. For $r>1$, however, the diagonal component $\mathfrak{Z}_1$ has strictly larger dimension than $\mathfrak{Z}_2$. This failure of equidimensionality is the central geometric obstruction that necessitates a refined intersection-theoretic argument.

\subsection{An equidimensionality issue}

To overcome the obstruction identified above, we recall several standard results from intersection theory. Although traditionally formulated in the context of Chow rings, these tools adapt seamlessly to our cohomological framework. The following lemmas and theorem are taken from \cite{fulton_intersection_theory}.

\begin{lemma}[\cite{fulton_intersection_theory}, Example 1.3.1]
\label{lemma_mayer_vietoris}
    Let $Z_1$ and $Z_2$ be closed subschemes of a scheme $X$. Then the sequence
\[\begin{tikzcd}
	{A_k(Z_1 \cap Z_2)} & {A_k(Z_1) \oplus A_k(Z_2)} & {A_k(Z_1 \cup Z_2)} & 0
	\arrow[from=1-1, to=1-2]
	\arrow[from=1-2, to=1-3]
	\arrow[from=1-3, to=1-4]
\end{tikzcd}\]
    is exact for all $k \geq 0$.
\end{lemma}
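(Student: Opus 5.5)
The statement to prove is the Mayer--Vietoris type exact sequence for Chow groups,
\[
A_k(Z_1 \cap Z_2) \to A_k(Z_1) \oplus A_k(Z_2) \to A_k(Z_1 \cup Z_2) \to 0,
\]
where the first map is $\alpha \mapsto (i_{1*}\alpha, -i_{2*}\alpha)$ and the second is $(\beta_1,\beta_2) \mapsto j_{1*}\beta_1 + j_{2*}\beta_2$. Here $i_1, i_2, j_1, j_2$ denote the evident closed embeddings. The sequence is a complex because $j_1 \circ i_1 = j_2 \circ i_2$ as closed embeddings of $Z_1 \cap Z_2$ into $Z_1 \cup Z_2$ and proper pushforward is functorial. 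Throughout, Chow groups only see the underlying reduced structure, so I may replace all schemes by their reductions. In particular $Z_1 \cup Z_2$ has the set-theoretic union as its support.

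\textbf{Surjectivity.} Take a generating cycle $[V]$ of $A_k(Z_1 \cup Z_2)$, with $V$ a $k$-dimensional subvariety. Since $V$ is irreducible and contained in the union of the closed sets $Z_1$ and $Z_2$, it lies in one of them. Hence $[V]$ is the pushforward of a cycle on $Z_1$ or on $Z_2$, and the second map is surjective already on the level of cycles.

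\textbf{Exactness in the middle.} This is the main step. Let $(\beta_1,\beta_2)$ with $j_{1*}\beta_1 + j_{2*}\beta_2 = 0$. I lift $\beta_i$ to cycles $b_i \in Z_k(Z_i)$. The vanishing means that
\[
j_{1*}b_1 + j_{2*}b_2 = \sum_m \operatorname{div}(r_m)
\]
for finitely many $(k+1)$-dimensional subvarieties $W_m \subset Z_1 \cup Z_2$ and rational functions $r_m$ on $W_m$. Each $W_m$ is irreducible, so it lies in $Z_1$ or in $Z_2$. I partition the relations accordingly into $R_1 = \sum_{W_m \subset Z_1}\operatorname{div}(r_m)$, which is a cycle on $Z_1$, and $R_2$, which is a cycle on $Z_2$. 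In $A_k(Z_i)$ the class of $b_i - R_i$ equals $\beta_i$.

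This gives the cycle identity $(b_1 - R_1) + (b_2 - R_2) = 0$ in $Z_k(Z_1 \cup Z_2)$. So $c := b_1 - R_1$ is a cycle supported on $Z_1$ whose negative $-c = b_2 - R_2$ is supported on $Z_2$. Every component of $c$ must then lie in $Z_1 \cap Z_2$, since the cycle group is free on subvarieties. Thus $c = i_{1*}\gamma$ with $\gamma \in Z_k(Z_1 \cap Z_2)$, and also $-c = b_2 - R_2 = -i_{2*}\gamma$ on $Z_2$. It follows that $(\beta_1,\beta_2) = (i_{1*}[\gamma], -i_{2*}[\gamma])$, which is the required lift.

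The only delicate point is that last bookkeeping with the free abelian group of cycles: a cycle lying in both $Z_k(Z_1)$ and $Z_k(Z_2)$ lies in $Z_k(Z_1 \cap Z_2)$. This holds because $Z_k$ of a closed subscheme is the span of subvarieties contained in it.

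In our application the statement is used for cohomology or Borel--Moore homology via the cycle class map. For Borel--Moore homology there is the analogous long exact Mayer--Vietoris sequence, obtained from the excision sequences of the pairs $(Z_1 \cup Z_2, Z_1)$ and $(Z_2, Z_1 \cap Z_2)$ together with the identification $(Z_1\cup Z_2)\setminus Z_1 = Z_2 \setminus (Z_1\cap Z_2)$. I would cite this rather than reprove it.
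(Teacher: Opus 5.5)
Your proof is correct and is the standard cycle-level argument. The paper gives no proof of its own and only cites Fulton, where the intended argument is exactly yours: every subvariety of $Z_1 \cup Z_2$, including each $(k+1)$-dimensional $W_m$ carrying a relation, lies in $Z_1$ or in $Z_2$, and $Z_k$ is free on subvarieties, so a cycle on both $Z_1$ and $Z_2$ is a cycle on $Z_1 \cap Z_2$.
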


We also require the excision exact sequence.

\begin{lemma}[\cite{fulton_intersection_theory}, Proposition 1.8]
\label{lemma_excision}
    Let $Z$ be a closed subscheme of a scheme $X$, and set $U \define X \setminus Z$. With the inclusions $i: Z \hookrightarrow X$ and $j: U \hookrightarrow X$, the sequence
\[\begin{tikzcd}
	{A_k(Z)} & {A_k(X)} & {A_k(U)} & 0
	\arrow["{i_*}", from=1-1, to=1-2]
	\arrow["{j^*}", from=1-2, to=1-3]
	\arrow[from=1-3, to=1-4]
\end{tikzcd}\]
    is exact for all $k \geq 0$.
\end{lemma}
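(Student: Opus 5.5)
The statement immediately above is Lemma \ref{lemma_excision}, the excision exact sequence $A_k(Z)\to A_k(X)\to A_k(U)\to 0$, quoted from Fulton. I would prove it directly from the definition of $A_k$ as the group of $k$-cycles modulo rational equivalence, $A_k(X)=Z_k(X)/R_k(X)$. Here $R_k(X)$ is generated by $[\operatorname{div}(f)]$ for rational functions $f$ on $(k+1)$-dimensional subvarieties $W\subset X$.

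The plan has three steps, carried out at the level of cycles and then passed to quotients.

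\textbf{Step 1 (the maps).} Since $i$ is proper, $i_*$ is well defined on $A_k$. The restriction $j^*$ sends $[V]\mapsto[V\cap U]$ (zero if $V\cap U=\varnothing$). It is well defined on $A_k$ because for $W\subset X$ of dimension $k+1$ and $f\in\mathbb{C}(W)^*$ we have $j^*[\operatorname{div}(f)]=[\operatorname{div}(f|_{W\cap U})]$. Indeed, order of vanishing is local and $\mathbb{C}(W\cap U)=\mathbb{C}(W)$ when $W\cap U\ne\varnothing$. If $W\cap U=\varnothing$, both sides are zero.

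\textbf{Step 2 (surjectivity, and the composite is zero).} Surjectivity of $j^*$ already holds at the cycle level. Every subvariety $V'\subset U$ has closure $\overline{V'}\subset X$ with $\overline{V'}\cap U=V'$, so $[\overline{V'}]\mapsto [V']$. For $j^*\circ i_*=0$, note that a subvariety $V\subset Z$ satisfies $V\cap U=\varnothing$, so $j^*i_*[V]=0$.

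\textbf{Step 3 (exactness in the middle).} This is the main point. Let $\alpha\in Z_k(X)$ with $j^*\alpha\in R_k(U)$, say $j^*\alpha=\sum_m[\operatorname{div}(f_m)]$ with $f_m\in\mathbb{C}(W_m')^*$ and $W'_m\subset U$ of dimension $k+1$.

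I would view each $f_m$ as a rational function on the closure $\overline{W'_m}$; this is legitimate because the function fields agree. The restriction of $\operatorname{div}_{\overline{W'_m}}(f_m)$ to $U$ is $\operatorname{div}(f_m)$. Hence the cycle $\beta=\alpha-\sum_m[\operatorname{div}_{\overline{W'_m}}(f_m)]$ satisfies $j^*\beta=0$ already in $Z_k(U)$, and $\beta$ is rationally equivalent to $\alpha$.

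A cycle whose restriction to $U$ vanishes on the nose is supported on components $V$ with $V\cap U=\varnothing$, that is, $V\subset Z$ set-theoretically. Note that $A_k$ only depends on the underlying reduced scheme, so a possibly nonreduced $Z$ causes no issue. Hence $\beta=i_*\gamma$ for some $\gamma\in Z_k(Z)$, and therefore $[\alpha]=i_*[\gamma]$ in $A_k(X)$.

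The only delicate point is the compatibility of orders of vanishing under restriction to the open set $U$ and under taking closures. This is the local nature of $\operatorname{ord}_V$: it is computed in the local ring $\mathcal{O}_{W,V}$, which is unchanged by shrinking $W$ to an open subset meeting $V$.

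For the cohomological use in the paper, the analogous statement is the long exact sequence of Borel–Moore homology for the pair $(X,Z)$. There, exactness is formal, so no additional argument beyond citing it is needed.
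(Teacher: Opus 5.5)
Your proof is correct. It is essentially the proof of Fulton's Proposition 1.8, which the paper cites rather than reproduces: you establish exactness of $Z_k(Z)\to Z_k(X)\to Z_k(U)\to 0$ at the level of cycles, then lift each relation $\operatorname{div}(f_m)$ on $W'_m\subset U$ to the closure $\overline{W'_m}$, so that the corrected cycle vanishes on $U$ and is therefore pushed forward from $Z$.

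Your closing aside about Borel--Moore homology is fine with one caveat. There the restriction to $U$ need not be surjective, because the boundary map to $H^{\mathrm{BM}}_{2k-1}(Z)$ intervenes. This is harmless for the paper's use, since the closed sets removed there have dimension below $k$.
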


Finally, we state the excess intersection formula, which will be the primary tool in the proof of Lemma \ref{lemma_base_change_stable}.

\begin{theorem}[\cite{eisenbud_harris_2016}, Theorem 13.3]
\label{theorem_excess_intersection_formula}
    Let $A$ be a subvariety of a smooth variety $X$ and $B$ a locally complete intersection subvariety of $X$. Then
    $$[A] \cdot [B] = \sum_C i_{C*} \gamma_C,$$
    where the sum is taken over all connected components $C$ of $A \cap B$, $i_C: C \hookrightarrow X$ denotes the inclusion, and 
    $$\gamma_C \define \Big\{ s(C,A) \cdot c(\mathcal{N}_{B/X}|_C) \Big\}_d \in A_d(C),$$ 
    with $d \define \dim A + \dim B - \dim X$ the expected dimension of the intersection.

    If $A$ is also a locally complete intersection, there is a symmetric form 
    $$\gamma_C = \Big\{ s(C,X) \cdot c(\mathcal{N}_{A/X}|_C) \cdot c(\mathcal{N}_{B/X}|_C) \Big\}_d.$$
\end{theorem}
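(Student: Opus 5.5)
The plan is to follow Fulton's construction of the refined Gysin map via deformation to the normal cone. I will first prove the formula in the Chow groups $A_*$ and then transport it to $H$ through the cycle class map, which commutes with proper pushforward, lci pullback and Chern classes. Let $i\colon B\hookrightarrow X$ be the regular (lci) embedding with normal bundle $N\define\mathcal{N}_{B/X}$ of rank $e=\dim X-\dim B$, and set $W\define A\cap B$. The product $[A]\cdot[B]$ is, by definition, the image in $A_*(X)$ of the refined Gysin class $i^![A]\in A_d(W)$. Here $d=\dim A-e=\dim A+\dim B-\dim X$, which is the expected dimension in the statement.

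\textbf{Step 1: reduction to the normal cone.} Deformation to the normal cone specializes $[A]$ to the cycle $[C_WA]$ of the normal cone of $W$ in $A$. This cone sits inside $N|_W$, and one has $i^![A]=0_{N|_W}^![C_WA]$, where $0^!$ is the Gysin pullback along the zero section. Since $W=\coprod_C C$ is a disjoint union of its connected components, the cone splits as $C_WA=\coprod_C C_CA$. Hence $i^![A]=\sum_C 0^![C_CA]$, and it suffices to treat one component $C$ at a time.

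\textbf{Step 2: intersecting a cone with the zero section.} For a cone $K\subset E$ in a vector bundle of rank $e$ over $C$, I will use Fulton's Proposition 6.1(a):
\[
0_E^![K]=\bigl\{c(E)\cap s(K)\bigr\}_{\dim K-e}.
\]
The proof embeds $E$ in $\mathbb{P}(E\oplus 1)$, writes $[K]$ via its projective closure, and identifies the Segre class with pushforwards of powers of $c_1(\mathcal{O}(1))$. This is consistent with the Segre-class definition recalled earlier and with $s(\mathcal{V})c(\mathcal{V})=1$. Applying it with $E=N|_C$ and $K=C_CA$, and using $s(C_CA)=s(C,A)$ by definition, yields $\gamma_C=\{s(C,A)\cdot c(\mathcal{N}_{B/X}|_C)\}_d$. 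Pushing forward along $i_C$ then gives the first formula.

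\textbf{Step 3: the symmetric form.} When $A$ is also lci in $X$, I will compare Segre classes along the chain $C\subset A\subset X$ and aim to establish
\[
s(C,A)=c(\mathcal{N}_{A/X}|_C)\cdot s(C,X).
\]
Substituting this into Step 2 gives the symmetric expression. The first thing I would try is Fulton's Example 4.2.6. This uses the exact sequence of cones $0\to C_CA\to C_CX\to\mathcal{N}_{A/X}|_C$ together with multiplicativity of Segre classes for the resulting bundle-like extension. This step is the main obstacle, because the normal cones need not be bundles and the sequence need not be exact in general. If the direct comparison fails, my fallback is to proceed symmetrically. Using commutativity of refined intersections (Fulton Theorem 6.4), compute $[A]\cdot[B]$ as $0^!$ of the cone $C_C X$ inside $\mathcal{N}_{A/X}|_C\oplus\mathcal{N}_{B/X}|_C$. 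This comes from the diagonal embedding $X\to X\times X$ pulled back along $A\times B$, which is regular with normal bundle the sum. Applying Proposition 6.1(a) to that cone then gives the symmetric formula directly.
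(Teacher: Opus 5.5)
The paper does not prove this statement. It is quoted from Eisenbud--Harris, who in turn defer to Fulton, so the natural comparison is with Fulton's argument. Your Steps 1--2 are exactly that argument: deformation to the normal cone, splitting over connected components, and Fulton's Proposition 6.1(a) applied to $C_CA\subset\mathcal{N}_{B/X}|_C$. Two small corrections:
\begin{itemize}
\item In Fulton's setup $[A]\cdot[B]$ is defined as $\Delta^![A\times B]$. Its agreement with $i_B^![A]$ is a consequence of the commutativity theorem, not a definition.
\item No transport to $H$ is needed, since the statement is already formulated in $A_d(C)$.
\end{itemize}

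In Step 3, you should abandon your first route. The identity $s(C,A)=c(\mathcal{N}_{A/X}|_C)\,s(C,X)$ is not merely hard to prove; it is false in general.
\begin{itemize}
\item For $C$ reduced and irreducible, the coefficient of $[C]$ in $s(C,A)$ is the multiplicity $e_CA$. In $c(\mathcal{N}_{A/X}|_C)\,s(C,X)$ that coefficient is $e_CX=1$.
\item Concretely, take $X=\mathbb{P}^2$, $A$ a nodal cubic and $B=\{p\}$ its node. Then $C=\{p\}$, and $s(C,A)=2[p]$ while $c(\mathcal{N}_{A/X}|_C)\,s(C,X)=[p]$.
\item The same failure occurs in a non-vacuous case, with $d=0$: a surface in $\mathbb{P}^3$ singular along a line $L$, intersected with $B=L$.
\item The identity does hold under extra hypotheses, for example when $C\hookrightarrow A$ is itself a regular embedding, but not in the generality you need.
\end{itemize}

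Your fallback is correct, and it is how the symmetric form is derived in Fulton:
\begin{itemize}
\item It is $A\times B\hookrightarrow X\times X$ that is regular with normal bundle $\mathcal{N}_{A/X}\boxplus\mathcal{N}_{B/X}$. The diagonal, as your wording suggests, has normal bundle $\mathcal{T}_X$ instead.
\item Commutativity (Theorem 6.4) gives $\Delta^![A\times B]=(A\times B)^![\Delta_X]$ in $A_d(A\cap B)$, with $A\cap B$ carrying the scheme-theoretic structure.
\item Proposition 6.1(a), applied to the cone $C_C\Delta_X\cong C_CX$ inside $(\mathcal{N}_{A/X}\oplus\mathcal{N}_{B/X})|_C$, then yields $\{c(\mathcal{N}_{A/X}|_C)\,c(\mathcal{N}_{B/X}|_C)\,s(C,X)\}_d$ directly.
\end{itemize}
With that route adopted in Step 3, the proposal is a complete proof.
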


\begin{remark}
    \label{remark_excess_intersection_formula}
    Two technical points regarding Theorem \ref{theorem_excess_intersection_formula} are essential for our computations. First, each connected component $C$ must be endowed with the scheme-theoretic intersection structure inherited from $A \cap B$, as this directly affects the computation of the total Segre classes $s(C,A)$ and $s(C,X)$. Second, whenever $C$ is itself a locally complete intersection in $X$, its total Segre class simplifies to $s(C,X) = s(\mathcal{N}_{C/X}) = c(-\mathcal{N}_{C/X})$.
\end{remark}

For a detailed exposition of Theorem \ref{theorem_excess_intersection_formula}, we refer to Section 13 of \cite{eisenbud_harris_2016}. With these tools in hand, we are ready to prove the following technical lemma, which encapsulates the core geometric complexity of the higher-rank setup.

\begin{lemma}
    \label{lemma_base_change_stable}
    Let $P(z)$ be the polynomial part of the Chern power series $c(\Gamma^{u*}\mathcal{U}_0,z) \in H_{\mathfrak{Z}_1}((z^{-1}))$, defined by 
    $$P(z) \define \bigl[ c(\Gamma^{u*}\mathcal{U}_0,z) \bigr]_{\geq 0} = z^r - c_1(\Gamma^{u*}\mathcal{U}_0)z^{r-1} + \ldots + (-1)^rc_r(\Gamma^{u*}\mathcal{U}_0),$$ where $\mathcal{U}_0$ is the coherent sheaf appearing in the universal flag on $\mathfrak{Z}_1 \times S$ and $\Gamma^u$ is the graph of $p_S: \mathfrak{Z}_1 \rightarrow S$.
    Consider the diagram
\[\begin{tikzcd}[row sep=small]
	& \Gamma_1 &&& \\
	& {\mathfrak{Z}_1} && {\mathfrak{Z}_2} \\
	{\Big\{ \mathcal{F}_0 \supset_{\textcolor{blue}{v}} \mathcal{F}_1' \Big\} \times \textcolor{red}{S}} &&&& {\Big\{ \mathcal{F}_1' \subset_{\textcolor{red}{u}} \mathcal{F}_2 \Big\}} \\
	&& {\mathcal{M}_{(r,c_1,c_2(\mathcal{F}_1'))} \times \textcolor{red}{S}}
	\arrow[dashed, no head, from=1-2, to=2-2]
	\arrow["{g_1}"', from=2-2, to=3-1]
	\arrow["{f_1}"'{pos=0.7}, from=2-2, to=3-5]
	\arrow["{g_2}"{pos=0.7}, from=2-4, to=3-1]
	\arrow["{f_2}", from=2-4, to=3-5]
	\arrow["{{p_+ \times \operatorname{id}_{\textcolor{red}{S}}}}"', from=3-1, to=4-3]
	\arrow["{{p_+ \times p_{\textcolor{red}{S}}}}", from=3-5, to=4-3]
\end{tikzcd}\]
    and define the cohomology class on the diagonal component $\mathfrak{Z}_1$ by $$\Gamma_1 \define (-1)^r \frac{P(\ell)-P(\ell+t)}{t},$$ where $\ell$ is the first Chern class of the tautological line bundle on $\mathfrak{Z}_1$ and $t$ is the first Chern class of the canonical bundle of $S$ (and its various pull-backs). Then the refined base change formula
    \begin{equation}
        \label{equation_base_change_stable}
        (p_+ \times \operatorname{id}_{\textcolor{red}{S}})^*(p_+ \times p_{\textcolor{red}{S}})_* = g_{1*}(\Gamma \cdot f_1^*) + g_{2*}f_2^*
    \end{equation}
    holds.
\end{lemma}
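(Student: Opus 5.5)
We reduce the base change formula to a computation of the refined intersection class of the fiber product. The fiber product of $p_+ \times \operatorname{id}_{\textcolor{red}{S}}$ and $p_+ \times p_{\textcolor{red}{S}}$ is exactly $\mathfrak{Z}^\downarrow = \{\mathcal{F}_0 \supset_{\textcolor{blue}{v}} \mathcal{F}_1' \subset_{\textcolor{red}{u}} \mathcal{F}_2\}$. To compute it, we realize this fiber product as the intersection of the graph-type subvarieties $A = \{\mathcal{F}_0 \supset_v \mathcal{F}_1'\}\times S \times_{\mathcal{M}} \ldots$ and $B$ inside the smooth ambient $X = \{\mathcal{F}_0 \supset_v \mathcal{F}_1'\} \times S \times \{\mathcal{F}_1'' \subset_u \mathcal{F}_2\}$, with $B$ the pullback of the diagonal of $\mathcal{M} \times S$. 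Both factors $\mathfrak{Z}_{(u)}$ are smooth by Proposition \ref{proposition_nested_Hilbert_schemes_geometric_properties}, and $\mathcal{M}\times S$ is smooth, so the diagonal is a regular embedding. Hence the composite $(p_+\times\operatorname{id})^*(p_+\times p_S)_*$ equals the correspondence given by the refined Gysin class $\Delta^![\mathfrak{Z}^\downarrow]$, supported on $\mathfrak{Z}^\downarrow = \mathfrak{Z}_1 \cup \mathfrak{Z}_2$. Its expected dimension is that of $\mathfrak{Y}^{\mathrm{rot}}$ in \eqref{equation_proof_E_F_3_stable}.

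Next we split the refined class using Mayer--Vietoris (Lemma \ref{lemma_mayer_vietoris}) and excision (Lemma \ref{lemma_excision}). On the open set $\{u\neq v\}$ the intersection is transverse and equals the dense open part of $\mathfrak{Z}_2$, which has expected dimension. So the refined class is $[\mathfrak{Z}_2]$ plus a class pushed forward from the complement, which lies inside $\mathfrak{Z}_1$ (set-theoretically $\{u=v\}\cap\mathfrak{Z}^\downarrow$ away from $\mathfrak{Z}_1$ lies in $\mathfrak{Z}_2$ and has too small dimension to contribute). The $\mathfrak{Z}_2$ term gives $g_{2*}f_2^*$. The remaining task is to identify the correction on $\mathfrak{Z}_1$ as $\Gamma_1$.

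To compute this correction we apply the excess intersection formula (Theorem \ref{theorem_excess_intersection_formula}) near $\mathfrak{Z}_1$. We will use the projective bundle descriptions of Proposition \ref{proposition_projective_bundle_description}. Over $\mathfrak{Z}_{(u)}\ni(\mathcal{F}_0\supset_u\mathcal{F}_1')$, the fiber of $\mathfrak{Z}^\downarrow$ over $u=v$ is $\mathbb{P}(\mathcal{V}_1|_u)$ cut out by $\mathcal{W}_1|_u$. Here $\mathfrak{Z}_1$ is the point of this fiber given by $\mathcal{F}_0$ itself, an excess locus of fiber dimension $r-1$. Roughly, the excess bundle is the cokernel piece of $\Gamma^{u*}\mathcal{U}_0$ twisted by $\mathcal{L}$ (rank $r$, minus a trivial direction). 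The $\omega_S$ twist from $\mathcal{W}^\vee\otimes\omega_S$ enters through the $\pi_+$ description. Assembling the top Chern class of the excess bundle divided by the contribution of the tangent direction along $\mathfrak{Z}_1$, we expect the formula
\[
\bigl\{c(\text{excess})\,s(\cdots)\bigr\}_{d} = (-1)^r\frac{P(\ell)-P(\ell+t)}{t}.
\]
The numerator comes from comparing $c(\Gamma^{u*}\mathcal{U}_0,\ell)$ with its $\omega_S$-twisted dual version via Serre duality, i.e.\ Grothendieck--Verdier duality. Only the polynomial part survives for degree reasons.

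The main obstacle is this last step. $\mathfrak{Z}^\downarrow$ is neither equidimensional nor lci along $\mathfrak{Z}_1\cap\mathfrak{Z}_2$, so the scheme structure of $C$ matters (Remark \ref{remark_excess_intersection_formula}). The first approach to try is to avoid computing $s(\mathfrak{Z}_1,\cdot)$ directly and instead work on the blowup-like space $\mathfrak{Y}^{\mathrm{rot}}\to\mathfrak{Z}^\downarrow$, which is birational onto $\mathfrak{Z}_2$ by Proposition \ref{proposition_birationality_uparrow_downarrow}. There the vanishing-locus Proposition \ref{proposition_vanishing_locus_line_bundle_map} describes the intersection with the $\mathfrak{Z}_1$-directions as a Cartier divisor. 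The correction is then obtained as the difference of two lci computations. The quotient by $t$ would arise as a telescoping difference quotient, as in the proof of Proposition \ref{proposition_commutator_cohomology_E}.
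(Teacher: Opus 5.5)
Your first two steps agree with the paper's proof. You realize $\mathfrak{Z}^\downarrow$ as $A\cap B$ in a smooth ambient space, split the refined class by Mayer--Vietoris and excision using $\dim(\mathfrak{Z}_1\cap\mathfrak{Z}_2)=\dim\mathfrak{Z}_2-1$, and obtain $[\mathfrak{Z}_2]$ from transversality on $\{u\neq v\}$.

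\textbf{The gap.} The third step is the actual content of the lemma, and you do not carry it out. You never derive $\Gamma_1$: you say you ``expect'' the formula and then name it as the main obstacle.

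\textbf{The missing idea.} The excision you already used removes the obstacle you describe. Since $A_{\dim\mathfrak{Z}_2}(\mathfrak{Z}_1\cap\mathfrak{Z}_2)=0$, the $\mathfrak{Z}_1$-part is determined on $V=X\setminus\mathfrak{Z}_2$.
\begin{itemize}
\item On $V$, $A\cap B\cap V=\mathfrak{Z}_1\cap V$ sits diagonally in $A\cap V$ with normal bundle $\mathcal{T}_{\mathcal{M}}\oplus\mathcal{T}_S$.
\item So the non-lci behaviour along $\mathfrak{Z}_1\cap\mathfrak{Z}_2$ never enters.
\item Theorem \ref{theorem_excess_intersection_formula} then collapses to $c_{r-1}(\mathcal{E})$, where $\mathcal{E}$ is the virtual excess bundle of rank $r-1$ given by $[\mathcal{E}]=[\mathcal{N}_{B/X}|_{\mathfrak{Z}_1}]-[\mathcal{N}_{\mathfrak{Z}_1/A}]$.
\end{itemize}

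\textbf{What remains to compute.} You still need a genuine $K$-theoretic computation of $[\mathcal{E}]$, which your proposal replaces with a heuristic. The paper does it by deformation theory:
\begin{itemize}
\item write the tangent bundles via $\mathcal{E}xt^1_p$ (Kodaira--Spencer for $\mathcal{M}$, the Hecke tangent sequence for $B$);
\item use $0\to\mathcal{U}_1'\to\mathcal{U}_0\to\mathcal{L}\otimes\mathcal{O}_{\Gamma^u}\to 0$;
\item kill $\mathcal{H}om_p(\mathcal{U}_0,\mathcal{U}_1')$ by simplicity of stable sheaves;
\item match the $\mathcal{E}xt^2_p$ terms by relative Serre duality, which is where Assumption S enters;
\item evaluate $[\mathcal{E}xt_p(\mathcal{O}_{\Gamma^u},\mathcal{O}_{\Gamma^u})]=[\mathcal{O}]-[\mathcal{T}_S]+[\omega_S^{-1}]$ by Koszul;
\item evaluate $[\mathcal{E}xt_p(\mathcal{L}\otimes\mathcal{O}_{\Gamma^u},\mathcal{U}_0)]=[\Gamma^{u*}\mathcal{U}_0\otimes\mathcal{L}^{-1}\otimes\omega_S^{-1}]$ by Grothendieck--Verdier duality.
\end{itemize}
The outcome is $[\mathcal{E}]=([\Gamma^{u*}\mathcal{U}_0\otimes\mathcal{L}^{-1}]-1)[\omega_S^{-1}]$. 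The precise shape matters: $\omega_S^{-1}$ also twists the trivial summand. The denominator $t$ is exactly the factor $c(-\omega_S^{-1})=(1-t)^{-1}$ that appears when extracting $c_{r-1}(\mathcal{E})$. It is not a telescoping quotient, and it is not a contribution of a tangent direction along $\mathfrak{Z}_1$.

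\textbf{Why your fallback cannot work.} $\mathfrak{Y}^{\mathrm{rot}}$ maps birationally onto $\mathfrak{Z}_2$, so it touches $\mathfrak{Z}_1$ only over $\mathfrak{Z}_1\cap\mathfrak{Z}_2$. Every class of dimension $\dim\mathfrak{Z}_2$ supported there is zero. But $\Gamma_1$ is a codimension $r-1$ class on the larger component $\mathfrak{Z}_1$, determined by its behaviour away from $\mathfrak{Z}_2$. Hence no computation localized on $\mathfrak{Y}^{\mathrm{rot}}$ can detect it.

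In the paper, the telescoping on $\mathfrak{Y}^{\mathrm{rot}}$ via Proposition \ref{proposition_vanishing_locus_line_bundle_map} belongs to the next step: simplifying the principal term $g_{2*}f_2^*$ of $[E_a,F_b]$. It plays no role in the lemma.

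\textbf{A smaller slip.} Over a point $(\mathcal{F}_0\supset_v\mathcal{F}_1')$ with $u=v$, the fiber of $\mathfrak{Z}^\downarrow$ parametrizes enlargements $\mathcal{F}_2\supset_u\mathcal{F}_1'$. By Proposition \ref{proposition_projective_bundle_description} this is the $\pi_+$-type projectivization of $\mathcal{W}^\vee\otimes\omega_S$. It is not $\mathbb{P}(\mathcal{V}|_u)$ cut out by $\mathcal{W}|_u$, which parametrizes colength-one subsheaves. This is exactly the $\omega_S$ bookkeeping your heuristic loses.
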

\begin{proof}
    We define the ambient product space
$$X \define \mathcal{M}_{(r,c_1,c_2(\mathcal{F}_0))} \times \mathcal{M}_{(r,c_1,c_2(\mathcal{F}'_1))} \times \mathcal{M}_{(r,c_1,c_2(\mathcal{F}_2))} \times \textcolor{red}{S} \times \textcolor{blue}{S},$$ 
and its closed subschemes 
$$A \define \Big\{ \mathcal{F}_0 \supset_{\textcolor{blue}{v}} \mathcal{F}'_1 \Big\} \times \mathcal{M}_{(r,c_1,c_2(\mathcal{F}_2))} \times \textcolor{red}{S} \qquad \text{and} \qquad B \define \mathcal{M}_{(r,c_1,c_2(\mathcal{F}_0))} \times \Big\{ \mathcal{F}'_1 \subset_{\textcolor{red}{u}} \mathcal{F}_2 \Big\} \times \textcolor{blue}{S}.$$ 
The spaces $X$, $A$, and $B$ are smooth, and we have $\mathfrak{Z}^\downarrow = A \cap B$. To establish \eqref{equation_base_change_stable}, we compute the excess intersection class
$$\Gamma \define i_{A \cap B*} \gamma_{A \cap B}$$ 
using Theorem \ref{theorem_excess_intersection_formula}. The expected dimension of $A \cap B$ is precisely the dimension of $\mathfrak{Z}_2$.

Observe that 
$\mathfrak{Z}_1 \cap \mathfrak{Z}_2 = \operatorname{im} (\pi^\downarrow|^{\mathfrak{Z}_1})$, 
where $\pi^\downarrow: \mathfrak{Y}^{\mathrm{rot}} \rightarrow \mathfrak{Z}^\downarrow$ forgets $\mathcal{F}_1$. The image of $\pi^\downarrow|^{\mathfrak{Z}_1}$ coincides with the image of 
$$\pi_+: \Big\{ \mathcal{F}_{-1} \supset_u \mathcal{F}_0 \supset_u \mathcal{F}_1' \Big\} \rightarrow \Big\{ \mathcal{F}_0 \supset_u \mathcal{F}_1' \Big\}.$$ 
By Proposition \ref{proposition_dimension_nested_stable}, $\dim (\mathfrak{Z}_1 \cap \mathfrak{Z}_2) = \dim \mathfrak{Z}_2 - 1$. Lemma \ref{lemma_mayer_vietoris} then yields the isomorphism
$$A_{\dim \mathfrak{Z}_2}(\mathfrak{Z}_1) \oplus A_{\dim \mathfrak{Z}_2}(\mathfrak{Z}_2) \cong A_{\dim \mathfrak{Z}_2}(\mathfrak{Z}^\downarrow).$$ 
Thus $\Gamma$ is determined by its restrictions $\Gamma_1$ and $\Gamma_2$ to the two components.

By Section 1.7 and Theorem 6.2 of \cite{fulton_intersection_theory}, the excess intersection formula is compatible with restriction to open subschemes of $X$. Define
$$U \define \Big\{ \textcolor{red}{u} \neq \textcolor{blue}{v} \Big\} \qquad \text{and} \qquad V \define X \setminus \mathfrak{Z}_2.$$
Since $\mathfrak{Z}_2 \cap U^\text{c}$ and $\mathfrak{Z}_1 \cap V^\text{c}$ are contained in $\mathfrak{Z}_1 \cap \mathfrak{Z}_2$, Lemma \ref{lemma_excision} implies
$$A_{\dim \mathfrak{Z}_2}(\mathfrak{Z}_2) \cong A_{\dim \mathfrak{Z}_2}(\mathfrak{Z}_2 \cap U) \qquad \text{and} \qquad A_{\dim \mathfrak{Z}_2}(\mathfrak{Z}_1) \cong A_{\dim \mathfrak{Z}_2}(\mathfrak{Z}_1 \cap V).$$

We first determine $\Gamma_2$ by replacing $X$ with $U$. We have
$$\gamma_{A \cap B}|_U = \Big\{ s(A \cap B \cap U, U) \cdot c(\mathcal{N}_{A/X}|_{A \cap B \cap U}) \cdot c(\mathcal{N}_{B/X}|_{A \cap B \cap U}) \Big\}_{\dim \mathfrak{Z}_2}.$$ 
Since $A \cap B$ is transversal along $U$, the normal bundle decomposes as 
$$\mathcal{N}_{A \cap B \cap U/U} \cong \mathcal{N}_{A/X}|_{A \cap B \cap U} \oplus \mathcal{N}_{B/X}|_{A \cap B \cap U}.$$ 
Moreover, by Remark \ref{remark_excess_intersection_formula}, one has $s(A \cap B \cap U, U) = s(\mathcal{N}_{(A \cap B \cap U)/U})$. Whitney's formula then yields 
$\gamma_{A \cap B}|_U = \bigl[ A \cap B \cap U \bigr]$, 
so $\Gamma_2$ is the fundamental class of $\mathfrak{Z}_2$.

We now determine $\Gamma_1$ by replacing $X$ with $V$. On $A \cap B \cap V$, we have $\mathcal{F}_0 = \mathcal{F}_2$ and $\textcolor{red}{u} = \textcolor{blue}{v}$. The localized excess intersection class is
$$\gamma_{A \cap B}|_V = \Big\{ s(A \cap B \cap V, A \cap V) \cdot c(\mathcal{N}_{B/X}|_{A \cap B \cap V}) \Big\}_{\dim \mathfrak{Z}_2}.$$ 
The embedding $A \cap B \cap V \hookrightarrow A \cap V$ is diagonal, its normal bundle is 
$$\mathcal{N}_{A \cap B \cap V / A \cap V} \cong \mathcal{T}_{\mathcal{M}_{c_2(\mathcal{F}_0)}} \oplus \mathcal{T}_S,$$ 
where we omit pull-backs. Remark \ref{remark_excess_intersection_formula} gives
$s(A \cap B \cap V, A \cap V) = s(\mathcal{T}_{\mathcal{M}_{c_2(\mathcal{F}_0)}} \oplus \mathcal{T}_S)$.

By Theorem II.8.17 of \cite{hartshorne_algebraic_geometry}, we have the $K$-theoretic identity
$$[\mathcal{N}_{B/X}|_{A \cap B \cap V}] = [\mathcal{T}_X|_{A \cap B \cap V}] - [\mathcal{T}_B|_{A \cap B \cap V}].$$ 
Set
\begin{equation}
    \label{equation_definition_excess_bundle}
    [\mathcal{E}] \define [\mathcal{N}_{B/X}|_{A \cap B \cap V}] - [\mathcal{N}_{A \cap B \cap V / A \cap V}],
\end{equation} 
then
$$\Gamma_1|_V = \Big\{ c(\mathcal{E}) \Big\}_{\dim \mathfrak{Z}_2}.$$

Let $p \define p_{\mathfrak{Z}_1}: \mathfrak{Z}_1 \times S \rightarrow \mathfrak{Z}_1$ be the projection. Define 
$$
\mathcal{E}xt_{p}(-,-) \define \mathbf{R}(p_* \mathcal{H}om(-,-)) \cong \mathbf{R}p_* \mathbf{R}\mathcal{H}om(-,-),
$$ 
following Proposition 2.58 of \cite{huybrechts_fourier_mukai}, and set 
$$
\mathcal{E}xt_{p}^i(-,-) \define R^i(p_* \mathcal{H}om(-,-)), \qquad \mathcal{H}om_p(-,-) \define R^0(p_* \mathcal{H}om(-,-)) \cong p_* \mathcal{H}om(-,-).
$$

By Theorem II.10.2.1 of \cite{huybrechts_lehn_moduli}, the Kodaira--Spencer map is an isomorphism, yielding
\begin{equation}
    \label{equation_tangent_space_moduli_space_stable}
    \mathcal{T}_{\mathcal{M}_{c_2(\mathcal{F}_1')}} \cong \mathcal{E}xt_p^1(\mathcal{U}_1',\mathcal{U}_1').
\end{equation}
Claim 2.11 of \cite{negut_shuffle_surfaces} gives a short exact sequence
$$
0 \rightarrow \operatorname{Ext}^1(\mathcal{F}_2,\mathcal{F}'_1) \rightarrow \operatorname{Tan}_{(\mathcal{F}'_1 \subset_u \mathcal{F}_2)} \Big\{ \mathcal{F}'_1 \subset_u \mathcal{F}_2 \Big\} \rightarrow \operatorname{Tan}_u S,
$$ 
whose relative version expresses the $K$-theory class of the tangent bundle of $B$:
\begin{equation}
    \label{equation_tangent_space_B}
    [\mathcal{T}_B] = [\mathcal{E}xt_p^1(\mathcal{U}_0,\mathcal{U}_0)] + [\mathcal{E}xt_p^1(\mathcal{U}_2,\mathcal{U}_1')] + [\mathcal{T}_{\textcolor{red}{S}}] + [\mathcal{T}_{\textcolor{blue}{S}}].
\end{equation}

Combining \eqref{equation_definition_excess_bundle}, \eqref{equation_tangent_space_moduli_space_stable}, and \eqref{equation_tangent_space_B}, we obtain
$$
[\mathcal{E}] = [\mathcal{E}xt_p^1(\mathcal{U}_1',\mathcal{U}_1')] - [\mathcal{E}xt_p^1(\mathcal{U}_0,\mathcal{U}_1')] - [\mathcal{T}_S].
$$ 
To simplify, apply $\mathcal{E}xt_p(-,\mathcal{U}_1')$ to the short exact sequence
\begin{equation}
    \label{equation_short_exact_sequence_Hecke}
    0 \rightarrow \mathcal{U}_1' \rightarrow \mathcal{U}_0 \rightarrow \mathcal{L} \otimes \mathcal{O}_{\Gamma^u} \rightarrow 0
\end{equation}
on $\mathfrak{Z}_1 \times S$. The additivity of the Grothendieck group gives
\begin{equation}
    \label{equation_RHom}
    [\mathcal{E}xt_p(\mathcal{U}_0,\mathcal{U}_1')] = [\mathcal{E}xt_p(\mathcal{U}_1',\mathcal{U}_1')] + [\mathcal{E}xt_p(\mathcal{L} \otimes \mathcal{O}_{\Gamma^u},\mathcal{U}_1')].
\end{equation}
The associated long exact sequence on $\mathfrak{Z}_1$ is
\[\begin{tikzcd}[row sep=small]
	0 & {\mathcal{H}om_p(\mathcal{L} \otimes \mathcal{O}_{\Gamma^u},\mathcal{U}_1')} & {\mathcal{H}om_p(\mathcal{U}_0,\mathcal{U}_1')} & {\mathcal{H}om_p(\mathcal{U}_1',\mathcal{U}_1')} & \\
	& {\mathcal{E}xt_p^1(\mathcal{L} \otimes \mathcal{O}_{\Gamma^u},\mathcal{U}_1')} & {\mathcal{E}xt_p^1(\mathcal{U}_0,\mathcal{U}_1')} & {\mathcal{E}xt_p^1(\mathcal{U}_1',\mathcal{U}_1')} \\
	& {\mathcal{E}xt_p^2(\mathcal{L} \otimes \mathcal{O}_{\Gamma^u},\mathcal{U}_1')} & {\mathcal{E}xt_p^2(\mathcal{U}_0,\mathcal{U}_1')} & {\mathcal{E}xt_p^2(\mathcal{U}_1',\mathcal{U}_1')} & 0.
	\arrow[from=1-1, to=1-2]
	\arrow[from=1-2, to=1-3]
	\arrow[from=1-3, to=1-4]
	\arrow[from=1-4, to=2-2]
	\arrow[from=2-2, to=2-3]
	\arrow[from=2-3, to=2-4]
	\arrow[from=2-4, to=3-2]
	\arrow[from=3-2, to=3-3]
	\arrow[from=3-3, to=3-4]
	\arrow[from=3-4, to=3-5]
\end{tikzcd}\]
The sequence terminates after three rows because $p$ has relative dimension $2$. By Theorem II.10.2.1 of \cite{huybrechts_lehn_moduli}, one obtains $\mathcal{H}om_p(\mathcal{U}_1',\mathcal{U}_1') \cong \mathcal{O}_{\mathfrak{Z}_1}$. We also have $\mathcal{H}om_p(\mathcal{U}_0,\mathcal{U}_1') = 0$.

Indeed, for an open $W \subseteq \mathfrak{Z}_1$, we get $
\mathcal{H}om_p(\mathcal{U}_0,\mathcal{U}_1')(W) = \operatorname{Hom}(\mathcal{U}_0|_{W \times S},\mathcal{U}_1'|_{W \times S})$. A non-zero map $\phi: \mathcal{U}_0|_{W \times S} \rightarrow \mathcal{U}_1'|_{W \times S}$ would, over some point, induce a non-zero map $\mathcal{F}_0 \rightarrow \mathcal{F}_1'$. Composing with $\mathcal{F}_1' \hookrightarrow \mathcal{F}_0$ gives a non-zero endomorphism of $\mathcal{F}_0$, which by simplicity (see Proposition \ref{proposition_stable_simple}) must be a scalar multiple of the identity, but this is impossible if it factors through a proper subsheaf. Since $\mathfrak{Z}_1$ is reduced and $\mathcal{U}_1'$ is flat, $\phi = 0$.

Relative Serre duality (see Theorem 3.34 of \cite{huybrechts_fourier_mukai}) yields
$$
\mathcal{E}xt_p^2(\mathcal{U}_0,\mathcal{U}_1') \cong \mathcal{E}xt_p^0(\mathcal{U}_1',\mathcal{U}_0 \otimes \omega_S)^\vee, \qquad \mathcal{E}xt_p^2(\mathcal{U}_1',\mathcal{U}_1') \cong \mathcal{E}xt_p^0(\mathcal{U}_1',\mathcal{U}_1' \otimes \omega_S)^\vee,
$$
and by the same argument as above, we have 
$
[\mathcal{E}xt_p^2(\mathcal{U}_0,\mathcal{U}_1')] = [\mathcal{E}xt_p^2(\mathcal{U}_1',\mathcal{U}_1')]
$.

Substituting these simplifications into \eqref{equation_RHom} yields
\begin{equation}
    \label{equation_computation_E}
    [\mathcal{E}] = [\mathcal{O}_{\mathfrak{Z}_1}] + [\mathcal{E}xt_p(\mathcal{L} \otimes \mathcal{O}_{\Gamma^u},\mathcal{U}_1')] - [\mathcal{T}_S].
\end{equation} 
Applying $\mathcal{E}xt_p(\mathcal{L} \otimes \mathcal{O}_{\Gamma^u},-)$ to \eqref{equation_short_exact_sequence_Hecke} gives
$$
[\mathcal{E}xt_p(\mathcal{L} \otimes \mathcal{O}_{\Gamma^u},\mathcal{U}_1')] = [\mathcal{E}xt_p(\mathcal{L} \otimes \mathcal{O}_{\Gamma^u},\mathcal{U}_0)] - [\mathcal{E}xt_p(\mathcal{O}_{\Gamma^u},\mathcal{O}_{\Gamma^u})].
$$ 
The second term is
\begin{align}
    \label{equation_Ext_Gamma}
    [\mathcal{E}xt_p(\mathcal{O}_{\Gamma^u},\mathcal{O}_{\Gamma^u})] &= [\mathbf{R}p_* \mathbf{R}\mathcal{H}om(\Gamma^u_* \mathcal{O}_{\mathfrak{Z}_1},\Gamma^u_* \mathcal{O}_{\mathfrak{Z}_1})] \nonumber \\ 
    &= [\mathbf{R}p_* \Gamma^u_* \mathbf{R}\mathcal{H}om(\mathbf{L}\Gamma^{u*}\Gamma^u_* \mathcal{O}_{\mathfrak{Z}_1},\mathcal{O}_{\mathfrak{Z}_1})] \nonumber \\ 
    &= [\mathbf{R}\mathcal{H}om(\mathbf{L}\Gamma^{u*}\Gamma^u_* \mathcal{O}_{\mathfrak{Z}_1},\mathcal{O}_{\mathfrak{Z}_1})] \nonumber \\ 
    &= [\wedge^\bullet \mathcal{N}_{\mathfrak{Z}_1 / \mathfrak{Z}_1 \times S}] \nonumber \\ 
    &= [\mathcal{O}_{\mathfrak{Z}_1}] - [\mathcal{T}_S] + [\omega_S^{-1}],
\end{align}
where we have used functorial compatibilities (see Chapter 3 of \cite{huybrechts_fourier_mukai}) and $p \circ \Gamma^u = \operatorname{id}_{\mathfrak{Z}_1}$. The fourth equality follows from the Koszul complex of the graph $\Gamma^u$. Since $\Gamma^u$ is a closed immersion, deriving $\Gamma^u_*$ is unnecessary.

The first term is
\begin{align}
    \label{equation_Ext_U_0}
    [\mathcal{E}xt_p(\mathcal{L} \otimes \mathcal{O}_{\Gamma^u},\mathcal{U}_0)] &= [\mathbf{R}p_* \mathbf{R}\mathcal{H}om(\Gamma^u_*\mathcal{L},\mathcal{U}_0)] \nonumber \\ 
    &= [\mathbf{R}p_* \Gamma^u_* \mathbf{R}\mathcal{H}om(\mathcal{L},\Gamma^{u*} \mathcal{U}_0 \otimes \omega_S^{-1}[-2])] \nonumber \\ 
    &= [\mathbf{R}\mathcal{H}om(\mathcal{L},\Gamma^{u*} \mathcal{U}_0 \otimes \omega_S^{-1}[-2])] \nonumber \\ 
    &= [\Gamma^{u*} \mathcal{U}_0 \otimes \mathcal{L}^{-1} \otimes \omega_S^{-1}],
\end{align}
where the second equality uses Grothendieck--Verdier duality (see Theorem 3.34 of \cite{huybrechts_fourier_mukai}).

Combining \eqref{equation_computation_E}, \eqref{equation_Ext_Gamma}, and \eqref{equation_Ext_U_0}, we obtain $[\mathcal{E}] = \bigl( [\Gamma^{u*} \mathcal{U}_0 \otimes \mathcal{L}^{-1}] - 1 \bigr) [\omega_S^{-1}]$. Define
\begin{align*}
    Q(z) &\define z^r + c_1\left(\Gamma^{u*} \mathcal{U}_0 \otimes \mathcal{L}^{-1} \otimes \omega_S^{-1}\right) z^{r-1} + \ldots + c_r\left(\Gamma^{u*} \mathcal{U}_0 \otimes \mathcal{L}^{-1} \otimes \omega_S^{-1} \right) \\
    &= \prod_{i=1}^r(z+u_i-\ell-t).
\end{align*}
The top Chern class of the excess bundle is then
\begin{align*}
    \Big\{ c(\mathcal{E}) \Big\}_{\dim \mathfrak{Z}_2} &= c_{r-1}(\mathcal{E}) \\
    &= t^{r-1} + c_1\left(\Gamma^{u*} \mathcal{U}_0 \otimes \mathcal{L}^{-1} \otimes \omega_S^{-1}\right) t^{r-2} + \ldots + c_{r-1}\left(\Gamma^{u*} \mathcal{U}_0 \otimes \mathcal{L}^{-1} \otimes \omega_S^{-1} \right) \\
    &= \frac{Q(t) - c_r\left(\Gamma^{u*} \mathcal{U}_0 \otimes \mathcal{L}^{-1} \otimes \omega_S^{-1} \right)}{t} \\
    &= \frac{\prod_{i=1}^r(u_i-\ell) - \prod_{i=1}^r(u_i-\ell-t)}{t} \\
    &= (-1)^r \frac{P(\ell)-P(\ell+t)}{t}.
\end{align*}
This completes the proof of the lemma.
\end{proof}

Having established Lemma \ref{lemma_base_change_stable}, we may now decompose the commutator $\bigl[ \textcolor{red}{E_a},\textcolor{blue}{F_b} \bigr]$. Invoking the birationality of $\pi^\uparrow$ and $\pi^\downarrow$ (corestricted as in Section \ref{section_commutator_geometry}), we obtain a principal part
\begin{equation}
    \label{equation_proof_E_F_4_stable}
    \begin{tikzcd}[row sep=small]
	& {\ell_{01}^a \ell_{12}^b-\ell_{01}'^b \ell_{12}'^a} & \\
	& {\mathfrak{Y}^\mathrm{rot}} \\
	{\mathcal{M} \times \textcolor{red}{S} \times \textcolor{blue}{S}} && {\mathcal{M}}
	\arrow[dashed, no head, from=1-2, to=2-2]
	\arrow[from=2-2, to=3-1]
	\arrow[from=2-2, to=3-3]
\end{tikzcd}
\end{equation}
together with a diagonal contribution
\begin{equation}
    \label{equation_proof_E_F_5_stable}
    \begin{tikzcd}[row sep=small]
	&& {-\ell^{a+b} \cdot \Gamma_1} & \\
	&& {\Big\{ \mathcal{F}_1' \subset_u \mathcal{F}_0 \Big\}} \\
	{\mathcal{M} \times \textcolor{red}{S} \times \textcolor{blue}{S}} & {\mathcal{M} \times S} && {\mathcal{M}.}
	\arrow[dashed, no head, from=1-3, to=2-3]
	\arrow["{p_- \times p_S}"', from=2-3, to=3-2]
	\arrow["{p_-}", from=2-3, to=3-4]
	\arrow["{\operatorname{id}_{\mathcal{M}} \times \Delta}"', from=3-2, to=3-1]
\end{tikzcd}
\end{equation}
The left-hand morphism in \eqref{equation_proof_E_F_4_stable} sends a quadruple diagram of the form \eqref{equation_proof_E_F_2_stable} to $(\mathcal{F}_0,\textcolor{red}{u},\textcolor{blue}{v})$, while the right-hand map projects onto $\mathcal{F}_2$. The diagonal contribution \eqref{equation_proof_E_F_5_stable} arises by identifying the irreducible component $\mathfrak{Z}_1$ with the nested moduli space $\{ \mathcal{F}_1' \subset_u \mathcal{F}_0 \}$; on this component the tautological bundles $\mathcal{L}_{0,1}'$ and $\mathcal{L}_{1,2}'$ coincide, and we denote their shared first Chern class by $\ell$.

Proceeding as in the proof of Proposition \ref{proposition_commutator_cohomology_E}, we work on $\mathfrak{Y}^\mathrm{rot}$ and compare the classes $\ell_{01}^a \ell_{12}^b$ and $\ell_{01}'^b \ell_{12}'^a$. Let $j$ be the inclusion of the vanishing locus
$$
Z \left( \mathcal{L}_{0,1}' \rightarrow \mathcal{L}_{1,2} \right) = Z \left( \mathcal{L}_{1,2}' \rightarrow \mathcal{L}_{0,1} \right) = \Big\{ \mathcal{J}_0 = \mathcal{J}_2, \textcolor{red}{u} = \textcolor{blue}{v} \Big\},
$$
as in Proposition \ref{proposition_vanishing_locus_line_bundle_map}. The restrictions of $\mathcal{L}_{0,1}$ and $\mathcal{L}_{0,1}'$ to the vanishing locus coincide with those of $\mathcal{L}_{1,2}$ and $\mathcal{L}_{1,2}'$, respectively.

The following telescoping computation systematically replaces $\ell_{01}$ by $\ell_{12}'$ and $\ell_{12}$ by $\ell_{01}'$:
\begin{align*}
    \ell_{01}^a \ell_{12}^b-\ell_{01}'^b \ell_{12}'^a = \; &\ell_{01}^a\ell_{12}^b - \ell_{01}^{a-1}\ell_{12}^b\ell_{12}' + \ldots + \ell_{01}\ell_{12}^b\ell_{12}'^{a-1} - \ell_{12}^b\ell_{12}'^a \\
    = \; &j_* \left( \ell_{01}^{a+b-1} + \ldots + \ell_{01}^b\ell_{01}'^{a-1} + \ell_{01}^{b-1}\ell_{01}'^a + \ldots + \ell_{01}'^{a+b-1} \right).
\end{align*}
Consequently, the principal contribution is realized by
\[\begin{tikzcd}[row sep=small]
	&& {\sum_{k=0}^{a+b-1} \ell_{01}^k\ell_{01}'^{a+b-1-k}} & \\
	&& {\mathfrak{Z}_2^\bullet} \\
	{\mathcal{M} \times \textcolor{red}{S} \times \textcolor{blue}{S}} & {\mathcal{M} \times S} && {\mathcal{M}}
	\arrow[dashed, no head, from=1-3, to=2-3]
	\arrow["{(p_+ \times p_S) \circ \pi_-}"', from=2-3, to=3-2]
	\arrow["{p_+ \circ \pi_-}", from=2-3, to=3-4]
	\arrow["{\operatorname{id}_{\mathcal{M}} \times \Delta}"', from=3-2, to=3-1]
\end{tikzcd}\]
which gives the correspondence
\[\begin{tikzcd}[column sep=tiny,row sep=small]
	& {\sum_{k=0}^{a+b-1} (\Delta_{\mathcal{M}} \times \Delta_{S})_* (p_+ \times p_S)_* \pi_{-*}\left( \ell_{01}^k\ell_{01}'^{a+b-1-k} \right)} & \\
	& {\mathcal{M} \times \textcolor{red}{S} \times \textcolor{blue}{S} \times \mathcal{M}} \\
	{\mathcal{M} \times \textcolor{red}{S} \times \textcolor{blue}{S}} && {\mathcal{M}.}
	\arrow[dashed, no head, from=1-2, to=2-2]
	\arrow["{p_{\mathcal{M}} \times p_{\textcolor{red}{S}} \times p_{\textcolor{blue}{S}}}"', from=2-2, to=3-1]
	\arrow["{p_{\mathcal{M}}}", from=2-2, to=3-3]
\end{tikzcd}\]
The diagonal contribution \eqref{equation_proof_E_F_5_stable} is realized by
\[\begin{tikzcd}[row sep=small]
	& {-(\Delta_{\mathcal{M}} \times \Delta_{S})_* (p_- \times p_S)_* \big(\ell^{a+b} \cdot \Gamma_1 \big)} & \\
	& {\mathcal{M} \times \textcolor{red}{S} \times \textcolor{blue}{S} \times \mathcal{M}} \\
	{\mathcal{M} \times \textcolor{red}{S} \times \textcolor{blue}{S}} && {\mathcal{M}.}
	\arrow[dashed, no head, from=1-2, to=2-2]
	\arrow["{p_{\mathcal{M}} \times p_{\textcolor{red}{S}} \times p_{\textcolor{blue}{S}}}"', from=2-2, to=3-1]
	\arrow["{p_{\mathcal{M}}}", from=2-2, to=3-3]
\end{tikzcd}\]

\subsection{Simplification via projective bundle techniques}

To obtain the final expression for the commutator $\bigl[ \textcolor{red}{E_a},\textcolor{blue}{F_b} \bigr]$, we now simplify the two contributions obtained in the previous section, namely
\begin{equation}
    \label{equation_proof_E_F_6_stable}
    \sum_{k=0}^{a+b-1} (\Delta_{\mathcal{M}} \times \Delta_{S})_* (p_+ \times p_S)_* \pi_{-*}\left( \ell_{01}^k\ell_{01}'^{a+b-1-k} \right)
\end{equation}
and
\begin{equation}
    \label{equation_proof_E_F_7_stable}
    -(\Delta_{\mathcal{M}} \times \Delta_{S})_* (p_- \times p_S)_* \big(\ell^{a+b} \cdot \Gamma_1 \big).
\end{equation}
For this purpose, we require two standard facts from intersection theory.

\begin{lemma}[\cite{eisenbud_harris_2016}, Proposition 5.17]
    \label{lemma_Chern_classes_vector_bundle_line_bundle}
    Let $\mathcal{E}$ be a locally free sheaf of rank $e$ and $\mathcal{L}$ a line bundle. Then,
    \begin{align*}
        c_k(\mathcal{E} \otimes \mathcal{L}) &= \sum_{l=0}^k \binom{e-l}{k-l} c_1(\mathcal{L})^{k-l} \cdot c_l(\mathcal{E}) \\
        &= \sum_{i=0}^k \binom{e-k+i}{i} c_1(\mathcal{L})^i \cdot c_{k-i}(\mathcal{E}).
    \end{align*}
\end{lemma}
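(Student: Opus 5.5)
The statement to prove is Lemma \ref{lemma_Chern_classes_vector_bundle_line_bundle}, the formula for $c_k(\mathcal{E} \otimes \mathcal{L})$. The plan is to use the splitting principle. First we pass to the flag bundle $\phi: \mathrm{Fl}(\mathcal{E}) \to X$. There $\phi^*\mathcal{E}$ has a filtration with line bundle quotients of first Chern classes $x_1,\dots,x_e$. Since $\phi^*$ is injective on $A(X)$ (respectively on cohomology) and commutes with Chern classes and tensor products, it suffices to prove the identity after pulling back. Tensoring the filtration with $\phi^*\mathcal{L}$ gives a filtration of $\phi^*(\mathcal{E}\otimes\mathcal{L})$ with quotients of Chern roots $x_i + y$, where $y = c_1(\mathcal{L})$. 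Whitney's formula then yields
\[
c(\mathcal{E}\otimes \mathcal{L}) = \prod_{i=1}^e (1 + x_i + y).
\]

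Next we extract the degree $k$ part of this product. Expand it by choosing, for each index $i$, either the summand $x_i$ or the summand $1+y$. A term with exactly $l$ factors of type $x_i$ is a product $x_{i_1}\cdots x_{i_l}$ with $i_1 < \cdots < i_l$, multiplied by $(1+y)^{e-l}$. Summing over the index sets of size $l$ gives $c_l(\mathcal{E})(1+y)^{e-l}$. The degree $k$ component of $c_l(\mathcal{E})(1+y)^{e-l}$ is $\binom{e-l}{k-l} y^{k-l} c_l(\mathcal{E})$. Summing over $0 \le l \le k$ gives the first formula.

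The second formula follows by the reindexing $i = k-l$, which turns $\binom{e-l}{k-l}$ into $\binom{e-k+i}{i}$.

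No step is a real obstacle. The only point needing care is justifying the splitting principle in the chosen theory, namely injectivity of $\phi^*$. For Chow rings this is Fulton, Remark 3.2.3; for singular cohomology it follows from the Leray–Hirsch theorem for iterated projective bundles.
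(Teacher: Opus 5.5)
Your splitting-principle argument is correct, and it is the standard one; the paper gives no proof of its own and simply cites Eisenbud--Harris, Proposition 5.17, where the identity is derived in exactly this way from the Chern roots $x_i + c_1(\mathcal{L})$. One small point: your expansion only produces terms with $l \le \min(k,e)$, and the additional terms with $e < l \le k$ in the stated sum vanish because $c_l(\mathcal{E}) = 0$ there.
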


\begin{lemma}[\cite{fulton_intersection_theory}]
    \label{lemma_fundamental_class_zero_locus}
    Let $X$ be a Cohen--Macaulay scheme and $\mathcal{E}$ a locally free sheaf of rank $e$ on $X$. Suppose that the cosection $\sigma^\vee: \mathcal{E} \rightarrow \mathcal{O}_X$ is regular in the sense that $\operatorname{codim} Z(\sigma) = e$, where $$Z(\sigma) \xhookrightarrow{i} X$$ denotes the zero locus of $\sigma$. Then, $$i_* \bigl[Z(\sigma)\bigr] = c_{\text{top}} (\mathcal{E}^\vee).$$
\end{lemma}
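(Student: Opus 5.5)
The plan is to deduce the claim from the general fact that a regular section of a vector bundle cuts out a zero scheme whose class is the top Chern class, i.e.\ Fulton's localized top Chern class (Fulton, Proposition 14.1 and Example 14.1.1, together with Proposition 7.1 on Cohen--Macaulay multiplicities). First I would dualize: the cosection $\sigma^\vee\colon \mathcal{E}\to\mathcal{O}_X$ is the same datum as a section $\sigma\colon\mathcal{O}_X\to\mathcal{E}^\vee$. The ideal of $Z(\sigma)$ is the image of $\sigma^\vee$, locally generated by the $e$ coordinate functions $f_1,\dots,f_e$ of $\sigma$ in a trivialization of $\mathcal{E}^\vee$.

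Next I would invoke the localized top Chern class. For any section $s$ of a rank $e$ bundle $\mathcal{F}$ on $X$, there is a class $\mathbb{Z}(s)\in A_{\dim X-e}(Z(s))$, defined by intersecting $s$ with the zero section of $\mathcal{F}$. It satisfies $i_*\mathbb{Z}(s)=c_e(\mathcal{F})\cap[X]$ (Fulton, Proposition 14.1(a)). Taking $\mathcal{F}=\mathcal{E}^\vee$ and $s=\sigma$ gives $i_*\mathbb{Z}(\sigma)=c_{\mathrm{top}}(\mathcal{E}^\vee)\cap[X]$.

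It remains to identify $\mathbb{Z}(\sigma)$ with the fundamental cycle $[Z(\sigma)]$; this is the main step. Since $X$ is Cohen--Macaulay and $\operatorname{codim}Z(\sigma)=e$, the local functions $f_1,\dots,f_e$ form a regular sequence. This is the standard fact that in a Cohen--Macaulay local ring, $e$ elements generating an ideal of height $e$ form a regular sequence. Consequently $Z(\sigma)\hookrightarrow X$ is a regular embedding with normal bundle $\mathcal{E}^\vee|_{Z(\sigma)}$. The normal cone then equals the normal bundle, and Fulton's Proposition 14.1(b) (or Example 7.1.10 together with Proposition 7.1, which computes intersection multiplicities as lengths in the Cohen--Macaulay case) gives $\mathbb{Z}(\sigma)=[Z(\sigma)]$. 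This fundamental cycle takes the scheme structure into account: each component is weighted by the length of the local ring of $Z(\sigma)$ at its generic point. The only subtle point is this comparison of multiplicities. It is exactly where the Cohen--Macaulay hypothesis enters, since without it a codimension-$e$ zero locus can have excess multiplicity.

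Finally, I would note that the identity transfers to singular cohomology via the cycle class map, and that this is how the lemma is used elsewhere in the paper. For example, in the proof of Proposition~\ref{proposition_commutator_cohomology_E_H_series} it yields $[\Delta]=c_{\mathrm{top}}(\mathcal{E})$ for the regular section cutting out the diagonal, where the roles of $\mathcal{E}$ and $\mathcal{E}^\vee$ are interchanged.
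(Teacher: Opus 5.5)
Your proposal is correct, and it spells out the argument behind the paper's bare citation of Fulton; the paper gives no proof beyond that reference. Your route is exactly what the reference supplies: the localized top Chern class $\mathbb{Z}(\sigma)$ with $i_*\mathbb{Z}(\sigma)=c_e(\mathcal{E}^\vee)\cap[X]$, followed by $\mathbb{Z}(\sigma)=[Z(\sigma)]$ because $e$ generators of a height-$e$ ideal in a Cohen--Macaulay local ring form a regular sequence. Your dualization of the cosection to a section of $\mathcal{E}^\vee$ (so the normal bundle is $\mathcal{E}^\vee|_{Z(\sigma)}$) and your reading of the conclusion as a cap product with $[X]$ are both right.
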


Recall from Section \ref{subsection_nested_stable} that the universal sheaf $\mathcal{U}$ on $\mathcal{M} \times S$ has homological dimension $1$. More precisely, there exists a short exact sequence
\begin{equation}
    \label{equation_proof_E_F_8_stable}
    0 \rightarrow \mathcal{W} \rightarrow \mathcal{V} \rightarrow \mathcal{U} \rightarrow 0,
\end{equation} 
where $\mathcal{W}$ and $\mathcal{V}$ are locally free on $\mathcal{M} \times S$ of ranks $w$ and $v$, respectively. Since the rank of $\mathcal{U}$ equals the rank $r$ of the stable sheaves parametrized by $\mathcal{M}$, we have $r = v - w$.

The universal sheaf lifts to nested moduli spaces such as $\mathfrak{Z}_2^\bullet \times S$, inducing inclusions
\begin{equation}
    \label{equation_proof_E_F_9_stable}
    \mathcal{U}_1' \subset \mathcal{U}_0 \subset \mathcal{U}_1.
\end{equation} 
Over a closed point of the nested moduli space, the fibers of these sheaves recover exactly the corresponding chain of stable sheaves. Each universal sheaf in \eqref{equation_proof_E_F_9_stable} admits a resolution analogous to \eqref{equation_proof_E_F_8_stable}.

We first simplify expression \eqref{equation_proof_E_F_6_stable}. Proposition \ref{proposition_projective_bundle_description} gives the factorization of the morphism 
$\pi_-$ 
as
\[\begin{tikzcd}[row sep=small]
	{\Big\{\mathcal{F}_1' \subset_u \mathcal{F}_0 \subset_u \mathcal{F}_1 \Big\}} & {\mathbb{P}_{\{\mathcal{F}_0 \subset_u \mathcal{F}_1 \}}(\Gamma^{u*}\mathcal{V}_0)} \\
	& {\Big\{\mathcal{F}_0 \subset_u \mathcal{F}_1 \Big\},}
	\arrow["i", hook, from=1-1, to=1-2]
	\arrow["{\pi_-}"', from=1-1, to=2-2]
	\arrow["\rho", from=1-2, to=2-2]
\end{tikzcd}\]
where the closed embedding $i$ is cut out by the cosection 
$\sigma^\vee: \rho^*\Gamma^{u*} \mathcal{W}_0 \otimes \mathcal{O}(-1) \rightarrow \mathcal{O}_{\mathbb{P}(\Gamma^{u*}\mathcal{V}_0)}.$
This cosection is not regular because the codimension of $\mathfrak{Z}^\bullet_2$ in $\mathbb{P}(\Gamma^{u*}\mathcal{V}_0)$ is $w_0-1$, whereas the rank of $\rho^*\Gamma^{u*} \mathcal{W}_0 \otimes \mathcal{O}(-1)$ is $w_0$. We therefore consider the modified cosection
$$\widetilde{\sigma}^\vee: \frac{\rho^*\Gamma^{u*}\mathcal{W}_0}{\rho^*\mathcal{L}_{0,1} \otimes \rho^*\omega_S} \otimes \mathcal{O}(-1) \rightarrow \mathcal{O}_{\mathbb{P}(\Gamma^{u*}\mathcal{V}_0)},$$
where $\omega_S$ denotes the canonical bundle of $S$ and its various pull-backs. By Proposition 2.27 of \cite{negut_hecke_correspondences}, the sheaf
$$\frac{\rho^*\Gamma^{u*}\mathcal{W}_0}{\rho^*\mathcal{L}_{0,1} \otimes \rho^*\omega_S}$$
is locally free of rank $w_0-1$. Applying Lemma \ref{lemma_Chern_classes_vector_bundle_line_bundle} and Lemma \ref{lemma_fundamental_class_zero_locus}, we obtain
$$i_*1 = \sum_{i=0}^{w_0-1} \sum_{j=0}^i c_1\left( \mathcal{O}(1) \right)^{w_0-1-i} \cdot c_1 \left( \rho^*\mathcal{L}_{0,1} \otimes \rho^*\omega_S \right)^j \cdot (-1)^{i-j} c_{i-j}\left( \rho^*\Gamma^{u*}\mathcal{W}_0 \right).$$

Consequently,
\begin{align*}
    &\pi_{-*}\left( \ell_{01}'^{\,a+b-1-k} \right) \\
    =\; &\sum_{i=0}^{w_0} \sum_{j=0}^i (-1)^{a+b-k-i-r-1}c_{a+b-k-i-r-1}\left( -\Gamma^{u*}\mathcal{V}_0 \right) \cdot c_1 \left( \mathcal{L}_{0,1} \otimes \omega_S \right)^j \cdot (-1)^{i-j} c_{i-j}\left( \Gamma^{u*}\mathcal{W}_0 \right),
\end{align*}
where the upper limit has been extended from $w_0-1$ to $w_0$ because the $w_0$-th Chern class of a rank $w_0-1$ vector bundle vanishes.

Using this identity, the principal contribution simplifies to
\begin{align*}
    &\sum_{k=0}^{a+b-1} (\Delta_{\mathcal{M}} \times \Delta_{S})_* (p_+ \times p_S)_* \pi_{-*}\left( \ell_{01}^k\ell_{01}'^{a+b-1-k} \right) \\
    =\; &\sum_{k=0}^{a+b-1} \sum_{i=0}^{w} \sum_{j=0}^i \sum_{h=0}^j \binom{j}{h} (\Delta_{\mathcal{M}} \times \Delta_{S})_* \Big( (p_+ \times p_S)_* \left( \ell^{k+h} \right) \\
    &\hspace{17em}\cdot (-1)^{a+b-k-i-r-1}c_{a+b-k-i-r-1}\left( -\mathcal{V} \right) \\
    &\hspace{17em} \cdot c_1 \left( \omega_S \right)^{j-h} \cdot (-1)^{i-j} c_{i-j}\left( \mathcal{W} \right) \Big).
\end{align*}

It remains to compute $(p_+ \times p_S)_* \left( \ell^{k+h} \right)$. Proposition \ref{proposition_projective_bundle_description} gives the factorization of $p_+$ as
\[\begin{tikzcd}[row sep=small]
	{\Big\{\mathcal{F}_0 \subset_u \mathcal{F}_1 \Big\}} & {\mathbb{P}_{\mathcal{M} \times S}(\mathcal{W}^\vee \otimes \omega_S)} \\
	& {\mathcal{M} \times S,}
	\arrow["{i'}", hook, from=1-1, to=1-2]
	\arrow["{p_+ \times p_S}"', from=1-1, to=2-2]
	\arrow["{\rho'}", from=1-2, to=2-2]
\end{tikzcd}\]
where $i'$ is cut out by
$\sigma'^\vee: \rho'^*\mathcal{V}^\vee \otimes \omega_S \otimes \mathcal{O}(-1) \rightarrow \mathcal{O}_{\mathbb{P}( \mathcal{W}^\vee \otimes \omega_S )}.$
This cosection is regular: the codimension of $\{ \mathcal{F}_0 \subset_u \mathcal{F}_1 \}$ inside $\mathbb{P}( \mathcal{W}^\vee \otimes \omega_S )$ is exactly $w+r$, which matches the rank of $\rho'^*\mathcal{V}^\vee \otimes \omega_S \otimes \mathcal{O}(-1)$. Lemma \ref{lemma_Chern_classes_vector_bundle_line_bundle} and Lemma \ref{lemma_fundamental_class_zero_locus} then yield
$$i'_*1 = \sum_{l=0}^{v} c_1 \left( \mathcal{O}(1) \right)^{v-l} \cdot c_l \left( \rho'^*\mathcal{V} \otimes \omega_S^{-1} \right).$$

A direct computation gives
\begin{align*}
    &(p_+ \times p_S)_* \left( \ell^{k+h} \right) \\
    =\; &(-1)^{r-1} \sum_{l=0}^{v} (-1)^{k+h+v-w+1-l} c_{k+h+v-w+1-l} \left( -\mathcal{W} \otimes \omega_S^{-1} \right) \cdot (-1)^l c_l \left( \mathcal{V} \otimes \omega_S^{-1} \right).
\end{align*}

Substituting this into the previous expression, we find that \eqref{equation_proof_E_F_6_stable} is equivalent to
\begin{align}
\begin{aligned}
    \label{equation_proof_E_F_10_stable}
    &\sum_{k=0}^{a+b-1} \sum_{i=0}^{w} \sum_{j=0}^i \sum_{h=0}^j \sum_{l=0}^{v} \binom{j}{h} (\Delta_{\mathcal{M}} \times \Delta_{S})_* \Bigg( (-1)^l c_l \bigl( \mathcal{V} \otimes \omega_S^{-1} \bigr) \\
    &\hspace{10em} \cdot (-1)^{k+h+r+1-l} c_{k+h+r+1-l} \bigl( -\mathcal{W} \otimes \omega_S^{-1} \bigr) \cdot c_1(\omega_S)^{j-h} \\
    &\hspace{10em} \cdot (-1)^{a+b-k-i-r-1} c_{a+b-k-i-r-1}\bigl( -\mathcal{V} \bigr) \cdot (-1)^{i-j} c_{i-j}\bigl( \mathcal{W} \bigr) \Bigg).
\end{aligned}
\end{align}

We now turn to the diagonal contribution \eqref{equation_proof_E_F_7_stable}. By Proposition \ref{proposition_projective_bundle_description}, the morphism $p_- \times p_S$ factorizes as
\[\begin{tikzcd}[row sep=small]
	{\Big\{\mathcal{F}_1' \subset_u \mathcal{F}_0 \Big\}} & {\mathbb{P}_{\mathcal{M} \times S}(\mathcal{V})} \\
	& {\mathcal{M} \times S,}
	\arrow["{i''}", hook, from=1-1, to=1-2]
	\arrow["{{p_- \times p_S}}"', from=1-1, to=2-2]
	\arrow["{\rho''}", from=1-2, to=2-2]
\end{tikzcd}\]
with $i''$ cut out by $\sigma''^\vee: \rho''^*\mathcal{W} \otimes \mathcal{O}(-1) \rightarrow \mathcal{O}_{\mathbb{P}(\mathcal{V})}$.
This cosection is regular, since the codimension of $\{ \mathcal{F}_1' \subset_u \mathcal{F}_0 \}$ in $\mathbb{P}(\mathcal{V})$ is $v-r$, which equals the rank of $\rho''^*\mathcal{W} \otimes \mathcal{O}(-1)$. Lemma \ref{lemma_Chern_classes_vector_bundle_line_bundle} and Lemma \ref{lemma_fundamental_class_zero_locus} therefore give
$$i''_*1 = \sum_{i=0}^w c_1 \left( \mathcal{O}(1) \right)^{w-i} \cdot c_i \left( \rho''^*\mathcal{W}^\vee \right).$$

The cohomology class $\Gamma_1$ is a polynomial of degree $r-1$ in $\ell$ whose coefficients are Chern classes of $\Gamma^{u*}\mathcal{U}_0$. Hence we may write
$$\frac{P(\ell)-P(\ell+t)}{t} = \sum_{j=0}^{r-1} (p_- \times p_S)^*\gamma_j \cdot \ell^j,$$ 
where $\gamma_j \in H_{\mathcal{M} \times S}$. We then compute
\begin{align}
    \label{equation_proof_E_F_20_stable}
    &(p_- \times p_S)_* \left( -\ell^{a+b} \cdot \Gamma_1 \right) \\ \nonumber
    =\; &(-1)^{r-1} (p_- \times p_S)_* \left( \ell^{a+b} \cdot \frac{P(\ell)-P(\ell+t)}{t} \right) \\ \nonumber
    =\; &(-1)^{r-1} \sum_{j=0}^{r-1} \sum_{i=0}^w  \gamma_j \cdot (-1)^{a+b-i}c_{a+b-i} \left( -\mathcal{V} \right) \cdot (-1)^i c_i \left( \mathcal{W} \right).
\end{align}

\subsection{Residue calculus and the final expression}

We now convert the multiple sums obtained above into compact generating functions using formal residue calculus. Following \cite{negut_shuffle_surfaces}, for a formal Laurent series expanded around infinity, say
\[
\sum_{i=n}^\infty a_i z^{-i},
\]
we denote by \(\underset{z=\infty}{\operatorname{Res}}\) the coefficient of \(z^{-1}\):
\[
\underset{z=\infty}{\operatorname{Res}} \Bigg[ \sum_{i=n}^\infty \frac{a_i}{z^i} \Bigg] \define a_1.
\]
This formalism provides a systematic way to extract coefficients from rational series.

We recall the expansions
\begin{equation}
    c(\mathcal{U},w+t) = \Bigg( \sum_{i=0}^\infty (-1)^i w^{v-i} c_i \bigl( \mathcal{V} \otimes \omega_S^{-1} \bigr) \Bigg) \cdot
       \Bigg( \sum_{j=0}^\infty (-1)^j w^{-w-j} c_j \bigl( -\mathcal{W} \otimes \omega_S^{-1} \bigr) \Bigg) \label{equation_proof_E_F_14}
\end{equation}
and
\begin{equation}
    c(-\mathcal{U},z) = \Bigg( \sum_{i=0}^\infty (-1)^i z^{-v-i} c_i \bigl( -\mathcal{V} \bigr) \Bigg) \cdot
       \Bigg( \sum_{j=0}^\infty (-1)^j z^{w-j} c_j \bigl( \mathcal{W} \bigr) \Bigg). \label{equation_proof_E_F_15}
\end{equation}

Comparing \eqref{equation_proof_E_F_14} and \eqref{equation_proof_E_F_15} with \eqref{equation_proof_E_F_10_stable}, we see that the latter is equivalent to
\[
(-1)^{r-1} (\Delta_{\mathcal{M}} \times \Delta_S)_* \left( \underset{w=\infty}{\operatorname{Res}} \; \underset{z=\infty}{\operatorname{Res}} \left[ \frac{c(\mathcal{U},w+t) \cdot c(-\mathcal{U},z) \cdot (z^{a+b}-w^{a+b})}{(z-w)(z-w-t)} \right] \right).
\]
Similarly, \eqref{equation_proof_E_F_20_stable} becomes
\[
(-1)^{r-1} \underset{z=\infty}{\operatorname{Res}} \left[ z^{a+b} \cdot c(-\mathcal{U},z) \cdot \frac{P'(z)-P'(z+t)}{t} \right],
\]
where \(P'(z) \define \bigl[ c(\mathcal{U},z) \bigr]_{\geq 0}\) is the polynomial part of the Chern power series.

Combining the principal and diagonal contributions, the commutator \(\bigl[ \textcolor{red}{E_a},\textcolor{blue}{F_b} \bigr]\) is realized by the correspondence
\[\begin{tikzcd}[row sep=small]
	& {\text{cohomology class}} & \\
	& {\mathcal{M}\times \textcolor{red}{S} \times \textcolor{blue}{S} \times \mathcal{M}} \\
	{\mathcal{M} \times \textcolor{red}{S} \times \textcolor{blue}{S}} && {\mathcal{M},}
	\arrow[dashed, no head, from=1-2, to=2-2]
	\arrow["{{p_{\mathcal{M}} \times p_{\textcolor{red}{S}} \times p_{\textcolor{blue}{S}}}}"', from=2-2, to=3-1]
	\arrow["{{p_{\mathcal{M}}}}", from=2-2, to=3-3]
\end{tikzcd}\]
where the cohomology class on \(\mathcal{M} \times \textcolor{red}{S} \times \textcolor{blue}{S} \times \mathcal{M}\) is
\begin{align}
    \begin{aligned}
    \label{equation_proof_E_F_11_stable}
    (-1)^{r-1} (\Delta_{\mathcal{M}} \times \Delta_S)_* \Bigg( &\underset{w=\infty}{\operatorname{Res}} \; \underset{z=\infty}{\operatorname{Res}} \left[ \frac{c(\mathcal{U},w+t) \cdot c(-\mathcal{U},z) \cdot (z^{a+b}-w^{a+b})}{(z-w)(z-w-t)} \right] \\
    + &\underset{z=\infty}{\operatorname{Res}} \left[ z^{a+b} \cdot c(-\mathcal{U},z) \cdot \frac{P'(z)-P'(z+t)}{t} \right] \Bigg).
    \end{aligned}
\end{align}

To complete the proof of Proposition \ref{proposition_commutator_cohomology_E_F_operators_stable}, we perform a final algebraic simplification.

\begin{claim}
    \label{claim_exchange_order_residues_stable}
    The expression \eqref{equation_proof_E_F_11_stable} simplifies to
    \[
    (-1)^r (\Delta_{\mathcal{M}} \times \Delta_S)_* \left( \underset{z=\infty}{\operatorname{Res}} \left[\frac{1}{t} \cdot \frac{c(\mathcal{U},z+t)}{c(\mathcal{U},z)} \cdot z^{a+b} \right] \right).
    \]
\end{claim}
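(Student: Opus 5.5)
The plan is a purely formal residue computation. The class $(\Delta_{\mathcal{M}} \times \Delta_S)_*$ and the overall sign $(-1)^{r-1}$ are common to everything, so it suffices to prove the identity of Laurent-series coefficients
\[
\underset{w=\infty}{\operatorname{Res}}\;\underset{z=\infty}{\operatorname{Res}} \left[ \frac{c(\mathcal{U},w+t)\, c(-\mathcal{U},z)\,(z^{n}-w^{n})}{(z-w)(z-w-t)} \right] + \underset{z=\infty}{\operatorname{Res}} \left[ z^{n} c(-\mathcal{U},z) \frac{P'(z)-P'(z+t)}{t} \right] = -\frac1t \underset{z=\infty}{\operatorname{Res}}\left[ \frac{c(\mathcal{U},z+t)}{c(\mathcal{U},z)} z^{n}\right]
\]
with $n=a+b$. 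Here the right-hand side is understood as the well-defined class obtained after the cancellation of $t$; equivalently, we multiply through by $t$.

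The key step is to exploit the partial fraction decomposition
\[
\frac{1}{(z-w)(z-w-t)} = \frac1t\left(\frac{1}{z-w-t} - \frac{1}{z-w}\right).
\]
Note that $\frac{z^n-w^n}{z-w}$ is a polynomial, so the double residue is only an honest iterated residue at infinity. I will therefore swap the order and take $\operatorname{Res}_{w=\infty}$ first, with $z$ fixed and treated as large, i.e.\ $|z|\gg|w|$ in the expansions. The factor $\frac{z^n-w^n}{z-w}$ causes no issues. The problematic factor $\frac{1}{z-w-t}$ vanishes at $w=z-t$, and moving the $w$-contour past this point produces a term $c(\mathcal{U},z)\cdot z^n$-type evaluation. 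Concretely, I would compute
\[
\operatorname{Res}_{w=\infty} \frac{c(\mathcal{U},w+t)\,(z^n-w^n)}{(z-w)(z-w-t)}
\]
by writing $c(\mathcal{U},w+t)=P'(w+t)+(\text{negative powers})$. Only finitely many terms then interact with the polynomial part. The polynomial part $P'(w+t)$ contributes, through $\operatorname{Res}_{w=\infty}$ of a rational function in $w$, minus the sum of its finite residues at $w=z$ and $w=z-t$. The residue at $w=z$ vanishes because $z^n-w^n$ vanishes there. The residue at $w=z-t$ gives $\frac{1}{t}P'(z)(z^n-(z-t)^n)$. The negative-power part of $c(\mathcal{U},w+t)$ must be handled separately by expanding in $z\gg w$.

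Collecting terms, I expect three kinds of contribution. The first is $\frac1t z^n P'(z)\, c(-\mathcal{U},z)$ and its shifted analogue, which combine with the diagonal term $z^n c(-\mathcal{U},z)\frac{P'(z)-P'(z+t)}{t}$ to leave $-\frac1t z^n c(\mathcal{U},z+t)/c(\mathcal{U},z)$. This uses $c(-\mathcal{U},z)=1/c(\mathcal{U},z)$, after which the polynomial parts telescope with the full Chern series. The second kind consists of terms of the form $\frac{(z-t)^n}{t}\cdot(\dots)$, or of $w^n$-terms whose $z$-residue vanishes because the remaining $z$-dependence is a polynomial times $c(-\mathcal{U},z)$ of too high degree. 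I would verify this by degree counting, using that $c(-\mathcal{U},z)$ has leading term $z^{-r}$.

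The main obstacle is the exchange of residue order together with the bookkeeping of the non-polynomial tail $c(\mathcal{U},w+t)-P'(w+t)$. The diagonal term involves only $P'$, whereas the target involves the full $c(\mathcal{U},z+t)$, so the tail contribution from the principal part must supply exactly the difference. First I will prove the identity $\operatorname{Res}_w \operatorname{Res}_z = \operatorname{Res}_z\operatorname{Res}_w + (\text{residue at } w=z-t)$ as an identity of formal series with the expansions specified. Then I will check it on monomials $c(\mathcal{U},w+t)\mapsto (w+t)^m$ for $m<0$, where the $w$-expansion in $|w|>|z|$ versus $|z|>|w|$ differs precisely by the pole at $w=z-t$. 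Once this is established, the claim follows by linearity and by the multiplicativity $c(\mathcal{U},z+t)\,c(-\mathcal{U},z)=\frac{c(\mathcal{U},z+t)}{c(\mathcal{U},z)}$.
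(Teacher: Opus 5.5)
There is a genuine gap, and it lies in the tail bookkeeping you postpone. Your partial fractions and the idea of exchanging residues at the cost of finite-pole corrections are sound. The problem is that your computation mixes two incompatible expansions.

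In the regime $|z|\gg|w|$ that you fix, $\frac{1}{z-w-t}=\sum_{m\ge 0}(w+t)^m z^{-m-1}$. So every coefficient of a power of $z$ in $P'(w+t)\frac{z^n-w^n}{(z-w)(z-w-t)}$ is a polynomial in $w$. The polynomial part therefore contributes nothing, and the entire double residue comes from the tail of $c(\mathcal{U},w+t)$. Your evaluation of the $P'$-part as a sum of finite residues is valid only in the opposite regime $|w|\gg|z|$. In that regime the tail no longer drops out, because $\frac{z^n-w^n}{z-w}$ has degree $n-1$ in $w$.

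Moreover, you keep the factor $z^n-w^n$ through the exchange of residues. This removes the pole at $w=z$, so the only exchange correction comes from $w=z-t$. There $c(\mathcal{U},(z-t)+t)\,c(-\mathcal{U},z)=1$, so the correction is $\pm\operatorname{Res}_{z=\infty}\frac{z^n-(z-t)^n}{t}=0$. Hence in your setup the whole right-hand side must be extracted from the tail, which you do not do.

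Your predictions about that part are also false. The residue of $\frac{(z-t)^n}{t}P'(z)\,c(-\mathcal{U},z)$ is in general nonzero. For the test series $c(\mathcal{U},z)=z+\alpha z^{-1}$ (so $r=1$) with $n=2$, it equals $2\alpha$. It is compensated only jointly with tail terms, which here contribute $-\alpha$.

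The missing idea is the order of operations used in the paper: discard the $w^{a+b}$-term \emph{before} exchanging residues. In the original regime $|z|\gg|w|$, $c(-\mathcal{U},z)/((z-w)(z-w-t))=O(z^{-r-2})$, so that term has zero inner $z$-residue. With the integrand $\frac{c(\mathcal{U},w+t)\,c(-\mathcal{U},z)\,z^{a+b}}{(z-w)(z-w-t)}$, the exchange now does the work:
\begin{itemize}
\item In the regime $|w|\gg|z|$, the tail of $c(\mathcal{U},w+t)$ times the $O(w^{-2})$ denominator has no $w^{-1}$-term. So only $P'(w+t)$ contributes, giving $\frac{P'(z+t)-P'(z)}{t}$. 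This cancels the diagonal term.
\item The exchange correction now comes from both poles $w=z$ and $w=z-t$. It equals $-\frac{1}{t}c(\mathcal{U},z+t)\,c(-\mathcal{U},z)\,z^{a+b}+\frac{1}{t}z^{a+b}$. The second summand has no residue, and the first is exactly the target.
\end{itemize}
The tail of $c(\mathcal{U},w+t)$ never has to be tracked.
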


\begin{proof}
    We proceed in two steps. First, observe that 
    \[
    \underset{w=\infty}{\operatorname{Res}} \; \underset{z=\infty}{\operatorname{Res}} \left[ \frac{c(\mathcal{U},w+t) \cdot c(-\mathcal{U},z) \cdot \bigl( z^{a+b} - w^{a+b} \bigr)}{(z-w)(z-w-t)} \right] = \underset{w=\infty}{\operatorname{Res}} \; \underset{z=\infty}{\operatorname{Res}} \left[ \frac{c(\mathcal{U},w+t) \cdot c(-\mathcal{U},z) \cdot z^{a+b}}{(z-w)(z-w-t)} \right].
    \]
    This equality holds because expanding the denominator $(z-w)(z-w-t)$ reveals that multiplying by negative powers of $z$ cannot yield the strictly positive powers of $z$ required to produce a non-zero residue at infinity.

    Second, exchanging the order in which we compute the residues introduces an error term. Concretely, we have
\begin{align}
    \label{equation_proof_claim_residues_1_stable}
    \begin{aligned}
        &\underset{w=\infty}{\operatorname{Res}} \; \underset{z=\infty}{\operatorname{Res}} \left[ \frac{c(\mathcal{U},w+t) \cdot c(-\mathcal{U},z) \cdot z^{a+b}}{(z-w)(z-w-t)} \right] \\
        = \; &\underset{z=\infty}{\operatorname{Res}} \; \underset{w=\infty}{\operatorname{Res}} \left[ \frac{c(\mathcal{U},w+t) \cdot c(-\mathcal{U},z) \cdot z^{a+b}}{(z-w)(z-w-t)} \right] + \text{error term}.
    \end{aligned}
\end{align}

    Now, consider the iterated residue
\begin{align}
    \label{equation_proof_claim_residues_2_stable}
    \begin{aligned}
        &\underset{z=\infty}{\operatorname{Res}} \; \underset{w=\infty}{\operatorname{Res}} \left[ \frac{c(\mathcal{U},w+t) \cdot c(-\mathcal{U},z) \cdot z^{a+b}}{(z-w)(z-w-t)} \right] \\
        = \; &\underset{z=\infty}{\operatorname{Res}} \left[ c(-\mathcal{U},z) \cdot z^{a+b} \cdot \underset{w=\infty}{\operatorname{Res}} \left[ \frac{c(\mathcal{U},w+t)}{(z-w)(z-w-t)} \right] \right].
    \end{aligned}
\end{align}
    We evaluate the inner residue by noting that the power series $c(\mathcal{U},w+t)$ can be replaced by its polynomial part $P'(w+t)$. To compute the residue of the rational function
    $$\frac{P'(w+t)}{(z-w)(z-w-t)},$$
    we invoke the global residue theorem. The function has poles at $w=z$, $w=z-t$ and $w=\infty$. Consequently, one obtains
    \begin{align*}
        &\underset{w=\infty}{\operatorname{Res}} \left[ \frac{P'(w+t)}{(z-w)(z-w-t)} \right] \\
        =\; &\underset{w=z}{\operatorname{Res}} \left[ \frac{P'(w+t)}{(w-z)(w-(z-t))} \right] + \underset{w=z-t}{\operatorname{Res}} \left[ \frac{P'(w+t)}{(w-z)(w-(z-t))} \right] \\
        =\; &\frac{P'(z+t)}{t} - \frac{P'(z)}{t}.
    \end{align*}
    Substituting this back into \eqref{equation_proof_claim_residues_2_stable} simplifies the iterated residue to 
    $$\underset{z=\infty}{\operatorname{Res}} \left[ c(-\mathcal{U},z) \cdot z^{a+b} \cdot \frac{P'(z+t)-P'(z)}{t} \right],$$ 
    which exactly cancels the contribution in \eqref{equation_proof_E_F_11_stable} arising from the diagonal component $\mathfrak{Z}_1$. 

    Finally, by standard results in formal residue calculus, the error term in \eqref{equation_proof_claim_residues_1_stable} is given by the sum of the residues at the finite poles $w = z$ and $w = z-t$:
\begin{align*}
&\underset{z=\infty}{\operatorname{Res}} \; \underset{w=z}{\operatorname{Res}} \left[ \frac{c(\mathcal{U},w+t) \cdot c(-\mathcal{U},z) \cdot z^{a+b}}{(z-w)(z-w-t)} \right] + \underset{z=\infty}{\operatorname{Res}} \; \underset{w=z-t}{\operatorname{Res}} \left[ \frac{c(\mathcal{U},w+t) \cdot c(-\mathcal{U},z) \cdot z^{a+b}}{(z-w)(z-w-t)} \right] \\
=\, &\underset{z=\infty}{\operatorname{Res}} \left[ \frac{c(\mathcal{U},z+t) \cdot c(-\mathcal{U},z) \cdot z^{a+b}}{-t} \right] + \underset{z=\infty}{\operatorname{Res}} \left[ \frac{c(\mathcal{U},z) \cdot c(-\mathcal{U},z) \cdot z^{a+b}}{t} \right] \\
=\, &\underset{z=\infty}{\operatorname{Res}} \left[ \frac{c(\mathcal{U},z+t) \cdot c(-\mathcal{U},z) \cdot z^{a+b}}{-t} \right],
\end{align*}
where we used the identity $c(\mathcal{U},z) \cdot c(-\mathcal{U},z) = 1$, which causes the second term to vanish.
    Multiplying by $(-1)^{r-1}$ from \eqref{equation_proof_E_F_11_stable} absorbs the minus sign, yielding the final factor of $(-1)^r$.    
\end{proof}

The algebraic simplification established in Claim \ref{claim_exchange_order_residues_stable} concludes the proof of Proposition \ref{proposition_commutator_cohomology_E_F_operators_stable}. This completes the intersection-theoretic construction of the Yangian action on the cohomology of the moduli space of stable sheaves.

\begin{remark}
    \label{remark_excision_approach}
    There is an alternative approach to proving Proposition \ref{proposition_commutator_cohomology_E_F_operators_stable} that relies on excision in cohomology. As a consequence of the isomorphism established in Claim \ref{claim_isomorphism_nested_stable}, the restriction of the commutator \(\bigl[ \textcolor{red}{E_a},\textcolor{blue}{F_b} \bigr]\) to the open locus \(\{\textcolor{red}{u} \neq \textcolor{blue}{v}\}\) vanishes. By excision, this implies that the commutator is supported on the diagonal. Hence there exists a cohomology class \(\Xi \in H_{\mathcal{M} \times S}\) such that \(\bigl[ \textcolor{red}{E_a},\textcolor{blue}{F_b} \bigr]\) is given by the correspondence $(\Delta_{\mathcal{M}} \times \Delta_S)_* \Xi$ on $\mathcal{M} \times \textcolor{red}{S} \times \textcolor{blue}{S} \times \mathcal{M}$, where $\Delta_{\mathcal{M}} \times \Delta_S: \mathcal{M} \times S \hookrightarrow \mathcal{M} \times \textcolor{red}{S} \times \textcolor{blue}{S} \times \mathcal{M}$ is the product of the diagonal embeddings. Since the pushforward \((\Delta_{\mathcal{M}} \times \Delta_S)_*\) is injective, \(\Xi\) is uniquely determined, and it suffices to evaluate the commutator on a single test class that detects it. 

    The excision approach is sketched in \cite{negut_rationaltrigonometricellipticalgebras}, but, to the best of our knowledge, it has not been fully carried out to identify the explicit class \(\Xi\).
\end{remark}

\bibliographystyle{alpha}
\bibliography{biblio}

\end{document}